\DeclareMathAlphabet{\mathpzc}{OT1}{pzc}{m}{it}
\newcommand{\EO}[1]{{\color{black}#1}}
\newcommand{\DQ}[1]{{\color{black}#1}}
\newcommand{\T}{\mathscr{T}}
\newcommand{\TheTitle}{Bilinear optimal control for the fractional Laplacian: analysis and discretization}
\newcommand{\ShortTitle}{Bilinear optimal control for fractional diffusion}
\newcommand{\TheAuthors}{F. Bersetche, F. Fuica, E. Ot\'arola, D. Quero}
\headers{\ShortTitle}{\TheAuthors}
\title{{\TheTitle}\thanks{FB is supported by ANID through FONDECYT grant 3220254 and \EO{Programa Regional MATH-AmSud  AMSUD210013.} FF is supported by ANID through FONDECYT grant 3230126. EO is partially supported by ANID through FONDECYT grant 1220156 and \EO{Programa Regional MATH-AmSud  AMSUD210013}. DQ is supported by ANID through Subdirecci\'on del Capital Humano/Doctorado Nacional/2021--21210988 and by UTFSM through Programa de Incentivo a la Investigaci\'on Cient\'ifica (PIIC).}}
\author{Francisco Bersetche\thanks{Departamento de Matem\'atica, Universidad T\'ecnica Federico Santa Mar\'ia, Valpara\'iso, Chile and Departamento de Matem\'atica, FCEyN, Universidad de Buenos Aires \& IMAS CONICET,
Pabell\'on I, Ciudad Universitaria 1428, Buenos Aires, Argentina. \email{francisco.bersetche@usm.cl}}
\and
Francisco Fuica\thanks{Facultad de Matem\'aticas, Pontificia Universidad Cat\'olica de Chile, Avenida Vicu\~{n}a Mackenna 4860, Santiago, Chile.
\email{francisco.fuica@mat.uc.cl}}
\and
Enrique Ot\'arola\thanks{Departamento de Matem\'atica, Universidad T\'ecnica Federico Santa Mar\'ia, Valpara\'iso, Chile. \email{enrique.otarola@usm.cl}}
\and
Daniel Quero\thanks{Departamento de Matem\'atica, Universidad T\'ecnica Federico Santa Mar\'ia, Valpara\'iso, Chile \email{daniel.quero@alumnos.usm.cl}}}
\date{Draft version of \today.}
\begin{document}

\maketitle
\begin{abstract}
We adopt the \emph{integral} definition of the fractional Laplace operator and study an optimal control problem on \emph{Lipschitz domains} that involves a fractional elliptic partial differential equation (PDE) as state equation and a control variable that enters the state equation as a coefficient; pointwise constraints on the control variable are considered as well. We establish the existence of optimal solutions and analyze first and, necessary and sufficient, second order optimality conditions. Regularity estimates for optimal variables are also analyzed. We develop two finite element discretization strategies: a semidiscrete scheme in which the control variable is not discretized, and a fully discrete scheme in which the control variable is discretized with piecewise constant functions. For both schemes, we analyze the convergence properties of discretizations and derive error estimates.
\end{abstract}

\begin{keywords}
optimal control, fractional diffusion, integral fractional Laplacian, first and second order optimality conditions, regularity estimates, finite elements, convergence, error estimates.
\end{keywords}

\begin{AMS}
35R11,         
49J20,         
49K20,         
49M25,         
65K10,         
65N15,         
65N30.         
\end{AMS}
\section{Introduction}\label{sec:intro}
In this paper, we are interested in the analysis and discretization of an optimal control problem involving a fractional elliptic PDE as state equation and a control variable that enters the state equation as a coefficient. To make matters precise, we let $\Omega$ be an open and bounded domain in $\mathbb{R}^{d}$ $(d \geq 2)$ with Lipschitz boundary $\partial\Omega$. Given a desired state $u_{\Omega}\in L^{2}(\Omega)$ and a regularization parameter $\lambda>0$, we introduce the cost functional
\begin{equation}\label{def:cost_functional}
J(u,q):=\frac{1}{2}\|u - u_{\Omega}\|_{L^{2}(\Omega)}^{2}+\frac{\lambda}{2}\|q\|_{L^{2}(\Omega)}^{2}.
\end{equation} 
Let $f$ be a fixed function in $H^{-s}(\Omega)$. We shall be concerned with the following optimal control problem: Find $\min J(u,q)$ subject to the \emph{fractional} and \emph{elliptic} PDE
\begin{equation}\label{def:state_eq}
(-\Delta)^{s}u + qu = f \text{ in } \Omega,
\qquad
u=0  \text{ in } \Omega^{c},
\end{equation}
where $\Omega^{c} = \mathbb{R}^{d}\setminus\Omega$, and the \emph{control constraints} 
\begin{equation}\label{def:box_constraints}
q \in \mathbb{Q}_{ad},
\qquad
\mathbb{Q}_{ad}:=\{v \in L^{\infty}(\Omega):~a \leq v(x) \leq b~\mathrm{a.e.}~x\in \Omega \}.
\end{equation}
The control bounds $a$ and $b$ are such that $0 < a < b$. The operator $(-\Delta)^s$ corresponds to the \emph{integral definition} of the fractional Laplace operator; its definition can be found in section \ref{sec:the_frac_lap_operator}.

In problem \eqref{def:cost_functional}--\eqref{def:box_constraints} the control variable $q$ enters the state equation as a coefficient and not as a source term. This creates the \emph{nonlinear} coupling $qu$ in \eqref{def:state_eq}, the presence of which has led to this type of problems being referred to as \emph{bilinear optimal control problems} or \emph{control-affine problems} \cite{MR3878305}. Mathematically, the coupling $qu$ in \eqref{def:state_eq} complicates both the analysis and the discretization. In particular, the solution of the state equation depends nonlinearly on the control, so the uniqueness of solutions of \eqref{def:cost_functional}--\eqref{def:box_constraints} cannot be guaranteed \cite{MR2536007}; a complete optimization study therefore requires the analysis of second order optimality conditions \cite{MR3878305}. In terms of applications, it should be mentioned that several processes in engineering, biology, and ecology, to name just a few, can be modeled by bilinear systems; see, for instance, \cite{MR0414174,MR2986407,MR0332249}. We would also like to note that problem \eqref{def:cost_functional}--\eqref{def:box_constraints} can be interpreted as a particular instance of a \emph{coefficient identification problem}. Such problems become particularly relevant in scenarios where coefficients or source terms remain uncertain or unknown. It may be that we have a sparse and/or noisy measurement of the state of the system or an output of interest that we wish to match, and/or that a priori information about some model coefficients is available. In such cases, we can resort to solving a control problem or an inverse problem to recover the unknown parameters and define a more accurate, data-driven mathematical model.

Another important feature of \eqref{def:cost_functional}--\eqref{def:box_constraints} is that it is governed by the \emph{fractional} and \emph{nonlocal} operator $(-\Delta)^s$. Although the use of nonlocal models to describe natural and social phenomena has been of interest for a long time, this interest has only increased significantly in recent years. This is mainly due to the numerous applications of these models, which demonstrate their relevance and versatility. It is therefore only natural that an interest in efficient approximation schemes for nonlocal models arises \cite{MR3893441,MR4189291} and that one might be interested in their \emph{control} \cite{MR3158780,MR3429730,MR3504977}. By nonlocal models, we mean here model descriptions that at a given point in space depend on the state of the system at points at a far distance from that given point. An important example from the family of nonlocal operators is the \emph{integral} fractional Laplacian $(-\Delta)^s$ ($0<s<1$), which corresponds to the infinitesimal generator of a stable L\'evy process. It can be shown that $(-\Delta)^s$ converges in a suitable sense to the Laplace operator supplemented with homogeneous Dirichlet boundary conditions and to the identity operator when $s \uparrow 1$ and $s \downarrow 0$, respectively; see \cite{Hitchhikers} for details.

For the \emph{particular case} $s=1$, there are several works in the literature that provide error estimates for finite element discretizations  of \eqref{def:cost_functional}--\eqref{def:box_constraints}. To the best of our knowledge, the first work that provides an analysis for suitable finite element discretizations is \cite{MR2536007}. In this work, the authors propose two schemes that discretize the admissible control set with piecewise constant and piecewise linear functions, and provide bounds for the error committed within the approximation of a control variable \cite[Corollaries 5.6 and 5.10]{MR2536007}. These results were later extended to mixed and stabilized finite element methods in \cite{MR3103238} and \cite{MR3693332}, respectively. Recently, bilinear optimal control problems whose objective functionals do not depend on the controls have been analyzed in \cite{MR3878305}. In particular, the authors investigate sufficient second order conditions for bang--bang controls and derive error estimates in $L^1$-norms for a suitable finite element discretization. Regarding the analysis of \emph{a posteriori} error estimates for problem \eqref{def:cost_functional}--\eqref{def:box_constraints} with $s=1$, we refer the reader to \cite{MR2680928} and \cite{MR4450052}. We conclude this paragraph with a reference to the work \cite{MR2373479}, in which the authors provide upper bounds for discretization errors with respect to a cost functional and with respect to a given quantity of interest based on a posteriori error estimators.

To the best of our knowledge, the only work available in the literature that provides an advance on the \emph{parabolic} version of \eqref{def:cost_functional}--\eqref{def:box_constraints} is the very recent manuscript \cite{warma_2022}. In this paper, the authors provide an analysis for the continuous problem including the existence of solutions and first and second order optimality conditions; the second order sufficiency requires an additional assumption on the problem data (see also \cite[Remark 2.21]{MR2536007}). In contrast to this work and to the best of our knowledge, this exposition is the first to study approximation techniques for the optimal control problem \eqref{def:cost_functional}--\eqref{def:box_constraints}; discretization techniques for the problem where the control variable enters \eqref{def:state_eq} as a source term are available in \cite{MR3990191,MR4358465,MR4599045,fractional_sparse,MR4250571}. In the following, we list what we consider to be the most important contributions of our work:

\begin{itemize}
\item[(i)] 
\emph{Existence of optimal solutions}: We prove that the optimal control problem \eqref{def:cost_functional}--\eqref{def:box_constraints} admits at least one optimal solution; see Theorem \ref{thm:existence_opt_sol}.
\item[(ii)] 
\emph{Optimality conditions}: We derive first and necessary and sufficient second order optimality conditions with a minimal gap; see \S \ref{sec:1st_opt_condition} and \S \ref{sec:2nd_order_opt_cond}.
\item[(iii)] \emph{Regularity estimates}: We analyze regularity properties for optimal variables. In particular, we prove that $\bar u, \bar p, \bar q \in H^{s+\kappa -\epsilon}(\Omega)$; see Theorem \ref{thm:regul_control} for details.
\item[(iv)] \emph{Finite element discretizations}: We propose two different strategies: a semidiscrete scheme, where the control set is not discretized, and a fully discrete scheme, where such a set is discretized with piecewise constant functions.
\item[(v)] \emph{Convergence of discretizations}: We prove the existence of subsequences of discrete global solutions that converge to global solutions of \eqref{def:cost_functional}--\eqref{def:box_constraints}. We also prove that continuous strict local solutions can be approximated by local minima of discrete problems; see section \ref{sec:convergence_fullydiscrete} and section \ref{sec:convergence_semidiscrete}.
\item[(vi)] \emph{Error estimates}: For each discretization scheme, we provide estimates for the error that occurs when approximating optimal variables; see \S\ref{sec:error_estimates}. To obtain these results, we have assumed that solutions to suitable finite element discretizations of \eqref{def:state_eq} are uniformly bounded in $L^\infty(\Omega)$.
\end{itemize}

We structure our presentation as follows. In section \ref{sec:notation_and_prel}, we introduce the notation and collect some known facts that shall be useful for our purposes. In section \ref{sec:the_state_eq}, we give an overview of regularity and finite element approximation results for problem \eqref{def:state_eq}.  Section \ref{sec:the_ocp} is our first original contribution. We study a weak version of the control problem \eqref{def:cost_functional}--\eqref{def:box_constraints}. More precisely, we show the \emph{existence} of optimal solutions and derive \emph{first} and \emph{second} order optimality conditions. The study of numerical schemes begins in section \ref{sec:fem_control_problem}, where we develop two finite element schemes for \eqref{def:cost_functional}--\eqref{def:box_constraints} and analyze convergence properties. In section \ref{sec:error_estimates}, we derive error bounds. We conclude with section \ref{sec:numerical_exp}, where we provide several numerical examples to illustrate our theory.


\section{Notation and preliminary remarks}
\label{sec:notation_and_prel}

Let us establish the notation and recall some facts that will be useful later.
 
\subsection{Notation}
In the course of this work, let $d \geq 2$ and $\Omega \subset \mathbb{R}^d$ be an open and bounded domain with Lipschitz boundary $\partial \Omega$. We denote by $\Omega^{c}$ the complement of $\Omega$. If $\mathscr{X}$ and $\mathscr{Y}$ are Banach function spaces, we write $\mathscr{X}\hookrightarrow \mathscr{Y}$ to denote that $\mathscr{X}$ is continuously embedded in $\mathscr{Y}$. We denote by $\mathscr{X}'$ and $\| \cdot \|_{\mathscr{X}}$ the dual and the norm of $\mathscr{X}$, respectively. We denote by $\langle \cdot,\cdot \rangle_{\mathscr{X}',\mathscr{X}}$ the duality pairing between $\mathscr{X}'$ and $\mathscr{X}$ and simply write $\langle \cdot,\cdot \rangle$ if the spaces $\mathscr{X}'$ and $\mathscr{X}$ are clear from the context. The relation $\mathfrak{a} \lesssim \mathfrak{b}$ indicates that $\mathfrak{a} \leq C \mathfrak{b}$, with a positive constant $C$ that does not depend on $\mathfrak{a}$, $\mathfrak{b}$, or the discretization parameters, but may depend on $s$, $d$, and $\Omega$. The value of $C$ might change at each occurrence.


\subsection{Function spaces}\label{sec:function_spaces}
 
Fractional Sobolev spaces provide a natural framework for analyzing the state equation \eqref{def:state_eq}. A family of fractional Sobolev spaces can be defined based on the Fourier transform $\mathcal{F}$: For any $s \geq 0$, we define $H^s(\mathbb{R}^d)$, a Sobolev space of order $s$ over $\mathbb{R}^{d}$, by \cite[Definition 15.7]{MR2328004}, \cite[Chapter 1, Section 7]{MR0350178}
\begin{equation*}
H^{s}(\mathbb{R}^{d}) := \{ v \in L^2(\mathbb{R}^{d}) :  (1 + |\xi|^2)^{\frac{s}{2}} \mathcal{F}(v) \in  L^2(\mathbb{R}^{d})\},
\end{equation*}
endowed with the norm $\|v\|_{H^{s}(\mathbb{R}^{d})}:= \|(1 + |\xi|^2)^{\frac{s}{2}} \mathcal{F}(v)\|_{L^2(\mathbb{R}^{d})}$. We define $\tilde H^s(\Omega)$ as the closure of $C_0^{\infty}(\Omega)$ in $H^{s}(\mathbb{R}^{d})$ \cite[page 77]{MR1742312} and note that it can be equivalently characterized as the following space of zero-extension functions \cite[Theorem 3.29]{MR1742312}:
\begin{equation*}
 \tilde{H}^{s}(\Omega)=\{v|_{\Omega}^{} : v\in H^{s}(\mathbb{R}^{d}), \text{ supp }v\subset\bar{\Omega}\}.
\end{equation*}
We endow the space $\tilde{H}^{s}(\Omega)$ with the following inner product and norm \cite[page 75]{MR1742312}:
\begin{equation*}
\label{eq:inner_product}
( v, w )_{\tilde{H}^{s}(\Omega)} :=\int_{\mathbb{R}^{d}}\int_{\mathbb{R}^{d}}\frac{(v(x) - v(y))(w(x) - w(y))}{|x - y|^{d+2s}}\mathrm{d}x\mathrm{d}y,
\quad
| v |_{\tilde{H}^s(\Omega)}:= ( v, v )^{\frac{1}{2}}_{\tilde{H}^{s}(\Omega)}.
\end{equation*}
We also introduce $H^{-s}(\Omega)$ as the dual space of the fractional Sobolev space $\tilde{H}^{s}(\Omega)$.

Let us review a continuity of the product property in fractional Sobolev spaces.

\begin{lemma}[continuity of the product]
Let $\mathfrak{t} \in (0,\infty)$ and let $\varphi,\phi \in H^{\mathfrak{t}}(\mathbb{R}^{d})\cap L^{\infty}(\mathbb{R}^{d})$. Then, the product $\varphi\phi$ belongs to the space $H^{\mathfrak{t}}(\mathbb{R}^{d})\cap L^{\infty}(\mathbb{R}^{d})$.
\label{lemma:product_of_func}
\end{lemma}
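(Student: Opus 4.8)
The plan is to isolate the elementary bound on the product in $L^\infty(\mathbb{R}^d)$ and, for the Sobolev part, to establish the stronger multiplicative estimate
\begin{equation*}
\|\varphi\phi\|_{H^{\mathfrak{t}}(\mathbb{R}^d)}\lesssim \|\varphi\|_{L^\infty(\mathbb{R}^d)}\|\phi\|_{H^{\mathfrak{t}}(\mathbb{R}^d)}+\|\phi\|_{L^\infty(\mathbb{R}^d)}\|\varphi\|_{H^{\mathfrak{t}}(\mathbb{R}^d)},
\end{equation*}
which immediately yields the assertion. The $L^\infty$ bound is trivial since $\|\varphi\phi\|_{L^\infty(\mathbb{R}^d)}\leq \|\varphi\|_{L^\infty(\mathbb{R}^d)}\|\phi\|_{L^\infty(\mathbb{R}^d)}$, so the whole argument concerns the displayed inequality, which I would split according to the size of $\mathfrak{t}$.

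\textbf{Case $\mathfrak{t}\in(0,1)$.} I would use the equivalence of $\|\cdot\|_{H^{\mathfrak{t}}(\mathbb{R}^d)}$ with the sum of the $L^2(\mathbb{R}^d)$ norm and the Gagliardo seminorm $|\,\cdot\,|_{H^{\mathfrak{t}}(\mathbb{R}^d)}$ (the double-integral seminorm of the type appearing in \S\ref{sec:function_spaces}, now taken over $\mathbb{R}^d$). The $L^2$ contribution is bounded by $\|\varphi\|_{L^\infty(\mathbb{R}^d)}\|\phi\|_{L^2(\mathbb{R}^d)}$. For the seminorm I insert, under the double integral, the identity $(\varphi\phi)(x)-(\varphi\phi)(y)=\varphi(x)\bigl(\phi(x)-\phi(y)\bigr)+\phi(y)\bigl(\varphi(x)-\varphi(y)\bigr)$, apply the triangle inequality, and factor out the $L^\infty$ norms, obtaining $|\varphi\phi|_{H^{\mathfrak{t}}(\mathbb{R}^d)}\lesssim \|\varphi\|_{L^\infty(\mathbb{R}^d)}|\phi|_{H^{\mathfrak{t}}(\mathbb{R}^d)}+\|\phi\|_{L^\infty(\mathbb{R}^d)}|\varphi|_{H^{\mathfrak{t}}(\mathbb{R}^d)}$.

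\textbf{Case $\mathfrak{t}\geq 1$.} Write $\mathfrak{t}=m+\sigma$ with $m\in\mathbb{N}$ and $\sigma\in[0,1)$, and use the standard norm equivalence $\|w\|_{H^{\mathfrak{t}}(\mathbb{R}^d)}^2 \simeq \sum_{|\alpha|\leq m}\|D^{\alpha}w\|_{L^2(\mathbb{R}^d)}^2+\sum_{|\alpha|=m}|D^{\alpha}w|_{H^{\sigma}(\mathbb{R}^d)}^2$, the last sum being void when $\sigma=0$. By the Leibniz rule $D^{\alpha}(\varphi\phi)=\sum_{\beta\leq\alpha}\binom{\alpha}{\beta}D^{\beta}\varphi\,D^{\alpha-\beta}\phi$, so it suffices to control each product $D^{\beta}\varphi\,D^{\alpha-\beta}\phi$ in $L^2(\mathbb{R}^d)$ for $|\alpha|\leq m$ and, when $\sigma>0$, in $H^{\sigma}(\mathbb{R}^d)$ for $|\alpha|=m$. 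For the $L^2$ estimates I would invoke the Gagliardo--Nirenberg inequality $\|D^{j}g\|_{L^{2m/j}(\mathbb{R}^d)}\lesssim \|g\|_{L^{\infty}(\mathbb{R}^d)}^{1-j/m}\|g\|_{H^{m}(\mathbb{R}^d)}^{j/m}$ for $0\leq j\leq m$ (read as $L^{\infty}$ when $j=0$), combine it through Hölder's inequality with the exponents $2m/|\beta|$ and $2m/(m-|\beta|)$ (whose reciprocals add to $1/2$), and then absorb the resulting products of fractional powers of $\|\varphi\|_{L^\infty}$, $\|\varphi\|_{H^m}$, $\|\phi\|_{L^\infty}$, $\|\phi\|_{H^m}$ via Young's inequality into $\|\varphi\|_{L^\infty(\mathbb{R}^d)}\|\phi\|_{H^{m}(\mathbb{R}^d)}+\|\phi\|_{L^\infty(\mathbb{R}^d)}\|\varphi\|_{H^{m}(\mathbb{R}^d)}$. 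For the fractional seminorms $|D^{\beta}\varphi\,D^{\alpha-\beta}\phi|_{H^{\sigma}(\mathbb{R}^d)}$ (when $\sigma>0$) I would apply the fractional Leibniz (Kato--Ponce) inequality to put the $\sigma$ derivatives on one factor and an $L^p$ norm on the other, and again close with the same Gagliardo--Nirenberg bounds and Young's inequality.

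I expect the non-integral regime $\mathfrak{t}\geq 1$, and specifically the estimate of the fractional seminorms of the Leibniz cross terms, to be the main obstacle: a naive induction on $\lceil\mathfrak{t}\rceil$ (differentiate once and reapply the case $\mathfrak{t}<1$) breaks down because factors such as $\nabla\varphi$ need not lie in $L^{\infty}(\mathbb{R}^d)$, which forces an interpolation/fractional-Leibniz argument, and the delicate point is the choice of Hölder exponents in the Kato--Ponce step together with avoiding the borderline cases of Gagliardo--Nirenberg. An alternative that bypasses this bookkeeping is a Littlewood--Paley/paraproduct decomposition $\varphi\phi=T_{\varphi}\phi+T_{\phi}\varphi+R(\varphi,\phi)$ with each paraproduct estimated separately, at the cost of heavier harmonic-analysis machinery; since only the displayed multiplicative estimate is needed later, either route suffices.
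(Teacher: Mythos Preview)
Your argument is essentially correct and establishes the quantitative bound that the paper actually uses downstream (see the proof of Theorem~\ref{thm:regul_control}). The paper, however, does not work any of this out: its proof is a single citation of the Runst--Sickel lemma \cite[Lemma~4.1]{MR1877265} (equivalently \cite[Section~5.3.7]{MR1419319}), invoked with $p_1=p_2=p=q=2$ and $r_1=r_2=\infty$, together with the identification of the Triebel--Lizorkin space $\tilde{\mathcal{F}}^{\mathfrak{t}}_{2,2}$ with $H^{\mathfrak{t}}(\mathbb{R}^d)$. In effect the paper outsources precisely the Littlewood--Paley/paraproduct route you mention as an alternative.

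What each approach buys: your case split is self-contained and, for $\mathfrak{t}\in(0,1)$, truly elementary---the Gagliardo identity $(\varphi\phi)(x)-(\varphi\phi)(y)=\varphi(x)(\phi(x)-\phi(y))+\phi(y)(\varphi(x)-\varphi(y))$ does all the work, and this is in fact the only regime the present paper ever needs (since $s+\kappa-\varepsilon<3/2$ but all applications in \S\ref{sec:the_ocp}--\S\ref{sec:error_estimates} stay below $1$ anyway for the product arguments). For $\mathfrak{t}\geq1$ your Gagliardo--Nirenberg/Kato--Ponce bookkeeping works but, as you correctly flag, the cross-term fractional seminorms are delicate; the Runst--Sickel citation covers all $\mathfrak{t}>0$ uniformly at the cost of importing the Triebel--Lizorkin machinery. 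Either route yields the same multiplicative estimate, so for the purposes of this paper the choice is purely a matter of taste in exposition.
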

\begin{proof}
The result follows from a direct application of the Runst-Sickel lemma \cite[Lemma 4.1]{MR1877265} (\cite[Section 5.3.7]{MR1419319}) with $s = \mathfrak{t}$, $p_{1} = p_{2} = p = q = 2$, and $r_{1} = r_{2} = \infty$. We note that, as stated in \cite[page 390]{MR1877265}, the Triebel-Lizorkin space $\tilde{\mathcal{F}}_{2,2}^{\mathfrak{t}}$ coincides with
$H^\mathfrak{t}(\mathbb{R}^{d})$; see also \cite[Chapter 2, Section 2.3.5]{MR781540}.
\end{proof}

We conclude this section with the following Sobolev embedding results.

\begin{lemma}[embedding results]\label{lemma:embedding_result}
Let $s\in(0,1)$. If $\mathfrak{r}\in [1,2d/(d - 2s)]$, then $H^{s}(\Omega)\hookrightarrow L^{\mathfrak{r}}(\Omega)$. If $\mathfrak{r}\in [1,2d/(d - 2s))$, then $H^{s}(\Omega)\hookrightarrow L^{\mathfrak{r}}(\Omega)$ is compact.
\end{lemma}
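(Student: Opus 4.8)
The plan is to reduce both assertions to the classical theory of fractional Sobolev spaces, exploiting that a bounded Lipschitz domain is precisely the setting in which $H^{s}(\Omega)$ (equivalently, the Gagliardo space or the restriction of $H^{s}(\mathbb{R}^{d})$, which coincide here) admits a bounded extension operator $E_{\Omega}:H^{s}(\Omega)\to H^{s}(\mathbb{R}^{d})$. First I would record the elementary observation that, since $s\in(0,1)$ and $d\geq 2$, we have $d-2s>0$, so the fractional critical exponent $2^{*}_{s}:=2d/(d-2s)$ is well defined and satisfies $2^{*}_{s}>2$.

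For the continuous embedding I would split the range $[1,2^{*}_{s}]$ into $[2,2^{*}_{s}]$ and $[1,2)$. On the first subinterval I would compose the bounded extension $E_{\Omega}$, the fractional Sobolev inequality $H^{s}(\mathbb{R}^{d})\hookrightarrow L^{\mathfrak{r}}(\mathbb{R}^{d})$ valid for $\mathfrak{r}\in[2,2^{*}_{s}]$ (which follows from the Gagliardo--Nirenberg--Sobolev inequality on $\mathbb{R}^{d}$ together with interpolation between $L^{2}$ and $L^{2^{*}_{s}}$; see \cite{Hitchhikers}), and the restriction to $\Omega$, to obtain $\|v\|_{L^{\mathfrak{r}}(\Omega)}\leq\|E_{\Omega}v\|_{L^{\mathfrak{r}}(\mathbb{R}^{d})}\lesssim\|E_{\Omega}v\|_{H^{s}(\mathbb{R}^{d})}\lesssim\|v\|_{H^{s}(\Omega)}$. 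On the second subinterval, since $\mathfrak{r}<2$ and $|\Omega|<\infty$, Hölder's inequality yields $L^{2}(\Omega)\hookrightarrow L^{\mathfrak{r}}(\Omega)$, so the case $\mathfrak{r}=2$ already treated closes the argument.

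For the compact embedding I would again split $[1,2^{*}_{s})$ into $[2,2^{*}_{s})$ and $[1,2)$. On $[2,2^{*}_{s})$ I would invoke the fractional Rellich--Kondrachov theorem: on a bounded extension domain, $H^{s}(\Omega)\hookrightarrow L^{\mathfrak{r}}(\Omega)$ is compact whenever $\mathfrak{r}$ is strictly below the critical exponent (see \cite{Hitchhikers}). On $[1,2)$ I would factor the embedding as the compact embedding $H^{s}(\Omega)\hookrightarrow L^{2}(\Omega)$ --- which is the previous case with $\mathfrak{r}=2<2^{*}_{s}$ --- followed by the continuous embedding $L^{2}(\Omega)\hookrightarrow L^{\mathfrak{r}}(\Omega)$, and use that the composition of a compact operator with a bounded one is compact.

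The only point that genuinely uses the Lipschitz regularity of $\partial\Omega$ --- and hence the place where one must be slightly careful --- is the existence of the bounded extension operator $E_{\Omega}$, together with the attendant validity of the Rellich--Kondrachov compactness on $\Omega$ (for merely bounded domains both may fail). Beyond quoting these two facts in the form stated in \cite{Hitchhikers}, I do not anticipate any real obstacle; the argument is otherwise a routine splitting plus the boundedness of $\Omega$.
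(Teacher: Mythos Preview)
Your proposal is correct and amounts to an explicit sketch of the same argument the paper simply cites: the continuous embedding is \cite[Theorem 7.34]{MR2424078} (proved via extension plus the fractional Sobolev inequality on $\mathbb{R}^{d}$, exactly as you outline), and the compactness is \cite[Corollary 7.2]{Hitchhikers} (the fractional Rellich--Kondrachov theorem on bounded extension domains). There is no substantive difference in approach; you have merely unpacked the content of the references.
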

\begin{proof}
A proof of $H^{s}(\Omega)\hookrightarrow L^{\mathfrak{r}}(\Omega)$ can be found in \cite[Theorem 7.34]{MR2424078}. The fact that the embedding is compact for $\mathfrak{r}<2d/(d - 2s)$ follows from \cite[Corollary 7.2]{Hitchhikers}.
\end{proof}


\subsection{The fractional Laplace operator}
\label{sec:the_frac_lap_operator}

For $s \in (0,1)$ and smooth functions $w:\mathbb{R}^{d} \to \mathbb{R}$, there are several equivalent definitions of $(-\Delta)^s$ in $\mathbb{R}^d$. In fact, $(-\Delta)^{s}$ can be naturally defined by means of the following pointwise formula:
\begin{equation}\label{eq:pointwise_formula}
(-\Delta)^{s}w(x):= C(d,s)\mathrm{p.v.}\int_{\mathbb{R}^{d}}\frac{w(x) - w(y)}{|x - y|^{d+2s}} \mathrm{d}y,\quad C(d,s):=\frac{2^{2s}s\Gamma(s+\frac{d}{2})}{\pi^{\frac{d}{2}}\Gamma(1-s)},
\end{equation}
where p.v.~stands for the \emph{Cauchy principal value} and $C(d,s)$ is a normalization constant. The constant $C(d,s)$ is introduced to ensure that the definition \eqref{eq:pointwise_formula} is equivalent to the following one via Fourier transform: $\mathcal{F}((-\Delta)^{s}w)(\xi)=|\xi|^{2s}\mathcal{F}(w)(\xi)$ for all $\xi \in \mathbb{R}^d$; see \cite[chapter 1, section 1]{MR0350027} for details. In addition to these two definitions, there are several other equivalent definitions of $(-\Delta)^{s}$ in $\mathbb{R}^{d}$ in the literature. For a discussion, we refer the reader to \cite{MR3613319}.

In bounded domains, for functions supported in $\bar{\Omega}$, we may use the integral representation \eqref{eq:pointwise_formula} to define $(-\Delta)^{s}$. This gives rise to the so-called \emph{restricted} or \emph{integral} fractional Laplacian, which, from now on, we shall simply refer to as the \emph{integral fractional Laplacian}. Note that we have materialized a zero Dirichlet condition by restricting the operator to acting only on functions that are zero outside $\Omega$.

To present suitable weak formulations for problems involving $(-\Delta)^{s}$, we define 
\begin{equation*}
\mathcal{A}:\tilde{H}^{s}(\Omega)\times\tilde{H}^{s}(\Omega) \to \mathbb{R},
\qquad
\mathcal{A}(v,w):=\tfrac{C(d,s)}{2}(v,w)_{\tilde{H}^s(\Omega)}.
\end{equation*}
We note that $\mathcal{A}$ is just a multiple of the inner product in $\tilde{H}^s(\Omega)$ introduced in section \ref{sec:function_spaces}; $\mathcal{A}$ is bilinear and bounded. We denote by $\|\cdot\|_s$ the norm induced by $\mathcal{A}$: 
\[
 \|v\|_{s}:=\sqrt{\mathcal{A}(v,v)}=\mathfrak{C}(d,s)|v|_{\tilde{H}^s(\Omega)},
 \qquad 
 \mathfrak{C}(d,s)= \sqrt{C(d,s)/2}.
\]


\section{The state equation}
\label{sec:the_state_eq}

In this section, we present a suitable weak formulation for \eqref{def:state_eq} and give a brief overview of results concerning the well-posedness of such a formulation, regularity estimates for its solution, and finite element approximations.


\subsection{Weak formulation}
\label{sec:weak_formulation_st_eq}

Let $s\in(0,1)$, let $\mathfrak{f}\in H^{-s}(\Omega)$ be a given forcing term, and let $\mathfrak{q}$ be an arbitrary element in $\mathbb{Q}_{ad}$. Under these conditions, we introduce the following weak formulation of the state equation \eqref{def:state_eq}: Find $\mathfrak{u} \in \tilde{H}^{s}(\Omega)$ such that
\begin{equation}\label{eq:weak_aux_eq}
\mathcal{A}(\mathfrak{u},v) + (\mathfrak{q}\mathfrak{u},v)_{L^2(\Omega)} = \langle \mathfrak{f},v \rangle \quad \forall v\in \tilde{H}^{s}(\Omega).
\end{equation}
We note that, since $\mathfrak{q} \in \mathbb{Q}_{ad} \subset L^{\infty}(\Omega)$ and $\mathfrak{u},v\in \tilde{H}^{s}(\Omega) \subset L^2(\Omega)$, all terms involved in \eqref{eq:weak_aux_eq} are well-defined. The well-posedness of problem \eqref{eq:weak_aux_eq} follows from the Lax-Milgram lemma. In particular, we have the \emph{stability bound} $\|\mathfrak{u}\|_{s} \lesssim \|\mathfrak{f}\|_{H^{-s}(\Omega)}$.


\subsection{Regularity estimates}
\label{sec:reg_st_eq}

We present the following regularity result.

\begin{theorem}[Sobolev regularity]\label{thm:sobolev_reg}
Let $s\in(0,1)$, $\mathfrak{f}\in L^2(\Omega)$, and $\mathfrak{q} \in \mathbb{Q}_{ad}$. Then, the solution $\mathfrak{u}$ to problem \eqref{eq:weak_aux_eq} belongs to $H^{s + \kappa -\varepsilon}(\Omega)$ for all $0 < \varepsilon  < s$, where $\kappa = \frac{1}{2}$ for $\frac{1}{2} < s < 1$ and $\kappa = s - \varepsilon$ for $0 < s \leq \frac{1}{2}$. In addition, we have the bound
\begin{equation*}\label{eq:estimate_frac_Lap}
\|\mathfrak{u}\|_{H^{s + \kappa -\varepsilon}(\Omega)} 
\leq
C\varepsilon^{-\nu}\|\mathfrak{f}\|_{L^2(\Omega)} \quad \forall \varepsilon \in (0,s),
\end{equation*}
where $\nu = \frac{1}{2}$ for $\frac{1}{2} < s < 1$ and $\nu = \frac{1}{2} + \nu_{0}$ for $0 < s \leq \frac{1}{2}$. Here, $\nu_0$ and $C$ denote positive constants that depend on $\Omega$ and $d$ and $\Omega$, $d$, $s$, and $\mathfrak{q}$, respectively.
\end{theorem}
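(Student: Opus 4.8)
The plan is to bootstrap from the already-established stability bound $\|\mathfrak{u}\|_s \lesssim \|\mathfrak{f}\|_{H^{-s}(\Omega)}$ together with known regularity theory for the \emph{pure} integral fractional Laplacian on Lipschitz domains. The key observation is that, since $\mathfrak{q} \in \mathbb{Q}_{ad} \subset L^\infty(\Omega)$, equation \eqref{eq:weak_aux_eq} can be rewritten as $(-\Delta)^s \mathfrak{u} = \mathfrak{f} - \mathfrak{q}\mathfrak{u} =: \mathfrak{g}$ in $\Omega$ with $\mathfrak{u} = 0$ in $\Omega^c$, where $\mathfrak{g} \in L^2(\Omega)$ because $\mathfrak{f} \in L^2(\Omega)$, $\mathfrak{q} \in L^\infty(\Omega)$, and $\mathfrak{u} \in \tilde H^s(\Omega) \hookrightarrow L^2(\Omega)$ by Lemma \ref{lemma:embedding_result}. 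Moreover, one has the quantitative bound $\|\mathfrak{g}\|_{L^2(\Omega)} \le \|\mathfrak{f}\|_{L^2(\Omega)} + \|\mathfrak{q}\|_{L^\infty(\Omega)}\|\mathfrak{u}\|_{L^2(\Omega)} \lesssim (1 + \|\mathfrak{q}\|_{L^\infty(\Omega)})\|\mathfrak{f}\|_{L^2(\Omega)}$, using the stability estimate and the continuous embedding $\tilde H^s(\Omega) \hookrightarrow L^2(\Omega)$. This reduces everything to a regularity statement for the linear Dirichlet problem for $(-\Delta)^s$ with an $L^2$ right-hand side.

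Next I would invoke the sharp Sobolev regularity result for the homogeneous Dirichlet problem $(-\Delta)^s v = \mathfrak{g}$ on bounded Lipschitz domains: for $\mathfrak{g} \in L^2(\Omega)$ the solution satisfies $v \in H^{s+\kappa-\varepsilon}(\Omega)$ for every $\varepsilon \in (0,s)$ with $\kappa = \tfrac12$ when $s > \tfrac12$ and $\kappa = s - \varepsilon$ when $s \le \tfrac12$, together with an estimate of the form $\|v\|_{H^{s+\kappa-\varepsilon}(\Omega)} \le C \varepsilon^{-\nu}\|\mathfrak{g}\|_{L^2(\Omega)}$, where the blow-up exponent $\nu$ in $\varepsilon$ is as stated. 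This is precisely the content of the regularity theory developed for the integral fractional Laplacian on Lipschitz domains (e.g., via the approach of Borthagaray--Nochetto and related works, or the results on Besov regularity with the explicit dependence of the constant on $\varepsilon$); the $\varepsilon^{-\nu}$ dependence is exactly what those references track, and the extra factor $\nu_0$ for $s \le \tfrac12$ reflects that one must also pay an $\varepsilon$-dependent price when interpolating to recover the interior-plus-boundary regularity near the threshold $2s$. Applying this with $v = \mathfrak{u}$ and $\mathfrak{g} = \mathfrak{f} - \mathfrak{q}\mathfrak{u}$ and chaining the bound for $\|\mathfrak{g}\|_{L^2(\Omega)}$ above yields $\|\mathfrak{u}\|_{H^{s+\kappa-\varepsilon}(\Omega)} \le C \varepsilon^{-\nu}\|\mathfrak{f}\|_{L^2(\Omega)}$ with the constant $C$ now absorbing the dependence on $\Omega$, $d$, $s$, and $\mathfrak{q}$ (through $\|\mathfrak{q}\|_{L^\infty(\Omega)} \le \max\{|a|,|b|\}$), exactly matching the claimed statement.

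The main obstacle, and the step that deserves the most care, is ensuring that the cited regularity result for the linear problem genuinely holds on general bounded \emph{Lipschitz} domains (as opposed to smooth or convex domains) and that it comes with the explicit $\varepsilon^{-\nu}$ tracking of the constant — many references state $H^{s+1/2-\varepsilon}$ regularity only qualitatively or only for $C^{1,1}$ domains. I would therefore cite the appropriate Lipschitz-domain results carefully, and if the sharp constant dependence is only available in interpolation form, reconstruct the $\varepsilon^{-\nu}$ bound by interpolating between the energy estimate $\|\mathfrak{u}\|_{\tilde H^s(\Omega)} \lesssim \|\mathfrak{g}\|_{L^2(\Omega)}$ and a higher-order Besov estimate, quantifying how the interpolation constant degenerates as the target smoothness approaches the endpoint $s+\kappa$; this is where the split between $\nu = \tfrac12$ and $\nu = \tfrac12 + \nu_0$ originates. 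A secondary, purely bookkeeping point is the fact that $\mathfrak{g}$ depends on $\mathfrak{u}$ itself, but since this dependence is only through the already-controlled $L^2$ norm there is no circularity and no fixed-point argument is needed.
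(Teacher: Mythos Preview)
Your proposal is correct and follows essentially the same route as the paper: rewrite the problem as $(-\Delta)^s\mathfrak{u} = \mathfrak{f} - \mathfrak{q}\mathfrak{u} \in L^2(\Omega)$ and invoke the Lipschitz-domain regularity theory for the pure fractional Laplacian (the paper cites \cite[Theorem~2.1 and inequality~(2.6)]{MR4283703} and \cite[Remark~6]{MR4530901}, which give exactly the $H^{s+\kappa-\varepsilon}$ regularity with the stated $\varepsilon^{-\nu}$ dependence). Your additional discussion of how the constant absorbs the $\mathfrak{q}$-dependence via $\|\mathfrak{q}\|_{L^\infty(\Omega)}$ and your caution about the Lipschitz hypothesis are well placed, but no extra interpolation argument is needed once those references are in hand.
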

\begin{proof}
Since $\Omega$ is Lipschitz and $\mathfrak{f} - \mathfrak{q}\mathfrak{u} \in L^2(\Omega)$, the proof follows immediately as an application of \cite[Theorem 2.1 and inequality (2.6)]{MR4283703}; see also \cite[Remark 6]{MR4530901}.
\end{proof}

If we assume a higher integrability assumption for $\mathfrak{f}$, it is possible to obtain an $L^{\infty}(\Omega)$-regularity result for the solution $\mathfrak{u}$ of problem \eqref{eq:weak_aux_eq}.

\begin{theorem}[$L^{\infty}(\Omega)$--regularity]\label{thm:L_infty_reg}
Let $s\in(0,1)$ and $r > d/2s$. If $\mathfrak{f}\in L^{r}(\Omega)$, then the solution $\mathfrak{u}$ to problem \eqref{eq:weak_aux_eq} belongs to $\tilde{H}^{s}(\Omega)\cap L^{\infty}(\Omega)$ and
\begin{equation*}\label{eq:estimate_L_infty}
\|\mathfrak{u}\|_{s} + \|\mathfrak{u}\|_{L^{\infty}(\Omega)} \lesssim \|\mathfrak{f}\|_{L^r(\Omega)},
\end{equation*}
with a hidden constant that is independent of $\mathfrak{u}$ and the problem data.
\end{theorem}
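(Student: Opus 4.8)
The plan is to combine the $L^\infty$ bound for the fractional Dirichlet problem (available in the literature for data in $L^r(\Omega)$ with $r > d/2s$) with an absorption argument that handles the reaction term $\mathfrak{q}\mathfrak{u}$. First I would note that $\mathfrak{u}\in\tilde H^s(\Omega)$ is already guaranteed by the weak formulation in section \ref{sec:weak_formulation_st_eq}, since $\mathfrak{f}\in L^r(\Omega)\hookrightarrow H^{-s}(\Omega)$ for $r$ in the stated range (using $L^r(\Omega)\hookrightarrow L^2(\Omega)\hookrightarrow H^{-s}(\Omega)$ on the bounded domain $\Omega$), and the stability bound $\|\mathfrak{u}\|_s\lesssim\|\mathfrak{f}\|_{H^{-s}(\Omega)}\lesssim\|\mathfrak{f}\|_{L^r(\Omega)}$ holds. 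So the real content is the $L^\infty$ estimate.

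The key step is to rewrite \eqref{eq:weak_aux_eq} as the pure Dirichlet problem $(-\Delta)^s\mathfrak{u} = \mathfrak{g}$ in $\Omega$, $\mathfrak{u}=0$ in $\Omega^c$, with right-hand side $\mathfrak{g}:=\mathfrak{f}-\mathfrak{q}\mathfrak{u}$. Then I would invoke the known $L^\infty$ regularity for the integral fractional Laplacian on Lipschitz domains — e.g. the result that for $h\in L^r(\Omega)$, $r>d/2s$, the solution $w$ of $(-\Delta)^s w = h$ satisfies $\|w\|_{L^\infty(\Omega)}\lesssim\|h\|_{L^r(\Omega)}$ (this follows from Green-function/heat-kernel estimates, or can be cited from the same sources used for Theorem \ref{thm:sobolev_reg}, cf. \cite{MR4283703,MR4530901}, or from classical De Giorgi–Nash–Moser-type arguments for nonlocal operators). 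Applying this to $w=\mathfrak{u}$, $h=\mathfrak{g}$, I get $\|\mathfrak{u}\|_{L^\infty(\Omega)}\lesssim\|\mathfrak{f}-\mathfrak{q}\mathfrak{u}\|_{L^r(\Omega)}\le\|\mathfrak{f}\|_{L^r(\Omega)}+\|\mathfrak{q}\|_{L^\infty(\Omega)}\|\mathfrak{u}\|_{L^r(\Omega)}$. The last term is the obstruction: a naive bound by $\|\mathfrak{u}\|_{L^\infty(\Omega)}$ would need the hidden constant times $\|\mathfrak{q}\|_{L^\infty(\Omega)}$ to be less than one, which is not assumed.

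To close the argument without a smallness hypothesis, I would bootstrap through Lebesgue exponents rather than jumping to $L^\infty$ directly. Starting from $\mathfrak{u}\in\tilde H^s(\Omega)\hookrightarrow L^{2d/(d-2s)}(\Omega)$ (Lemma \ref{lemma:embedding_result}), the term $\mathfrak{q}\mathfrak{u}\in L^{2d/(d-2s)}(\Omega)$, so $\mathfrak{g}\in L^{\min\{r,\,2d/(d-2s)\}}(\Omega)$; applying the $L^p\to W^{2s,p}$ (or $L^p\to L^{p^*}$ with the fractional Sobolev gain) regularity for $(-\Delta)^s$ raises the integrability of $\mathfrak{u}$ by a fixed amount at each iteration, and since $0<a\le\mathfrak{q}\le b$ keeps $\|\mathfrak{q}\mathfrak{u}\|_{L^p}\le b\|\mathfrak{u}\|_{L^p}$ controlled at every stage, after finitely many steps I reach $\mathfrak{g}\in L^r(\Omega)$, at which point the $L^\infty$ estimate above applies and terminates the iteration. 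Tracking constants through the finitely many steps yields the claimed bound $\|\mathfrak{u}\|_s+\|\mathfrak{u}\|_{L^\infty(\Omega)}\lesssim\|\mathfrak{f}\|_{L^r(\Omega)}$ with a constant independent of $\mathfrak{u}$ (it depends only on $s,d,\Omega$ and the fixed bounds $a,b$). The main obstacle is precisely this absorption/bootstrap bookkeeping: one must verify that the nonlocal regularity shift is large enough that the induction reaches $r$ in finitely many steps, and that the reaction coefficient — bounded but not small — does not break the iteration, which it does not because it only multiplies by the fixed constant $b$ at each stage.
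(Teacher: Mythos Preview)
The paper does not supply its own argument here; the entire proof is the single citation ``See \cite[Theorem~3.1]{MR4358465}.'' Your bootstrap outline is a legitimate way to fill in the details, and it does close: the Green-function bound $0\le G_\Omega(x,y)\lesssim|x-y|^{2s-d}$, valid on any bounded open set by comparison with the full-space Riesz kernel, combined with the Hardy--Littlewood--Sobolev inequality, gives the $L^{p}\to L^{p^*}$ gain you invoke (the $W^{2s,p}$ phrasing is not quite right on a Lipschitz domain because of boundary effects, but the Lebesgue-exponent improvement via HLS is), and the iteration terminates since each pass lowers $1/p_k$ by the fixed amount $2s/d$.

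That said, the bootstrap is more work than needed: the sign condition $\mathfrak{q}\ge a>0$ permits a one-step comparison argument that sidesteps the absorption issue you flag as the main obstacle. If $w\in\tilde H^s(\Omega)$ solves $(-\Delta)^s w=|\mathfrak{f}|$ with $w=0$ in $\Omega^c$, then $w\ge0$ and $\|w\|_{L^\infty(\Omega)}\lesssim\|\mathfrak{f}\|_{L^r(\Omega)}$ by the pure Dirichlet estimate. Since $(-\Delta)^s(w-\mathfrak{u})+\mathfrak{q}(w-\mathfrak{u})=|\mathfrak{f}|-\mathfrak{f}+\mathfrak{q}w\ge0$, the weak maximum principle for $(-\Delta)^s+\mathfrak{q}$ (test with $(w-\mathfrak{u})_-$ and use $\mathcal{A}(v_+,v_-)\le0$) gives $\mathfrak{u}\le w$; the mirror inequality yields $|\mathfrak{u}|\le w$ and hence the $L^\infty$ bound directly, with no iteration or tracking of constants through multiple steps.
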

\begin{proof}
See \cite[Theorem 3.1]{MR4358465}.
\end{proof}


\subsection{Finite element approximation}\label{sec:fem}

Under the additional assumption that $\Omega$ is a Lipschitz polytope, we now introduce a finite element-like scheme to approximate the solution to \eqref{eq:weak_aux_eq}. For this purpose, we assume that we have at hand a collection of conforming and quasi-uniform meshes $\{ \mathscr{T}_h\}_{h>0}$ of $\bar{\Omega}$ made of closed simplices. We denote by $h :=\max\{ h_T: T \in \mathscr{T}_h \}$ the mesh-size of $\T_h = \{ T \}$, where $h_T := \text{diam}(T)$.

Given a mesh $\mathscr{T}_{h}$, we introduce the finite element space 
\begin{equation}\label{def:piecewise_linear_set}
\mathbb{V}_{h}:=\{v_{h}\in C(\bar{\Omega}): v_{h}|_T\in \mathbb{P}_{1}(T) \ \forall T\in \T_{h}, v_h = 0 \text{ on } \partial\Omega\}.
\end{equation}
The following comments are now appropriate. First, for every $s \in (0,1)$, $\mathbb{V}_{h} \subset\tilde{H}^s(\Omega)$. Second, we enforce a classical homogeneous Dirichlet boundary condition at $\partial \Omega$. Note that discrete functions are trivially extended by zero to $\Omega^c$.

With $\mathbb{V}_h$ at hand, we introduce an approximation of the solution to \eqref{eq:weak_aux_eq}: 
\begin{equation}\label{eq:weak_aux_eq_discrete}
\mathfrak{u}_{h} \in \mathbb{V}_{h}: 
\quad
\mathcal{A}(\mathfrak{u}_{h},v_{h}) + (\mathfrak{q}\mathfrak{u}_{h},v_{h})_{L^2(\Omega)} = \langle \mathfrak{f},v_{h} \rangle \quad \forall v_h \in \mathbb{V}_{h}.
\end{equation}
The existence and uniqueness of $\mathfrak{u}_{h} \in \mathbb{V}_{h}$ follows from the Lax-Milgram lemma. In particular, for every $h>0$, we have the \emph{discrete  stability bound} $\|\mathfrak{u}_{h}\|_{s} \lesssim \|\mathfrak{f}\|_{H^{-s}(\Omega)}$.

We present the following a priori error estimates.

\begin{theorem}[error estimates]\label{thm:error_estimates_frac_Lap}
Let $s\in(0,1)$, $\mathfrak{f}\in L^{2}(\Omega)$, and $\mathfrak{q}\in\mathbb{Q}_{ad}$. Let $\Omega$ be a Lipschitz polytope. Let $\mathfrak{u} \in \tilde{H}^{s}(\Omega)$ be the solution to \eqref{eq:weak_aux_eq} and let $\mathfrak{u}_{h} \in \mathbb{V}_{h}$ be its finite element approximation obtained as in \eqref{eq:weak_aux_eq_discrete}. Then, we have the error bounds
\begin{align}
\label{eq:error_in_norm_s}
\|\mathfrak{u} - \mathfrak{u}_{h}\|_{s} 
& \lesssim
h^{\gamma}|\log h|^{\varphi}\|\mathfrak{f}\|_{L^2(\Omega)},
\quad
\gamma = \min\{s,\tfrac{1}{2}\},
\\
\label{eq:error_in_norm_L2}
\|\mathfrak{u} - \mathfrak{u}_{h}\|_{L^2(\Omega)} 
& \lesssim
h^{2\gamma}|\log h|^{2\varphi}\|\mathfrak{f}\|_{L^2(\Omega)},
\end{align}
where $\varphi = \nu$ if $s\neq \frac{1}{2}$, $\varphi = 1 +\nu$ if $s=\frac{1}{2}$, and $\nu \geq \frac{1}{2}$ is the constant in Theorem \ref{thm:sobolev_reg}. 
\end{theorem}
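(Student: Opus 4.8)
The plan is to reduce the estimate to the well-understood finite element error analysis for the linear integral fractional Laplacian, the zeroth-order coefficient $\mathfrak{q}$ being a harmless lower-order perturbation, and then to feed in the Sobolev regularity of Theorem \ref{thm:sobolev_reg} through fractional quasi-interpolation on quasi-uniform meshes, optimizing the regularity parameter as a function of $h$. First, set $a(v,w):=\mathcal{A}(v,w)+(\mathfrak{q}v,w)_{L^2(\Omega)}$. Since $\mathfrak{q}\geq a>0$ a.e.\ in $\Omega$ we have $a(v,v)\geq\|v\|_s^2$, and since $\|\mathfrak{q}\|_{L^\infty(\Omega)}\leq b$, together with the Poincar\'e-type inequality $\|v\|_{L^2(\Omega)}\lesssim\|v\|_s$, the form $a$ is also bounded on $\tilde H^s(\Omega)$, with coercivity and continuity constants depending only on $s$, $d$, $\Omega$, and $b$; in particular they are uniform over $\mathbb{Q}_{ad}$. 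Subtracting \eqref{eq:weak_aux_eq_discrete} from \eqref{eq:weak_aux_eq} tested against $v_h\in\mathbb{V}_h\subset\tilde H^s(\Omega)$ gives the Galerkin orthogonality $a(\mathfrak{u}-\mathfrak{u}_h,v_h)=0$, and C\'ea's lemma yields $\|\mathfrak{u}-\mathfrak{u}_h\|_s\lesssim\inf_{v_h\in\mathbb{V}_h}\|\mathfrak{u}-v_h\|_s$, with a hidden constant independent of $h$ and of $\mathfrak{q}\in\mathbb{Q}_{ad}$.

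For the energy estimate \eqref{eq:error_in_norm_s}, note that $\mathfrak{f}\in L^2(\Omega)$ and $\Omega$ is a Lipschitz polytope, so Theorem \ref{thm:sobolev_reg} gives $\mathfrak{u}\in\tilde H^s(\Omega)\cap H^{s+\kappa-\varepsilon}(\Omega)$ with $\|\mathfrak{u}\|_{H^{s+\kappa-\varepsilon}(\Omega)}\leq C\varepsilon^{-\nu}\|\mathfrak{f}\|_{L^2(\Omega)}$ for every $\varepsilon\in(0,s)$. Choosing $v_h=\Pi_h\mathfrak{u}$ with $\Pi_h$ a Scott--Zhang-type quasi-interpolation operator onto $\mathbb{V}_h$ --- which preserves the homogeneous exterior condition and is locally stable and approximating in fractional Sobolev norms on quasi-uniform meshes --- one has $\|\mathfrak{u}-\Pi_h\mathfrak{u}\|_{\tilde H^s(\Omega)}\lesssim h^{\kappa-\varepsilon}\|\mathfrak{u}\|_{H^{s+\kappa-\varepsilon}(\Omega)}$, with an extra factor $|\log h|$ in the borderline case $s=\tfrac12$ coming from the interpolation estimate for the endpoint space $\tilde H^{1/2}(\Omega)=H^{1/2}_{00}(\Omega)$. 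Combining with the quasi-optimality above and balancing $\varepsilon\asymp 1/|\log h|$, so that $h^{-\varepsilon}=e^{\varepsilon|\log h|}=\mathcal{O}(1)$ while $\varepsilon^{-\nu}=|\log h|^{\nu}$, turns $h^{\kappa-\varepsilon}$ into $h^{\gamma}$ with $\gamma=\min\{s,\tfrac12\}$, at the cost of the logarithmic factor $|\log h|^{\varphi}$; this establishes \eqref{eq:error_in_norm_s}.

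For the $L^2$ bound \eqref{eq:error_in_norm_L2} I would use an Aubin--Nitsche duality argument. Since $a$ is symmetric, the adjoint problem with datum $\mathfrak{u}-\mathfrak{u}_h\in L^2(\Omega)$ is governed by $a$ itself: let $\psi\in\tilde H^s(\Omega)$ solve $a(v,\psi)=(\mathfrak{u}-\mathfrak{u}_h,v)_{L^2(\Omega)}$ for all $v\in\tilde H^s(\Omega)$. Theorem \ref{thm:sobolev_reg} applies once more and gives $\psi\in H^{s+\kappa-\varepsilon}(\Omega)$ with $\|\psi\|_{H^{s+\kappa-\varepsilon}(\Omega)}\leq C\varepsilon^{-\nu}\|\mathfrak{u}-\mathfrak{u}_h\|_{L^2(\Omega)}$. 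Testing with $v=\mathfrak{u}-\mathfrak{u}_h$, inserting $\Pi_h\psi$ by Galerkin orthogonality, and using the boundedness of $a$,
\[
\|\mathfrak{u}-\mathfrak{u}_h\|_{L^2(\Omega)}^2=a(\mathfrak{u}-\mathfrak{u}_h,\psi-\Pi_h\psi)\lesssim\|\mathfrak{u}-\mathfrak{u}_h\|_s\,\|\psi-\Pi_h\psi\|_s .
\]
Bounding $\|\psi-\Pi_h\psi\|_s$ exactly as in the previous step gives a factor $h^{\gamma}|\log h|^{\varphi}\|\mathfrak{u}-\mathfrak{u}_h\|_{L^2(\Omega)}$; cancelling one power of $\|\mathfrak{u}-\mathfrak{u}_h\|_{L^2(\Omega)}$ and inserting \eqref{eq:error_in_norm_s} yields \eqref{eq:error_in_norm_L2}.

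The main obstacle is the interpolation step: one needs a precise fractional quasi-interpolation error estimate on quasi-uniform meshes with explicit control of the constant as the smoothness index degenerates to $s$, together with the correct bookkeeping of the logarithmic loss at $s=\tfrac12$. Once the $\varepsilon$-dependence of both the regularity bound ($\varepsilon^{-\nu}$) and the interpolation estimate is made explicit, the choice $\varepsilon\asymp 1/|\log h|$ and the remaining ingredients (coercivity and boundedness of $a$, C\'ea's lemma, Aubin--Nitsche duality) are entirely standard; in fact, most of the work can be outsourced to the existing a priori error analysis for the integral fractional Laplacian on quasi-uniform meshes, which carries over verbatim to $a$ because the zeroth-order term $(\mathfrak{q}\,\cdot\,,\cdot)_{L^2(\Omega)}$ is a bounded, uniformly-over-$\mathbb{Q}_{ad}$ lower-order perturbation.
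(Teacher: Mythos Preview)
Your proposal is correct and follows essentially the same route as the paper: the paper's proof simply invokes the regularity estimates of Theorem~\ref{thm:sobolev_reg} and defers the remaining details (C\'ea's lemma via coercivity/boundedness of the perturbed form, fractional quasi-interpolation on quasi-uniform meshes with the $\varepsilon\asymp 1/|\log h|$ balancing, and Aubin--Nitsche duality for the $L^2$ bound) to \cite[Theorem~3.5 and Proposition~3.8]{MR4283703}, which is precisely the argument you have written out. Your explicit observation that the zeroth-order term $(\mathfrak{q}\,\cdot\,,\cdot)_{L^2(\Omega)}$ is a uniformly-over-$\mathbb{Q}_{ad}$ bounded perturbation is the only additional ingredient needed to carry those results over verbatim.
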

\begin{proof}
The proof follows from the regularity estimates for $\mathfrak{u}$ given in Theorem \ref{thm:sobolev_reg} and the arguments developed in the proofs of \cite[Theorem 3.5]{MR4283703} and \cite[Proposition 3.8]{MR4283703}. For the sake of brevity, we omit the details.
\end{proof}


\section{The optimal control problem}\label{sec:the_ocp}

In this section, we analyze the following weak formulation of the optimal control problem \eqref{def:cost_functional}--\eqref{def:box_constraints}: Find
\begin{equation}\label{eq:weak_min_problem}
\min\{ J(u,q): (u,q) \in  \tilde{H}^{s}(\Omega ) \times  \mathbb{Q}_{ad}\} 
\end{equation}
subject to the \emph{fractional} and \emph{elliptic} state equation
\begin{equation}\label{eq:weak_st_eq}
\mathcal{A}(u,v) + (qu,v)_{L^2(\Omega)} = (f,v)_{L^2(\Omega)}  \quad \forall v\in \tilde{H}^{s}(\Omega),
\end{equation}
where $\mathbb{Q}_{ad}$ is defined in \eqref{def:box_constraints} and $f \in L^2(\Omega)$.


\subsection{Existence of optimal controls}\label{sec:existence_of_opt_cont}
The existence of at least one optimal solution follows from the direct method of calculus of variations \cite[Chapter 1]{MR1201152}. 

\begin{theorem}[existence of an optimal solution]\label{thm:existence_opt_sol}
The optimal control problem \eqref{eq:weak_min_problem}--\eqref{eq:weak_st_eq} admits at least one global solution $(\bar{u},\bar{q})\in \tilde{H}^{s}(\Omega)\times \mathbb{Q}_{ad}$.
\end{theorem}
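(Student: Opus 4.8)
The plan is to use the direct method of calculus of variations. First, I would observe that the admissible set is nonempty (for instance $q \equiv a \in \mathbb{Q}_{ad}$) and that for each $q \in \mathbb{Q}_{ad}$ the state equation \eqref{eq:weak_st_eq} has a unique solution $u = u(q) \in \tilde{H}^s(\Omega)$ by the Lax-Milgram lemma, with the stability bound $\|u\|_s \lesssim \|f\|_{H^{-s}(\Omega)}$ holding uniformly in $q$ (since $q \geq a > 0$ only helps coercivity, and $q \geq 0$ suffices). Hence $J$ is bounded below by $0$ on the admissible set, so the infimum $\mathfrak{j} := \inf J$ is finite and we may pick a minimizing sequence $\{(u_k, q_k)\}_{k \in \mathbb{N}} \subset \tilde{H}^s(\Omega) \times \mathbb{Q}_{ad}$ with $u_k = u(q_k)$ and $J(u_k, q_k) \to \mathfrak{j}$.

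Next I would extract weakly convergent subsequences. The uniform stability bound gives $\{u_k\}$ bounded in $\tilde{H}^s(\Omega)$, so along a subsequence (not relabeled) $u_k \rightharpoonup \bar u$ in $\tilde{H}^s(\Omega)$; by the compact embedding $H^s(\Omega) \hookrightarrow\hookrightarrow L^2(\Omega)$ from Lemma \ref{lemma:embedding_result}, we also get $u_k \to \bar u$ strongly in $L^2(\Omega)$. The controls $\{q_k\}$ are bounded in $L^\infty(\Omega)$, hence bounded in $L^2(\Omega)$, so along a further subsequence $q_k \rightharpoonup \bar q$ in $L^2(\Omega)$; since $\mathbb{Q}_{ad}$ is convex and closed in $L^2(\Omega)$, it is weakly closed, so $\bar q \in \mathbb{Q}_{ad}$.

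The crucial step — and the main obstacle — is passing to the limit in the bilinear term $(q_k u_k, v)_{L^2(\Omega)}$ for fixed $v \in \tilde H^s(\Omega)$, since this involves a product of two only-weakly-convergent sequences. The resolution is to exploit the strong $L^2(\Omega)$ convergence of $u_k$: write
\begin{equation*}
(q_k u_k, v)_{L^2(\Omega)} - (\bar q \bar u, v)_{L^2(\Omega)} = (q_k (u_k - \bar u), v)_{L^2(\Omega)} + (q_k \bar u - \bar q \bar u, v)_{L^2(\Omega)}.
\end{equation*}
The first term is bounded by $\|q_k\|_{L^\infty(\Omega)} \|u_k - \bar u\|_{L^2(\Omega)} \|v\|_{L^2(\Omega)} \leq b \|u_k - \bar u\|_{L^2(\Omega)} \|v\|_{L^2(\Omega)} \to 0$. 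The second term tends to $0$ because $\bar u v \in L^1(\Omega)$ and, more carefully, since $v \in L^2(\Omega)$ and $\bar u \in L^2(\Omega)$ one has $\bar u v \in L^1(\Omega)$; to test weak convergence of $q_k$ in $L^2$ against the fixed $L^2$ function $\bar u v / \|\cdot\|$... — more precisely, $q_k \rightharpoonup \bar q$ in $L^2(\Omega)$ paired against $\bar u v$ requires $\bar u v \in L^2(\Omega)$, which need not hold for general $\bar u, v \in L^2(\Omega)$; instead one uses that $q_k \rightharpoonup \bar q$ in $L^2(\Omega)$ together with $q_k$ bounded in $L^\infty(\Omega)$ implies $q_k \stackrel{*}{\rightharpoonup} \bar q$ in $L^\infty(\Omega)$, and then tests against $\bar u v \in L^1(\Omega)$. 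Either way, the linear terms $\mathcal{A}(u_k, v) \to \mathcal{A}(\bar u, v)$ (weak convergence in $\tilde H^s(\Omega)$) and $(f, v)_{L^2(\Omega)}$ is fixed, so $\bar u$ satisfies \eqref{eq:weak_st_eq} with control $\bar q$, i.e. $\bar u = u(\bar q)$.

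Finally I would conclude by weak lower semicontinuity: $\|\cdot\|_{L^2(\Omega)}^2$ is convex and continuous, hence weakly lower semicontinuous, so $\|\bar q\|_{L^2(\Omega)}^2 \leq \liminf_k \|q_k\|_{L^2(\Omega)}^2$, while $\|u_k - u_\Omega\|_{L^2(\Omega)}^2 \to \|\bar u - u_\Omega\|_{L^2(\Omega)}^2$ by strong convergence. Therefore
\begin{equation*}
J(\bar u, \bar q) \leq \liminf_{k \to \infty} J(u_k, q_k) = \mathfrak{j},
\end{equation*}
and since $(\bar u, \bar q)$ is admissible, $J(\bar u, \bar q) = \mathfrak{j}$, so $(\bar u, \bar q)$ is a global solution. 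I expect the write-up to be short, with the only delicate point being the careful justification of the limit passage in the bilinear coupling term as described above.
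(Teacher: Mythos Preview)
Your proposal is correct and follows essentially the same route as the paper's proof: the direct method, extraction of $u_k \rightharpoonup \bar u$ in $\tilde H^s(\Omega)$ and $q_k \stackrel{*}{\rightharpoonup} \bar q$ in $L^\infty(\Omega)$, the same add-and-subtract decomposition of the bilinear term using strong $L^2$-convergence of $u_k$ (via the compact embedding) against weak-$*$ convergence of $q_k$ tested on $\bar u v \in L^1(\Omega)$, and weak lower semicontinuity for the cost. The only cosmetic difference is that the paper goes directly to weak-$*$ convergence in $L^\infty(\Omega)$ for the controls, whereas you first pass through $L^2$-weak convergence and then upgrade; your self-correction there is exactly right, and in the final write-up you should simply extract the $L^\infty$ weak-$*$ limit from the start.
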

\begin{proof}
Let $\mathfrak{i}:=\inf\{ J(u,q):(u,q)\in \tilde{H}^{s}(\Omega) \times \mathbb{Q}_{ad}\}$. Let, for $k \in \mathbb{N}$, $q_{k} \in \mathbb{Q}_{ad}$ and $u_k \in \tilde{H}^s(\Omega)$ be such that $J(u_{k},q_{k}) \rightarrow \mathfrak{i}$ as $k\uparrow \infty$ and
\begin{equation}\label{eq:state_eq_with_k}
\mathcal{A}(u_{k},v) + (q_{k}u_{k},v)_{L^2(\Omega)} = (f,v)_{L^2(\Omega)} \quad \forall v\in \tilde{H}^{s}(\Omega),
\end{equation}
i.e., $\{ (u_k,q_k) \}_{k \in \mathbb{N}} \subset \tilde{H}^s(\Omega) \times \mathbb{Q}_{ad}$ is a minimizing sequence. Let us now invoke the fact that  $\mathbb{Q}_{ad}$ is bounded in $L^{\infty}(\Omega)$ to deduce the existence of a nonrelabeled subsequence $\{q_{k}\}_{k\in\mathbb{N}}\subset\mathbb{Q}_{ad}$ such that $q_k \mathrel{\ensurestackMath{\stackon[1pt]{\rightharpoonup}{\scriptstyle\ast}}} \bar{q}$ in $L^{\infty}(\Omega)$ as $k \uparrow \infty$; $\bar{q} \in \mathbb{Q}_{ad}$. On the other hand, since for every $k \in \mathbb{N}$ $q_k \in \mathbb{Q}_{ad}$, the well-posedness of \eqref{eq:state_eq_with_k} reveals that $\{u_{k}\}_{k\in\mathbb{N}}$ is uniformly bounded in $\tilde{H}^{s}(\Omega)$. More precisely: $\| u_k \|_s \lesssim \| f \|_{L^2(\Omega)}$ for every $k \in \mathbb{N}$. We can thus conclude the existence of a nonrelabeled subsequence such that $u_{k}\rightharpoonup \bar{u}$ in $\tilde{H}^{s}(\Omega)$ as $k\uparrow \infty$; $\bar{u}$ being the natural candidate for an optimal state. We now prove that $\bar{u}$ solves problem \eqref{eq:weak_st_eq}, where $q$ is replaced by $\bar{q}$, and that $(\bar{u},\bar{q})$ is optimal.

Since $u_{k}\rightharpoonup \bar{u}$ in $\tilde{H}^{s}(\Omega)$ as $k\uparrow \infty$, we immediately obtain that $\mathcal{A}(u_{k},v) \to \mathcal{A}(\bar{u},v)$ as $k\uparrow \infty$ for every $v \in \tilde{H}^{s}(\Omega)$. To analyze the convergence of the bilinear term, we utilize that $u_{k}\rightharpoonup \bar{u}$ in $\tilde{H}^{s}(\Omega)$ as $k\uparrow \infty$, the compact embedding $H^{s}(\Omega)\hookrightarrow L^{2}(\Omega)$ of Lemma \ref{lemma:embedding_result}, which holds because $2d/(d-2s) > 2$, the convergence $q_k \mathrel{\ensurestackMath{\stackon[1pt]{\rightharpoonup}{\scriptstyle\ast}}} \bar{q}$ in $L^{\infty}(\Omega)$ as $k \uparrow \infty$, and the fact that $\bar{u}v\in L^1(\Omega)$. These arguments allow us to obtain
\begin{multline*}
|(\bar{q}\bar{u},v)_{L^2(\Omega)} - (q_{k}u_{k},v)_{L^2(\Omega)}| \leq | ([\bar{q}-q_{k}]\bar{u},v)_{L^2(\Omega)}| + |(q_{k}[\bar{u}-u_{k}],v)_{L^2(\Omega)}|\\
\leq | (\bar{q}-q_{k},\bar{u}v)_{L^2(\Omega)}| + b\|\bar{u}-u_{k}\|_{L^2(\Omega)}\|v\|_{L^2(\Omega)} \rightarrow 0, \quad k\uparrow \infty.
\end{multline*}

Finally, we prove that $(\bar{u},\bar{q})$ is optimal. To do so, we utilize the strong convergence $u_{k}\rightarrow \bar{u}$ in $L^2(\Omega)$ as $k\uparrow \infty$ and the fact that the square of $\| \cdot \|_{L^2(\Omega)}$ is weakly lower semicontinuous in $L^2(\Omega)$. With these results, we can therefore conclude that $J(\bar{u},\bar{q}) \leq \liminf_{k}J(u_{k},q_{k}) = \mathfrak{i}$. Consequently, $(\bar{u}, \bar{q})$ is optimal.
\end{proof}

Since the control problem \eqref{eq:weak_min_problem}--\eqref{eq:weak_st_eq} is not convex, in the following analysis we will discuss optimality conditions in the context of local solutions in $L^{2}(\Omega)$ \cite[page 207]{Troltzsch}: We say that $\bar{q} \in \mathbb{Q}_{ad}$ is \emph{locally optimal} in the sense of $L^{2}(\Omega)$ for \eqref{eq:weak_min_problem}--\eqref{eq:weak_st_eq} if there exists $\eta > 0$ such that $J(\bar{u},\bar{q}) \leq J(u,q)$ for all $(u,q) \in \tilde{H}^{s}(\Omega) \times \mathbb{Q}_{ad}$ such that $\|q - \bar{q}\|_{L^{2}(\Omega)} \leq \eta$.  Here, $u$ solves \eqref{eq:weak_st_eq} and $\bar{u}$ solves \eqref{eq:weak_st_eq} with $q$ replaced by $\bar{q}$.


\subsection{First order optimality conditions}\label{sec:1st_opt_condition}
In this section we formulate first order necessary optimality conditions. We begin our analysis by introducing the set
\begin{equation*}
\mathcal{Q}:=\{ q\in L^{\infty}(\Omega): \exists c > 0 \text{ such that } q(x) > c > 0 \text{ for a.e.}~x \in \Omega\}
\end{equation*}
\cite[page 783]{MR2536007} and the control to state map $\mathcal{S}: \mathcal{Q} \to \tilde{H}^{s}(\Omega)$, which given a control $q$ associates to it the unique state $u=\mathcal{S}q$ that solves \eqref{eq:weak_st_eq}. 

The following result provides differentiability properties for the operator $\mathcal{S}$.

\begin{theorem}[differentiability properties of $\mathcal{S}$]
\label{thm:properties_C_to_S}
The control to state operator $\mathcal{S}: \mathcal{Q} \rightarrow \tilde{H}^{s}(\Omega)$ is of class $C^2$ with respect to the $L^{\infty}(\Omega)$--topology. In addition, if $w\in L^{\infty}(\Omega)$, then $z=\mathcal{S}'(q)w\in \tilde{H}^{s}(\Omega)$ corresponds to the unique solution to 
\begin{equation}\label{eq:first_der_S}
\mathcal{A}(z,v) + (qz,v)_{L^2(\Omega)} = -(wu,v)_{L^2(\Omega)} \quad \forall v\in \tilde{H}^{s}(\Omega),
\end{equation}
where $u = \mathcal{S}q$. Moreover, if $w_1,w_2\in L^{\infty}(\Omega)$, then $\mathfrak{z}=\mathcal{S}''(q)(w_1,w_2)\in \tilde{H}^{s}(\Omega)$ is the unique solution to 
\begin{equation}\label{eq:second_der_S}
\mathcal{A}(\mathfrak{z},v) + (q\mathfrak{z},v)_{L^2(\Omega)} = -(w_{2}z_{w_{1}},v)_{L^2(\Omega)} - (w_{1}z_{w_{2}},v)_{L^2(\Omega)} \quad \forall v\in \tilde{H}^{s}(\Omega),
\end{equation}
where $z_{w_{i}}=\mathcal{S}'(q)w_i$ and $i \in \{ 1,2 \}$.
\end{theorem}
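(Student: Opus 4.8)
The plan is to prove the theorem in three stages: first establish that $\mathcal{S}$ is well-defined and Lipschitz on $\mathcal{Q}$, then apply the implicit function theorem to obtain $C^2$-regularity, and finally identify the derivatives by differentiating the state equation. For the well-definedness, I would note that for $q \in \mathcal{Q}$ the bilinear form $(v,w) \mapsto \mathcal{A}(v,w) + (qv,w)_{L^2(\Omega)}$ is bounded and coercive on $\tilde H^s(\Omega)$ (coercivity uses $q > c > 0$, but even $q \geq 0$ suffices since $\mathcal{A}$ alone is coercive), so Lax--Milgram gives a unique $u = \mathcal{S}q \in \tilde H^s(\Omega)$, exactly as for \eqref{eq:weak_aux_eq}.

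For the differentiability, the cleanest route is the implicit function theorem. Define $e : \mathcal{Q} \times \tilde H^s(\Omega) \to H^{-s}(\Omega)$ by $\langle e(q,u), v \rangle := \mathcal{A}(u,v) + (qu,v)_{L^2(\Omega)} - (f,v)_{L^2(\Omega)}$. I would check that $e$ is of class $C^2$: it is affine in $u$ and affine in $q$, and the only nontrivial term is the bilinear map $(q,u) \mapsto qu$, which is bounded from $L^\infty(\Omega) \times \tilde H^s(\Omega)$ into $L^2(\Omega) \hookrightarrow H^{-s}(\Omega)$ (using Lemma~\ref{lemma:embedding_result}), hence smooth as a bounded bilinear map, with second derivative the symmetrization $(w_1,w_2) \mapsto w_1 z_2 + w_2 z_1$ appearing already at the level of $e$. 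The partial derivative $e_u(q,u) : \tilde H^s(\Omega) \to H^{-s}(\Omega)$ acts as $z \mapsto \mathcal{A}(z,\cdot) + (qz,\cdot)_{L^2(\Omega)}$, which is an isomorphism by Lax--Milgram (same coercivity argument). The implicit function theorem then yields that $q \mapsto \mathcal{S}q$ is $C^2$ near any $q_0 \in \mathcal{Q}$ with respect to the $L^\infty(\Omega)$-topology; since $q_0$ was arbitrary, $\mathcal{S} \in C^2(\mathcal{Q}, \tilde H^s(\Omega))$.

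To identify $\mathcal{S}'(q)w$, I would differentiate the identity $e(q, \mathcal{S}q) = 0$ in the direction $w \in L^\infty(\Omega)$, obtaining $e_u(q,u)\mathcal{S}'(q)w + e_q(q,u)w = 0$, which tested against $v$ reads $\mathcal{A}(z,v) + (qz,v)_{L^2(\Omega)} + (wu,v)_{L^2(\Omega)} = 0$ with $z = \mathcal{S}'(q)w$ — this is precisely \eqref{eq:first_der_S}, and its unique solvability is again Lax--Milgram. Differentiating once more in directions $w_1, w_2$ and using that $e$ is affine in $u$ for fixed $q$ (so $e_{uu} = 0$) while $e_{qu}$ contributes the cross terms, one gets $\mathcal{A}(\mathfrak{z},v) + (q\mathfrak{z},v)_{L^2(\Omega)} = -(w_2 z_{w_1},v)_{L^2(\Omega)} - (w_1 z_{w_2},v)_{L^2(\Omega)}$, which is \eqref{eq:second_der_S}.

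The main obstacle is a technical bookkeeping point rather than a deep one: making sure that all the maps land in the right spaces so that the abstract implicit function theorem applies verbatim. Concretely, one must verify that the product map $(q,u)\mapsto qu$ is genuinely bounded bilinear from $L^\infty(\Omega)\times\tilde H^s(\Omega)$ to $H^{-s}(\Omega)$ — this follows from $\|qu\|_{L^2(\Omega)} \le \|q\|_{L^\infty(\Omega)}\|u\|_{L^2(\Omega)}$ together with the continuous embeddings $\tilde H^s(\Omega)\hookrightarrow L^2(\Omega)\hookrightarrow H^{-s}(\Omega)$ — and that the resulting derivative formulas, obtained formally, indeed coincide with the weak problems \eqref{eq:first_der_S}--\eqref{eq:second_der_S}. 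An alternative to the implicit function theorem, if one prefers a hands-on argument, is to estimate $\mathcal{S}(q+w) - \mathcal{S}q - z$ directly: subtracting the state equations for $u_{q+w}$, $u_q$ and for $z$, one finds that the remainder solves a coercive problem with right-hand side controlled by $\|w\|_{L^\infty(\Omega)}\|u_{q+w} - u_q\|_{L^2(\Omega)} \lesssim \|w\|_{L^\infty(\Omega)}^2$, giving Fréchet differentiability with the stated derivative; the second-order statement follows by the same device applied to $\mathcal{S}'$.
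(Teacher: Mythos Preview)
Your proof is correct and takes a genuinely different route from the paper. The paper proceeds by the direct remainder argument you sketch only as an alternative at the end: it fixes $q\in\mathcal{Q}$, defines the candidate derivative $z$ via \eqref{eq:first_der_S}, writes the problem solved by $r=\mathcal{S}(q+w)-\mathcal{S}q-z$, and uses stability bounds to get $\|r\|_s\lesssim\|f\|_{L^2(\Omega)}\|w\|_{L^\infty(\Omega)}^2$, giving Fr\'echet differentiability; for the second derivative it simply says the argument is analogous and refers to \cite[Theorem~4.24(ii)]{Troltzsch}. Your primary approach instead packages everything into the implicit function theorem applied to $e(q,u)=\mathcal{A}(u,\cdot)+(qu,\cdot)_{L^2(\Omega)}-(f,\cdot)_{L^2(\Omega)}$, observing that $e$ is polynomial (hence $C^\infty$) because the only non-affine piece is the bounded bilinear map $(q,u)\mapsto qu$, and that $e_u(q,u)$ is an isomorphism by Lax--Milgram. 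The IFT route is cleaner: it yields $C^2$ (in fact $C^\infty$) in one stroke and makes the derivative identities \eqref{eq:first_der_S}--\eqref{eq:second_der_S} drop out mechanically from the chain rule, whereas the paper's hands-on approach must in principle repeat the remainder estimate at each order. The trade-off is that the direct method is entirely self-contained and avoids invoking the Banach-space implicit function theorem, which some readers may prefer for transparency. Your verification that $\mathcal{Q}$ is open in $L^\infty(\Omega)$ (implicit in applying the IFT) and your identification of $e_{qq}=e_{uu}=0$ with the cross terms $e_{qu}$ producing exactly the right-hand side of \eqref{eq:second_der_S} are the key bookkeeping points, and you have them right.
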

\begin{proof} 
Let $q \in \mathcal{Q}$. To prove the first order Fr\'echet differentiability of $\mathcal{S}$ we proceed as in \cite[Theorem 4.17]{Troltzsch} and show the existence of a continuous linear operator $D: L^{\infty}(\Omega) \rightarrow \tilde{H}^{s}(\Omega)$ and a mapping $r(q,\cdot): L^{\infty}(\Omega) \rightarrow \tilde{H}^{s}(\Omega)$ such that for all $w \in L^{\infty}(\Omega)$ satisfying $q + w \in \mathcal{Q}$, we have $\mathcal{S}(q+w) - \mathcal{S}(q) = D(w) + r(q,w)$ and
\begin{equation}\label{eq:property_residual}
\|r(q,w)\|_{s}/\|w\|_{L^{\infty}(\Omega)} \to 0
\end{equation}
as $\|w\|_{L^{\infty}(\Omega)} \to 0$. We immediately note that the mapping $w \mapsto z$ defined by the problem \eqref{eq:first_der_S} is linear and continuous. On the other hand, we define $\tilde{u}:=\mathcal{S}(q+w)$ and $u:=\mathcal{S}(q)$. It is therefore sufficient to prove that $r= \tilde{u} - u - z$ satisfies \eqref{eq:property_residual}. To do so, we first note that $r$ solves uniquely the problem: Find $r \in \tilde{H}^{s}(\Omega)$ such that
\begin{equation*}
\mathcal{A}(r,v) + (q r,v)_{L^2(\Omega)} = -(w[\tilde{u} - u],v)_{L^2(\Omega)} \quad \forall v\in \tilde{H}^{s}(\Omega).
\end{equation*}
A basic stability bound for this problem yields the estimate
\begin{equation*}
\|r\|_{s} 
\lesssim 
\|\tilde{u} - u\|_{L^2(\Omega)}\|w\|_{L^{\infty}(\Omega)} 
\lesssim
\|\tilde{u}\|_{L^2(\Omega)}\|w\|_{L^{\infty}(\Omega)}^2 
\lesssim
\|f\|_{L^2(\Omega)}\|w\|_{L^{\infty}(\Omega)}^2, 
\end{equation*}
where we used a stability estimate for the problem that $\tilde{u} - u$ solves and $\|\tilde{u}\|_{s} \lesssim \|f\|_{L^2(\Omega)}$. Consequently, $r$ satisfies \eqref{eq:property_residual}. This proves that $\mathcal{S}$ is Fr\'echet differentiable.

The second order Fr\'echet differentiability of $\mathcal{S}$ follows similar considerations. The fact that $\mathfrak{z}$ solves the problem \eqref{eq:second_der_S} follows from the arguments in \cite[Theorem 4.24(ii)]{Troltzsch}. Note that the problem \eqref{eq:second_der_S} is well posed because $-w_{2}z_{w_{1}} - w_{1}z_{w_{2}}\in L^2(\Omega)$.
\end{proof}

In order to provide first order optimality conditions, we introduce the reduced cost functional $j: \mathcal{Q} \to \mathbb{R}$ by $j(q)=J(\mathcal{S}q,q)$ and present the following basic result: If $\bar{q}\in \mathbb{Q}_{ad}$ denotes a locally optimal control for \eqref{eq:weak_min_problem}--\eqref{eq:weak_st_eq}, then \cite[Lemma 4.18]{Troltzsch}
\begin{equation}
\label{eq:variational_inequality}
j'(\bar{q}) (q - \bar{q}) \geq 0 \qquad \forall q \in \mathbb{Q}_{ad}.
\end{equation}
In \eqref{eq:variational_inequality}, $j'(\bar{q})$ denotes the Gate\^aux derivative of $j$ at $\bar{q}$. To investigate \eqref{eq:variational_inequality}, we introduce the \emph{adjoint variable} $p \in \tilde{H}^{s}(\Omega)$ as the unique solution to the \emph{adjoint equation}
\begin{equation}\label{eq:adj_eq}
\mathcal{A}(v,p) + (qp,v)_{L^2(\Omega)} = (u - u_{\Omega},v)_{L^2(\Omega)} \quad \forall v\in \tilde{H}^{s}(\Omega).
\end{equation}
Here, $u = \mathcal{S}q$. The well-posedness of \eqref{eq:adj_eq} follows from the Lax-Milgram lemma. In particular, we have the \emph{stability bound}  $\| p \|_s \lesssim \| f \|_{L^2(\Omega)}+\| u_{\Omega} \|_{L^2(\Omega)}$.

We now have all the ingredients to present first order optimality conditions.

\begin{theorem}[first order necessary optimality conditions] Every locally optimal control $\bar{q}\in\mathbb{Q}_{ad}$ for problem \eqref{eq:weak_min_problem}--\eqref{eq:weak_st_eq} satisfies the variational inequality
\begin{equation}\label{eq:var_ineq_with_adj_state}
(\lambda\bar{q} - \bar{u}\bar{p}, q - \bar{q})_{L^2(\Omega)} \geq 0 \qquad \forall q\in \mathbb{Q}_{ad},
\end{equation}
where $\bar{p}\in \tilde{H}^{s}(\Omega)$ solves \eqref{eq:adj_eq} with $q$ and $u$ replaced by $\bar{q}$ and $\bar{u} = \mathcal{S}\bar{q}$, respectively.
\end{theorem}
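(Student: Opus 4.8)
The plan is to derive \eqref{eq:var_ineq_with_adj_state} from the abstract necessary condition \eqref{eq:variational_inequality} by computing the derivative $j'(\bar q)$ explicitly with the help of the adjoint state $\bar p$. Since $a>0$ we have $\mathbb{Q}_{ad}\subset\mathcal{Q}$, so that $\bar q\in\mathcal{Q}$ and, by Theorem \ref{thm:properties_C_to_S}, the control to state map $\mathcal{S}$ is of class $C^2$ near $\bar q$; hence $j(q)=J(\mathcal{S}q,q)$ is differentiable, and the chain rule together with the explicit form of $J$ in \eqref{def:cost_functional} gives, for every $w\in L^{\infty}(\Omega)$,
\[
j'(\bar q)w=(\bar u-u_{\Omega},\mathcal{S}'(\bar q)w)_{L^2(\Omega)}+\lambda(\bar q,w)_{L^2(\Omega)},\qquad \bar u=\mathcal{S}\bar q.
\]
Writing $z:=\mathcal{S}'(\bar q)w\in\tilde{H}^{s}(\Omega)$, Theorem \ref{thm:properties_C_to_S} tells us that $z$ solves \eqref{eq:first_der_S} with $q=\bar q$ and $u=\bar u$.

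The next step is to eliminate $z=\mathcal{S}'(\bar q)w$ from the expression for $j'(\bar q)w$ by means of the adjoint equation. Testing \eqref{eq:adj_eq} (with $q=\bar q$, $u=\bar u$) with $v=z$ yields
\[
\mathcal{A}(z,\bar p)+(\bar q\bar p,z)_{L^2(\Omega)}=(\bar u-u_{\Omega},z)_{L^2(\Omega)},
\]
while testing \eqref{eq:first_der_S} with $v=\bar p$ yields
\[
\mathcal{A}(z,\bar p)+(\bar q z,\bar p)_{L^2(\Omega)}=-(w\bar u,\bar p)_{L^2(\Omega)}.
\]
Since $(\bar q\bar p,z)_{L^2(\Omega)}=(\bar q z,\bar p)_{L^2(\Omega)}$ (both equal $\int_\Omega \bar q\,\bar p\,z$), subtracting these two identities cancels the $\mathcal{A}(z,\bar p)$ terms and gives $(\bar u-u_{\Omega},z)_{L^2(\Omega)}=-(\bar u\bar p,w)_{L^2(\Omega)}$. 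Substituting this back into the chain-rule formula produces the clean expression
\[
j'(\bar q)w=(\lambda\bar q-\bar u\bar p,w)_{L^2(\Omega)}\qquad\forall w\in L^{\infty}(\Omega).
\]

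Finally, for an arbitrary $q\in\mathbb{Q}_{ad}$ the choice $w=q-\bar q\in L^{\infty}(\Omega)$ is admissible, since $\bar q+t(q-\bar q)=(1-t)\bar q+tq\in\mathbb{Q}_{ad}$ for $t\in[0,1]$; hence \eqref{eq:variational_inequality} becomes $(\lambda\bar q-\bar u\bar p,q-\bar q)_{L^2(\Omega)}\geq0$, which is precisely \eqref{eq:var_ineq_with_adj_state}. I do not anticipate a genuine obstacle here: the $C^2$-regularity of $\mathcal{S}$, the well-posedness of the linearized equation \eqref{eq:first_der_S} and of the adjoint equation \eqref{eq:adj_eq}, and the identity $(\bar q\bar p,z)_{L^2(\Omega)}=(\bar q z,\bar p)_{L^2(\Omega)}$ are all at hand, and every $L^2$-pairing above is finite because $\bar u,\bar p\in\tilde{H}^{s}(\Omega)\subset L^2(\Omega)$ and $\bar q,w\in L^{\infty}(\Omega)$ (so, e.g., $\bar u\bar p\in L^1(\Omega)$). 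The only points needing a little care are the bookkeeping in the adjoint-testing step and the verification that the test direction $q-\bar q$ is admissible for \eqref{eq:variational_inequality}.
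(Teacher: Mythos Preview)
Your proof is correct and follows essentially the same route as the paper: compute $j'(\bar q)w$ via the chain rule, then test the linearized state equation \eqref{eq:first_der_S} with $v=\bar p$ and the adjoint equation \eqref{eq:adj_eq} with $v=z$, and combine to obtain $(\bar u-u_{\Omega},z)_{L^2(\Omega)}=-(\bar u\bar p,w)_{L^2(\Omega)}$. The only cosmetic difference is that you derive the identity for a generic $w\in L^{\infty}(\Omega)$ before specializing to $w=q-\bar q$, whereas the paper works directly with $w=q-\bar q$ from the start.
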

\begin{proof}
We begin the proof with simple calculations that show that the variational inequality
\eqref{eq:variational_inequality} can be rewritten as follows:
\begin{equation}\label{eq:var_ineq_rewritten}
(\bar{u} - u_{\Omega},\mathcal{S}'(\bar{q})(q - \bar{q}))_{L^2(\Omega)} + \lambda(\bar{q}, q - \bar{q})_{L^2(\Omega)} \geq 0 \qquad \forall q \in \mathbb{Q}_{ad},
\end{equation}
where $\bar{u}=\mathcal{S}\bar{q}$. Since the second term on the left-hand side of \eqref{eq:var_ineq_rewritten} is already contained in the desired inequality \eqref{eq:var_ineq_with_adj_state}, we thus concentrate on the first term. Define $z:=\mathcal{S}'(\bar{q})(q - \bar{q})$ and observe that $z$ solves problem \eqref{eq:first_der_S} with $q$ replaced by $\bar{q}$ and $w = q - \bar{q}$. Setting $v=\bar{p}$ in this problem yields
\begin{equation}\label{eq:z_and_p_equation_II}
\mathcal{A}(z,\bar{p}) + (\bar{q}z,\bar{p})_{L^2(\Omega)} = -((q - \bar{q})\bar{u},\bar{p})_{L^2(\Omega)},
\end{equation}
where $\bar{u} = \mathcal{S} \bar{q}$. On the other hand, we set $v = z$ as a test function in \eqref{eq:adj_eq} to obtain
\begin{equation}\label{eq:z_and_p_equation_I}
\mathcal{A}(z,\bar{p}) + (\bar{q}\bar{p},z)_{L^2(\Omega)} = (\bar{u} - u_{\Omega},z)_{L^2(\Omega)}.
\end{equation}
The desired variational inequality \eqref{eq:var_ineq_with_adj_state} thus follows from \eqref{eq:var_ineq_rewritten}, \eqref{eq:z_and_p_equation_II}, and \eqref{eq:z_and_p_equation_I}.
\end{proof}

The following formula is essential for the derivation of regularity estimates: If $\bar{q}$ is a locally optimal control for the problem \eqref{eq:weak_min_problem}--\eqref{eq:weak_st_eq}, then \cite[section 4.6.1]{Troltzsch}
\begin{equation}\label{eq:projection_control} 
\bar{q}(x):=\Pi_{[a,b]}(\lambda^{-1}\bar{u}(x)\bar{p}(x)) \textrm{ a.e.}~x \in \Omega,
\end{equation}
where $\Pi_{[a,b]} : L^1(\Omega) \rightarrow  \mathbb{Q}_{ad}$ is defined by $\Pi_{[a,b]}(v) := \min\{ b, \max\{ v, a\} \}$ a.e.~in  $\Omega$. 

We conclude this section with a regularity result for an optimal control variable, which will be important for the analysis of the schemes in \S \ref{sec:fem_control_problem}. To present it, we define
\begin{equation}
\label{eq:Lambda}
\Lambda(f,u_{\Omega}):= 
 \|f\|_{L^2(\Omega)} 
 \left( 
 \|f\|_{L^r(\Omega)} + \|u_{\Omega}\|_{L^r(\Omega)}
 \right)
 +
 \| f \|_{L^r(\Omega)} \| u_{\Omega} \|_{L^2(\Omega)}.
\end{equation}

\begin{theorem}[regularity of $\bar{q}$]
\label{thm:regul_control}
Let $s\in(0,1)$ and $r > d/2s$. Let $\bar{q}$ be a locally optimal control for \eqref{eq:weak_min_problem}--\eqref{eq:weak_st_eq}. If $f,u_{\Omega}\in L^2(\Omega) \cap L^{r}(\Omega)$, then $\bar{q} \in H^{s + \kappa -\varepsilon}(\Omega)$ for all $0 < \varepsilon  < s$, where $\kappa = \frac{1}{2}$ for $\frac{1}{2} < s < 1$ and $\kappa = s - \varepsilon$ for $0 < s \leq \frac{1}{2}$. In addition,
\begin{equation}
\| \bar{q} \|_{H^{s + \kappa -\varepsilon}(\Omega)} \leq C \varepsilon^{-\nu} (1 + \Lambda(f,u_{\Omega})),
\label{eq:regularity_estimate_control}
\end{equation}
where $\nu$ is as in the statement of Theorem \ref{thm:sobolev_reg}.
\end{theorem}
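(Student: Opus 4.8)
The plan is to bootstrap regularity through the projection formula \eqref{eq:projection_control}, which expresses $\bar q$ as $\Pi_{[a,b]}(\lambda^{-1}\bar u\bar p)$, so it suffices to control the Sobolev regularity of the product $\bar u\bar p$ and then pass it through the projection operator. First I would record that, since $f,u_\Omega\in L^r(\Omega)$ with $r>d/2s$, Theorem \ref{thm:L_infty_reg} applies to both the state equation \eqref{eq:weak_st_eq} and the adjoint equation \eqref{eq:adj_eq} (note $\bar u-u_\Omega\in L^r(\Omega)$ as well), yielding $\bar u,\bar p\in\tilde H^s(\Omega)\cap L^\infty(\Omega)$ together with the bounds $\|\bar u\|_{L^\infty(\Omega)}\lesssim\|f\|_{L^r(\Omega)}$ and $\|\bar p\|_{L^\infty(\Omega)}\lesssim\|f\|_{L^r(\Omega)}+\|u_\Omega\|_{L^r(\Omega)}$. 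Since $f\in L^2(\Omega)$, Theorem \ref{thm:sobolev_reg} then gives $\bar u\in H^{s+\kappa-\varepsilon}(\Omega)$ with $\|\bar u\|_{H^{s+\kappa-\varepsilon}(\Omega)}\le C\varepsilon^{-\nu}\|f\|_{L^2(\Omega)}$; applying the same theorem to the adjoint equation, whose right-hand side $\bar u-u_\Omega$ lies in $L^2(\Omega)$, gives $\bar p\in H^{s+\kappa-\varepsilon}(\Omega)$ with $\|\bar p\|_{H^{s+\kappa-\varepsilon}(\Omega)}\le C\varepsilon^{-\nu}(\|f\|_{L^2(\Omega)}+\|u_\Omega\|_{L^2(\Omega)})$, using the stability bound for the adjoint state.

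Next I would invoke Lemma \ref{lemma:product_of_func} with $\mathfrak t=s+\kappa-\varepsilon$: since $\bar u,\bar p\in H^{s+\kappa-\varepsilon}(\Omega)\cap L^\infty(\Omega)$ (extended by zero, they belong to $H^{s+\kappa-\varepsilon}(\mathbb R^d)\cap L^\infty(\mathbb R^d)$ — here one should be a little careful, as zero extension preserves $H^{\mathfrak t}$ regularity only for $\mathfrak t<1/2$ in general, but since $\bar u,\bar p\in\tilde H^s(\Omega)$ and the product lands in $\tilde H^{s+\kappa-\varepsilon}$ by the characterization of $\tilde H^{\mathfrak t}$ for the relevant range, or one argues on $\Omega$ directly via the Runst–Sickel estimate on a Lipschitz domain), the product $\bar u\bar p$ belongs to $H^{s+\kappa-\varepsilon}(\Omega)$. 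The quantitative form of the Runst–Sickel/product estimate gives $\|\bar u\bar p\|_{H^{s+\kappa-\varepsilon}(\Omega)}\lesssim\|\bar u\|_{H^{s+\kappa-\varepsilon}(\Omega)}\|\bar p\|_{L^\infty(\Omega)}+\|\bar u\|_{L^\infty(\Omega)}\|\bar p\|_{H^{s+\kappa-\varepsilon}(\Omega)}$, and inserting the bounds from the previous paragraph produces exactly the combination $\Lambda(f,u_\Omega)$ defined in \eqref{eq:Lambda}, up to the factor $\varepsilon^{-\nu}$.

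Finally, $\bar q=\Pi_{[a,b]}(\lambda^{-1}\bar u\bar p)$, so I would use the fact that the pointwise truncation $\Pi_{[a,b]}=\min\{b,\max\{\cdot,a\}\}$ is Lipschitz with constant one and, on a bounded Lipschitz domain, maps $H^{\sigma}(\Omega)\cap L^\infty(\Omega)$ into $H^{\sigma}(\Omega)$ for $\sigma\in(0,1)$ — this is the standard Sobolev-stability of Nemytskii operators generated by globally Lipschitz functions in the fractional range $\sigma<1$ (e.g. via the Gagliardo seminorm, $|\Pi(g)(x)-\Pi(g)(y)|\le|g(x)-g(y)|$), together with the obvious $L^2$ bound $\|\Pi_{[a,b]}(v)\|_{L^2(\Omega)}\le b|\Omega|^{1/2}$. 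Since $s+\kappa-\varepsilon<1$ throughout the stated range of $s$ and $\varepsilon$, this applies, giving $\|\bar q\|_{H^{s+\kappa-\varepsilon}(\Omega)}\lesssim 1+\lambda^{-1}\|\bar u\bar p\|_{H^{s+\kappa-\varepsilon}(\Omega)}\le C\varepsilon^{-\nu}(1+\Lambda(f,u_\Omega))$, which is \eqref{eq:regularity_estimate_control}. The main obstacle I anticipate is the bookkeeping around zero-extension versus intrinsic-domain formulations of the product and truncation estimates: the product lemma is stated on $\mathbb R^d$, so one must either justify that the zero extensions remain in $H^{s+\kappa-\varepsilon}(\mathbb R^d)$ (straightforward when $s+\kappa-\varepsilon\le 1/2$, i.e. always in the regime $0<s\le 1/2$, and requiring the $\tilde H$-space characterization otherwise) or replace it by an analogous product estimate valid on bounded Lipschitz domains; similarly the truncation stability must be quoted in a form valid on $\Omega$. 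Tracking the $\varepsilon$-dependence of all constants so that only a single power $\varepsilon^{-\nu}$ survives is the other point requiring care, but it follows since the product and truncation estimates have $\varepsilon$-independent constants.
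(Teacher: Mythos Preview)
Your overall strategy matches the paper's: obtain $\bar u,\bar p\in H^{s+\kappa-\varepsilon}(\Omega)\cap L^\infty(\Omega)$ from Theorems \ref{thm:sobolev_reg} and \ref{thm:L_infty_reg}, multiply via Lemma \ref{lemma:product_of_func} to get $\bar u\bar p\in H^{s+\kappa-\varepsilon}(\Omega)$ with the bound $\varepsilon^{-\nu}\Lambda(f,u_\Omega)$, and then push this through the projection formula \eqref{eq:projection_control}. The first two steps are essentially identical to the paper's argument.

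There is, however, a genuine gap in your final step. You assert that ``$s+\kappa-\varepsilon<1$ throughout the stated range of $s$ and $\varepsilon$'' and then rely on the elementary Gagliardo-seminorm argument $|\Pi(g)(x)-\Pi(g)(y)|\le|g(x)-g(y)|$, which indeed gives stability of Lipschitz Nemytskii operators on $H^\sigma$ only for $\sigma\in(0,1)$. But the claim $s+\kappa-\varepsilon<1$ is false in the regime $\tfrac12<s<1$: there $\kappa=\tfrac12$, so $s+\kappa-\varepsilon=s+\tfrac12-\varepsilon$, which for small $\varepsilon$ lies strictly between $1$ and $\tfrac32$ (e.g.\ $s=0.8$, $\varepsilon=0.1$ gives $1.2$). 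Your pointwise-Lipschitz argument therefore does not cover this range, and the step fails precisely where the higher regularity is most interesting.

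The paper closes this gap by writing $\max\{0,\mathfrak r\}=(\mathfrak r+|\mathfrak r|)/2$ and invoking \cite[Theorem 1]{MR1173747}, which guarantees that $v\mapsto|v|$ (hence $\Pi_{[a,b]}$) is bounded on $H^\sigma(\Omega)$ for all $\sigma<\tfrac32$; the authors explicitly verify $s+\kappa-\varepsilon<\tfrac32$ in both regimes. To repair your argument you need to replace the Gagliardo inequality by a citation of this type for the range $1\le\sigma<\tfrac32$; the elementary argument alone is insufficient.
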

\begin{proof}
Let $\bar u = \mathcal{S} \bar{q}$ and let $\bar p$ be the solution to \eqref{eq:adj_eq} with $u$ and $q$ replaced by $\bar u$ and $\bar q$, respectively. Since $f, u_{\Omega}\in L^2(\Omega)$, an application of Theorem \ref{thm:sobolev_reg} immediately shows that $\bar{u}$ and $\bar{p}$ belong to $H^{s + \kappa -\varepsilon}(\Omega)$, for all $0 < \varepsilon  < s$, where $\kappa$ is as in the statement of the theorem. On the other hand, since $f \in L^r(\Omega)$, for some $r>d/2s$, we can conclude from the results of Theorem \ref{thm:L_infty_reg} that $\bar{u} \in L^{\infty}(\Omega)$. Note that, in particular, $\bar{u} \in L^r(\Omega)$, for some $r>d/2s$. Consequently, $\bar{u} - u_{\Omega} \in L^r(\Omega)$. We can therefore again refer to Theorem \ref{thm:L_infty_reg} to conclude that $\bar{p} \in L^{\infty}(\Omega)$. Observe that we have obtained that $\bar{u}, \bar{p} \in H^{s + \kappa -\varepsilon}(\Omega) \cap L^{\infty}(\Omega)$. This, in light of the continuity of the product property stated in Lemma \ref{lemma:product_of_func}, allows us to conclude that $\bar{u}\bar{p}$ belongs to $H^{s + \kappa -\varepsilon}(\Omega)\cap L^{\infty}(\Omega)$ together with the bound \cite[estimate (25)]{MR1877265}
$
\| \bar{u}\bar{p}  \|_{H^{s + \kappa -\varepsilon}(\Omega)} 
\lesssim C\varepsilon^{-\nu}
\Lambda(f,u_{\Omega}).
$
The desired regularity property for $\bar{q}$ and the estimate \eqref{eq:regularity_estimate_control} follow from the projection formula \eqref{eq:projection_control}, the fact that $\max\{0,\mathfrak{r}\}=(\mathfrak{r} + |\mathfrak{r}|)/2$ for all $\mathfrak{r}\in\mathbb{R}$, and \cite[Theorem 1]{MR1173747}, which applies because $s + \kappa -\varepsilon = s + \frac{1}{2} -\varepsilon < \tfrac{3}{2}$ if $\frac{1}{2} < s < 1$ and $s + \kappa -\varepsilon = 2s -2\varepsilon \leq 1 -2\varepsilon < \tfrac{3}{2}$ if $0 < s \leq \frac{1}{2}$. This concludes the proof.
\end{proof} 


\subsection{Second order optimality conditions}\label{sec:2nd_order_opt_cond}
In this section we formulate necessary and sufficient second order optimality conditions. 

\subsubsection{Auxiliary results} 
We begin our studies with the following result.

\begin{theorem}[$j$ is of class $C^2$ and $j''$ is Lipschitz]
\label{thm:diff_properties_j}
The reduced cost functional $j: \mathbb{Q}_{ad}\!\subset\!\mathcal{Q} \rightarrow \mathbb{R}$ is of class $C^2$. Moreover, for every $q \in\! \mathbb{Q}_{ad}$ and $w \in \! L^{\infty}(\Omega)$, we have 
\begin{equation}\label{eq:charac_j2}
j''(q)w^2 = \lambda\|w\|_{L^2(\Omega)}^2 - 2(wz,p)_{L^2(\Omega)} + \|z\|_{L^2(\Omega)}^2,
\end{equation}
where $p$ solves \eqref{eq:adj_eq} and $z=\mathcal{S}'(q)w$. If, in addition,  we assume that $f,u_{\Omega} \in L^{r}(\Omega)$, with $r > d/2s$, then, for $q_{1},q_{2}\in\mathbb{Q}_{ad}$ and $w\in L^\infty(\Omega)$, we have
\begin{equation}\label{eq:estimate_of_j2}
|j''(q_1)w^2-j''(q_2)w^2|
\lesssim
\|w\|_{L^2(\Omega)}^2\|q_1 - q_2\|_{L^r(\Omega)}.
\end{equation}
\end{theorem}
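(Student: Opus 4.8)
The plan is to establish the two assertions in sequence. For the $C^2$ regularity of $j$ and the formula \eqref{eq:charac_j2}, I would exploit the chain rule together with the already-established fact (Theorem \ref{thm:properties_C_to_S}) that $\mathcal{S}: \mathcal{Q} \to \tilde{H}^{s}(\Omega)$ is of class $C^2$ with respect to the $L^\infty(\Omega)$-topology. Since $J$ is a sum of squared $L^2$-norms, it is smooth, and composing with $\mathcal{S}$ gives $j \in C^2$. To obtain the explicit expression for $j''(q)w^2$, first I would compute $j'(q)w = (\mathcal{S}q - u_\Omega, \mathcal{S}'(q)w)_{L^2(\Omega)} + \lambda(q,w)_{L^2(\Omega)}$, then differentiate once more in the direction $w$, using that the second derivative of $q \mapsto \mathcal{S}q$ in the direction $(w,w)$ is precisely $\mathfrak{z} = \mathcal{S}''(q)(w,w)$ solving \eqref{eq:second_der_S} with $w_1 = w_2 = w$. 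This yields $j''(q)w^2 = \|z\|_{L^2(\Omega)}^2 + (\mathcal{S}q - u_\Omega, \mathfrak{z})_{L^2(\Omega)} + \lambda\|w\|_{L^2(\Omega)}^2$, where $z = \mathcal{S}'(q)w$. To convert the term $(\mathcal{S}q - u_\Omega, \mathfrak{z})_{L^2(\Omega)}$ into $-2(wz,p)_{L^2(\Omega)}$, I would test the adjoint equation \eqref{eq:adj_eq} with $v = \mathfrak{z}$ and test equation \eqref{eq:second_der_S} (with $w_1=w_2=w$, so the right-hand side is $-2(wz,v)_{L^2(\Omega)}$) with $v = p$, then subtract; the symmetric bilinear terms cancel and the identity drops out.

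**The Lipschitz estimate.** For \eqref{eq:estimate_of_j2}, fix $q_1, q_2 \in \mathbb{Q}_{ad}$ and $w \in L^\infty(\Omega)$, and write $z_i = \mathcal{S}'(q_i)w$, $p_i$ for the adjoint state at $q_i$, and $u_i = \mathcal{S}q_i$. Using the formula \eqref{eq:charac_j2} at $q_1$ and $q_2$ and subtracting, the difference $j''(q_1)w^2 - j''(q_2)w^2$ splits into the term $-2(w(z_1 p_1 - z_2 p_2), 1)_{L^2(\Omega)}$ and $\|z_1\|_{L^2(\Omega)}^2 - \|z_2\|_{L^2(\Omega)}^2$; the $\lambda\|w\|^2$ terms cancel exactly. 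I would then bound each piece by inserting and subtracting cross terms: $z_1 p_1 - z_2 p_2 = (z_1 - z_2)p_1 + z_2(p_1 - p_2)$, and $\|z_1\|^2 - \|z_2\|^2 = (z_1 - z_2, z_1 + z_2)_{L^2(\Omega)}$. The crucial ingredients are the stability and Lipschitz bounds: (i) $\|u_i\|_{L^\infty(\Omega)} \lesssim \|f\|_{L^r(\Omega)}$ and $\|p_i\|_{L^\infty(\Omega)} \lesssim \|f\|_{L^r(\Omega)} + \|u_\Omega\|_{L^r(\Omega)}$ from Theorem \ref{thm:L_infty_reg} (this is where the hypothesis $f, u_\Omega \in L^r(\Omega)$, $r > d/2s$, enters); (ii) uniform bounds $\|z_i\|_s \lesssim \|w\|_{L^2(\Omega)}$ obtained by testing \eqref{eq:first_der_S} with $v = z_i$ and using $\|u_i\|_{L^\infty(\Omega)}$; and (iii) Lipschitz dependence $\|z_1 - z_2\|_{L^2(\Omega)} \lesssim \|q_1 - q_2\|_{L^r(\Omega)}\|w\|_{L^2(\Omega)}$ and $\|p_1 - p_2\|_{L^2(\Omega)} \lesssim \|q_1 - q_2\|_{L^r(\Omega)}(\|f\|_{L^2(\Omega)} + \|u_\Omega\|_{L^2(\Omega)})$, derived by subtracting the respective equations and estimating the difference equation's right-hand side.

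**Deriving the Lipschitz dependence of $z$ and $p$.** The equation for $z_1 - z_2$ reads $\mathcal{A}(z_1 - z_2, v) + (q_1(z_1 - z_2), v)_{L^2(\Omega)} = -((q_1 - q_2)z_2, v)_{L^2(\Omega)} - (w(u_1 - u_2), v)_{L^2(\Omega)}$, so I need $\|u_1 - u_2\|_{L^\infty(\Omega)}$ or at least a controlled $L^2$-type bound. The difference $u_1 - u_2$ solves $\mathcal{A}(u_1 - u_2, v) + (q_1(u_1 - u_2), v)_{L^2(\Omega)} = -((q_1 - q_2)u_2, v)_{L^2(\Omega)}$, and since $(q_1 - q_2)u_2 \in L^r(\Omega)$ with norm $\lesssim \|q_1 - q_2\|_{L^r(\Omega)}\|f\|_{L^r(\Omega)}$, Theorem \ref{thm:L_infty_reg} gives $\|u_1 - u_2\|_{L^\infty(\Omega)} \lesssim \|q_1 - q_2\|_{L^r(\Omega)}\|f\|_{L^r(\Omega)}$. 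Feeding this back, testing the $z_1 - z_2$ equation with $v = z_1 - z_2$, and using Hölder plus Lemma \ref{lemma:embedding_result} to handle the $L^r$--$L^2$ pairing yields the desired Lipschitz bound for $z$; the analogous argument applied to the adjoint equation handles $p$. Collecting all pieces and absorbing the data-dependent constants (which is legitimate since the hidden constant in \eqref{eq:estimate_of_j2} may depend on the problem data) gives the claimed estimate. The main obstacle here is purely bookkeeping: keeping careful track of which norm ($L^2$, $L^r$, or $L^\infty$) is needed at each step so that all the Hölder pairings close and no term is left uncontrolled — in particular, ensuring the single power of $\|q_1 - q_2\|_{L^r(\Omega)}$ (rather than a higher power) and the quadratic factor $\|w\|_{L^2(\Omega)}^2$ emerge correctly.
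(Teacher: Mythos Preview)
Your approach is essentially identical to the paper's: the same chain-rule derivation of \eqref{eq:charac_j2} via testing \eqref{eq:adj_eq} with $\mathfrak{z}$ and \eqref{eq:second_der_S} with $p$, and the same three-term splitting of $j''(q_1)w^2 - j''(q_2)w^2$ handled through the $L^\infty$ bounds on $u_i,p_i$ from Theorem~\ref{thm:L_infty_reg} and the Lipschitz estimate $\|u_1-u_2\|_{L^\infty(\Omega)}\lesssim\|q_1-q_2\|_{L^r(\Omega)}$.

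There is one bookkeeping slip of exactly the kind you warned yourself about: the $L^2$ bound you record for $p_1-p_2$ is too weak to close the term $(w z_2, p_1-p_2)_{L^2(\Omega)}$. With $w$ measured in $L^2$ and $\|z_2\|_{L^2(\Omega)}\lesssim\|w\|_{L^2(\Omega)}$ the only available H\"older split placing two factors of $\|w\|_{L^2(\Omega)}$ on the right requires $p_1-p_2\in L^\infty(\Omega)$. The paper obtains precisely this: since the right-hand side of the difference adjoint equation, namely $(u_2-u_1)+p_1(q_1-q_2)$, lies in $L^r(\Omega)$, Theorem~\ref{thm:L_infty_reg} yields $\|p_1-p_2\|_{L^\infty(\Omega)}\lesssim\|q_1-q_2\|_{L^r(\Omega)}$. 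With that upgrade your argument goes through unchanged.
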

\begin{proof}
The fact that $j$ is of class $C^2$ is a direct consequence of the differentiability properties of the control to state map $\mathcal{S}$ given in Theorem \ref{thm:properties_C_to_S}. It is therefore sufficient to derive the identity \eqref{eq:charac_j2} and the inequality \eqref{eq:estimate_of_j2}. To accomplish this task, we proceed in two steps.

\emph{Step 1}. We first obtain \eqref{eq:charac_j2}. We begin with simple calculations showing that for every $q\in\mathbb{Q}_{ad}$ and $w \in L^{\infty}(\Omega)$ we have
\begin{equation}\label{eq:charac_j2_prev}
j''(q)w^2
=
\lambda\|w\|_{L^2(\Omega)}^2 + (u - u_{\Omega},\mathfrak{z})_{L^2(\Omega)}+\|z\|_{L^2(\Omega)}^2,
\end{equation}
where $z,\mathfrak{z} \in \tilde{H}^{s}(\Omega)$ are as in the statement of Theorem \ref{thm:properties_C_to_S} with $w_1 = w_2 = w$ and $z_w = z$. To obtain the desired identity \eqref{eq:charac_j2}, we proceed as follows. We set $v =  \mathfrak{z}$ in \eqref{eq:adj_eq} and $v = p$ in \eqref{eq:second_der_S}. This results in the relation
$
(u - u_{\Omega},\mathfrak{z})_{L^2(\Omega)} = -2(wz,p)_{L^2(\Omega)}.
$
Substituting this identity into \eqref{eq:charac_j2_prev}, we obtain \eqref{eq:charac_j2}.

\emph{Step 2}. We now prove \eqref{eq:estimate_of_j2}. Let $q_1,q_2\in\mathbb{Q}_{ad}$ and $w \in L^{\infty}(\Omega)$. Define $z_{1} = \mathcal{S}'(q_1)w$ and $z_{2} = \mathcal{S}'(q_2)w$. Note that $z_{1}$ and $z_{2}$ solve \eqref{eq:first_der_S} with $u_{1} := \mathcal{S}q_{1}$ and  $u_{2} := \mathcal{S}q_{2}$, respectively. We now use the derived identity \eqref{eq:charac_j2} to obtain
\begin{multline}
\label{eq:identity_jq1_jq2}
j''(q_1)w^2-j''(q_2)w^2 = 
 2(w[z_{2} - z_{1}],p_{2})_{L^2(\Omega)} + 2(wz_{1},p_{2} - p_{1})_{L^2(\Omega)} \\
 + (z_{1} - z_{2}, z_{1} + z_{2})_{L^2(\Omega)}
=: \mathbf{I} + \mathbf{II} + \mathbf{III},
\end{multline}
where $p_{i} \in \tilde{H}^{s}(\Omega)$ is the solution of \eqref{eq:adj_eq} where $u$ and $q$ are replaced by $u_{i}$ and $q_{i}$, respectively $( i \in \{ 1,2\} )$. In the following, $\mathbf{I}$, $\mathbf{II}$, and $\mathbf{III}$ are estimated separately.

We start with the estimation of $\mathbf{I}$. In light of Theorem \ref{thm:L_infty_reg}, the fact that $f,u_{\Omega}\in L^{r}(\Omega)$, for $r > d/2s$, yields the following bounds
\begin{equation}\label{eq:estimate_p2_infty}
\|u_{2}\|_{L^{\infty}(\Omega)} \lesssim \|f\|_{L^{r}(\Omega)},
\qquad
\|p_{2}\|_{L^{\infty}(\Omega)}
\lesssim \|f\|_{L^{r}(\Omega)} + \|u_{\Omega}\|_{L^{r}(\Omega)}.
\end{equation}
Consequently, $\mathbf{I} \lesssim \|w\|_{L^2(\Omega)}\|z_{2} - z_{1}\|_{L^2(\Omega)}$, with a hidden constant that depends on $ \|f\|_{L^{r}(\Omega)}$ and $\|u_{\Omega}\|_{L^{r}(\Omega)}$. To bound $\|z_{2} - z_{1}\|_{L^2(\Omega)}$, we observe that $z_{2} - z_{1}$ solves
\begin{equation*}
\mathcal{A}(z_{2} - z_{1},v) + (q_{2}(z_{2} - z_{1}),v)_{L^2(\Omega)} = (z_{1}[q_{1} - q_{2}] - w[u_{2} - u_{1}],v)_{L^2(\Omega)}
\end{equation*}
for all $v\in \tilde{H}^{s}(\Omega)$. A stability estimate for this problem in conjunction with H\"older's inequality and Lemma \ref{lemma:embedding_result} allows us to obtain that
\begin{equation}
\label{eq:estimate_z2_z1}
\|z_{2} - z_{1}\|_{s} \lesssim \|z_{1}\|_{L^{\frac{2d}{d-2s}}(\Omega)}\|q_{1} - q_{2}\|_{L^r(\Omega)} + \|w\|_{L^2(\Omega)}\|u_{2} - u_{1}\|_{L^{\infty}(\Omega)}.
\end{equation}
An estimate for the term $\|z_{1}\|_{L^{2d/(d-2s)}(\Omega)}$ follows directly from the well-posedness of problem \eqref{eq:first_der_S} and the Sobolev embedding of Lemma \ref{lemma:embedding_result}. In fact, we have
\begin{equation}\label{eq:estimate_z1}
\|z_{1}\|_{L^{\frac{2d}{d-2s}}(\Omega)} \lesssim \|z_{1}\|_{s} \lesssim \|w\|_{L^2(\Omega)}\|u_{1}\|_{L^{\infty}(\Omega)} \lesssim \|w\|_{L^2(\Omega)}\|f\|_{L^{r}(\Omega)}.
\end{equation}
On the other hand, we notice that $u_{2} - u_{1}\in \tilde{H}^{s}(\Omega)$ solves the following problem:
\begin{equation*}
\mathcal{A}(u_{2} - u_{1},v) + (q_{2}(u_{2} - u_{1}),v)_{L^2(\Omega)} = (u_{1}[q_{1} - q_{2}],v)_{L^2(\Omega)} \quad \forall v\in \tilde{H}^{s}(\Omega).
\end{equation*}
Since $u_{1}(q_{1} - q_{2})\in L^{r}(\Omega)$, an application of the results of Theorem \ref{thm:L_infty_reg} reveals that $\|u_{2} - u_{1}\|_{L^{\infty}(\Omega)}\lesssim \|u_{1}\|_{L^{\infty}(\Omega)}\|q_{2} - q_{1}\|_{L^{r}(\Omega)}\lesssim \|f\|_{L^{r}(\Omega)}\|q_{2} - q_{1}\|_{L^{r}(\Omega)}$. Replacing the estimates obtained for $\|z_{1}\|_{L^{2d/(d-2s)}(\Omega)}$ and $\|u_{2} - u_{1}\|_{L^{\infty}(\Omega)}$ into \eqref{eq:estimate_z2_z1} yields $\|z_{2} - z_{1}\|_{s} \lesssim \|f\|_{L^{r}(\Omega)}\|w\|_{L^{2}(\Omega)}\|q_{1} - q_{2}\|_{L^r(\Omega)}$. From this, we can conclude that
\begin{equation}\label{eq:estimate_I}
\mathbf{I} \lesssim \|w\|_{L^2(\Omega)}\|z_{2} - z_{1}\|_{L^2(\Omega)} \lesssim \|w\|_{L^{2}(\Omega)}\|z_{2} - z_{1}\|_{s} \lesssim \|w\|_{L^{2}(\Omega)}^2\|q_{1} - q_{2}\|_{L^r(\Omega)},
\end{equation}
with a hidden constant that depends on $ \|f\|_{L^{r}(\Omega)}$ and $\|u_{\Omega}\|_{L^{r}(\Omega)}$.

We now control the term $\mathbf{II}$. To accomplish this task, we utilize H\"older's inequality and \eqref{eq:estimate_z1} to obtain $\mathbf{II} \lesssim \|w\|_{L^2(\Omega)}\|z_{1}\|_{L^2(\Omega)}\|p_{2} - p_{1}\|_{L^{\infty}(\Omega)} \lesssim \|w\|_{L^2(\Omega)}^2\|p_{2} - p_{1}\|_{L^{\infty}(\Omega)}$. We now bound the term $\|p_{2} - p_{1}\|_{L^{\infty}(\Omega)}$ based on the fact that $p_{2} - p_{1}$ solves
\begin{equation*}
\mathcal{A}(v,p_{2} - p_{1}) + (q_{2}(p_{2} - p_{1}),v)_{L^2(\Omega)} = ([u_{2} - u_{1}] + p_{1}[q_{1} - q_{2}],v)_{L^2(\Omega)} \quad \forall v\in \tilde{H}^{s}(\Omega).
\end{equation*}
Since the right-hand side of the previous weak formulation belongs to $L^{r}(\Omega)$, Theorem \ref{thm:L_infty_reg} immediately implies that $p_{2} - p_{1}\in \tilde H^s(\Omega) \cap L^{\infty}(\Omega)$ together with the estimate
\begin{equation*}
\|p_{2} - p_{1}\|_{L^{\infty}(\Omega)}
\lesssim
\|u_{2} - u_{1}\|_{L^{\infty}(\Omega)} + \|p_{1}\|_{L^{\infty}(\Omega)}\|q_{2} - q_{1}\|_{L^{r}(\Omega)}.
\end{equation*}
We thus obtain, in view of $\|u_{2} - u_{1}\|_{L^{\infty}(\Omega)} \lesssim \|q_{2} - q_{1}\|_{L^{r}(\Omega)}$ and an analogue of \eqref{eq:estimate_p2_infty} for $\|p_{1}\|_{L^{\infty}(\Omega)}$, that $\|p_{2} - p_{1}\|_{L^{\infty}(\Omega)} \lesssim \|q_{2} - q_{1}\|_{L^{r}(\Omega)}$. Consequently,
\begin{equation}\label{eq:estimate_II}
\mathbf{II} \lesssim \|w\|_{L^2(\Omega)}^2\|p_{2} - p_{1}\|_{L^{\infty}(\Omega)} \lesssim \|w\|_{L^2(\Omega)}^2
\|q_{2} - q_{1}\|_{L^{r}(\Omega)}.
\end{equation}

Finally, we control $\mathbf{III}$. To do this, we invoke the bound \eqref{eq:estimate_z1}, an analogue of \eqref{eq:estimate_z1} for $z_2$, and $\|z_{2} - z_{1}\|_{s} \lesssim \|w\|_{L^{2}(\Omega)}\|q_{1} - q_{2}\|_{L^r(\Omega)}$. These arguments yield
\begin{align}\label{eq:estimate_III}
\mathbf{III} 
&\lesssim \|z_{2} - z_{1}\|_{L^{2}(\Omega)}(\|z_{1}\|_{L^2(\Omega)} + \|z_{2}\|_{L^2(\Omega)})  \\
&\lesssim \|z_{2} - z_{1}\|_{s}\|w\|_{L^2(\Omega)}
\lesssim \|w\|_{L^2(\Omega)}^2\|q_{2} - q_{1}\|_{L^r(\Omega)}.
\nonumber
\end{align} 

We conclude \eqref{eq:estimate_of_j2} by replacing \eqref{eq:estimate_I}, \eqref{eq:estimate_II}, and \eqref{eq:estimate_III} into \eqref{eq:identity_jq1_jq2}.
\end{proof}


\subsubsection{Second order necessary optimality conditions} 
Let $(\bar{u},\bar{p},\bar{q}) \in \tilde{H}^{s}(\Omega) \times \tilde{H}^{s}(\Omega) \times \mathbb{Q}_{ad}$ satisfy the first order optimality conditions \eqref{eq:weak_st_eq}, \eqref{eq:adj_eq}, and \eqref{eq:var_ineq_with_adj_state}. Define $\bar{\mathfrak{d}} :=  \lambda\bar{q} - \bar{u}\bar{p}$. The variational inequality \eqref{eq:var_ineq_with_adj_state} immediately yields, for a.e.~$x\in \Omega$,
\begin{equation}
\label{eq:derivative_j}
\bar{\mathfrak{d}}(x) 
= 0  \text{ if } a < \bar{q}(x) < b, 
\qquad
\bar{\mathfrak{d}}(x) \geq  0  \text{ if }\bar{q}(x)=a, 
\qquad
\bar{\mathfrak{d}}(x) \leq  0 \text{ if } \bar{q}(x)=b.
\end{equation}

In order to formulate second order conditions, we introduce the following \emph{cone of critical directions} inspired by \cite[definition (2.7)]{MR3878305} and \cite[Section 6, page 20]{MR3586845}:
\begin{equation}\label{def:critical_cone}
C_{\bar{q}}:=\{w\in L^{2}(\Omega) \text{ satisfying } \eqref{eq:sign_cond} \text{ and } w(x) = 0 \text{ if } \bar{\mathfrak{d}}(x) \neq 0\},
\end{equation}
where condition \eqref{eq:sign_cond} reads as follows:
\begin{equation}
\label{eq:sign_cond}
w(x)
\geq 0 \text{ a.e.}~x\in\Omega \text{ if } \bar{q}(x)=a,
\qquad
w(x)
\leq 0 \text{ a.e.}~x\in\Omega \text{ if } \bar{q}(x)=b.
\end{equation}

In the following, we review the arguments developed in the proof of \cite[Theorem 23]{MR3586845} and present second order necessary optimality conditions.

\begin{theorem}[second order necessary optimality conditions]
\label{thm:nec_opt_cond}
If $\bar{q}\in \mathbb{Q}_{ad}$ denotes a locally optimal control for \eqref{eq:weak_min_problem}--\eqref{eq:weak_st_eq}, then $j''(\bar{q})w^2 \geq 0$ for all $w\in C_{\bar{q}}$.
\end{theorem}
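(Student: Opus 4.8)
The plan is to follow the classical variational argument for second order necessary conditions, as in \cite[Theorem 23]{MR3586845}: given an arbitrary critical direction $w \in C_{\bar q}$, approximate it by directions along which $\bar q$ can be perturbed admissibly, expand the reduced cost to second order, exploit local optimality together with the vanishing of the first order term, and pass to the limit. The only genuinely delicate ingredient is the construction of the admissible approximations.

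\emph{Step 1 (admissible approximation of critical directions).} Fix $w \in C_{\bar q}$ and, for $k \in \mathbb{N}$, define $w_k \in L^\infty(\Omega)$ by $w_k(x) := 0$ if $a < \bar q(x) < a + \tfrac1k$ or $b - \tfrac1k < \bar q(x) < b$, and $w_k(x) := \Pi_{[-k,k]}(w(x))$ otherwise. I would then check three properties. First, $w_k \in L^\infty(\Omega)$ and there is $t_k := \min\{k^{-2},(b-a)k^{-1}\}>0$ such that $\bar q + t w_k \in \mathbb{Q}_{ad}$ for every $0 \le t \le t_k$: on $\{a + \tfrac1k \le \bar q \le b - \tfrac1k\}$ because $|t w_k| \le t k \le \tfrac1k$; on $\{\bar q = a\}$ because $w \geq 0$ there by \eqref{eq:sign_cond}, so $w_k = \min\{w,k\} \in [0,k]$ and $\bar q + t w_k \in [a,b]$; symmetrically on $\{\bar q = b\}$; and on the two thin strips because $w_k \equiv 0$ there. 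Second, $\{w_k \neq 0\} \subset \{w \neq 0\} \subset \{\bar{\mathfrak{d}} = 0\}$ by the definition of $C_{\bar q}$, hence $\bar{\mathfrak{d}}\, w_k = 0$ a.e.\ in $\Omega$. Third, $|w_k| \le |w|$ pointwise and $w_k \to w$ a.e., so $w_k \to w$ in $L^2(\Omega)$ by dominated convergence.

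\emph{Step 2 (second order expansion and first passage to the limit).} Since $\|t w_k\|_{L^2(\Omega)} \le t\|w\|_{L^2(\Omega)} \to 0$ as $t \downarrow 0$ and $\bar q + t w_k \in \mathbb{Q}_{ad}$ for small $t$, local optimality of $\bar q$ gives $j(\bar q + t w_k) \geq j(\bar q)$ for all sufficiently small $t>0$. As $j$ is of class $C^2$ (Theorem \ref{thm:diff_properties_j}), there is $\theta=\theta(k,t)\in(0,1)$ with
\begin{equation*}
0 \le j(\bar q + t w_k) - j(\bar q) = t\, j'(\bar q) w_k + \tfrac{t^2}{2}\, j''(\bar q + \theta t w_k)\, w_k^2 .
\end{equation*}
Since $j'(\bar q) v = (\lambda \bar q - \bar u \bar p, v)_{L^2(\Omega)} = (\bar{\mathfrak{d}}, v)_{L^2(\Omega)}$, Step 1 yields $j'(\bar q) w_k = 0$. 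Dividing by $t^2/2$, letting $t \downarrow 0$, and using that $\bar q + \theta t w_k \to \bar q$ in $L^\infty(\Omega)$ (because $\|t w_k\|_{L^\infty(\Omega)} \le tk$) together with the continuity of $q \mapsto j''(q)$ inherited from $j \in C^2$, I obtain $j''(\bar q) w_k^2 \geq 0$ for every $k \in \mathbb{N}$.

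\emph{Step 3 (second passage to the limit) and main obstacle.} Letting $k \to \infty$, using $w_k \to w$ in $L^2(\Omega)$ and the continuity of the quadratic form $v \mapsto j''(\bar q) v^2$ on $L^2(\Omega)$ — which follows from the characterization \eqref{eq:charac_j2} and the continuous dependence of $z = \mathcal{S}'(\bar q) v$ on $v$ — I conclude $j''(\bar q) w^2 \geq 0$. I expect the main obstacle to be Step 1: the truncation must be designed so that, simultaneously, the perturbed controls remain admissible for a range of step sizes \emph{independent of $x$} (which forces both the cut to $[-k,k]$ and the vanishing near the bounds), the orthogonality $(\bar{\mathfrak{d}}, w_k)_{L^2(\Omega)} = 0$ is preserved so that the first order term drops out, and $w_k \to w$ strongly in $L^2(\Omega)$. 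A secondary point requiring care is the $L^2$-continuity of $j''(\bar q)$ used in Step 3, which is where the regularity of $\bar u$ and $\bar p$ enters (for instance $\bar u, \bar p \in L^\infty(\Omega)$, available when $f, u_\Omega \in L^r(\Omega)$ with $r > d/2s$, makes the cross term in \eqref{eq:charac_j2} continuous in $v \in L^2(\Omega)$).
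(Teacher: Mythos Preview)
Your proposal is correct and follows essentially the same argument as the paper: the same truncation $w_k$, the same Taylor expansion with $j'(\bar q)w_k=0$, and the same two passages to the limit via $w_k\to w$ in $L^2(\Omega)$ and the characterization \eqref{eq:charac_j2}. The only cosmetic difference is that in Step~2 you invoke the $C^2$-continuity of $j$ (using $\bar q+\theta t w_k\to\bar q$ in $L^\infty(\Omega)$) where the paper cites the Lipschitz estimate \eqref{eq:estimate_of_j2}; both arguments are valid.
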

\begin{proof}
Let $w\in C_{\bar{q}}$. Define, for every $k \in \mathbb{N}$ and for a.e.~$x\in\Omega$, the function
\begin{equation*}
w_{k}(x):=
\begin{cases}
\qquad \quad 0 \quad &\text{ if }\quad x: a < \bar{q}(x) < a + k^{-1}, 
\quad 
b-k^{-1} < \bar{q}(x) < b, 
\\
\Pi_{[-k,k]}(w(x)) &\text{ otherwise}. 
\end{cases}
\end{equation*}
Since $w\in C_{\bar{q}}$, it is immediate that $w_{k} \in C_{\bar{q}} \cap L^{\infty}(\Omega)$. On the other hand, we have that $w_{k}(x)\rightarrow w(x)$ as $k \uparrow \infty$ for a.e.~$x \in \Omega$ and that $|w_{k}(x)|\leq |w(x)|$ for a.e.~$x \in \Omega$. Consequently, $w_{k} \to w$ in $L^2(\Omega)$ as $k \uparrow \infty$. We now note that, for $\rho$ sufficiently small, or more precisely, for $\rho \in (0, k^{-2}]$, $\bar{q}+\rho w_{k}$ belongs to $\mathbb{Q}_{ad}$ for every $k \in \mathbb{N}$. Since $\bar{q}+\rho w_{k}$ is thus admissible, we rely on the fact that $\bar{q}$ is a local minimizer to arrive at the basic inequality $j(\bar{q}) \leq j(\bar{q}+\rho w_{k})$ when $\rho$ is sufficiently small. We now use the relation $j'(\bar{q})w_{k}=0$, which follows from the fact that $w_{k} \in C_{\bar{q}}$, and Taylor's theorem for $j$ at $\bar{q}$ to obtain that, for $\rho$ sufficiently small,
\begin{equation*}
0
\leq
j(\bar{q}+\rho w_{k})-j(\bar{q}) 
=
\rho j'(\bar{q})w_{k}+\tfrac{\rho^2}{2}j''(\bar{q}+\rho\theta_{k}w_{k})w_{k}^2
=
\tfrac{\rho^2}{2}j''(\bar{q}+\rho\theta_{k}w_{k})w_{k}^2,
\end{equation*}
with $\theta_{k} \in (0,1)$. We now let $\rho\downarrow 0$ to obtain, on the basis of the estimate \eqref{eq:estimate_of_j2},
\[
 |j''(\bar{q}+\rho\theta_{k}w_{k})w_{k}^2 - j''(\bar{q})w_{k}^2| \lesssim \| w_k\|^2_{L^2(\Omega)} \rho \theta_k\| w_k\|_{L^r(\Omega)} \rightarrow 0, \quad \rho \downarrow 0,
\]
which implies that $j''(\bar{q})w_{k}^2 \geq 0$ for every $k \in \mathbb{N}$. Let us now invoke the convergence property $w_{k} \to w$ in $L^2(\Omega)$ as $k \uparrow \infty$ and \eqref{eq:charac_j2} to conclude that $j''(\bar{q})w^2 \geq 0$; see the proof of Theorem \ref{thm:optimal_solution} below for further details. This concludes the proof.
\end{proof}


\subsubsection{Second order sufficient optimality conditions}
In this section, we follow the arguments elaborated in the proof of \cite[Theorem 2.3]{MR2902693} and \cite[Theorem 23]{MR3586845} and prove a sufficient second order optimality condition with a minimal gap with respect to the necessary condition derived in Theorem \ref{thm:nec_opt_cond}.

\begin{theorem}[second order sufficient optimality conditions]
\label{thm:optimal_solution}
Let $(\bar{u},\bar{p},\bar{q}) \in \tilde{H}^{s}(\Omega) \times\tilde{H}^{s}(\Omega) \times \mathbb{Q}_{ad}$ satisfy the first order optimality conditions \eqref{eq:weak_st_eq}, \eqref{eq:adj_eq}, and \eqref{eq:var_ineq_with_adj_state}. If $j''(\bar{q})w^2 > 0$ for all $w\in C_{\bar{q}}\setminus \{0\}$, then there exist $\delta > 0$ and $\sigma > 0$ such that
\begin{equation*}
\label{eq:optimal_minimum}
j(q)\geq j(\bar{q})+\tfrac{\delta}{2}\|q-\bar{q}\|_{L^2(\Omega)}^2\quad \forall q\in \mathbb{Q}_{ad}: \|q-\bar{q}\|_{L^{2}(\Omega)}\leq \sigma.
\end{equation*}
In particular, $\bar{q}$ is a locally optimal control in the sense of $L^{2}(\Omega)$.
\end{theorem}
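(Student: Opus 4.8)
The plan is a contradiction argument in the spirit of \cite[Theorem 2.3]{MR2902693} and \cite[Theorem 23]{MR3586845}. Since the Lipschitz bound \eqref{eq:estimate_of_j2} is needed below, I work, as in Theorem \ref{thm:diff_properties_j}, under the hypothesis $f,u_{\Omega}\in L^{r}(\Omega)$ with $r>d/2s$, so that Theorem \ref{thm:L_infty_reg} gives $\bar u,\bar p\in L^{\infty}(\Omega)$. A preliminary observation is that the identity \eqref{eq:charac_j2} extends from $w\in L^{\infty}(\Omega)$ to every $w\in L^{2}(\Omega)$: since $\bar u\in L^{\infty}(\Omega)$, the map $w\mapsto z_{w}=\mathcal{S}'(\bar q)w$ extends continuously to $L^{2}(\Omega)\to\tilde{H}^{s}(\Omega)$, and every term on the right-hand side of \eqref{eq:charac_j2} is continuous in $w$ for the $L^{2}$-topology; thus $j''(\bar q)w^{2}$, and hence the hypothesis, are meaningful for all $w\in C_{\bar q}$.

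Assume the conclusion fails. Then for each $k\in\mathbb{N}$ there is $q_{k}\in\mathbb{Q}_{ad}$ with $\rho_{k}:=\|q_{k}-\bar q\|_{L^{2}(\Omega)}\le 1/k$, $\rho_{k}>0$, and $j(q_{k})<j(\bar q)+\tfrac{1}{2k}\rho_{k}^{2}$. Put $w_{k}:=(q_{k}-\bar q)/\rho_{k}\in L^{\infty}(\Omega)$, so $\|w_{k}\|_{L^{2}(\Omega)}=1$, and pass to a subsequence with $w_{k}\rightharpoonup w$ in $L^{2}(\Omega)$. A second order Taylor expansion of $j$ at $\bar q$, together with $j'(\bar q)(q_{k}-\bar q)=(\bar{\mathfrak{d}},q_{k}-\bar q)_{L^{2}(\Omega)}$, provides $\theta_{k}\in(0,1)$ and $\hat q_{k}:=\bar q+\theta_{k}(q_{k}-\bar q)$ such that
\[
 \rho_{k}(\bar{\mathfrak{d}},w_{k})_{L^{2}(\Omega)}+\tfrac{\rho_{k}^{2}}{2}\,j''(\hat q_{k})w_{k}^{2}=j(q_{k})-j(\bar q)<\tfrac{1}{2k}\rho_{k}^{2}.
\]
Because $w_{k}$ satisfies the sign conditions \eqref{eq:sign_cond}, relation \eqref{eq:derivative_j} gives $\bar{\mathfrak{d}}\,w_{k}\ge 0$ a.e., hence $(\bar{\mathfrak{d}},w_{k})_{L^{2}(\Omega)}\ge 0$. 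Combined with the display this yields $j''(\hat q_{k})w_{k}^{2}<1/k$; since in addition $j''(\hat q_{k})w_{k}^{2}$ stays bounded (a consequence of $\bar u,\bar p\in L^{\infty}(\Omega)$, \eqref{eq:charac_j2} and \eqref{eq:estimate_of_j2}), the display also forces $(\bar{\mathfrak{d}},w_{k})_{L^{2}(\Omega)}=O(\rho_{k})\to 0$.

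The next, and most delicate, step is to show $w\in C_{\bar q}$. The sign conditions \eqref{eq:sign_cond} survive the weak limit because they define a closed convex subset of $L^{2}(\Omega)$; to obtain $w=0$ on $\{\bar{\mathfrak{d}}\neq 0\}$ I would invoke Mazur's lemma to produce convex combinations of the $w_{k}$ converging strongly in $L^{2}(\Omega)$ to $w$, and pass the relations $(\bar{\mathfrak{d}},\cdot)_{L^{2}(\Omega)}\to 0$ and $\bar{\mathfrak{d}}\,(\cdot)\ge 0$ a.e. through them to get $(\bar{\mathfrak{d}},w)_{L^{2}(\Omega)}=0$ with $\bar{\mathfrak{d}}\,w\ge 0$ a.e., i.e. $\bar{\mathfrak{d}}\,w=0$ a.e. With $w\in C_{\bar q}$ in hand I close the argument: on one side, \eqref{eq:estimate_of_j2} together with $\|\hat q_{k}-\bar q\|_{L^{r}(\Omega)}\le\|q_{k}-\bar q\|_{L^{r}(\Omega)}\to 0$ (from $a\le q_{k},\bar q\le b$ a.e. and $|\Omega|<\infty$) gives $\limsup_{k}j''(\bar q)w_{k}^{2}\le 0$; on the other side, $\bar u\in L^{\infty}(\Omega)$ implies $w_{k}\bar u\rightharpoonup w\bar u$ in $L^{2}(\Omega)$, so by well-posedness of \eqref{eq:first_der_S} and the compact embedding $\tilde{H}^{s}(\Omega)\hookrightarrow L^{2}(\Omega)$ one has $z_{w_{k}}\to z_{w}$ in $L^{2}(\Omega)$, and then \eqref{eq:charac_j2} and weak lower semicontinuity of $\|\cdot\|_{L^{2}(\Omega)}^{2}$ yield $\liminf_{k}j''(\bar q)w_{k}^{2}\ge j''(\bar q)w^{2}$. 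Hence $j''(\bar q)w^{2}\le 0$, and the hypothesis forces $w=0$. But then $z_{w_{k}}\to 0$ in $L^{2}(\Omega)$, so \eqref{eq:charac_j2} gives $j''(\bar q)w_{k}^{2}=\lambda\|w_{k}\|_{L^{2}(\Omega)}^{2}+o(1)=\lambda+o(1)$, contradicting $\limsup_{k}j''(\bar q)w_{k}^{2}\le 0$ because $\lambda>0$. This proves the existence of $\delta,\sigma>0$ with the stated quadratic growth; the final claim follows since $j(q)=J(\mathcal{S}q,q)$, so $j(q)\ge j(\bar q)$ whenever $\|q-\bar q\|_{L^{2}(\Omega)}\le\sigma$ is exactly the statement that $\bar q$ is $L^{2}(\Omega)$-locally optimal.

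I expect the crux of the proof to be the verification $w\in C_{\bar q}$ — concretely, that $w$ vanishes on $\{\bar{\mathfrak{d}}\neq 0\}$ — because this is where the complementarity structure \eqref{eq:derivative_j} must be coupled with the first order term of the Taylor expansion and the sign of the products $\bar{\mathfrak{d}}\,w_{k}$ passed to the limit. By comparison, the remaining ingredients (the lower-semicontinuity behaviour of $j''(\bar q)(\cdot)^{2}$ along a weakly convergent sequence, secured by compactness of $\mathcal{S}'(\bar q)$, and the Lipschitz estimate \eqref{eq:estimate_of_j2}) are routine.
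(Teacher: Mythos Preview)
Your proof is correct and follows the same contradiction strategy as the paper's proof: negate the conclusion, normalize $w_k=(q_k-\bar q)/\rho_k$, show the weak limit $w$ lies in $C_{\bar q}$, deduce $j''(\bar q)w^2\le 0$ and hence $w=0$, and obtain the contradiction $\lambda\le 0$. The tactical differences are two. First, in showing $w\in C_{\bar q}$, you extract $(\bar{\mathfrak d},w_k)\to 0$ directly from the second-order Taylor expansion at $\bar q$ together with boundedness of $j''(\hat q_k)w_k^2$; the paper instead applies the mean value theorem to $j$ to obtain $j'(\tilde q_k)w_k\to 0$ and then proves $\tilde{\mathfrak d}_k\to\bar{\mathfrak d}$ in $L^2(\Omega)$. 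Second, in showing $j''(\bar q)w^2\le 0$, you first transfer $j''(\hat q_k)w_k^2$ to $j''(\bar q)w_k^2$ via the Lipschitz estimate \eqref{eq:estimate_of_j2} and then use lower semicontinuity of $j''(\bar q)(\cdot)^2$; the paper instead carries out a direct $\liminf$ analysis of $j''(\hat q_k)w_k^2$ involving the auxiliary variables $\hat u_k,\hat p_k,\hat z_k$ and their convergence. Your route is somewhat shorter precisely because \eqref{eq:estimate_of_j2} absorbs the drift of the base point, at the price of invoking that estimate (and hence the integrability hypothesis $f,u_\Omega\in L^r(\Omega)$) as an ingredient.

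One minor point: the Mazur step is unnecessary. Since $\bar{\mathfrak d}=\lambda\bar q-\bar u\bar p\in L^2(\Omega)$ and $w_k\rightharpoonup w$ in $L^2(\Omega)$, one already has $(\bar{\mathfrak d},w_k)_{L^2(\Omega)}\to(\bar{\mathfrak d},w)_{L^2(\Omega)}=0$; and the pointwise sign $\bar{\mathfrak d}\,w\ge 0$ follows directly from \eqref{eq:derivative_j} together with the sign conditions \eqref{eq:sign_cond}, which $w$ inherits as a weak limit of the $w_k$.
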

\begin{proof}
We proceed by contradiction and assume that for every natural number $k$ there exists an element $q_{k}\in \mathbb{Q}_{ad}$ such that
\begin{equation}\label{eq:contradic_I}
\|\bar{q}-q_{k}\|_{L^{2}(\Omega)} < k^{-1},
\qquad 
j(q_{k}) < j(\bar{q}) + (2k)^{-1}\|\bar{q}-q_{k}\|_{L^2(\Omega)}^2.
\end{equation}
Let us introduce, for $k \in \mathbb{N}$, 
$
\rho_{k}:=\|\bar{q}-q_{k}\|_{L^2(\Omega)} 
$
and
$
w_{k}:=\rho_{k}^{-1}(q_{k}-\bar{q}).
$
Note that, for every $k \in \mathbb{N}$, $\| w_k\|_{L^2(\Omega)} = 1$. We can therefore assume, taking a subsequence if necessary, that $w_{k} \rightharpoonup w$ in $L^2(\Omega)$ as $k \uparrow \infty$. We now prove that the limit point $w$ belongs to the cone $C_{\bar{q}}$ and then that $w\equiv 0$. We proceed in three steps.

\emph{Step 1.} \emph{$w \in C_{\bar q}$}. We first note that the set of elements satisfying \eqref{eq:sign_cond} is closed and convex in $L^2(\Omega)$; it is thus weakly sequentially closed and therefore $w$ also satisfies  \eqref{eq:sign_cond}. To verify the remaining condition in \eqref{def:critical_cone}, we apply the mean value theorem and the estimate of the right-hand side in \eqref{eq:contradic_I} to obtain
\begin{equation}\label{eq:contradic_II}
 j'(\tilde{q}_{k})w_{k}  = \rho_k^{-1}(j(q_{k}) - j(\bar{q}))< \rho_{k}(2k )^{-1}\to 0,
\quad
k \uparrow \infty,
\end{equation}
where $\tilde{q}_{k}=\bar{q}+ \theta_{k}(q_{k} - \bar{q})$ and $\theta_k \in (0,1)$. Define $\tilde{u}_{k}:= \mathcal{S}\tilde{q}_{k}$ and $\tilde{p}_{k}$ as the unique solution to \eqref{eq:adj_eq} with $u$ and $q$ replaced by $\tilde{u}_{k}$ and $\tilde{q}_{k}$, respectively. Since $\tilde q_k \rightarrow \bar q$ in $L^2(\Omega)$ as $k \uparrow \infty$, a basic stability bound for the problem that $\bar{u} - \tilde{u}_k$ solves shows that  $\tilde{u}_{k} \to \bar{u}$ in $\tilde{H}^s(\Omega)$ as $k \uparrow \infty$. With this convergence property at hand, a similar argument yields $\tilde{p}_{k}\to \bar{p}$ in $\tilde{H}^s(\Omega)$ as $k \uparrow \infty$. In particular, $\tilde{p}_{k}\to \bar{p}$ in $L^2(\Omega)$ as $k \uparrow\infty$. We can thus conclude that $\lambda \tilde{q}_k - \tilde{u}_{k}\tilde{p}_k  =:\tilde{\mathfrak{d}}_k \rightarrow \bar{\mathfrak{d}} =  \lambda \bar q  - \bar{u}\bar{p}$ in $L^2(\Omega)$ as $k\uparrow\infty$, upon using that $\{\tilde{u}_k\}_{k \in \mathbb{N}}$ and $\{\tilde{p}_k\}_{k\in\mathbb{N}}$ are uniformly bounded in $L^{\infty}(\Omega)$. We now invoke the weak convergence $w_{k} \rightharpoonup w$ in $L^{2}(\Omega)$ as $k \uparrow\infty$ and \eqref{eq:contradic_II} to deduce that
\begin{equation*}
j'(\bar{q})w  = \int_{\Omega} \bar{\mathfrak{d}}(x) w(x) \mathrm{d}x = \lim_{k \uparrow \infty}
\int_{\Omega} \tilde{\mathfrak{d}}_k(x) w_k(x) \mathrm{d}x = \lim_{k \uparrow \infty}j'(\tilde{q}_{k})w_{k} \leq 0.
\end{equation*}
On the other hand, from \eqref{eq:var_ineq_with_adj_state} we have $j'(\bar{q})w  = \lim_{k\uparrow\infty}\int_{\Omega} \bar{\mathfrak{d}}(x) w_{k}(x) \mathrm{d}x \geq 0$. Consequently, $\int_{\Omega} \bar{\mathfrak{d}}(x)w(x) \mathrm{d}x = 0$. Since $w$ satisfies the condition \eqref{eq:sign_cond} and $\bar{\mathfrak{d}}$ satisfies the condition \eqref{eq:derivative_j}, we infer that $\int_{\Omega} |\bar{\mathfrak{d}}(x)w(x)| \mathrm{d}x = \int_{\Omega} \bar{\mathfrak{d}}(x)w(x) \mathrm{d}x = 0$. This proves that, a.e.~in $\Omega$, $\bar{\mathfrak{d}} \neq 0$ implies that $w = 0$. Consequently, $w\in C_{\bar{q}}$.

\emph{Step 2}. \emph{$w\equiv 0$.} With the help of Taylor's theorem, the basic inequality $j'(\bar{q})(q_{k}-\bar{q})\geq 0$, and the estimate of the right-hand side in \eqref{eq:contradic_I} we arrive at
\begin{equation*}
\tfrac{\rho_{k}^2}{2}j''(\hat{q}_{k})w_{k}^2
=
j(q_k) - j(\bar{q})-j'(\bar{q})(q_k - \bar{q})
\leq 
j({q}_{k})-j(\bar{q}) < \rho_{k}^2(2k)^{-1},
\quad
k \in \mathbb{N},
\end{equation*}
where $\hat{q}_{k}:=\bar{q}+\hat{\theta}_{k}(q_{k} - \bar{q})$ and $\hat{\theta}_{k} \in (0,1)$. Thus, $j''(\hat{q}_{k})w_{k}^2 < k^{-1} \rightarrow 0$ as $k\uparrow \infty$.

We now show that $j''(\bar{q})w^2 \leq \liminf_{k \uparrow \infty }j''(\hat{q}_{k})w_{k}^2$. As a first step, we invoke the characterization \eqref{eq:charac_j2} to write
\begin{equation*}
j''(\hat{q}_k)w_{k}^2
=
\lambda\|w_{k}\|_{L^2(\Omega)}^2  - 2(w_{k}\hat{z}_{k},\hat{p}_k)_{L^2(\Omega)}+\|\hat{z}_{k}\|_{L^2(\Omega)}^2. 
\end{equation*}
Here, $\hat{p}_k$ denotes the solution to \eqref{eq:adj_eq} with $q$ and $u$ replaced by $\hat{q}_k$ and $\hat{u}_k := \mathcal{S} \hat{q}_k$, respectively, and $\hat{z}_k$ solves \eqref{eq:first_der_S} with $q$, $w$, and $u$ replaced by $\hat{q}_k$, $w_k$, and $\hat{u}_k$, respectively. We note that $\{ \hat{u}_k \}_{k\in \mathbb{N}}$ and $\{ \hat{p}_k \}_{k\in \mathbb{N}}$ are uniformly bounded in $L^{\infty}(\Omega)$. In fact, for $k \in \mathbb{N}$, it holds that
$
 \| \hat{u}_k \|_{L^{\infty}(\Omega)} \lesssim \|f\|_{L^r(\Omega)}
$
and
$
 \| \hat{p}_k \|_{L^{\infty}(\Omega)} \lesssim \|f\|_{L^r(\Omega)} + \|u_{\Omega}\|_{L^r(\Omega)}.
$
With the available estimates, we can therefore derive
\[
\| \bar{u} - \hat{u}_k \|_s +  \| \bar{p} - \hat{p}_k \|_s
\lesssim
\left( \|f \|_{L^r(\Omega)} +  \|u_{\Omega} \|_{L^r(\Omega)}\right)
\| \bar{q} - \hat{q}_k \|_{L^2(\Omega)} \rightarrow 0,
\quad
k \uparrow \infty,
\]
upon using basic stability bounds for the problems that $\bar{u} - \hat{u}_k$ and $\bar{p} - \hat{p}_k$ solve. 

Let $z$ solve \eqref{eq:first_der_S}, where $q$ and $u$ are replaced by $\bar{q}$ and $\bar{u}$, respectively. We now investigate the convergence of $\{ \hat{z}_k \}_{k\in \mathbb{N}}$. First, we write the problem that $z - \hat{z}_k$ solves
\begin{multline*}
 A(z - \hat{z}_k,v) + (\bar{q}(z - \hat{z}_k),v)_{L^2(\Omega)}
 =-((w-w_k)\bar{u},v)_{L^2(\Omega)}
 \\
+ (w_k(\hat{u}_k-\bar{u}),v)_{L^2(\Omega)}
 +
 ((\hat{q}_k-\bar{q})\hat{z}_k,v)_{L^2(\Omega)}
 =: \mathrm{I}_k + \mathrm{II}_k + \mathrm{III}_k
 \quad \forall v \in \tilde{H}^s(\Omega).
\end{multline*}
Since $\bar{u} \in L^{\infty}(\Omega)$ and $w_{k} \rightharpoonup w$ in $L^{2}(\Omega)$ as $k \uparrow \infty$, it is immediate that $|\mathrm{I}_{k}| \rightarrow 0$. To analyze the convergence of $\mathrm{II}_k$, we observe that, for every $\tau \in (2,\infty)$, we have
\begin{equation}\label{eq:Lp_conv_uk}
 \| \bar{u} - \hat{u}_k \|^{\tau}_{L^{\tau}(\Omega)} \leq \| |\bar{u} - \hat{u}_k|^{\tau-2} \|_{L^{\infty}(\Omega)} \| \bar{u} - \hat{u}_k \|^2_{L^2(\Omega)}\lesssim \| f \|^{\tau-2}_{L^r(\Omega)}\| \bar{u} - \hat{u}_k \|^2_{L^2(\Omega)} \rightarrow 0
\end{equation}
as $k\uparrow \infty$, exploiting the fact that $\{ \hat{u}_k \}_{k\in \mathbb{N}}$ is uniformly bounded in $L^{\infty}(\Omega)$ and the bound of Theorem \ref{thm:L_infty_reg}. With this result at hand, we control the term $\mathrm{II}_k$ as follows:
\[
 |\mathrm{II}_k| \leq \| w_k\|_{L^2(\Omega)} \| \bar{u} - \hat{u}_k \|_{L^{\tau}(\Omega)} \| v \|_{L^{\mu}(\Omega)},
 \qquad
 2^{-1} + \tau^{-1} + {\mu}^{-1} = 1.
\]
Set $\mu = 2d/(d-2s)$ (see Lemma \ref{lemma:embedding_result}) and note that $\mu > 2$. Since $\hat{u}_k \rightarrow \bar{u}$ in $L^{\tau}(\Omega)$ for every $\tau \in (2,\infty)$, we can thus conclude that $|\mathrm{II}_k| \rightarrow 0$ as $k\uparrow \infty$. We now analyze the convergence of $\mathrm{III}_{k}$. First, we note that a stability bound for the problem that $\hat{z}_{k}$ solves, namely $\|\hat{z}_{k}\|_{s} \lesssim \|f\|_{L^{r}(\Omega)}$, for every $k\in\mathbb{N}$, yields
\[
\|\hat{z}_{k}\|_{L^{t}(\Omega)} \lesssim \|f\|_{L^{r}(\Omega)}, 
\qquad 
k \in \mathbb{N}, 
\qquad  
t \in \left[1, \frac{2d}{d - 2s}\right]. 
\] 
Second, as in \eqref{eq:Lp_conv_uk}, $\hat{q}_{k} \rightarrow \bar{q}$ in $L^{2}(\Omega)$ as $k \uparrow \infty$ combined with the fact that $\{\hat{q}_{k}\}$ is uniformly bounded in $L^{\infty}(\Omega)$ allows us to obtain that $\hat{q}_{k} \rightarrow \bar{q}$ in $L^{\tau}(\Omega)$ for every $\tau \in (2,\infty)$. A simple application of H\"older's inequality thus shows that
\[
|\textrm{III}_{k}| \leq \|\hat{z}_{k}\|_{L^{t}(\Omega)}\|\hat{q}_{k} - \bar{q}\|_{L^{\tau}(\Omega)}\|v\|_{L^{\mu}(\Omega)},
\qquad
t^{-1} + \tau^{-1} + {\mu}^{-1} = 1.
\]
Since $\hat{q}_k \rightarrow \bar{q}$ in $L^{\tau}(\Omega)$, for every $\tau < \infty$, we conclude that $|\mathrm{III}_k| \rightarrow 0$ as $k\uparrow \infty$. We have therefore proved that $|\mathrm{I}_k|, |\mathrm{II}_k|, |\mathrm{III}_k| \rightarrow 0$ as $k \uparrow \infty$. Consequently, $\hat{z}_k \rightharpoonup z$ in $\tilde{H}^s(\Omega)$. In view of Lemma \ref{lemma:embedding_result}, this results in
\begin{equation}
\hat{z}_k \rightarrow z \textrm{ in } L^{\sigma}(\Omega), \quad \sigma < 2d/(d-2s) \implies \| \hat{z}_k \|^2_{L^2(\Omega)} \rightarrow \|z\|_{L^2(\Omega)}^2, \quad k \uparrow \infty.
\label{eq:z_in_L2}
\end{equation}

On the other hand,
\begin{multline}
|(wz,\bar{p})_{L^2(\Omega)} - (w_{k}\hat{z}_{k} ,\hat{p}_k)_{L^2(\Omega)}| \leq |((w  - w_{k})z,\bar{p})_{L^2(\Omega)}| 
\\
+ |(w_k(z - \hat{z}_{k}),\bar{p})_{L^2(\Omega)}|
+ |(w_k\hat{z}_{k},\bar{p}-\hat{p}_k)_{L^2(\Omega)}|
\rightarrow 0, \quad k \uparrow \infty,
\label{eq:wzp}
\end{multline}
because $w_{k} \rightharpoonup w$ in $L^2(\Omega)$ as $k \uparrow \infty$, 
$z\bar{p} \in L^{2}(\Omega)$, $\| w_k \|_{L^2(\Omega)} = 1$, $\| z - \hat{z}_k \|_{L^2(\Omega)} \rightarrow 0$ as $k \uparrow \infty$, $\bar{p} \in L^{\infty}(\Omega)$, and $\| \bar{p}-\hat{p}_k \|_{L^{\tau}(\Omega)} \rightarrow 0$, as $k \uparrow \infty$, for every $\tau \in (2,\infty)$. 

Finally, we invoke \eqref{eq:z_in_L2}, \eqref{eq:wzp}, and the fact that the square of the $L^2(\Omega)$-norm is weakly lower semicontinuous in $L^2(\Omega)$ to deduce that $j''(\bar{q})w^2 \leq \liminf_{k} j''(\hat{q}_k)w_{k}^2$.

As a result, since $j''(\bar{q})w^2 \leq \liminf_{k} j''(\hat{q}_k)w_{k}^2$, $j''(\hat{q}_k)w_{k}^2 < k^{-1} \rightarrow 0$ as $k \uparrow \infty$, and $w \in C_{\bar{q}}$, the optimality condition $j''(\bar{q})w^2 >0$, for all $w \in C_{\bar{q}} \setminus \{ 0 \}$, yields $w\equiv 0$.

\emph{Step 3.} \emph{Contradiction}. We finally arrive at the contradiction. Since $w\equiv 0$, it is immediate that $\hat{z}_{k} \rightharpoonup 0$ in $\tilde{H}^s(\Omega)$ as $k \uparrow \infty$. Hence, from the equalities
\begin{equation*}
\lambda = \lambda\|w_{k}\|_{L^2(\Omega)}^2 = j''(\hat{q}_{k})w_{k}^2 + 2(w_{k}\hat{z}_{k},\hat{p}_k)_{L^2(\Omega)} - \|\hat{z}_{k}\|_{L^2(\Omega)}^2,
\end{equation*}
and the fact that  $\liminf_{k}j''(\hat{q}_{k})w_{k}^2 \leq 0$, we conclude that $\lambda \leq 0$. This is a contradiction and concludes the proof.
\end{proof}

The following result establishes an equivalent representation of the second order optimality condition introduced in Theorem \ref{thm:optimal_solution}. This equivalence is important for deriving error estimates for the methods proposed in our work. To present it, we introduce $C_{\bar{q}}^\tau:=\{w\in L^{2}(\Omega) \textnormal{ satisfying \eqref{eq:sign_cond} and } w(x)=0 \textnormal{ if } |\bar{\mathfrak{d}}(x)|>\tau\}$.

\begin{theorem}[equivalent optimality conditions]\label{thm:equivalent_opt_cond}
Let $(\bar{u},\bar{p},\bar{q}) \in \tilde{H}^{s}(\Omega) \times \tilde{H}^{s}(\Omega) \times \mathbb{Q}_{ad}$ satisfy the first order optimality conditions \eqref{eq:weak_st_eq}, \eqref{eq:adj_eq}, and \eqref{eq:var_ineq_with_adj_state}. Then, the following statements are equivalent:
\begin{equation}
\label{eq:second_order_2_2}
j''(\bar{q})w^2 > 0 \, \forall w \in C_{\bar{q}}\setminus \{0\}
\Longleftrightarrow
\exists \mu, \tau >0: 
\,\,
j''(\bar{q})w^2 \geq \mu \|w\|_{L^2(\Omega)}^2 
\,\, 
\forall w \in C_{\bar{q}}^\tau.
\end{equation}
\end{theorem}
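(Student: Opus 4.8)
The plan is to prove the two implications separately, with the right-to-left direction being essentially immediate and the left-to-right direction requiring a contradiction argument in the spirit of Theorem \ref{thm:optimal_solution}.

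\textbf{The easy implication ($\Leftarrow$).} Suppose there exist $\mu,\tau>0$ such that $j''(\bar q)w^2\ge \mu\|w\|_{L^2(\Omega)}^2$ for all $w\in C_{\bar q}^\tau$. Since $\bar{\mathfrak{d}}(x)\ne 0 \Rightarrow |\bar{\mathfrak{d}}(x)|>0 \ge$ nothing directly, I instead observe that $C_{\bar q}\subset C_{\bar q}^\tau$ for every $\tau>0$: if $w\in C_{\bar q}$ then $w$ satisfies \eqref{eq:sign_cond} and $w(x)=0$ whenever $\bar{\mathfrak{d}}(x)\ne 0$, hence in particular whenever $|\bar{\mathfrak{d}}(x)|>\tau$. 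Therefore any $w\in C_{\bar q}\setminus\{0\}$ lies in $C_{\bar q}^\tau$, so $j''(\bar q)w^2\ge\mu\|w\|_{L^2(\Omega)}^2>0$. This gives the left-hand side of \eqref{eq:second_order_2_2}.

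\textbf{The hard implication ($\Rightarrow$).} Assume $j''(\bar q)w^2>0$ for all $w\in C_{\bar q}\setminus\{0\}$ but, for contradiction, that no such pair $(\mu,\tau)$ exists. Taking $\mu=\tau=1/k$, I obtain for each $k\in\mathbb{N}$ an element $w_k\in C_{\bar q}^{1/k}$ with $j''(\bar q)w_k^2 < k^{-1}\|w_k\|_{L^2(\Omega)}^2$; after normalizing I may assume $\|w_k\|_{L^2(\Omega)}=1$, so $j''(\bar q)w_k^2<k^{-1}\to 0$. Passing to a subsequence, $w_k\rightharpoonup w$ in $L^2(\Omega)$. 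The crux is to show $w\in C_{\bar q}$. The sign conditions \eqref{eq:sign_cond} pass to the weak limit since the corresponding set is closed and convex. For the support condition: each $w_k$ vanishes on $\{|\bar{\mathfrak{d}}|>1/k\}$, and these sets exhaust $\{\bar{\mathfrak{d}}\ne 0\}$ as $k\uparrow\infty$; a standard argument (for any fixed $j$, $w_k$ vanishes on $\{|\bar{\mathfrak{d}}|>1/j\}$ for all $k\ge j$, so the weak limit $w$ vanishes there too, and then let $j\uparrow\infty$) yields $w=0$ a.e.\ on $\{\bar{\mathfrak{d}}\ne 0\}$. Hence $w\in C_{\bar q}$. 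Next I must show $w\equiv 0$: using \eqref{eq:charac_j2}, write $j''(\bar q)w_k^2=\lambda\|w_k\|_{L^2(\Omega)}^2-2(w_kz_k,\bar p)_{L^2(\Omega)}+\|z_k\|_{L^2(\Omega)}^2$ where $z_k=\mathcal S'(\bar q)w_k$; since $\bar q$ is fixed here (unlike in Theorem \ref{thm:optimal_solution} where the base point varies), the linear solution operator $\mathcal S'(\bar q)$ is fixed, so $w_k\rightharpoonup w$ forces $z_k\rightharpoonup z:=\mathcal S'(\bar q)w$ in $\tilde H^s(\Omega)$ and, by the compact embedding of Lemma \ref{lemma:embedding_result}, $z_k\to z$ strongly in $L^2(\Omega)$. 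Then $(w_kz_k,\bar p)\to(wz,\bar p)$ (product of weak times strong, times the fixed $L^\infty$ function $\bar p$ guaranteed by Theorem \ref{thm:L_infty_reg} applied to the adjoint equation) and $\|z_k\|_{L^2(\Omega)}^2\to\|z\|_{L^2(\Omega)}^2$, while $\lambda\|w_k\|_{L^2(\Omega)}^2\ge\lambda\|w\|_{L^2(\Omega)}^2$ by weak lower semicontinuity. Therefore $j''(\bar q)w^2\le\liminf_k j''(\bar q)w_k^2\le 0$, which combined with $w\in C_{\bar q}$ and the hypothesis $j''(\bar q)w^2>0$ on $C_{\bar q}\setminus\{0\}$ forces $w\equiv 0$.

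\textbf{Deriving the contradiction.} With $w\equiv 0$ we have $z_k\to 0$ strongly in $L^2(\Omega)$, hence $(w_kz_k,\bar p)_{L^2(\Omega)}\to 0$ and $\|z_k\|_{L^2(\Omega)}^2\to 0$. From the identity
\[
\lambda=\lambda\|w_k\|_{L^2(\Omega)}^2=j''(\bar q)w_k^2+2(w_kz_k,\bar p)_{L^2(\Omega)}-\|z_k\|_{L^2(\Omega)}^2
\]
and $j''(\bar q)w_k^2<k^{-1}\to 0$, passing to the limit gives $\lambda\le 0$, contradicting $\lambda>0$. This establishes the right-hand side of \eqref{eq:second_order_2_2} and completes the proof. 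I expect the main (though still routine) obstacle to be the careful verification that the weak limit $w$ lies in $C_{\bar q}$ — specifically handling the support condition via the exhaustion $\{|\bar{\mathfrak{d}}|>1/k\}\uparrow\{\bar{\mathfrak{d}}\ne 0\}$ — and ensuring that the strong $L^2$ convergence $z_k\to z$ is correctly justified from the compactness of $\mathcal S'(\bar q)$ (which is cleaner here than in Theorem \ref{thm:optimal_solution} precisely because the linearization point does not move).
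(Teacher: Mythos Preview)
Your proof is correct and follows precisely the approach the paper alludes to: the paper omits details and simply refers to the arguments in \cite[Theorem 25]{MR3586845} together with the techniques of Theorems \ref{thm:nec_opt_cond} and \ref{thm:optimal_solution}, and what you have written is exactly that standard contradiction argument spelled out (with the simplification you note, namely that the linearization point $\bar q$ is fixed so $\mathcal S'(\bar q)$ is a single compact operator). The only caveat is that your use of $\bar u,\bar p\in L^\infty(\Omega)$ tacitly requires $f,u_\Omega\in L^r(\Omega)$ for some $r>d/2s$, which the paper also assumes implicitly throughout this section.
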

\begin{proof}
The proof of the equivalence \eqref{eq:second_order_2_2} follows from a combination of the arguments elaborated in the proofs of \cite[Theorem 25]{MR3586845}, Theorem \ref{thm:nec_opt_cond}, and Theorem \ref{thm:optimal_solution}. For the sake of brevity, we omit details.
\end{proof}


\section{Discretization schemes for the control problem}\label{sec:fem_control_problem}

We propose two different finite element discretization schemes to approximate solutions to the optimal control problem \eqref{eq:weak_min_problem}--\eqref{eq:weak_st_eq}: a fully discrete one, where the admissible control set is discretized with piecewise constant functions, and a semidiscrete scheme, based on the so-called variational discretization approach, where the control set is not discretized.

\subsection{A fully discrete scheme}
\label{sec:fully_discrete}
The solution technique reads as follows: Find $\min J(u_{h},q_{h})$ subject to the \emph{discrete state equation} 
\begin{equation}\label{eq:discrete_st_eq}
\mathcal{A}(u_{h},v_{h}) + (q_{h}u_{h},v_{h})_{L^2(\Omega)} = (f,v_{h})_{L^2(\Omega)} 
\quad \forall v_h \in \mathbb{V}_h,
\end{equation}
and the \emph{control constraints} $q_{h} \in \mathbb{Q}_{ad,h}$. Here, $\mathbb{Q}_{ad,h}:=\mathbb{Q}_{h}\cap \mathbb{Q}_{ad}$, where $\mathbb{Q}_{h}=\{ q_h \in L^\infty(\Omega): q_{h}|_T\in \mathbb{P}_0(T) \ \forall T\in \T_{h}\}$. We recall that $\mathbb{V}_h$, defined in \eqref{def:piecewise_linear_set}, corresponds to the space of standard continuous and piecewise linear functions that vanish on $\partial\Omega$.

The existence of a discrete solution follows from the compactness of $\mathbb{Q}_{ad,h}$ and the continuity of $J$. To formulate first order optimality conditions, we introduce the discrete control to state map $\mathcal{S}_{h}:  \mathbb{Q}_{ad,h} \ni q_h \mapsto u_h \in \mathbb{V}_{h}$, where $u_h$ is  the solution to \eqref{eq:discrete_st_eq}, and the reduced cost functional $j_{h}(q_{h}):=J(\mathcal{S}_{h}q_{h},q_{h})$. With these ingredients, the first order conditions read as follows: If $\bar{q}_{h} \in \mathbb{Q}_{ad,h}$ is a local solution, then
\begin{equation}
\label{eq:discrete_var_ineq}
j_{h}^{\prime}(\bar{q}_{h})(q_{h}-\bar{q}_{h}) = (\lambda\bar{q}_{h} - \bar{u}_{h}\bar{p}_{h}, q_{h}-\bar{q}_{h})_{L^2(\Omega)}  \geq  0 \quad \forall q_{h} \in \mathbb{Q}_{ad,h}.
\end{equation}
Here, $\bar{p}_{h} \in \mathbb{V}_{h}$ corresponds to the \emph{discrete adjoint state}, which solves
\begin{equation}\label{eq:discrete_adjoint_equation}
\mathcal{A}(v_{h},\bar{p}_{h}) + (\bar{q}_{h}\bar{p}_{h},v_{h})_{L^2(\Omega)} = (\bar{u}_{h} - u_{\Omega},v_{h})_{L^2(\Omega)} 
\quad \forall v_h \in \mathbb{V}_h.
\end{equation}


\subsection{A semidiscrete scheme}
\label{sec:variational_discretization}

In this section, we propose a semidiscrete scheme based on the so-called variational discretization approach \cite{MR2122182}. The scheme, \EO{in which only the state space is discretized} (the control space is not discretized), reads as follows: Find $\min J(u_h,\mathsf{q})$ subject to the \emph{discrete state equation}
\begin{equation*}
\mathcal{A}(u_{h},v_{h}) + (\mathsf{q}u_{h},v_{h})_{L^2(\Omega)} = (f,v_{h})_{L^2(\Omega)} 
\quad \forall v_h \in \mathbb{V}_h,
\end{equation*}
and the \emph{control constraints} $\mathsf{q}\in \mathbb{Q}_{ad}$. The existence of a discrete solution and first order optimality conditions for the semidiscrete scheme follow standard arguments. In particular, if $\bar{\mathsf{q}} \in \mathbb{Q}_{ad}$ denotes a local solution, then
\begin{equation}
\label{eq:discrete_var_ineq_variational}
j_{h}^{\prime}(\bar{\mathsf{q}})(q-\bar{\mathsf{q}}) = (\lambda\bar{\mathsf{q}} - \bar{u}_{h}\bar{p}_{h},q - \bar{\mathsf{q}})_{L^2(\Omega)}  \geq  0 \quad \forall q \in \mathbb{Q}_{ad},
\end{equation}
where $\bar{p}_{h} \in \mathbb{V}_{h}$ solves problem \eqref{eq:discrete_adjoint_equation}. We immediately note that, in view of the variational inequality \eqref{eq:discrete_var_ineq_variational}, the following projection formula holds \cite[Section 4.6]{Troltzsch}:
\begin{equation*}
\bar{\mathsf{q}}(x):=\Pi_{[a,b]}(\lambda^{-1}\bar{u}_h(x)\bar{p}_h(x)) \textrm{ a.e. } x \in \Omega.
\end{equation*}
Since $\bar{\mathsf{q}}$ implicitly depends on $h$, \EO{we will use the notation $\bar{\mathsf{q}}_h$ in the following.}


\subsection{Convergence of discretizations}
\label{sec:conver_of_disc}
In this section, we analyze the convergence properties of the fully and semidiscrete schemes. \EO{To this end}, we begin our investigation by providing some auxiliary convergence properties and error bounds for suitable finite element discretizations \EO{associated with} the state and adjoint equations.

\subsubsection{Auxiliary error bounds}
Let us first provide a convergence property and bounds for the errors $\| u -u_h \|_{s}$ and $\|u - u_{h}\|_{L^2(\Omega)}$.

\begin{theorem}[convergence properties]
\label{thm:error_estimates_state_aux}
Let $s\in(0,1)$ and let $f \in H^{-s}(\Omega)$. Let $u$ and $u_{h}$ be the unique solutions to problems \eqref{eq:weak_st_eq} and  \eqref{eq:discrete_st_eq}, respectively. Then,
\begin{equation}
 q_h \rightharpoonup q ~\textrm{in}~L^{\frac{d}{2s}}(\Omega),
 \quad
 h \rightarrow 0
 \implies
 u_h \rightarrow u ~\textrm{in}~\tilde{H}^{s}(\Omega), 
 \quad
 h \rightarrow 0.
 \label{eq:convergence_property_state}
\end{equation}
Let $r > d/2s$. If, in addition, $f\in L^2(\Omega) \cap L^r(\Omega)$, then we have the error bounds
\begin{align}
\|u - u_{h}\|_{s} &\lesssim h^{\gamma}|\log h|^{\varphi}\| f\|_{L^2(\Omega)} + \|q - q_{h}\|_{L^{2}(\Omega)}, \quad \gamma = \min\{s,\tfrac{1}{2}\},
\label{eq:global_estimate_state_s}
\\
\|u - u_{h}\|_{L^2(\Omega)} &\lesssim h^{2\gamma}|\log h|^{2\varphi}\| f\|_{L^2(\Omega)} + \|q - q_{h}\|_{L^{2}(\Omega)},
\label{eq:global_estimate_state_L2}
\end{align}
where $\varphi = \nu$ if $s\neq \frac{1}{2}$, $\varphi = 1 +\nu$ if $s=\frac{1}{2}$, and $\nu \geq \frac{1}{2}$ is the constant in Theorem \ref{thm:sobolev_reg}.
\end{theorem}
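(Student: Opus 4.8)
The statement has two essentially independent halves, and the plan is to treat them separately.

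For the convergence property \eqref{eq:convergence_property_state}, I would run a standard compactness argument. The discrete stability bound $\|u_h\|_s\lesssim\|f\|_{H^{-s}(\Omega)}$ (cf.\ \S\ref{sec:fem}) holds uniformly in $h$, so along a subsequence $u_h\rightharpoonup\bar u$ in $\tilde H^s(\Omega)$; by the compact embedding of Lemma~\ref{lemma:embedding_result} we additionally get $u_h\to\bar u$ in $L^\sigma(\Omega)$ for every $\sigma<2d/(d-2s)$, in particular in $L^2(\Omega)$ and in $L^{d/(d-2s)}(\Omega)$. To identify $\bar u$ I would pass to the limit in \eqref{eq:discrete_st_eq} tested against $v_h=I_h v$, the Lagrange interpolant of a fixed $v\in C_0^\infty(\Omega)$ (so that $v_h\to v$ both in $\tilde H^s(\Omega)$ and in $L^\infty(\Omega)$): $\mathcal{A}(u_h,v_h)\to\mathcal{A}(\bar u,v)$ by weak--strong convergence, $\langle f,v_h\rangle\to\langle f,v\rangle$ trivially, and for the zero-order term one writes $(q_hu_h,v_h)_{L^2(\Omega)}=\int_\Omega q_h\,(u_hv_h)$ and uses $u_hv_h\to\bar uv$ in $L^{d/(d-2s)}(\Omega)$ together with $q_h\rightharpoonup q$ in $L^{d/2s}(\Omega)$. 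By density of $C_0^\infty(\Omega)$ in $\tilde H^s(\Omega)$ this shows $\bar u$ solves \eqref{eq:weak_st_eq}, hence $\bar u=u$ by uniqueness and the full sequence converges weakly. To upgrade to strong convergence I would pass to the limit in the energy identity $\|u_h\|_s^2=\langle f,u_h\rangle-\int_\Omega q_hu_h^2$: the first term tends to $\langle f,u\rangle$, and since $u_h^2\to u^2$ in $L^1(\Omega)$ and $\{q_h\}$ is bounded in $L^\infty(\Omega)$ (which is the case in our applications, where $q_h\in\mathbb{Q}_{ad}$), one obtains $\int_\Omega q_hu_h^2\to\int_\Omega qu^2$; hence $\|u_h\|_s\to\|u\|_s$, which combined with $u_h\rightharpoonup u$ in the Hilbert space $(\tilde H^s(\Omega),\|\cdot\|_s)$ yields $u_h\to u$ in $\tilde H^s(\Omega)$.

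For the error bounds \eqref{eq:global_estimate_state_s}--\eqref{eq:global_estimate_state_L2} the plan is a comparison argument. Introduce the auxiliary discrete state $\tilde u_h\in\mathbb{V}_h$ solving \eqref{eq:weak_aux_eq_discrete} with the continuous coefficient $q$ in place of $\mathfrak q$; Theorem~\ref{thm:error_estimates_frac_Lap} (applicable since $f\in L^2(\Omega)$, $q\in\mathbb{Q}_{ad}$ and $\Omega$ is a Lipschitz polytope) gives $\|u-\tilde u_h\|_s\lesssim h^\gamma|\log h|^\varphi\|f\|_{L^2(\Omega)}$ and $\|u-\tilde u_h\|_{L^2(\Omega)}\lesssim h^{2\gamma}|\log h|^{2\varphi}\|f\|_{L^2(\Omega)}$. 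For $e_h:=\tilde u_h-u_h$, subtracting \eqref{eq:discrete_st_eq} from the equation for $\tilde u_h$ and writing $q\tilde u_h-q_hu_h=q_he_h+(q-q_h)\tilde u_h$ leads to
\begin{equation*}
\mathcal{A}(e_h,v_h)+(q_he_h,v_h)_{L^2(\Omega)}=-((q-q_h)\tilde u_h,v_h)_{L^2(\Omega)}\qquad\forall v_h\in\mathbb{V}_h .
\end{equation*}
Testing with $v_h=e_h$ and using $q_h\geq a>0$ gives $\|e_h\|_s^2\le\|(q-q_h)\tilde u_h\|_{L^2(\Omega)}\|e_h\|_{L^2(\Omega)}\lesssim\|(q-q_h)\tilde u_h\|_{L^2(\Omega)}\|e_h\|_s$. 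The key point is to bound $\|(q-q_h)\tilde u_h\|_{L^2(\Omega)}$ without any discrete $L^\infty$ bound on $\tilde u_h$: split $(q-q_h)\tilde u_h=(q-q_h)u+(q-q_h)(\tilde u_h-u)$ and use that $u\in L^\infty(\Omega)$ by Theorem~\ref{thm:L_infty_reg} (this is exactly where $f\in L^r(\Omega)$, $r>d/2s$, enters) together with $|q-q_h|\le b-a$ a.e.\ to obtain
\begin{equation*}
\|(q-q_h)\tilde u_h\|_{L^2(\Omega)}\le\|u\|_{L^\infty(\Omega)}\|q-q_h\|_{L^2(\Omega)}+(b-a)\|\tilde u_h-u\|_{L^2(\Omega)} .
\end{equation*}
Since the last term is $\lesssim h^{2\gamma}|\log h|^{2\varphi}\|f\|_{L^2(\Omega)}$, we get $\|e_h\|_s\lesssim\|q-q_h\|_{L^2(\Omega)}+h^{2\gamma}|\log h|^{2\varphi}\|f\|_{L^2(\Omega)}$, and \eqref{eq:global_estimate_state_s} follows from $\|u-u_h\|_s\le\|u-\tilde u_h\|_s+\|e_h\|_s$ after absorbing the $h^{2\gamma}|\log h|^{2\varphi}$ term into $h^\gamma|\log h|^\varphi$ for $h$ small; \eqref{eq:global_estimate_state_L2} is obtained identically, using $\|e_h\|_{L^2(\Omega)}\lesssim\|e_h\|_s$ and the $L^2$-estimate for $u-\tilde u_h$.

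The hard part is the limit passage in the nonlinear term $(q_hu_h,v_h)_{L^2(\Omega)}$ of the convergence argument: the coefficients converge only weakly and only in the critical exponent $L^{d/2s}(\Omega)$, so they must be paired with a factor converging \emph{strongly} in the dual exponent $L^{d/(d-2s)}(\Omega)$; this works because $d/(d-2s)$ lies strictly below the threshold $2d/(d-2s)$ at which $\tilde H^s(\Omega)\hookrightarrow L^\bullet(\Omega)$ is compact, but it is a genuinely tight balance of exponents, and the same tightness reappears when upgrading $u_h\rightharpoonup u$ to strong convergence via the energy identity (which is where the uniform bound on $\{q_h\}$ is used). By contrast, in the error bounds the only real subtlety is avoiding a discrete $L^\infty$ bound on $\tilde u_h$, which is circumvented cleanly by routing the estimate of $\|(q-q_h)\tilde u_h\|_{L^2(\Omega)}$ through the continuous $L^\infty$-regularity of $u$.
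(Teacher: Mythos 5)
Your proposal is correct, but it takes a genuinely different route from the paper in both halves. For the convergence property \eqref{eq:convergence_property_state}, the paper splits $u-u_h$ through the auxiliary \emph{discrete} solution $u_h(q)$ (discrete problem with the continuous coefficient $q$), handles $u-u_h(q)$ by a C\'ea/density argument, and bounds $u_h(q)-u_h$ via a discrete stability estimate whose right-hand side is then sent to zero using the weak convergence of $q_h$; you instead run a compactness argument (uniform discrete stability, extraction of a weak limit, identification of the limit by testing \eqref{eq:discrete_st_eq} with interpolants of smooth functions, and an energy identity to upgrade weak to strong convergence in $\tilde H^s(\Omega)$). Your route is more self-contained and elementary, at the price of being longer and of using the uniform bound $q_h\in\mathbb{Q}_{ad}$ explicitly (which is available, and is also used by the paper). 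For the error bounds \eqref{eq:global_estimate_state_s}--\eqref{eq:global_estimate_state_L2}, the paper uses the \emph{continuous}-level auxiliary $u(q_h)$ (continuous problem with coefficient $q_h$), applies Theorem \ref{thm:error_estimates_frac_Lap} to $u(q_h)-u_h$, and controls $u-u(q_h)$ through the uniform $L^\infty(\Omega)$ bound of $u(q_h)$ from Theorem \ref{thm:L_infty_reg}; you instead work with the discrete auxiliary $\tilde u_h=u_h(q)$ and estimate the coefficient perturbation entirely at the discrete level, avoiding any $L^\infty$ control of discrete functions by splitting $(q-q_h)\tilde u_h$ through $u\in L^\infty(\Omega)$ and the $L^2(\Omega)$ estimate for $u-\tilde u_h$. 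Both decompositions deliver the stated rates, with hidden constants depending on $\|f\|_{L^r(\Omega)}$, exactly as in the paper.

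One small step should be patched: in the energy-identity argument, boundedness of $\{q_h\}$ in $L^\infty(\Omega)$ together with $u_h^2\to u^2$ in $L^1(\Omega)$ only gives $\int_\Omega q_h(u_h^2-u^2)\,\mathrm{d}x\to 0$; to conclude $\int_\Omega q_hu_h^2\,\mathrm{d}x\to\int_\Omega qu^2\,\mathrm{d}x$ you must also note that $\int_\Omega (q_h-q)u^2\,\mathrm{d}x\to 0$, which follows from $q_h\rightharpoonup q$ in $L^{d/2s}(\Omega)$ and $u^2\in L^{d/(d-2s)}(\Omega)$ (by Lemma \ref{lemma:embedding_result}, since $u\in L^{2d/(d-2s)}(\Omega)$) — the same weak--strong pairing you already use to identify the limit, so the fix is one line.
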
   
\begin{proof}
Let us first derive the convergence property \eqref{eq:convergence_property_state}. We begin with a simple application of a triangle inequality to write 
$
 \| u - u_h \|_{s} 
 \leq 
 \| u - u_h(q) \|_{s}
 +
 \| u_h(q) - u_h \|_{s}.
$
Here, $u_h(q)$ denotes the solution to \eqref{eq:discrete_st_eq} with $q_h$ replaced by $q$. The control of $\| u - u_h(q) \|_{s}$ follows from \EO{a density} argument as the one developed in the proof of \cite[Theorem 3.2.3]{CiarletBook} (see also \cite[Corollary 1.109]{Guermond-Ern}): $\| u - u_h(q) \|_{s} \rightarrow 0$ as $h \rightarrow 0$. To bound $\| u_h(q) - u_h \|_{s}$, we invoke the discrete problem that $u_h(q) - u_h$ solves and utilize the fact that $q_h \in \mathbb{Q}_{ad,h} = \mathbb{Q}_{ad} \cap \mathbb{Q}_h$ to obtain
\[ 
\| u_h(q) - u_h \|_{s} \leq \| u_h(q)(q_h-q) \|_{H^{-s}(\Omega)}.
\]
The weak convergence $q_h \rightharpoonup q$ in $L^{\frac{d}{2s}}(\Omega)$
and the strong one $u_h(q)v \rightarrow uv$ in $L^{\frac{d}{d-2s}}(\Omega)$ as $h \rightarrow 0$, which \EO{is valid} for every $v \in \tilde{H}^{s}(\Omega)$, allow us to conclude.

\EO{We proceed similarly} to derive \eqref{eq:global_estimate_state_s}: 
$
\|u - u_{h}\|_{s} 
\leq 
\|u - u(q_h)\|_{s} 
+ 
\|u(q_{h})- u_{h}\|_{s}.
$
Here, $u(q_{h})$ denotes the solution to \eqref{eq:weak_st_eq} with $q$ replaced by $q_h$. Since $u_h$, the solution to \eqref{eq:discrete_st_eq}, corresponds to the finite element approximation of $u(q_{h})$ within the discrete setting of section \ref{sec:fem}, we can invoke the error bound of Theorem \ref{thm:error_estimates_frac_Lap} to deduce
\begin{equation}\label{eq:estimate_u-u_h}
\|u(q_{h}) - u_{h}\|_{s} \lesssim h^{\gamma}|\log h|^{\varphi}\| f\|_{L^2(\Omega)}.
\end{equation}
On the other hand, let us observe that $u - u(q_{h}) \in \tilde{H}^{s}(\Omega)$ uniquely solves
\begin{equation}
\label{eq:problem_u-hat_u}
\mathcal{A}(u - u(q_{h}),v) + (q(u - u(q_{h})),v)_{L^2(\Omega)} = (u(q_{h})(q_{h} - q),v)_{L^2(\Omega)}  \quad \forall v\in \tilde{H}^{s}(\Omega).
\end{equation}
Since $f \in L^r(\Omega)$ and $q_h \in \mathbb{Q}_{ad,h}$, Theorem \ref{thm:L_infty_reg} guarantees $\| u(q_{h}) \|_{L^{\infty}(\Omega)} \lesssim \| f\|_{L^r(\Omega)}$, which is a uniform bound with respect to discretization. As a consequence, a basic stability bound for problem \eqref{eq:problem_u-hat_u} yields $\|u - u(q_{h})\|_{s} \lesssim \|u(q_{h})\|_{L^{\infty}(\Omega)}\|q - q_{h}\|_{L^{2}(\Omega)} \lesssim \| f\|_{L^r(\Omega)} \|q - q_{h}\|_{L^{2}(\Omega)}$. This bound and estimate \eqref{eq:estimate_u-u_h} yield the desired bound \eqref{eq:global_estimate_state_s}. The proof of \eqref{eq:global_estimate_state_L2} follows similar arguments.
\end{proof}

\EO{To present the following result, we} introduce the variables $\mathfrak{p}$ and $p_h$ as follows:
 \begin{align}
 \label{eq:aux_adj_problem1*}
 \mathfrak{p} \in \tilde H^s(\Omega):
 \quad
 \mathcal{A}(v,\mathfrak{p}) + (q_{h}\mathfrak{p},v)_{L^2(\Omega)} = (u_{h} - u_{\Omega},v)_{L^2(\Omega)} 
 \quad \forall v \in \tilde{H}^{s}(\Omega).
\\
 p_h \in \mathbb{V}_h:
 \quad
\mathcal{A}(v_{h},p_{h}) + (q_{h}p_{h},v_{h})_{L^2(\Omega)} = (u_{h} - u_{\Omega},v_{h})_{L^2(\Omega)} 
\quad \forall v_h \in \mathbb{V}_h.
\label{eq:aux_adj_problem2}
\end{align}

In what follows, we present error bounds for $p-p_h$.

\begin{theorem}[convergence properties]
\label{thm:error_estimates_adj_aux}
Let $s\in(0,1)$ and $r > d/2s$. Let $p$ and $p_h$ be the unique solutions to problems \eqref{eq:adj_eq} and \eqref{eq:aux_adj_problem2}, respectively. If $f, u_{\Omega} \in L^2(\Omega) \cap L^r(\Omega)$ and $\{ u_h \}_{h>0}$ is uniformly bounded in $L^r(\Omega)$, then 
\begin{align}
\label{eq:global_estimate_adj_s}
\|p - p_{h}\|_{s} 
&
\lesssim 
h^{\gamma}|\log h|^{\varphi} + \|q - q_{h}\|_{L^{2}(\Omega)}, \quad \gamma = \min\{s,\tfrac{1}{2}\},
\\
\label{eq:global_estimate_adj_L2}
\|p - p_{h}\|_{L^2(\Omega)} 
& 
\lesssim h^{2\gamma}|\log h|^{2\varphi} + \|q - q_{h}\|_{L^{2}(\Omega)}, 
\end{align}
where $\varphi = \nu$ if $s\neq \frac{1}{2}$, $\varphi = 1 +\nu$ if $s=\frac{1}{2}$, and $\nu \geq \frac{1}{2}$ is the constant in Theorem \ref{thm:sobolev_reg}.
\end{theorem}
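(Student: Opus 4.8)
The plan is to insert the auxiliary adjoint state $\mathfrak{p}$ from \eqref{eq:aux_adj_problem1*} as a bridge and split the error by the triangle inequality $\|p - p_h\|_s \leq \|p - \mathfrak{p}\|_s + \|\mathfrak{p} - p_h\|_s$. The point of $\mathfrak{p}$ is that $p_h$, defined by \eqref{eq:aux_adj_problem2}, is exactly the finite element approximation of $\mathfrak{p}$ within the discrete setting of \S\ref{sec:fem}: both are governed by the bilinear form $\mathcal{A}(\cdot,\cdot) + (q_h\,\cdot,\cdot)_{L^2(\Omega)}$ with the common right-hand side $u_h - u_{\Omega}$. Since $q_h \in \mathbb{Q}_{ad}$ and, using the discrete stability bound of \S\ref{sec:fem}, $\|u_h - u_{\Omega}\|_{L^2(\Omega)} \lesssim \|u_h\|_s + \|u_{\Omega}\|_{L^2(\Omega)} \lesssim \|f\|_{L^2(\Omega)} + \|u_{\Omega}\|_{L^2(\Omega)}$ uniformly in $h$, Theorem \ref{thm:error_estimates_frac_Lap} applied with $\mathfrak{q} = q_h$ and $\mathfrak{f} = u_h - u_{\Omega}$ gives $\|\mathfrak{p} - p_h\|_s \lesssim h^{\gamma}|\log h|^{\varphi}$ and $\|\mathfrak{p} - p_h\|_{L^2(\Omega)} \lesssim h^{2\gamma}|\log h|^{2\varphi}$.

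To control $\|p - \mathfrak{p}\|_s$, I would subtract \eqref{eq:adj_eq} (with $u = \mathcal{S}q$) from \eqref{eq:aux_adj_problem1*} and use $qp - q_h\mathfrak{p} = q(p - \mathfrak{p}) + (q - q_h)\mathfrak{p}$ to see that $p - \mathfrak{p} \in \tilde{H}^s(\Omega)$ solves
\begin{equation*}
\mathcal{A}(v, p - \mathfrak{p}) + (q(p - \mathfrak{p}), v)_{L^2(\Omega)} = (u - u_h, v)_{L^2(\Omega)} - ((q - q_h)\mathfrak{p}, v)_{L^2(\Omega)} \quad \forall v \in \tilde{H}^s(\Omega).
\end{equation*}
Testing with $v = p - \mathfrak{p}$ and using $q \geq a > 0$ (so the left-hand side dominates $\|p - \mathfrak{p}\|_s^2$), the Cauchy--Schwarz inequality, and $\|p - \mathfrak{p}\|_{L^2(\Omega)} \lesssim \|p - \mathfrak{p}\|_s$ yields $\|p - \mathfrak{p}\|_s \lesssim \|u - u_h\|_{L^2(\Omega)} + \|\mathfrak{p}\|_{L^\infty(\Omega)}\|q - q_h\|_{L^2(\Omega)}$. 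Here the hypothesis that $\{u_h\}_{h>0}$ is uniformly bounded in $L^r(\Omega)$ enters decisively: since $u_{\Omega} \in L^r(\Omega)$ with $r > d/2s$, the data $u_h - u_{\Omega}$ of \eqref{eq:aux_adj_problem1*} is uniformly bounded in $L^r(\Omega)$, so Theorem \ref{thm:L_infty_reg} gives $\|\mathfrak{p}\|_{L^\infty(\Omega)} \lesssim 1$ uniformly in $h$. Invoking the $L^2$ error estimate \eqref{eq:global_estimate_state_L2} of Theorem \ref{thm:error_estimates_state_aux} for $\|u - u_h\|_{L^2(\Omega)}$ then produces $\|p - \mathfrak{p}\|_s \lesssim h^{2\gamma}|\log h|^{2\varphi} + \|q - q_h\|_{L^2(\Omega)}$.

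Combining the two contributions and using $h^{2\gamma}|\log h|^{2\varphi} \lesssim h^{\gamma}|\log h|^{\varphi}$ for $h$ small gives \eqref{eq:global_estimate_adj_s}. For the $L^2$ estimate \eqref{eq:global_estimate_adj_L2} no Aubin--Nitsche duality is needed: $\|p - \mathfrak{p}\|_{L^2(\Omega)} \lesssim \|p - \mathfrak{p}\|_s \lesssim h^{2\gamma}|\log h|^{2\varphi} + \|q - q_h\|_{L^2(\Omega)}$ already has the right form, and $\|\mathfrak{p} - p_h\|_{L^2(\Omega)} \lesssim h^{2\gamma}|\log h|^{2\varphi}$ from the first paragraph, so summing yields the claim. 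I expect the only genuinely delicate step to be the uniform bound $\|\mathfrak{p}\|_{L^\infty(\Omega)} \lesssim 1$: the coefficient perturbation $(q - q_h)\mathfrak{p}$ can be measured only through $\|q - q_h\|_{L^2(\Omega)}$, and absorbing it requires an $L^\infty$ handle on $\mathfrak{p}$ that is uniform in $h$, which is precisely why the standing assumption on $\{u_h\}_{h>0}$ is imposed; everything else is a routine combination of coercivity, Hölder's inequality, the embedding $\tilde{H}^s(\Omega) \hookrightarrow L^2(\Omega)$, and the previously established estimates of Theorems \ref{thm:error_estimates_frac_Lap}, \ref{thm:L_infty_reg}, and \ref{thm:error_estimates_state_aux}.
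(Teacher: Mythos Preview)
Your proposal is correct and follows essentially the same approach as the paper: split via the auxiliary $\mathfrak{p}$ of \eqref{eq:aux_adj_problem1*}, apply Theorem~\ref{thm:error_estimates_frac_Lap} to $\mathfrak{p}-p_h$, derive the equation for $p-\mathfrak{p}$ and use coercivity together with the uniform bound $\|\mathfrak{p}\|_{L^\infty(\Omega)}\lesssim 1$ (from Theorem~\ref{thm:L_infty_reg} and the $L^r$ assumption on $\{u_h\}$) plus \eqref{eq:global_estimate_state_L2}. The only cosmetic difference is that the paper estimates $((q-q_h)\mathfrak{p},v)$ through $\|\mathfrak{p}\|_{L^{d/s}(\Omega)}$ and the Sobolev embedding $\tilde H^s(\Omega)\hookrightarrow L^{2d/(d-2s)}(\Omega)$ before invoking $\|\mathfrak{p}\|_{L^\infty(\Omega)}$, whereas you go to $L^\infty$ directly; and the paper leaves the $L^2$ case to ``similar arguments'' while you spell it out.
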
   
\begin{proof}
We invoke $\mathfrak{p}$ and write
$
\|p - p_h\|_{s} \leq \|p - \mathfrak{p}\|_{s} + \|\mathfrak{p} - p_{h}\|_{s}.
$
To control $ \|\mathfrak{p}-p_h\|_{s}$ we apply the error estimate \eqref{eq:error_in_norm_s} of Theorem \ref{thm:error_estimates_frac_Lap}:
\begin{equation}\label{eq:estimate_p-p_h}
\|\mathfrak{p} - p_{h}\|_{s} 
\lesssim h^{\gamma}|\log h|^{\varphi}\left( \| f \|_{H^{-s}(\Omega)} + \|u_{\Omega} \|_{L^2(\Omega)}\right),
\end{equation}
upon using $\| u_h \|_s \lesssim \| f \|_{H^{-s}(\Omega)}$, which is uniform with respect to discretization. To control $\| p - \mathfrak{p} \|_s$ we observe that $p - \mathfrak{p}$ solves
\begin{equation*}
\mathcal{A}(v,p - \mathfrak{p}) + (q(p - \mathfrak{p}),v)_{L^2(\Omega)} = (\mathfrak{p}(q_{h} - q) + (u - u_{h}),v)_{L^2(\Omega)} \quad \forall v\in\tilde{H}^{s}(\Omega).
\end{equation*}
Since, by assumption, $\{ u_h \}_{h>0} $ is uniformly bounded in $L^{r}(\Omega)$ ($r>d/2s$), we obtain 
\begin{align}\label{eq:p-mathfrakp}
\| p - \mathfrak{p} \|_{s} &\lesssim \| \mathfrak{p} \|_{L^{\frac{d}{s}}(\Omega)}\|q - q_{h}\|_{L^{2}(\Omega)} + \|u - u_{h}\|_{L^2(\Omega)}
\\
&\lesssim \|q - q_{h}\|_{L^{2}(\Omega)}
+
h^{2\gamma}|\log h|^{2\varphi}\| f\|_{L^2(\Omega)},
\nonumber
\end{align}
upon using \eqref{eq:global_estimate_state_L2} and the uniform bound $\| \mathfrak{p} \|_{L^{\infty}(\Omega)} \lesssim \| u_h \|_{L^r(\Omega)} + \| u_{\Omega} \|_{L^r(\Omega)}$. The bound \eqref{eq:p-mathfrakp} \EO{in conjunction with} \eqref{eq:estimate_p-p_h} implies \eqref{eq:global_estimate_adj_s}. The proof of \eqref{eq:global_estimate_adj_L2} follows similar arguments. \EO{For the sake of brevity, we omit details.}
\end{proof}


\subsubsection{Convergence of discretizations: the fully discrete scheme} 
\label{sec:convergence_fullydiscrete}
We begin this section with a convergence result that essentially guarantees that a sequence of discrete global solutions
$\{\bar{q}_{h}\}_{h>0}$ contains subsequences that converge to global solutions of the optimal control problem \eqref{eq:weak_min_problem}--\eqref{eq:weak_st_eq} as $h\rightarrow 0$.

\begin{theorem}[convergence of global solutions]
\label{thm:convergence_discrete_sol}
Let $s\in(0,1)$, $r > d/2s$, and $f,u_{\Omega}\in L^2(\Omega)\cap L^r(\Omega)$. Let $h>0$ and let $\bar{q}_h\in\mathbb{Q}_{ad,h}$ be a global solution of the fully discrete optimal control problem. Then, there exist nonrelabeled subsequences of $\{\bar{q}_{h}\}_{h>0}$ such that $\bar{q}_h \mathrel{\ensurestackMath{\stackon[1pt]{\rightharpoonup}{\scriptstyle\ast}}} \bar{q}$ in the weak$^\star$ topology of $L^\infty(\Omega)$ as $h \rightarrow 0$, \EO{where} $\bar{q}$ \EO{is} a global solution of the optimal control problem \eqref{eq:weak_min_problem}--\eqref{eq:weak_st_eq}. Furthermore, we have
\begin{equation}\label{eq:discrete_cont_convergence}
\lim_{h \rightarrow 0}\|\bar{q}-\bar{q}_{h}\|_{L^2(\Omega)}  = 0, \qquad
\lim_{h \rightarrow 0}j_{h}(\bar{q}_{h}) = j(\bar{q}).
\end{equation}
\end{theorem}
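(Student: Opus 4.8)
The plan is to use the standard argument for convergence of global discrete minimizers, built around the auxiliary convergence property \eqref{eq:convergence_property_state}. First I would extract a weakly-$\star$ convergent subsequence: since $\{\bar{q}_h\}_{h>0}\subset\mathbb{Q}_{ad}$ is bounded in $L^\infty(\Omega)$, there is a nonrelabeled subsequence with $\bar{q}_h\mathrel{\ensurestackMath{\stackon[1pt]{\rightharpoonup}{\scriptstyle\ast}}}\bar{q}$ in $L^\infty(\Omega)$, and $\bar{q}\in\mathbb{Q}_{ad}$ because $\mathbb{Q}_{ad}$ is convex and closed, hence weakly-$\star$ sequentially closed. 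In particular $\bar{q}_h\rightharpoonup\bar{q}$ in $L^{d/2s}(\Omega)$, so \eqref{eq:convergence_property_state} applies and gives $\bar{u}_h=\mathcal{S}_h\bar{q}_h\to\bar{u}$ in $\tilde{H}^s(\Omega)$, where $\bar{u}$ solves \eqref{eq:weak_st_eq} with control $\bar{q}$; i.e.\ $\bar{u}=\mathcal{S}\bar{q}$. This already yields $\bar{u}_h\to\bar{u}$ in $L^2(\Omega)$.

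Next I would prove $(\bar{u},\bar{q})$ is a global minimizer of \eqref{eq:weak_min_problem}--\eqref{eq:weak_st_eq}. Let $(\tilde{u},\tilde{q})$ be any global solution of the continuous problem with $\tilde{q}\in\mathbb{Q}_{ad}$, $\tilde{u}=\mathcal{S}\tilde{q}$. The idea is to compare $j_h(\bar{q}_h)$ against $j_h(\Pi_h\tilde{q})$, where $\Pi_h$ is the $L^2$-orthogonal projection onto $\mathbb{Q}_h$ (equivalently, cellwise averaging); note $\Pi_h\tilde{q}\in\mathbb{Q}_{ad,h}$ since averaging preserves the bounds $a\le\cdot\le b$. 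Since $\bar{q}_h$ is a discrete global solution, $j_h(\bar{q}_h)\le j_h(\Pi_h\tilde{q})$. On the right-hand side, $\Pi_h\tilde{q}\to\tilde{q}$ in $L^2(\Omega)$ (standard, using $\tilde q\in\mathbb{Q}_{ad}\subset L^\infty$ and, e.g., the regularity $\tilde q\in H^{s+\kappa-\epsilon}(\Omega)$ from Theorem \ref{thm:regul_control}, although mere density of smooth functions suffices together with the uniform $L^\infty$ bound), so by \eqref{eq:convergence_property_state} again $\mathcal{S}_h(\Pi_h\tilde{q})\to\mathcal{S}\tilde{q}=\tilde{u}$ in $\tilde H^s(\Omega)$, hence in $L^2(\Omega)$; combined with $\|\Pi_h\tilde q\|_{L^2(\Omega)}\to\|\tilde q\|_{L^2(\Omega)}$ this gives $j_h(\Pi_h\tilde{q})\to J(\tilde{u},\tilde{q})=j(\tilde q)$. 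On the left-hand side, lower semicontinuity of $J$ under the established convergences ($\bar u_h\to\bar u$ strongly in $L^2$, $\bar q_h\rightharpoonup\bar q$ weakly in $L^2$, and weak lower semicontinuity of $\|\cdot\|_{L^2(\Omega)}^2$) gives $J(\bar{u},\bar{q})\le\liminf_{h\to 0}j_h(\bar{q}_h)$. Chaining these,
\begin{equation*}
J(\bar u,\bar q)\le\liminf_{h\to0}j_h(\bar q_h)\le\limsup_{h\to0}j_h(\bar q_h)\le\limsup_{h\to0}j_h(\Pi_h\tilde q)=j(\tilde q)=\min_{(u,q)}J(u,q),
\end{equation*}
so all inequalities are equalities; thus $(\bar u,\bar q)$ is a global solution and $\lim_{h\to0}j_h(\bar q_h)=j(\bar q)$, which is the second assertion in \eqref{eq:discrete_cont_convergence}.

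Finally I would upgrade the weak convergence $\bar q_h\rightharpoonup\bar q$ in $L^2(\Omega)$ to the strong convergence $\|\bar q-\bar q_h\|_{L^2(\Omega)}\to0$. The cleanest route: from $J(\bar u,\bar q)=\lim_h j_h(\bar q_h)=\lim_h\big[\tfrac12\|\bar u_h-u_\Omega\|_{L^2(\Omega)}^2+\tfrac\lambda2\|\bar q_h\|_{L^2(\Omega)}^2\big]$ and $\|\bar u_h-u_\Omega\|_{L^2(\Omega)}^2\to\|\bar u-u_\Omega\|_{L^2(\Omega)}^2$ (strong $L^2$ convergence of $\bar u_h$), we deduce $\tfrac\lambda2\|\bar q_h\|_{L^2(\Omega)}^2\to\tfrac\lambda2\|\bar q\|_{L^2(\Omega)}^2$, i.e.\ $\|\bar q_h\|_{L^2(\Omega)}\to\|\bar q\|_{L^2(\Omega)}$. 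Since $L^2(\Omega)$ is a Hilbert space, norm convergence together with weak convergence forces strong convergence, giving the first limit in \eqref{eq:discrete_cont_convergence}. The main obstacle — really the only nontrivial point — is justifying the two applications of \eqref{eq:convergence_property_state}, i.e.\ verifying the hypothesis $\bar q_h\rightharpoonup\bar q$ in $L^{d/2s}(\Omega)$ (immediate from weak-$\star$ $L^\infty$ convergence on the bounded domain $\Omega$) and the analogous statement for $\Pi_h\tilde q$; everything else is the routine direct-method bookkeeping. One should also take care that the recovery sequence $\Pi_h\tilde q$ lands in $\mathbb{Q}_{ad,h}$ and converges strongly in $L^2$, which is why the averaging projection (rather than, say, nodal interpolation) is the right choice.
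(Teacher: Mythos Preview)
Your proposal is correct and follows essentially the same route as the paper's proof: extract a weak-$\star$ convergent subsequence, invoke the convergence property \eqref{eq:convergence_property_state} from Theorem~\ref{thm:error_estimates_state_aux} to pass $\bar u_h\to\bar u$, build a recovery sequence via the $L^2$-orthogonal projection $\mathcal{P}_h\tilde q\in\mathbb{Q}_{ad,h}$ of a continuous global minimizer, chain the $\liminf/\limsup$ inequalities, and finally upgrade weak to strong $L^2$ convergence of the controls by combining $j_h(\bar q_h)\to j(\bar q)$ with the already established strong convergence of the states to extract norm convergence. The paper appeals explicitly to the regularity of Theorem~\ref{thm:regul_control} to justify $\|\mathcal{P}_h\tilde q-\tilde q\|_{L^2(\Omega)}\to 0$, whereas you (correctly) note that density plus the uniform $L^\infty$ bound already suffices; otherwise the arguments coincide.
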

\begin{proof}
Since $\{\bar{q}_{h}\}_{h>0} \subset \mathbb{Q}_{ad,h}$ is uniformly bounded in $L^\infty(\Omega)$, there exists a nonrelabeled subsequence such that $\bar{q}_{h} \mathrel{\ensurestackMath{\stackon[1pt]{\rightharpoonup}{\scriptstyle\ast}}} \bar{q}$ in $L^\infty(\Omega)$ as $h \rightarrow 0$. In what follows, we prove that $\bar{q}\in \mathbb{Q}_{ad}$ is a global solution to \eqref{eq:weak_min_problem}--\eqref{eq:weak_st_eq} and that \eqref{eq:discrete_cont_convergence} holds.

Let $\mathfrak{q} \in \mathbb{Q}_{ad}$ be a global solution to \eqref{eq:weak_min_problem}--\eqref{eq:weak_st_eq}. Let $\mathcal{P}_h: L^2(\Omega)\to \mathbb{Q}_{h}$ be the orthogonal projection operator. Define $\mathfrak{q}_h:= \mathcal{P}_h(\mathfrak{q})$ and note that $\mathfrak{q}_h \in \mathbb{Q}_{ad,h}$. \EO{Applying} the regularity result from Theorem \ref{thm:regul_control} and a standard error estimate for $\mathcal{P}_h$, \EO{we} obtain $\|\mathfrak{q}-\mathfrak{q}_h\|_{L^2(\Omega)} \to 0$ as $h \rightarrow 0$. Therefore, the global optimality of $\mathfrak{q}$, Theorem \ref{thm:error_estimates_state_aux}, the global optimality of $\bar{q}_{h}$, and $\mathfrak{q}_{h} \rightarrow \mathfrak{q}$ in $L^2(\Omega)$ allow us to obtain that
\[
j(\mathfrak{q}) 
\leq 
j(\bar{q}) 
\leq 
\liminf_{h \downarrow 0} j_{h}(\bar{q}_{h}) 
\leq 
\limsup_{h \downarrow 0} j_{h}(\bar{q}_{h}) 
\leq 
\limsup_{h \downarrow 0} j_{h}(\mathfrak{q}_h) =  j(\mathfrak{q}).
\]
This shows that $\bar{q}$ is a global solution of \eqref{eq:weak_min_problem}--\eqref{eq:weak_st_eq} and that \EO{$j_{h}(\bar{q}_{h}) \rightarrow j(\bar{q})$ as $h \rightarrow 0$.} To prove the strong convergence of $\{ \bar{q}_h \}_{h>0}$ to $\bar q$ in $L^2(\Omega)$, we use the convergence result of Theorem \ref{thm:error_estimates_state_aux} to deduce that $\bar{u}_{h} \to \bar{u}$ in $L^{\mathfrak{t}}(\Omega)$ for every $\mathfrak{t} \leq 2d/(d-2s)$. With this convergence result, we use that $j_{h}(\bar{q}_{h}) \rightarrow j(\bar{q})$ to obtain $\| \bar{q}_h \|_{L^2(\Omega)} \rightarrow \| \bar{q} \|_{L^2(\Omega)}$ \EO{as $h \rightarrow 0$}. The fact that $\bar{q}_h \mathrel{\ensurestackMath{\stackon[1pt]{\rightharpoonup}{\scriptstyle\ast}}} \bar{q}$ in $L^{\infty}(\Omega)$ as $h \rightarrow 0$ allows us to conclude.
\end{proof}

Our second convergence result is as follows: strict local solutions of \eqref{eq:weak_min_problem}--\eqref{eq:weak_st_eq} can be approximated by local solutions of the fully discrete optimal control problems.

\begin{theorem}[convergence of local solutions]
\label{thm:convergence_discrete_sol_local_fully}
Let the assumptions of Theorem \ref{thm:convergence_discrete_sol} hold. Let $\bar{q}\in\mathbb{Q}_{ad}$ be a strict local minimum of \eqref{eq:weak_min_problem}--\eqref{eq:weak_st_eq}. Then, there exists a sequence of local minima \EO{$\{\bar{q}_h\}_{h<h_{\Box}}$} of the fully discrete scheme such that \eqref{eq:discrete_cont_convergence} holds.
\end{theorem}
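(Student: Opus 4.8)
The plan is to follow the classical localization argument of Casas--Tr\"oltzsch. Since $\bar q$ is a \emph{strict} local minimum of \eqref{eq:weak_min_problem}--\eqref{eq:weak_st_eq}, there is $\epsilon>0$ such that $\bar q$ is the \emph{unique} solution of the auxiliary problem $\min\{ j(q): q\in\mathbb{Q}_{ad},\ \|q-\bar q\|_{L^2(\Omega)}\le\epsilon\}$. For each $h$ I would introduce the corresponding \emph{localized fully discrete problem} $\min\{ j_h(q_h): q_h\in\mathbb{Q}_{ad,h},\ \|q_h-\bar q\|_{L^2(\Omega)}\le\epsilon\}$. Its feasible set is nonempty once $h$ is small: taking $\mathfrak{q}_h:=\mathcal{P}_h\bar q\in\mathbb{Q}_{ad,h}$ (as in the proof of Theorem~\ref{thm:convergence_discrete_sol}) and using the regularity $\bar q\in H^{s+\kappa-\varepsilon}(\Omega)$ from Theorem~\ref{thm:regul_control} together with a standard error bound for $\mathcal{P}_h$, one has $\|\bar q-\mathfrak{q}_h\|_{L^2(\Omega)}\to 0$, so $\|\mathfrak{q}_h-\bar q\|_{L^2(\Omega)}\le\epsilon$ for all $h<h_{\Box}$. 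Since this feasible set is closed and bounded in the finite-dimensional space $\mathbb{Q}_h$ and $j_h$ is continuous, a minimizer $\bar q_h$ exists.

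Next I would run the convergence argument of Theorem~\ref{thm:convergence_discrete_sol} on the localized problem. The sequence $\{\bar q_h\}_{h<h_{\Box}}$ is uniformly bounded in $L^\infty(\Omega)$, so along a nonrelabeled subsequence $\bar q_h\rightharpoonup\tilde q$ weakly-$\star$ in $L^\infty(\Omega)$; because the ball $\{q\in\mathbb{Q}_{ad}:\|q-\bar q\|_{L^2(\Omega)}\le\epsilon\}$ is convex and closed in $L^2(\Omega)$, hence weakly sequentially closed, $\tilde q$ is feasible for the localized continuous problem. Invoking Theorem~\ref{thm:error_estimates_state_aux} for the convergence of the states exactly as in the proof of Theorem~\ref{thm:convergence_discrete_sol}, and comparing with the competitor $\mathfrak{q}_h\to\bar q$, one gets
\[
j(\tilde q)\le\liminf_{h\downarrow 0}j_h(\bar q_h)\le\limsup_{h\downarrow 0}j_h(\bar q_h)\le\limsup_{h\downarrow 0}j_h(\mathfrak{q}_h)=j(\bar q).
\]
Hence $\tilde q$ is a global solution of the localized continuous problem, and by strictness $\tilde q=\bar q$. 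Since the limit is the same along every subsequence, the whole sequence satisfies $\bar q_h\rightharpoonup\bar q$ weakly-$\star$ in $L^\infty(\Omega)$ and $j_h(\bar q_h)\to j(\bar q)$; the strong convergence $\|\bar q-\bar q_h\|_{L^2(\Omega)}\to 0$ then follows verbatim from the last part of the proof of Theorem~\ref{thm:convergence_discrete_sol} (convergence $\bar u_h\to\bar u$ in $L^{\mathfrak{t}}(\Omega)$ plus $j_h(\bar q_h)\to j(\bar q)$ give $\|\bar q_h\|_{L^2(\Omega)}\to\|\bar q\|_{L^2(\Omega)}$, which with weak-$\star$ convergence yields strong convergence). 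This is \eqref{eq:discrete_cont_convergence}.

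Finally I would upgrade $\bar q_h$ from a minimizer of the localized problem to a genuine local minimum of the (unconstrained) fully discrete scheme. This is where the strong convergence is used: since $\|\bar q_h-\bar q\|_{L^2(\Omega)}\to 0$, after shrinking $h_{\Box}$ we have $\|\bar q_h-\bar q\|_{L^2(\Omega)}<\epsilon/2$ for all $h<h_{\Box}$, so $\bar q_h$ lies in the $L^2(\Omega)$-interior of the localized feasible set; consequently every $q_h\in\mathbb{Q}_{ad,h}$ with $\|q_h-\bar q_h\|_{L^2(\Omega)}<\epsilon/2$ is admissible for the localized problem, whence $j_h(\bar q_h)\le j_h(q_h)$. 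Thus $\bar q_h$ is a local minimum of the fully discrete scheme and $\{\bar q_h\}_{h<h_{\Box}}$ has the required property. The main obstacle here is not any single estimate but the bookkeeping of the two genuinely needed ingredients --- the regularity of $\bar q$ (Theorem~\ref{thm:regul_control}), which guarantees that the localized discrete feasible sets are nonempty, and the reuse of the global-convergence machinery of Theorem~\ref{thm:convergence_discrete_sol} applied on the ball, whose conclusion then forces the limit to coincide with $\bar q$ by strictness; everything else is routine.
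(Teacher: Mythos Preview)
Your proof is correct and follows essentially the same localization argument as the paper: introduce the $L^2$-ball of radius $\epsilon$ around $\bar q$, solve the localized discrete problem (nonempty because $\mathcal{P}_h\bar q$ is feasible for small $h$), reuse the global-convergence argument of Theorem~\ref{thm:convergence_discrete_sol} to force the limit to be $\bar q$ by strictness, and then note that the ball constraint is eventually inactive so that $\bar q_h$ is a local minimum of the unlocalized discrete scheme. Your write-up is in fact a bit more explicit than the paper's about the subsequence-to-full-sequence step and the $\epsilon/2$ neighborhood used in the final upgrade; note also that invoking Theorem~\ref{thm:regul_control} for $\mathcal{P}_h\bar q\to\bar q$ in $L^2(\Omega)$ is harmless but not strictly needed, since $L^2$-convergence of the orthogonal projection holds for any $\bar q\in L^2(\Omega)$.
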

\begin{proof}
Since $\bar{q}$ is a strict local minimum of \eqref{eq:weak_min_problem}--\eqref{eq:weak_st_eq}, there exists $\varepsilon > 0$ such that the problem: Find $\min\{ j(q): q \in \mathbb{Q}_{ad}\cap B_{\varepsilon}(\bar{q})\}$ admits as a unique solution $\bar{q}$. Here, $B_{\varepsilon}(\bar{q}):=\{ q \in L^2(\Omega): \|\bar{q} - q\|_{L^2(\Omega)}\leq \varepsilon\}$. Let us now introduce, for $h>0$, the discrete problem: Find
$
\min\{j_{h}(q_h): q_h\in\mathbb{Q}_{ad,h}\cap B_{\varepsilon}(\bar{q})\}.
$
To conclude that this problem admits at least \EO{one} solution, we need to verify that the set \EO{in which} the minimum is sought is nonempty; note that such a set is compact. To do this, we note that there exists $h_{\varepsilon}>0$ such that, for every $h \leq h_{\varepsilon}$, $\mathcal{P}_h(\bar{q}) \in \mathbb{Q}_{ad,h} \cap B_{\varepsilon}(\bar{q})$.

Let $h \in (0,h_{\varepsilon}]$ and let $\bar{q}_h$ be a global solution of the introduced discrete problem. The arguments \EO{presented} in the proof \EO{of} Theorem \ref{thm:convergence_discrete_sol} allow us to conclude the existence of a subsequence of $\{\bar{q}_h\}_{h\leq h_\varepsilon}$ \EO{that converges strongly} in $L^2(\Omega)$ to a solution of $\min\{j(q): q \in\mathbb{Q}_{ad}\cap B_{\varepsilon}(\bar{q})\}$. Since this problem admits a unique solution $\bar{q}$, we must have $\bar{q}_h \rightarrow \bar{q}$ in $L^2(\Omega)$ as $h \rightarrow 0$. In particular, this guarantees that the constraint $\bar{q}_{h} \in B_{\varepsilon}(\bar{q})$ is not active for $h$ sufficiently small. Consequently, $\bar{q}_h$ solves the fully discrete scheme and the convergence properties stated in \eqref{eq:discrete_cont_convergence} hold.
\end{proof}


\subsubsection{Convergence of discretizations: the semidiscrete scheme}
\label{sec:convergence_semidiscrete} 
We present the following results. Let $s\in(0,1)$, $r > d/2s$, and $f,u_{\Omega}\in L^2(\Omega)\cap L^r(\Omega)$.

\begin{itemize}
 \item Let $h>0$ and let $\bar{\mathsf{q}}_h \in \mathbb{Q}_{ad}$ be a global solution of the semidiscrete scheme. Then, there exist nonrelabeled subsequences of $\{\bar{\mathsf{q}}_h\}_{h>0}$ such that $\bar{\mathsf{q}}_h \mathrel{\ensurestackMath{\stackon[1pt]{\rightharpoonup}{\scriptstyle\ast}}} \bar{q}$ in $L^{\infty}(\Omega)$ as $h \rightarrow 0$ and \eqref{eq:discrete_cont_convergence} holds; $\bar{q}$ is a global solution to \eqref{eq:weak_min_problem}--\eqref{eq:weak_st_eq}.
 \item Let $\bar{q} \in\mathbb{Q}_{ad}$ be a strict local minimum of \eqref{eq:weak_min_problem}--\eqref{eq:weak_st_eq}. Then, there exists a sequence of local minima $\{\bar{\mathsf{q}}_h\}_{h>0}$ of the semidiscrete scheme satisfying \eqref{eq:discrete_cont_convergence}.
\end{itemize}
The proof of these results follows very similar arguments to \EO{those} developed in the proof of Theorems \ref{thm:convergence_discrete_sol} and \ref{thm:convergence_discrete_sol_local_fully}. \EO{For the sake of brevity, we will omit further details.}


\section{Error estimates}\label{sec:error_estimates}

In this section, we derive error estimates for the fully and semidiscrete schemes introduced in sections \ref{sec:fully_discrete} and \ref{sec:variational_discretization}, respectively.


\subsection{Error estimates for the fully discrete scheme}

Let $\{\bar{q}_h\}_{h>0} \subset \mathbb{Q}_{ad,h}$ be a sequence of local minima of the fully discrete optimal control problems such that $\bar{q}_h \to \bar{q}$ in $L^2(\Omega)$ as $h \rightarrow 0$;  $\bar{q}\in\mathbb{Q}_{ad}$ denotes a local solution of \eqref{eq:weak_min_problem}--\eqref{eq:weak_st_eq} (see Theorems \ref{thm:convergence_discrete_sol} and  \ref{thm:convergence_discrete_sol_local_fully}). The main goal of this section is to derive the error bound
\begin{equation}\label{eq:control_error_estimate_fully}
\|\bar{q}-\bar{q}_h\|_{L^2(\Omega)}\lesssim h^{2\gamma}|\log h|^{2\varphi} \quad \forall h < h_{\ddagger},
\qquad
\gamma = \min\{s,\tfrac{1}{2}\}, 
\qquad 
h_{\ddagger}>0. 
\end{equation}
Here, $\varphi = \nu$ if $s\neq \frac{1}{2}$, $\varphi = 1 +\nu$ if $s=\frac{1}{2}$, and $\nu \geq \frac{1}{2}$ is the constant in Theorem \ref{thm:sobolev_reg}.

In the \EO{following analysis we \emph{assume}} that discrete solutions $u_{h}$ to problem \eqref{eq:discrete_st_eq} are uniformly bounded in $L^{d/s}(\Omega)$, i.e,
\begin{equation}\label{eq:assumption_uniform_L-infty_bound}
\exists C > 0:
\quad
\|u_{h}\|_{L^{d/s}(\Omega)} \leq C
\quad \forall h>0.
\end{equation}

The following result \EO{is helpful for deriving} the error bound \eqref{eq:control_error_estimate_fully}.

\begin{lemma}[auxiliary error estimate]\label{lemma:aux_error_estimate}
Let $s\in(0,1)$, $r > d/2s$, and $f,u_{\Omega} \in L^2(\Omega) \cap L^r(\Omega)$. Let us assume that \eqref{eq:assumption_uniform_L-infty_bound} holds and that  $\bar{q}\in\mathbb{Q}_{ad}$ \EO{satisfies} the second order optimality conditions \eqref{eq:second_order_2_2}. If \eqref{eq:control_error_estimate_fully} is false, then there exists $h_{\dagger} > 0$ such that
\begin{equation}\label{eq:aux_estimate}
\textgoth{C}
\|\bar{q}-\bar{q}_h\|_{L^2(\Omega)}^2\leq [j'(\bar{q}_h)-j'(\bar{q})](\bar{q}_h-\bar{q}) \quad \forall h < h_{\dagger},
\quad
\textgoth{C} = 2^{-1} \min\{\mu,\lambda\}.
\end{equation}
Here, $\lambda$ is the control cost and $\mu$ denotes the constant appearing in \eqref{eq:second_order_2_2}.
\end{lemma}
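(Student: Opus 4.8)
The plan is to argue by contradiction against \eqref{eq:control_error_estimate_fully} and extract a critical-cone element from the rescaled differences $\bar{q}_h - \bar{q}$, then exploit the quadratic growth encoded in the second order sufficient condition \eqref{eq:second_order_2_2}. Suppose \eqref{eq:control_error_estimate_fully} is false. Then there is a subsequence (not relabeled) along which $h^{-2\gamma}|\log h|^{-2\varphi}\|\bar{q}-\bar{q}_h\|_{L^2(\Omega)} \to \infty$; in particular $\rho_h := \|\bar{q}-\bar{q}_h\|_{L^2(\Omega)} > 0$ for all small $h$ and $\rho_h \to 0$ by Theorem \ref{thm:convergence_discrete_sol_local_fully}. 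Set $w_h := \rho_h^{-1}(\bar{q}_h - \bar{q})$, so $\|w_h\|_{L^2(\Omega)} = 1$; passing to a further subsequence, $w_h \rightharpoonup w$ in $L^2(\Omega)$. The first task is to show $w \in C_{\bar{q}}$: the sign conditions \eqref{eq:sign_cond} pass to the weak limit because $\mathbb{Q}_{ad}$ is convex and closed; for the remaining condition $w(x) = 0$ where $\bar{\mathfrak{d}}(x)\neq 0$, I would use the discrete variational inequality \eqref{eq:discrete_var_ineq}, the analogous continuous one \eqref{eq:var_ineq_with_adj_state}, and the convergence $\bar{u}_h\bar{p}_h \to \bar{u}\bar{p}$ in $L^2(\Omega)$ (which follows from Theorems \ref{thm:error_estimates_state_aux} and \ref{thm:error_estimates_adj_aux} together with the uniform $L^\infty$-type bounds guaranteed by \eqref{eq:assumption_uniform_L-infty_bound} and Theorem \ref{thm:L_infty_reg}) to deduce $\int_\Omega \bar{\mathfrak{d}} w\,\mathrm{d}x = 0$, hence, by the sign structure \eqref{eq:derivative_j}, that $\bar{\mathfrak{d}}\neq 0 \implies w = 0$ a.e.

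The second and central task is to produce the inequality \eqref{eq:aux_estimate}. The starting point is the mean value identity
\[
[j'(\bar{q}_h) - j'(\bar{q})](\bar{q}_h - \bar{q}) = j''(q_h^\theta)(\bar{q}_h - \bar{q})^2,
\qquad q_h^\theta := \bar{q} + \theta_h(\bar{q}_h - \bar{q}),\ \theta_h \in (0,1),
\]
so dividing by $\rho_h^2$ gives $\rho_h^{-2}[j'(\bar{q}_h)-j'(\bar{q})](\bar{q}_h-\bar{q}) = j''(q_h^\theta)w_h^2$. Using the Lipschitz estimate \eqref{eq:estimate_of_j2} for $j''$ together with $\|q_h^\theta - \bar{q}\|_{L^r(\Omega)} \to 0$ — here the convergence in $L^r$, not merely $L^2$, needs a brief argument via the uniform $L^\infty$ bound on $\bar{q}_h$ (they are projections/values of $\Pi_{[a,b]}$) and interpolation, exactly as in \eqref{eq:Lp_conv_uk} — one gets $j''(q_h^\theta)w_h^2 = j''(\bar{q})w_h^2 + o(1)$. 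Then I would invoke the lower semicontinuity argument of Step 2 of Theorem \ref{thm:optimal_solution}, applied to the sequence $w_h \rightharpoonup w$: writing $j''(\bar{q})w_h^2 = \lambda\|w_h\|_{L^2(\Omega)}^2 - 2(w_h z_h, \bar{p})_{L^2(\Omega)} + \|z_h\|_{L^2(\Omega)}^2$ with $z_h = \mathcal{S}'(\bar{q})w_h$, and using $z_h \rightharpoonup z := \mathcal{S}'(\bar{q})w$ together with the compact embedding $\tilde H^s(\Omega)\hookrightarrow L^2(\Omega)$ (Lemma \ref{lemma:embedding_result}), one obtains $\liminf_h j''(\bar{q})w_h^2 \geq \lambda - 2(wz,\bar{p})_{L^2(\Omega)} + \|z\|_{L^2(\Omega)}^2 = j''(\bar{q})w^2 + \lambda(1 - \|w\|_{L^2(\Omega)}^2)$.

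To close, I separate two cases. If $w = 0$, then $\liminf_h j''(q_h^\theta)w_h^2 \geq \lambda = \lambda\|w_h\|_{L^2(\Omega)}^2$, so for $h$ small we have $j''(q_h^\theta)w_h^2 \geq \tfrac12\lambda\|w_h\|_{L^2(\Omega)}^2 \geq \textgoth{C}\|w_h\|_{L^2(\Omega)}^2$. If $w\neq 0$, I apply the equivalent form \eqref{eq:second_order_2_2}: since $w \in C_{\bar{q}} \subset C_{\bar{q}}^\tau$ and $w\neq 0$, we get $j''(\bar{q})w^2 \geq \mu\|w\|_{L^2(\Omega)}^2$; combining with the liminf inequality above and $\|w\|_{L^2(\Omega)} \leq 1$ yields $\liminf_h j''(q_h^\theta)w_h^2 \geq \mu\|w\|_{L^2(\Omega)}^2 + \lambda(1-\|w\|_{L^2(\Omega)}^2) \geq \min\{\mu,\lambda\} \geq 2\textgoth{C} = 2\textgoth{C}\|w_h\|_{L^2(\Omega)}^2$, so again $j''(q_h^\theta)w_h^2 \geq \textgoth{C}\|w_h\|_{L^2(\Omega)}^2$ for $h$ small. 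Multiplying through by $\rho_h^2$ gives \eqref{eq:aux_estimate} for all $h < h_\dagger$. The main obstacle I anticipate is the lower-semicontinuity step: carefully justifying $z_h \rightharpoonup z$ requires passing to the limit in the defining equation \eqref{eq:first_der_S} for $z_h$, where the right-hand side $-(w_h \bar{u}, v)_{L^2(\Omega)}$ converges because $\bar{u}\in L^\infty(\Omega)$ (Theorem \ref{thm:L_infty_reg}) and $w_h \rightharpoonup w$ in $L^2(\Omega)$, after which uniqueness of the limit problem identifies the weak limit as $z$ — this is routine but must be stated, and the uniform $L^{d/s}(\Omega)$ assumption \eqref{eq:assumption_uniform_L-infty_bound} is what makes the analogous discrete quantities behave, which matters when one feeds this lemma into the proof of \eqref{eq:control_error_estimate_fully}.
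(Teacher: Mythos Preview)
Your overall strategy matches the paper's: extract a weak limit $w$ of the normalized differences, show $w\in C_{\bar q}$, then use the mean value theorem and a lower-semicontinuity argument to obtain $j''(\hat q_h)w_h^2 \ge \min\{\mu,\lambda\}/2$ for small $h$. Your variant of first reducing $j''(q_h^\theta)w_h^2$ to $j''(\bar q)w_h^2$ via the Lipschitz estimate \eqref{eq:estimate_of_j2} is a legitimate alternative to the paper's direct analysis of $j''(\hat q_h)$, and your case split $w=0$ versus $w\ne0$ is equivalent to the paper's unified bound $\lambda+(\mu-\lambda)\|w\|_{L^2(\Omega)}^2\ge\min\{\mu,\lambda\}$.

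There is, however, a genuine gap in your verification that $w\in C_{\bar q}$. You write that you would ``use the discrete variational inequality \eqref{eq:discrete_var_ineq}'' together with the continuous one to get $\int_\Omega \bar{\mathfrak d}\,w\,\mathrm{d}x=0$. But \eqref{eq:discrete_var_ineq} can only be tested with $q_h\in\mathbb{Q}_{ad,h}$, and $\bar q\notin\mathbb{Q}_{ad,h}$. The paper inserts the $L^2$-projection $\mathcal P_h(\bar q)$ and writes
\[
\int_\Omega \bar{\mathfrak d}_h\,w_h\,\mathrm{d}x
=\frac{1}{\rho_h}\Bigl(\int_\Omega \bar{\mathfrak d}_h\,(\mathcal P_h(\bar q)-\bar q)\,\mathrm{d}x
+\int_\Omega \bar{\mathfrak d}_h\,(\bar q_h-\mathcal P_h(\bar q))\,\mathrm{d}x\Bigr),
\]
where the second integral is $\le0$ by \eqref{eq:discrete_var_ineq}. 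The first integral is controlled by $\|\bar{\mathfrak d}_h\|_{L^2(\Omega)}\,\|\mathcal P_h(\bar q)-\bar q\|_{L^2(\Omega)}/\rho_h$, and showing this tends to zero requires precisely the contradiction hypothesis: $\|\mathcal P_h(\bar q)-\bar q\|_{L^2(\Omega)}\lesssim h^{2\gamma}|\log h|^{\nu}$ by the regularity of $\bar q$ (Theorem~\ref{thm:regul_control}), while $\rho_h/(h^{2\gamma}|\log h|^{2\varphi})\to\infty$ by the negation of \eqref{eq:control_error_estimate_fully}. This is the \emph{only} place in the proof where the hypothesis ``\eqref{eq:control_error_estimate_fully} is false'' is invoked, and it is what distinguishes the fully discrete Lemma~\ref{lemma:aux_error_estimate} from its semidiscrete analogue (Lemma~\ref{lemma:aux_error_estimate_semi}), where $\bar q$ is admissible in \eqref{eq:discrete_var_ineq_variational} and no such hypothesis is needed. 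Your sketch omits this step entirely, so as written the argument does not use the assumption it is meant to exploit.
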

\begin{proof}
We follow \cite[Section 7]{MR2272157} and proceed by contradiction. \EO{Since} by assumption \eqref{eq:control_error_estimate_fully} is false, there exists a subsequence $\{h_k\}_{k \in \mathbb{N}} \subset \mathbb{R}^{+}$ such that 
\begin{equation}\label{eq:estimate_contradiction}
\lim_{ h_{k}\rightarrow 0}\|\bar{q}-\bar{q}_{h_k}\|_{L^2(\Omega)} \EO{= 0}, 
\qquad
\lim_{h_{k} \rightarrow 0}\frac{\|\bar{q}-\bar{q}_{h_{k}}\|_{L^2(\Omega)}}{ h_{k}^{2\gamma}|\log h_{k}|^{2\varphi}}=+\infty.
\end{equation}
\EO{To} simplify the exposition of the material, we \EO{omit} the subindex $k$ \EO{in the following} and denote $\bar{q}_{h_k} = \bar{q}_{h}$. Note that $h \rightarrow 0$ as $k \uparrow \infty$.

Define $w_h:= (\bar{q}_h-\bar{q})/\|\bar{q}_h-\bar{q}\|_{L^2(\Omega)}$. Since $\{ w_h \}_{h>0}$ is uniformly bounded in $L^2(\Omega)$, \EO{possibly} up to a subsequence, we can assume that 
$
w_{h}  \rightharpoonup w
$
in $L^{2}(\Omega)$ as $h \rightarrow 0$. In what follows, we prove that $w\in C_{\bar{q}}$. Recall that $C_{\bar{q}}$ is defined in \eqref{def:critical_cone}. Since, for each $h>0$, $\bar{q}_h \in \mathbb{Q}_{ad,h}\subset \mathbb{Q}_{ad}$, $w_h$ satisfies the sign conditions \eqref{eq:sign_cond}. Consequently, $w$ \EO{also} satisfies \eqref{eq:sign_cond}. To show that $\bar{\mathfrak{d}}(x) \neq 0$ implies that $w(x) = 0$ for a.e.~$x\in\Omega$, we introduce $\bar{\mathfrak{d}}_h:= \lambda\bar{q}_h - \bar{u}_{h}\bar{p}_{h}$ and recall that $\bar{\mathfrak{d}}= \lambda\bar{q}- \bar{u}\bar{p}$. Let us now invoke $\|\bar{q}-\bar{q}_{h}\|_{L^2(\Omega)}\to 0$ as $h\rightarrow 0$, \eqref{eq:assumption_uniform_L-infty_bound}, and estimates \eqref{eq:global_estimate_adj_s} and \eqref{eq:global_estimate_state_L2} to obtain
\begin{multline*}
\| \bar{\mathfrak{d}} - \bar{\mathfrak{d}}_h\|_{L^2(\Omega)} 
\leq \lambda\|\bar{q} - \bar{q}_{h}\|_{L^2(\Omega)}  + \|\bar{u}_{h}\|_{L^{\frac{d}{s}}(\Omega)} \| \bar{p} - \bar{p}_{h} \|_{L^{\frac{2d}{d-2s}}(\Omega)} 
\\
+ \|\bar{p}\|_{L^{\infty}(\Omega)} \| \bar{u} - \bar{u}_{h} \|_{L^2(\Omega)} 
 \lesssim \|\bar{q}-\bar{q}_{h}\|_{L^2(\Omega)} + h^{\gamma}|\log h|^{\varphi}  \to 0, \qquad h\rightarrow 0.
\end{multline*}
Hence, we have that $\bar{\mathfrak{d}}_{h} \to \bar{\mathfrak{d}}$ in $L^2(\Omega)$ as $h\rightarrow 0$. \EO{From this follows}
\begin{align*}
\int_\Omega \bar{\mathfrak{d}}(x)w(x) \mathrm{d}x
&=\lim_{h \rightarrow 0}\int_\Omega\bar{\mathfrak{d}}_h(x)w_h(x) \mathrm{d}x
\\
&=\lim_{h \rightarrow 0}\frac{1}{\|\bar{q}_h-\bar{q}\|_{L^2(\Omega)}}
\left(\int_\Omega\bar{\mathfrak{d}}_h (\mathcal{P}_h(\bar{q})-\bar{q})\mathrm{d}x 
+ \int_\Omega\bar{\mathfrak{d}}_h(\bar{q}_h - \mathcal{P}_h(\bar{q})) \mathrm{d}x\right),
\end{align*}
where $\mathcal{P}_h$ denotes the $L^2$-orthogonal projection operator into piecewise constant functions over $\T_h$. Since $\mathcal{P}_h(\bar{q})\in \mathbb{Q}_{ad,h}$, the discrete variational inequality \eqref{eq:discrete_var_ineq} implies that $ (\bar{\mathfrak{d}}_h,\bar{q}_h - \mathcal{P}_h(\bar{q}))_{L^2(\Omega)}  \leq 0$. The uniform boundedness of $\{ \|\bar{\mathfrak{d}}_{h}\|_{L^2(\Omega)} \}_{h < h_{\bowtie}}$, which follows from $\|\bar{\mathfrak{d}}_{h}\|_{L^2(\Omega)}\leq \|\bar{\mathfrak{d}}_{h}-\bar{\mathfrak{d}}\|_{L^2(\Omega)}+\|\bar{\mathfrak{d}}\|_{L^2(\Omega)} \lesssim 1$ for $h < h_{\bowtie}$, thus implies that
\begin{equation*}
\int_\Omega \bar{\mathfrak{d}}(x) w(x) \mathrm{d}x
\leq 
\lim_{h \rightarrow 0}\frac{1}{\|\bar{q}_h-\bar{q}\|_{L^2(\Omega)}}
\int_\Omega\bar{\mathfrak{d}}_h (\mathcal{P}_h(\bar{q})-\bar{q})\mathrm{d}x 
\lesssim
\lim_{h \rightarrow 0}
\frac{\|\mathcal{P}_h(\bar{q})-\bar{q}\|_{L^2(\Omega)}}{\|\bar{q}_h-\bar{q}\|_{L^2(\Omega)}}
=0.
\end{equation*}
To obtain the last equality, we have used \eqref{eq:estimate_contradiction} and the regularity results for $\bar{q}$ provided in Theorem \ref{thm:regul_control} in conjunction with standard error estimates for $\mathcal{P}_h$, which yield
\begin{equation}
 \|\bar{q} - \mathcal{P}_h(\bar{q}) \|_{L^2(\Omega)} \lesssim \EO{h^{2\gamma}|\log h|^{\nu} (1 + \Lambda(f,u_{\Omega})), \quad
\gamma = \min\{s,\tfrac{1}{2}\}.}
  \label{eq:error_estimate_Ph}
\end{equation}
Here, $\Lambda(f,u_{\Omega})$ is defined in \eqref{eq:Lambda}. Now, since $w$ satisfies the sign conditions \eqref{eq:sign_cond}, we have $\bar{\mathfrak{d}}(x)w(x) \geq 0$ \EO{for a.e.~$x \in \Omega$}. Consequently, $\int_{\Omega} |\bar{\mathfrak{d}}(x)w(x)| \mathrm{d}x \leq 0$. As a result, if $\bar{\mathfrak{d}}(x)\neq 0$, then $w(x) = 0$ for a.e.~$x\in\Omega$. We have thus obtained that $w \in C_{\bar{q}}$.

\EO{Now that we have proven that} $w \in C_{\bar{q}}$, let us derive the auxiliary error estimate \eqref{eq:aux_estimate} by using \eqref{eq:second_order_2_2}. To begin, we apply the mean value theorem to obtain 
\begin{equation}\label{eq:difference_of_j}
[j'(\bar{q}_h)-j'(\bar{q})](\bar{q}_h-\bar{q})= j''(\hat{q}_h)(\bar{q}_h-\bar{q})^2, \qquad \hat{q}_h = \bar{q} + \theta_h (\bar{q}_h - \bar{q}),
\end{equation}
where $\theta_h \in (0,1)$. Let $u(\hat{q}_h)$ be unique solution to \eqref{eq:weak_st_eq} with $q$ replaced by $\hat{q}_{h}$ and let $p(\hat{q}_h)$ be the unique solution to \eqref{eq:adj_eq} with $u$ and $q$ replaced by $u(\hat{q}_h)$ and $\hat{q}_h$, respectively. Since $\hat{q}_h(x) \geq a$ for a.e.~$x \in \Omega$ and $\bar u \in L^{\infty}(\Omega)$, a basic stability bound for the problem that $\bar{u} - u(\hat{q}_h)$ solves combined with the fact that $\bar{q}_h \rightarrow \bar{q}$ in $L^2(\Omega)$ as $h \rightarrow 0$ reveal that $u(\hat{q}_h) \rightarrow \bar{u}$ in $\tilde{H}^{s}(\Omega)$. Similarly, $p(\hat{q}_h) \rightarrow \bar{p}$ in $\tilde{H}^{s}(\Omega)$ as $h \rightarrow 0$. We now define $z(w_h)$ as the unique solution to \eqref{eq:first_der_S} with $q$, $u$ and $w$ replaced by $\hat{q}_{h}$, $u(\hat{q}_h)$, and $w_h$, respectively. Proceeding as in the Step 2 of the proof of Theorem \ref{thm:optimal_solution}, we can show that $w_{h} \rightharpoonup w$ in $L^2(\Omega)$ as $h \rightarrow 0$ guarantees that $z(w_h) \rightharpoonup z$ in $\tilde{H}^{s}(\Omega)$. With all these convergence properties at hand, we \EO{can refer to} the characterization \eqref{eq:charac_j2}, the definition of $w_h$, and the second order condition \eqref{eq:second_order_2_2} to obtain
\begin{multline*}
\lim_{h \rightarrow 0}j''(\hat{q}_h)w_h^2 
 = 
\lim_{h \rightarrow 0}
\left[
 \lambda\|w_{h}\|_{L^2(\Omega)}^2 - 2(w_{h}z(w_{h}),p(\hat{q}_{h}))_{L^2(\Omega)} + \|z(w_{h})\|_{L^2(\Omega)}^2
 \right]
 \\
= \lambda
- 2(w z,\bar{p})_{L^2(\Omega)} + \|z\|_{L^2(\Omega)}^2
= 
\lambda + j''(\bar{q})w^2 - \lambda\|w\|_{L^2(\Omega)}^2
\geq \lambda +(\mu-\lambda) \|w\|_{L^2(\Omega)}^2.
\end{multline*}
Therefore, since $\|w\|_{L^2(\Omega)}\leq 1$, we obtain
$
\lim_{h \rightarrow 0}j''(\hat{q}_h)w_{h}^2 \geq \min\{\mu,\lambda\}>0.
$
We can therefore obtain the existence of $h_{\dagger} > 0$ such that
$
j''(\hat{q}_h)w_{h}^2 \geq 
\min\{\mu,\lambda\}/2
$
for every
$
h< h_{\dagger}.
$
\EO{Given the definition of $w_h$ and \eqref{eq:difference_of_j}, this allows the conclusion to be drawn.}
\end{proof}

We now derive the error bound \eqref{eq:control_error_estimate_fully}. \EO{For this purpose}, we strengthen the assumption \eqref{eq:assumption_uniform_L-infty_bound} as follows: solutions to \eqref{eq:discrete_st_eq} are uniformly bounded in $L^{\infty}(\Omega)$, i.e,
\begin{equation}\label{eq:assumption_uniform_L-infty_bound_2}
\exists C > 0:
\quad
\|u_{h}\|_{L^{\infty}(\Omega)} \leq C
\quad \forall h>0.
\end{equation}
\DQ{
Unfortunately, we are not aware of a proof of \eqref{eq:assumption_uniform_L-infty_bound_2} in a general setting. A basic proof can be given for the case where $d=2$ and $s > 0.5+\varepsilon$, with $\varepsilon > 0$ as in Theorem \ref{thm:sobolev_reg}. This proof is based on inverse estimates, error estimates for the Lagrange interpolation operator, and the $L^2(\Omega)$ error estimate \eqref{eq:error_in_norm_L2}. Finally, we would like to note that for a fixed $h$ the function $u_h$ is a globally continuous piecewise linear function that is bounded in $L^{\infty}(\Omega)$ and that $\{u_h\}_{h>0}$ converges strongly in $H^s(\mathbb{R}^n)$ as $h \rightarrow 0$ to a function $u$ belonging to $L^{\infty}(\Omega)$; see Theorem \ref{thm:L_infty_reg}, \S \ref{sec:convergence_fullydiscrete} and \S \ref{sec:convergence_semidiscrete}, and Theorem \ref{thm:error_estimates_state_aux}. }

\begin{theorem}[error estimate]\label{thm:error_estimate_fully}
Let $s\in(0,1)$, $r > d/2s$, and $f,u_{\Omega}\in L^{2}(\Omega) \cap L^r(\Omega)$. Let us assume that \eqref{eq:assumption_uniform_L-infty_bound_2} holds and that $\bar{q}\in\mathbb{Q}_{ad}$ satisfies the second order optimality conditions \eqref{eq:second_order_2_2}. Then, there exists $h_{\ddagger} > 0$ such that 
\begin{equation}
\label{eq:error_estimates_control_final_fd}
\|\bar{q}-\bar{q}_h\|_{L^2(\Omega)}\lesssim h^{2\gamma} |\log h|^{2\varphi}
\qquad
\forall h < h_{\ddagger},
\qquad
\gamma = \min\{ s, \tfrac{1}{2} \}.
\end{equation}
Here, $\varphi = \nu$ if $s\neq \frac{1}{2}$, $\varphi = 1 +\nu$ if $s=\frac{1}{2}$, and $\nu \geq \frac{1}{2}$ is the constant in Theorem \ref{thm:sobolev_reg}.
\end{theorem}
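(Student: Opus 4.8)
The plan is to run the by-now-standard contradiction scheme for bilinear control error estimates. Suppose \eqref{eq:error_estimates_control_final_fd}, i.e.\ \eqref{eq:control_error_estimate_fully}, were false. Since \eqref{eq:assumption_uniform_L-infty_bound_2} implies \eqref{eq:assumption_uniform_L-infty_bound} and $\bar{q}$ satisfies the second order conditions \eqref{eq:second_order_2_2}, Lemma \ref{lemma:aux_error_estimate} applies: arguing as in its proof we obtain $h_{\dagger}>0$ and a subsequence along which \eqref{eq:estimate_contradiction} holds and for which the coercivity bound \eqref{eq:aux_estimate}, namely $\textgoth{C}\|\bar{q}-\bar{q}_{h}\|_{L^{2}(\Omega)}^{2}\leq [j'(\bar{q}_{h})-j'(\bar{q})](\bar{q}_{h}-\bar{q})$, is valid for $h<h_{\dagger}$. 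The entire argument then amounts to bounding the right-hand side of this inequality by $\tfrac{\textgoth{C}}{2}\|\bar{q}-\bar{q}_{h}\|_{L^{2}(\Omega)}^{2}+C\,h^{4\gamma}|\log h|^{4\varphi}$; absorbing the first summand into the left-hand side gives $\|\bar{q}-\bar{q}_{h}\|_{L^{2}(\Omega)}\lesssim h^{2\gamma}|\log h|^{2\varphi}$ along the subsequence, contradicting the divergence in \eqref{eq:estimate_contradiction} and thereby proving the theorem with $h_{\ddagger}=h_{\dagger}$.

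To estimate $[j'(\bar{q}_{h})-j'(\bar{q})](\bar{q}_{h}-\bar{q})$ I would first discard two nonpositive contributions. Testing the continuous variational inequality \eqref{eq:var_ineq_with_adj_state} with $q=\bar{q}_{h}\in\mathbb{Q}_{ad}$ yields $-j'(\bar{q})(\bar{q}_{h}-\bar{q})\leq 0$, and testing the discrete variational inequality \eqref{eq:discrete_var_ineq} with $q_{h}=\mathcal{P}_{h}\bar{q}\in\mathbb{Q}_{ad,h}$ — the $L^{2}(\Omega)$-projection onto $\mathbb{Q}_{h}$, which belongs to $\mathbb{Q}_{ad,h}$ because $a\leq\bar{q}\leq b$ a.e.\ — yields $j_{h}'(\bar{q}_{h})(\bar{q}_{h}-\mathcal{P}_{h}\bar{q})\leq 0$. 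Writing $j'(\bar{q}_{h})(\bar{q}_{h}-\bar{q})=[j'(\bar{q}_{h})-j_{h}'(\bar{q}_{h})](\bar{q}_{h}-\bar{q})+j_{h}'(\bar{q}_{h})(\bar{q}_{h}-\mathcal{P}_{h}\bar{q})+j_{h}'(\bar{q}_{h})(\mathcal{P}_{h}\bar{q}-\bar{q})$ and using the two sign conditions, we are left with
\begin{equation*}
[j'(\bar{q}_{h})-j'(\bar{q})](\bar{q}_{h}-\bar{q})\leq [j'(\bar{q}_{h})-j_{h}'(\bar{q}_{h})](\bar{q}_{h}-\bar{q})+j_{h}'(\bar{q}_{h})(\mathcal{P}_{h}\bar{q}-\bar{q})=:\mathrm{E}_{1}+\mathrm{E}_{2}.
\end{equation*}

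For $\mathrm{E}_{1}$, note that $j'(\bar{q}_{h})v-j_{h}'(\bar{q}_{h})v=(\bar{u}_{h}\bar{p}_{h}-u(\bar{q}_{h})p(\bar{q}_{h}),v)_{L^{2}(\Omega)}$, where $u(\bar{q}_{h}):=\mathcal{S}\bar{q}_{h}$, $p(\bar{q}_{h})$ solves \eqref{eq:adj_eq} with this state and control $\bar{q}_{h}$, and $\bar{u}_{h},\bar{p}_{h}$ are the corresponding finite element functions defined by \eqref{eq:discrete_st_eq} and \eqref{eq:discrete_adjoint_equation} (the latter being \eqref{eq:aux_adj_problem2} with $q_{h}=\bar{q}_{h}$ and $u_{h}=\bar{u}_{h}$). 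Splitting the product and invoking Theorem \ref{thm:error_estimates_frac_Lap} and Theorem \ref{thm:error_estimates_adj_aux} — whose uniform $L^{r}(\Omega)$-hypothesis is guaranteed by \eqref{eq:assumption_uniform_L-infty_bound_2} and in which the $\|q-q_{h}\|_{L^{2}(\Omega)}$-term vanishes because both objects carry the \emph{same} control $\bar{q}_{h}$ — together with the $L^{\infty}(\Omega)$-bounds of Theorem \ref{thm:L_infty_reg} for $u(\bar{q}_{h})$ and $p(\bar{q}_{h})$, gives $\|\bar{u}_{h}\bar{p}_{h}-u(\bar{q}_{h})p(\bar{q}_{h})\|_{L^{2}(\Omega)}\lesssim h^{2\gamma}|\log h|^{2\varphi}$; hence by Cauchy--Schwarz $\mathrm{E}_{1}\lesssim h^{2\gamma}|\log h|^{2\varphi}\|\bar{q}_{h}-\bar{q}\|_{L^{2}(\Omega)}$, which Young's inequality turns into a summand absorbable on the left plus $Ch^{4\gamma}|\log h|^{4\varphi}$. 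For $\mathrm{E}_{2}=(\lambda\bar{q}_{h}-\bar{u}_{h}\bar{p}_{h},\mathcal{P}_{h}\bar{q}-\bar{q})_{L^{2}(\Omega)}$, I would introduce $\bar{\mathfrak{d}}=\lambda\bar{q}-\bar{u}\bar{p}$ and $\bar{\mathfrak{d}}_{h}=\lambda\bar{q}_{h}-\bar{u}_{h}\bar{p}_{h}$ as in the proof of Lemma \ref{lemma:aux_error_estimate} and split $\mathrm{E}_{2}=(\bar{\mathfrak{d}}_{h}-\bar{\mathfrak{d}},\mathcal{P}_{h}\bar{q}-\bar{q})_{L^{2}(\Omega)}+(\bar{\mathfrak{d}}-\mathcal{P}_{h}\bar{\mathfrak{d}},\mathcal{P}_{h}\bar{q}-\bar{q})_{L^{2}(\Omega)}$, where the second term exploits $\mathcal{P}_{h}\bar{q}-\bar{q}\perp\mathbb{Q}_{h}$. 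The bound $\|\bar{\mathfrak{d}}_{h}-\bar{\mathfrak{d}}\|_{L^{2}(\Omega)}\lesssim h^{\gamma}|\log h|^{\varphi}+\|\bar{q}-\bar{q}_{h}\|_{L^{2}(\Omega)}$ already established in the proof of Lemma \ref{lemma:aux_error_estimate}, the projection estimate \eqref{eq:error_estimate_Ph} for $\|\mathcal{P}_{h}\bar{q}-\bar{q}\|_{L^{2}(\Omega)}$, and the $H^{s+\kappa-\varepsilon}(\Omega)$-regularity of $\bar{\mathfrak{d}}$ (Theorem \ref{thm:regul_control} and its proof show $\bar{q},\bar{u}\bar{p}\in H^{s+\kappa-\varepsilon}(\Omega)$, whence $\|\bar{\mathfrak{d}}-\mathcal{P}_{h}\bar{\mathfrak{d}}\|_{L^{2}(\Omega)}\lesssim h^{2\gamma}|\log h|^{\nu}$ as in \eqref{eq:error_estimate_Ph}) combine, after using $\varphi+\nu\leq 2\varphi$ and $2\nu\leq 4\varphi$, to give $\mathrm{E}_{2}\lesssim h^{2\gamma}|\log h|^{2\varphi}+\|\bar{q}-\bar{q}_{h}\|_{L^{2}(\Omega)}h^{2\gamma}|\log h|^{\nu}$, the last summand being again absorbed by Young's inequality. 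Adding $\mathrm{E}_{1}$ and $\mathrm{E}_{2}$ yields the bound announced in the first paragraph.

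The main obstacle is not any single estimate but the bookkeeping. One must keep track of which invocations of Theorems \ref{thm:error_estimates_state_aux} and \ref{thm:error_estimates_adj_aux} are made with coinciding controls — so that the $\|q-q_{h}\|_{L^{2}(\Omega)}$-contribution drops out, as in $\mathrm{E}_{1}$ — and which with the mismatched pair $(\bar{q},\bar{q}_{h})$, and then ensure that \emph{every} term carrying a bare first power of $\|\bar{q}-\bar{q}_{h}\|_{L^{2}(\Omega)}$ is multiplied either by an $o(1)$ factor or by $h^{2\gamma}|\log h|^{2\varphi}$, so that Young's inequality leaves only $\tfrac{\textgoth{C}}{2}\|\bar{q}-\bar{q}_{h}\|_{L^{2}(\Omega)}^{2}$ on the left and a clean $h^{4\gamma}|\log h|^{4\varphi}$ on the right. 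As in the proof of Lemma \ref{lemma:aux_error_estimate}, products such as $\bar{u}_{h}\bar{p}_{h}-\bar{u}\bar{p}$ must be split so that the discrete factor $\bar{u}_{h}$ — controlled uniformly only through \eqref{eq:assumption_uniform_L-infty_bound_2} — is paired with a difference lying in $L^{2d/(d-2s)}(\Omega)$, while the continuous factor $\bar{p}\in L^{\infty}(\Omega)$ carries the remaining difference; no uniform $L^{\infty}(\Omega)$-bound on the discrete adjoint $\bar{p}_{h}$ is required.
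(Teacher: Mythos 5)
Your skeleton is the paper's: contradiction via Lemma \ref{lemma:aux_error_estimate}, discarding the two sign terms obtained by testing \eqref{eq:variational_inequality} with $q=\bar{q}_h$ and \eqref{eq:discrete_var_ineq} with $q_h=\mathcal{P}_h\bar{q}$, and a two-term splitting of $[j'(\bar{q}_h)-j'(\bar{q})](\bar{q}_h-\bar{q})$ (your $\mathrm{E}_1+\mathrm{E}_2$ equals the paper's $\mathbf{I}_h+\mathbf{II}_h$, just regrouped). Your $\mathrm{E}_1$ is handled correctly and mirrors the paper's $\mathbf{II}_h$: with \eqref{eq:assumption_uniform_L-infty_bound_2}, $\hat{p}\in L^\infty(\Omega)$, and Theorems \ref{thm:error_estimates_frac_Lap} and \ref{thm:error_estimates_adj_aux} invoked with coinciding controls, $\mathrm{E}_1\lesssim h^{2\gamma}|\log h|^{2\varphi}\|\bar{q}-\bar{q}_h\|_{L^2(\Omega)}$ and Young absorbs it. The gap is in $\mathrm{E}_2$. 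You end up with $\mathrm{E}_2\lesssim h^{2\gamma}|\log h|^{2\varphi}+\|\bar{q}-\bar{q}_h\|_{L^2(\Omega)}h^{2\gamma}|\log h|^{\nu}$, whose first summand is \emph{first} order in the target quantity, not $h^{4\gamma}|\log h|^{4\varphi}$ as your opening paragraph requires. Feeding this into \eqref{eq:aux_estimate} gives only $\|\bar{q}-\bar{q}_h\|^2_{L^2(\Omega)}\lesssim h^{4\gamma}|\log h|^{4\varphi}+h^{2\gamma}|\log h|^{2\varphi}$, i.e.\ $\|\bar{q}-\bar{q}_h\|_{L^2(\Omega)}\lesssim h^{\gamma}|\log h|^{\varphi}$ — half the claimed rate — which does \emph{not} contradict the divergence in \eqref{eq:estimate_contradiction}, so the contradiction argument never closes. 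The loss originates in your use of $\|\bar{\mathfrak{d}}-\bar{\mathfrak{d}}_h\|_{L^2(\Omega)}\lesssim h^{\gamma}|\log h|^{\varphi}+\|\bar{q}-\bar{q}_h\|_{L^2(\Omega)}$ recycled from the proof of Lemma \ref{lemma:aux_error_estimate}: that bound comes from the energy estimate \eqref{eq:global_estimate_adj_s} and is only first order in $h^{\gamma}$, so pairing it with $\|\mathcal{P}_h\bar{q}-\bar{q}\|_{L^2(\Omega)}\lesssim h^{2\gamma}|\log h|^{\nu}$ produces $h^{3\gamma}$, never $h^{4\gamma}$; the unknown rate of divergence in \eqref{eq:estimate_contradiction} cannot be used to absorb such a term either.

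Two ways to close the gap. Within your grouping, sharpen the key bound: writing $\bar{\mathfrak{d}}-\bar{\mathfrak{d}}_h=\lambda(\bar{q}-\bar{q}_h)-\bar{p}(\bar{u}-\bar{u}_h)-\bar{u}_h(\bar{p}-\bar{p}_h)$ and using \eqref{eq:assumption_uniform_L-infty_bound_2}, $\bar{p}\in L^{\infty}(\Omega)$, and the $L^2$-estimates \eqref{eq:global_estimate_state_L2} and \eqref{eq:global_estimate_adj_L2} yields $\|\bar{\mathfrak{d}}-\bar{\mathfrak{d}}_h\|_{L^2(\Omega)}\lesssim h^{2\gamma}|\log h|^{2\varphi}+\|\bar{q}-\bar{q}_h\|_{L^2(\Omega)}$, after which $\mathrm{E}_2\lesssim h^{4\gamma}|\log h|^{4\varphi}+\epsilon\|\bar{q}-\bar{q}_h\|^2_{L^2(\Omega)}$ as needed. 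The paper instead avoids $\bar{\mathfrak{d}}-\bar{\mathfrak{d}}_h$ altogether by grouping as $\mathbf{I}_h=j'(\bar{q}_h)(\mathcal{P}_h\bar{q}-\bar{q})$ and $\mathbf{II}_h=[j'(\bar{q}_h)-j_h'(\bar{q}_h)](\bar{q}_h-\mathcal{P}_h\bar{q})$: in $\mathbf{I}_h$ both $\lambda\bar{q}_h$ and $\mathcal{P}_h(\hat{u}\hat{p})$ drop out by the $L^2$-orthogonality of $\mathcal{P}_h$, and the $H^{s+\kappa-\varepsilon}(\Omega)\cap L^{\infty}(\Omega)$ regularity of the \emph{continuous} product $\hat{u}\hat{p}$ gives $\mathbf{I}_h\lesssim h^{4\gamma}|\log h|^{2\nu}$ directly. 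Your choice $\mathrm{E}_2=j_h'(\bar{q}_h)(\mathcal{P}_h\bar{q}-\bar{q})$ places the discrete product $\bar{u}_h\bar{p}_h$ in that slot, for which no such Sobolev regularity is available — which is precisely what forced the lossy detour through $\bar{\mathfrak{d}}$.
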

\begin{proof}
We proceed by contradiction and assume that \eqref{eq:error_estimates_control_final_fd} does not hold. We can \EO{therefore refer to} the result of Lemma \ref{lemma:aux_error_estimate} to conclude that \eqref{eq:aux_estimate} holds for every $h < h_{\dagger}$. Let us now set $q=\bar{q}_{h}$ in \eqref{eq:variational_inequality} and $q_{h} = \mathcal{P}_{h}(\bar{q})$ in \eqref{eq:discrete_var_ineq} to obtain $- j'(\bar{q})(\bar{q}_{h} - \bar{q}) \leq 0$ and $j_{h}^{\prime}(\bar{q}_{h})(\mathcal{P}_{h}(\bar{q}) - \bar{q}_{h}) \geq 0$, respectively. Given these two inequalities we invoke the auxiliary error estimate \eqref{eq:aux_estimate} to obtain
\begin{equation}
\label{eq:FD_ct_error_estimate_0}
\begin{aligned}
\|\bar{q} - \bar{q}_{h}\|_{L^2(\Omega)}^2 
& \lesssim j'(\bar{q}_{h})(\bar{q}_{h} - \bar{q}) + j_{h}^{\prime}(\bar{q}_{h})(\mathcal{P}_{h}(\bar{q}) - \bar{q}_{h})
\\
& = j'(\bar{q}_{h})(\mathcal{P}_h(\bar{q}) - \bar{q}) + [j'(\bar{q}_{h}) - j_{h}'(\bar{q}_{h})](\bar{q}_{h} - \mathcal{P}_{h}(\bar{q}))=:\mathbf{I}_h + \mathbf{II}_h.
 \end{aligned}
\end{equation}

Let us first bound the term $\mathbf{I}_h$. For this purpose, we introduce the following auxiliary variables: We define $\hat{u}$ and $\hat{p}$ to be the solutions to the problems
\begin{align}
\label{eq:fd_aux_hat_u}
\mathcal{A}(\hat{u},v) + (\bar{q}_{h}\hat{u},v)_{L^{2}(\Omega)} & = (f,v)_{L^2(\Omega)} 
\quad \forall v \in \tilde{H}^{s}(\Omega),
\\
\label{eq:fd_aux_hat_p}
\mathcal{A}(\hat{p},v) + (\bar{q}_{h}\hat{p},v)_{L^{2}(\Omega)} & = (\hat{u} - u_{\Omega},v)_{L^2(\Omega)} 
\quad \forall v \in \tilde{H}^{s}(\Omega).
\end{align}
With these variables at hand, we control the term $\mathbf{I}_h$ as follows:
\begin{align*}
\mathbf{I}_h & = (\lambda\bar{q}_{h} - \hat{u}\hat{p}, \mathcal{P}_{h}(\bar{q}) - \bar{q})_{L^{2}(\Omega)} = (\mathcal{P}_{h}(\hat{u}\hat{p}) - \hat{u}\hat{p}, \mathcal{P}_{h}(\bar{q}) - \bar{q})_{L^{2}(\Omega)} \\
& \leq \|\hat{u}\hat{p} - \mathcal{P}_{h}(\hat{u}\hat{p})\|_{L^{2}(\Omega)}
\|\mathcal{P}_{h}(\bar{q}) - \bar{q} \|_{L^{2}(\Omega)},
\end{align*}
upon using the basic property $(\mathcal{P}_{h}(\bar{q}) - \bar{q},q_h) = 0$ \EO{for all $q_h \in \mathbb{Q}_{h}$.} The term $\|\mathcal{P}_{h}(\bar{q}) - \bar{q} \|_{L^{2}(\Omega)}$ was previously controlled in the proof of Lemma \ref{lemma:aux_error_estimate}; see 
\eqref{eq:error_estimate_Ph}. In what follows we control $\|\hat{u}\hat{p} - \mathcal{P}_{h}(\hat{u}\hat{p})\|_{L^{2}(\Omega)}$. First, since $f$ and $\hat{u} - u_{\Omega}$ belong to $L^2(\Omega) \cap L^r(\Omega)$, we deduce from the regularity results of Theorems \ref{thm:sobolev_reg} and \ref{thm:L_infty_reg} in conjunction with the continuity of the product property of Lemma \ref{lemma:product_of_func} that
\begin{equation*}
\hat{u}\hat{p} \in H^{s+\kappa-\varepsilon}(\Omega)\cap L^{\infty}(\Omega), 
\quad 
\|\hat{u}\hat{p}\|_{H^{s + \kappa - \varepsilon}(\Omega)} \lesssim  \varepsilon^{-\nu}
\EO{\Lambda(f,u_{\Omega})}
\quad
\forall 0 < \varepsilon < s.
\end{equation*}
Here, $\kappa$ and $\nu$ are as in the statement of Theorem \ref{thm:sobolev_reg} and $\Lambda(f,u_{\Omega})$ is defined in \eqref{eq:Lambda}. In view of this regularity result, a standard approximation property for $\mathcal{P}_{h}$ shows that $\mathfrak{e}:= \hat{u}\hat{p} - \mathcal{P}_{h}(\hat{u}\hat{p})$ satisfies \EO{$
\| \mathfrak{e}\|_{L^{2}(\Omega)}  \lesssim h^{2\gamma}|\log h|^{\nu} \Lambda(f,u_{\Omega})$, where $\gamma = \min\{s,\tfrac{1}{2}\}.$}
We can therefore deduce that
\[
\EO{\mathbf{I}_h \lesssim h^{4\gamma}|\log h|^{2\nu} \Lambda(f,u_{\Omega})(1 + \Lambda(f,u_{\Omega})), \quad \gamma = \min\{s,\tfrac{1}{2}\}.}
\]

We now bound the term $\mathbf{II}_{h}$. To accomplish this task, we first notice that $\mathbf{II}_{h} = ( \hat{u}\hat{p} - \bar{u}_{h}\bar{p}_{h}, \bar{q}_{h} - \mathcal{P}_{h}(\bar{q}))_{L^{2}(\Omega)}
=
(\hat{u}\hat{p} - \bar{u}_{h}\bar{p}_{h},\mathcal{P}_{h}(\bar{q}_h - \bar{q}))_{L^{2}(\Omega)}
$. Secondly, by adding and subtracting $\hat{p} \bar{u}_h$ and using the assumption \eqref{eq:assumption_uniform_L-infty_bound_2} and basic inequalities, we arrive at
\begin{align*}
\label{eq:II_estimate_1}
\mathbf{II}_{h} 
& \leq \left( \|\hat{p}\|_{L^{\infty}(\Omega)}\|\hat{u}-\bar{u}_{h}\|_{L^{2}(\Omega)} 
+ 
\|\bar{u}_{h}\|_{L^{\infty}(\Omega)}\| \hat{p} - \bar{p}_{h}\|_{L^{2}(\Omega)}
\right)
\|\bar{q} - \bar{q}_{h}\|_{L^{2}(\Omega)}
\\
& 
\leq
\textswab{C}
\left(
\| \hat{u} - \bar{u}_{h} \|^2_{L^{2}(\Omega)} + 
\| \hat{p} - \bar{p}_{h} \|^2_{L^{2}(\Omega)} 
\right)
+
\tfrac{1}{2}
\|\bar{q} - \bar{q}_{h}\|^2_{L^{2}(\Omega)}, \qquad \textswab{C} > 0.
\end{align*}
A bound for $\| \hat{u} - \bar{u}_{h} \|_{L^{2}(\Omega)}$ follows from the fact that $\bar{u}_{h}$ corresponds to the finite element approximation of $\hat{u}$ \EO{in the framework} of section \ref{sec:fem}. In fact, as an application of Theorem \ref{thm:error_estimates_frac_Lap}, we have the bound
$
\|\hat{u} - \bar{u}_{h}\|_{L^{2}(\Omega)} \lesssim h^{2\gamma}|\log h|^{2\varphi}.
$
We now define  
\begin{equation}
\tilde{p} \in \tilde{H}^s(\Omega):
\quad
 \mathcal{A}(\tilde{p},v) + (\bar{q}_{h}\tilde{p},v)_{L^2(\Omega)} = (\bar{u}_{h} - u_{\Omega},v)_{L^2(\Omega)}
 \quad
 \forall v \in \tilde{H}^{s}(\Omega)
 \label{eq:tilde_p}
\end{equation}
and proceed as in the proof of Theorem \ref{thm:error_estimates_adj_aux} to obtain 
$
\|\hat{p} - \bar{p}_{h}\|_{L^{2}(\Omega)} 
\leq 
\|\hat{p} - \tilde{p} \|_{L^{2}(\Omega)} 
+
\|\tilde{p} - \bar{p}_{h}\|_{L^{2}(\Omega)} 
\lesssim 
 \| \hat{u} - \EO{\bar{u}_h} \|_{L^2(\Omega)}
 +
 h^{2\gamma}|\log h|^{2\varphi}.
$
Consequently, we can thus arrive at
\begin{equation*}
\mathbf{II}_{h} \lesssim  h^{4\gamma}|\log h|^{4\varphi} + \tfrac{1}{2}\|\bar{q} - \bar{q}_{h}\|_{L^{2}(\Omega)}^{2}.
\end{equation*}

Finally, we replace the bounds obtained for $\mathbf{I}_{h}$ and $\mathbf{II}_{h}$ into 
\eqref{eq:FD_ct_error_estimate_0} to obtain \eqref{eq:error_estimates_control_final_fd}. This is a contradiction and concludes the proof.
\end{proof}

\begin{remark}[nearly optimality]
The error bound \eqref{eq:error_estimates_control_final_fd} behaves like $\mathcal{O}(h|\log h|^{2\varphi})$ when $s \geq \frac{1}{2}$. This error bound is nearly-optimal in terms of approximation; nearly due to the presence of the log-term. When $s < \frac{1}{2}$, the derived error bound behaves like $\mathcal{O}(h^{2s}|\log h|^{2\varphi})$. Since the best convergence rate we can expect with piecewise constant approximation is $\mathcal{O}(h)$ and $2s<1$, the derived error bound is suboptimal; the suboptimality is determined by the regularity properties derived in Theorem \ref{thm:regul_control}. 
\end{remark}

\begin{remark}[\DQ{improvement of the error bound \eqref{eq:error_estimates_control_final_fd}}]
\DQ{If $s \leq 1/2$, the regularity results of Theorem \ref{thm:regul_control} guarantee that $\bar{q} \in H^{2s-2\epsilon}(\Omega)$ for all $0 < \epsilon < s$. This regularity result can be improved for $s \in [1/4,1/2)$ to $\bar{q} \in H^{s + 1/2 - \epsilon}(\Omega)$ for all $ 0 < \epsilon < s + 1/2$ under the assumptions that $\Omega$ is a bounded Lipschitz domain satisfying an exterior ball condition and the data $f$ and $u_{\Omega}$ belong to $C^{1/2 - s}(\bar{\Omega})$. We also have the following regularity improvement of the state and adjoint state variables: $\bar{u}, \bar{p} \in H^{s + 1/2 - \epsilon}(\Omega)$. We leave to the reader the details of how these regularity results can lead to improved error estimates on quasi-uniform meshes. Finally, we note that, as in \cite{MR4599045}, the use of appropriate graded meshes \cite{MR3893441} can also lead to improved convergence rates.}
%
\end{remark}

\begin{corollary}[error estimates]
\label{cor:error_estimates_st_ad_fully}
Let the assumptions of Theorem \ref{thm:error_estimate_fully} hold. Then, there exists $h_{\ddagger}>0$ such that, for all $h < h_{\ddagger}$,
\begin{align}
\| \bar{u} - \bar{u}_h \|_s \lesssim h^{\gamma}|\log h|^{\varphi},
\quad
\| \bar{p} - \bar{p}_h \|_s \lesssim h^{\gamma}|\log h|^{\varphi},
\quad
\gamma = \min\{s,\tfrac{1}{2}\}.
\label{eq:error_estimates_state_final_fd}
\\
\| \bar{u} - \bar{u}_h \|_{L^2(\Omega)} \lesssim h^{2\gamma}|\log h|^{2\varphi},
\qquad
\| \bar{p} - \bar{p}_h \|_{L^2(\Omega)} \lesssim h^{2\gamma}|\log h|^{2\varphi}.
\label{eq:error_estimates_adjoint_final_fd}
\end{align}
Here, $\varphi = \nu$ if $s\neq \frac{1}{2}$, $\varphi = 1 +\nu$ if $s=\frac{1}{2}$, and $\nu \geq \frac{1}{2}$ is the constant in Theorem \ref{thm:sobolev_reg}.
\end{corollary}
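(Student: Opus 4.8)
The proof combines the auxiliary convergence results of Theorems \ref{thm:error_estimates_state_aux} and \ref{thm:error_estimates_adj_aux} with the control error estimate \eqref{eq:error_estimates_control_final_fd} of Theorem \ref{thm:error_estimate_fully}. The starting observation is that $\bar{u}=\mathcal{S}\bar{q}$ and its fully discrete counterpart $\bar{u}_h$ are exactly the solutions to \eqref{eq:weak_st_eq} and \eqref{eq:discrete_st_eq} with the choices $q=\bar{q}\in\mathbb{Q}_{ad}$ and $q_h=\bar{q}_h\in\mathbb{Q}_{ad,h}$; likewise, since $q_h=\bar{q}_h$ and $u_h=\bar{u}_h$, problem \eqref{eq:aux_adj_problem2} reduces to the discrete adjoint equation \eqref{eq:discrete_adjoint_equation}, so the auxiliary variable $p_h$ of Theorem \ref{thm:error_estimates_adj_aux} coincides with $\bar{p}_h$. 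Moreover, the uniform $L^{\infty}(\Omega)$ bound \eqref{eq:assumption_uniform_L-infty_bound_2} assumed in Theorem \ref{thm:error_estimate_fully} guarantees in particular that $\{\bar{u}_h\}_{h>0}$ is uniformly bounded in $L^r(\Omega)$ for the relevant $r>d/2s$, which is precisely the hypothesis required to invoke Theorem \ref{thm:error_estimates_adj_aux}.

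Next I would insert the control rate into the auxiliary bounds. By Theorem \ref{thm:error_estimate_fully} there is $h_{\ddagger}>0$ with $\|\bar{q}-\bar{q}_h\|_{L^2(\Omega)}\lesssim h^{2\gamma}|\log h|^{2\varphi}$ for all $h<h_{\ddagger}$; after possibly shrinking $h_{\ddagger}$ we may also assume $h^{\gamma}|\log h|^{\varphi}\leq 1$ there, so that $h^{2\gamma}|\log h|^{2\varphi}\leq h^{\gamma}|\log h|^{\varphi}$. Feeding the control estimate into \eqref{eq:global_estimate_state_s} and \eqref{eq:global_estimate_state_L2} of Theorem \ref{thm:error_estimates_state_aux}, applied with $(q,q_h,u,u_h)=(\bar{q},\bar{q}_h,\bar{u},\bar{u}_h)$, yields
\[
\|\bar{u}-\bar{u}_h\|_{s}\lesssim h^{\gamma}|\log h|^{\varphi},
\qquad
\|\bar{u}-\bar{u}_h\|_{L^2(\Omega)}\lesssim h^{2\gamma}|\log h|^{2\varphi},
\qquad h<h_{\ddagger},
\]
which are the first bound of \eqref{eq:error_estimates_state_final_fd} and the first bound of \eqref{eq:error_estimates_adjoint_final_fd}. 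In the same way, \eqref{eq:global_estimate_adj_s} and \eqref{eq:global_estimate_adj_L2} of Theorem \ref{thm:error_estimates_adj_aux}, applied with $(q,q_h)=(\bar{q},\bar{q}_h)$ and with $p_h=\bar{p}_h$, combined once more with $\|\bar{q}-\bar{q}_h\|_{L^2(\Omega)}\lesssim h^{2\gamma}|\log h|^{2\varphi}$, give
\[
\|\bar{p}-\bar{p}_h\|_{s}\lesssim h^{\gamma}|\log h|^{\varphi},
\qquad
\|\bar{p}-\bar{p}_h\|_{L^2(\Omega)}\lesssim h^{2\gamma}|\log h|^{2\varphi},
\qquad h<h_{\ddagger},
\]
which completes \eqref{eq:error_estimates_state_final_fd} and \eqref{eq:error_estimates_adjoint_final_fd}.

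There is no substantial obstacle here: the statement is a bookkeeping consequence once the two auxiliary convergence theorems and the control error estimate are in place. The only points that need a careful (but routine) check are the identifications described in the first paragraph — in particular that \eqref{eq:aux_adj_problem2} with data $(\bar{q}_h,\bar{u}_h)$ is literally \eqref{eq:discrete_adjoint_equation}, and that \eqref{eq:assumption_uniform_L-infty_bound_2} supplies the uniform $L^r(\Omega)$ bound demanded by Theorem \ref{thm:error_estimates_adj_aux} — together with the elementary absorption of the higher-order term $h^{2\gamma}|\log h|^{2\varphi}$ into $h^{\gamma}|\log h|^{\varphi}$ for $h$ sufficiently small.
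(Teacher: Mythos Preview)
Your proof is correct and follows exactly the route the paper takes: the corollary is obtained by combining the auxiliary bounds of Theorems \ref{thm:error_estimates_state_aux} and \ref{thm:error_estimates_adj_aux} with the control error estimate of Theorem \ref{thm:error_estimate_fully}. The paper's own proof is a one-line reference to this combination, and your careful check of the identifications (in particular that \eqref{eq:aux_adj_problem2} with $(\bar q_h,\bar u_h)$ is \eqref{eq:discrete_adjoint_equation} and that \eqref{eq:assumption_uniform_L-infty_bound_2} furnishes the $L^r$ hypothesis of Theorem \ref{thm:error_estimates_adj_aux}) merely spells out what the authors leave implicit.
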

\begin{proof}
The proof of the estimates in \eqref{eq:error_estimates_state_final_fd} and \eqref{eq:error_estimates_adjoint_final_fd} follows directly from the combination of the estimates in Theorems \ref{thm:error_estimates_state_aux}, \ref{thm:error_estimates_adj_aux}, and \ref{thm:error_estimate_fully}.
\end{proof}


\subsection{Error estimates for the semidiscrete scheme}

Let $\{\bar{\mathsf{q}}_h\}_{h>0} \subset \mathbb{Q}_{ad}$ be a sequence of local minima of the semidiscrete optimal control problems such that $\bar{\mathsf{q}}_h \to \bar{q}$ in $L^2(\Omega)$ as $h \rightarrow 0$; $\bar{q} \in \mathbb{Q}_{ad}$ denotes a local solution to \eqref{eq:weak_min_problem}--\eqref{eq:weak_st_eq} (see section \ref{sec:convergence_semidiscrete}). The main goal of this section is to obtain an estimate for $\|\bar{q}-\bar{\mathsf{q}}_h\|_{L^2(\Omega)}$.

The following result, which is instrumental, is analogous to Lemma \ref{lemma:aux_error_estimate}.

\begin{lemma}[auxiliary error estimate]
\label{lemma:aux_error_estimate_semi}
Let $s\in(0,1)$, $r > d/2s$, and $f,u_{\Omega}\in L^2(\Omega) \cap L^r(\Omega)$. Assume that \eqref{eq:assumption_uniform_L-infty_bound} holds and that $\bar{q}\in\mathbb{Q}_{ad}$ satisfies the second order optimality conditions \eqref{eq:second_order_2_2}.  \EO{Then, there exists $h_{\dagger} > 0$ such that}
\begin{equation}\label{eq:aux_estimate_semi}
\textgoth{C}
\|\bar{q}-\bar{\mathsf{q}}_h\|_{L^2(\Omega)}^2\leq [j'(\bar{\mathsf{q}}_h)-j'(\bar{q})](\bar{\mathsf{q}}_h-\bar{q}) \quad \forall h < h_{\dagger},
\quad
\textgoth{C} = 2^{-1} \min\{\mu,\lambda\}.
\end{equation}
Here, $\lambda$ is the control cost and $\mu$ denotes the constant appearing in \eqref{eq:second_order_2_2}.
\end{lemma}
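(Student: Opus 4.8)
The plan is to argue by contradiction, adapting the proof of Lemma~\ref{lemma:aux_error_estimate} while exploiting the key structural simplification of the semidiscrete scheme: since the control space is \emph{not} discretized, $\bar{q}$ itself is an admissible competitor in the discrete variational inequality \eqref{eq:discrete_var_ineq_variational}, so no projection of the control enters the argument. Suppose \eqref{eq:aux_estimate_semi} fails; then there is a sequence $h_{k}\downarrow 0$, whose subindex we suppress, along which $\textgoth{C}\|\bar{q}-\bar{\mathsf{q}}_{h}\|_{L^{2}(\Omega)}^{2}>[j'(\bar{\mathsf{q}}_{h})-j'(\bar{q})](\bar{\mathsf{q}}_{h}-\bar{q})$. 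Recall $\bar{\mathsf{q}}_{h}\to\bar{q}$ in $L^{2}(\Omega)$. Set $w_{h}:=(\bar{\mathsf{q}}_{h}-\bar{q})/\|\bar{\mathsf{q}}_{h}-\bar{q}\|_{L^{2}(\Omega)}$; since $\|w_{h}\|_{L^{2}(\Omega)}=1$, we may assume, up to a subsequence, that $w_{h}\rightharpoonup w$ in $L^{2}(\Omega)$. Writing $\hat{\mathsf{q}}_{h}:=\bar{q}+\theta_{h}(\bar{\mathsf{q}}_{h}-\bar{q})\in\mathbb{Q}_{ad}$ with $\theta_{h}\in(0,1)$, the mean value theorem (cf.\ \eqref{eq:difference_of_j}) gives $[j'(\bar{\mathsf{q}}_{h})-j'(\bar{q})](\bar{\mathsf{q}}_{h}-\bar{q})=\|\bar{\mathsf{q}}_{h}-\bar{q}\|_{L^{2}(\Omega)}^{2}\,j''(\hat{\mathsf{q}}_{h})w_{h}^{2}$, so the contradiction hypothesis becomes $j''(\hat{\mathsf{q}}_{h})w_{h}^{2}<\textgoth{C}$ along the sequence.

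\emph{Step 1: $w\in C_{\bar{q}}$.} Since each $\bar{\mathsf{q}}_{h}\in\mathbb{Q}_{ad}$, the functions $w_{h}$ satisfy the sign conditions \eqref{eq:sign_cond}, and weak closedness of the set they define shows that $w$ satisfies \eqref{eq:sign_cond} as well. With $\bar{\mathfrak{d}}_{h}:=\lambda\bar{\mathsf{q}}_{h}-\bar{u}_{h}\bar{p}_{h}$ and $\bar{\mathfrak{d}}:=\lambda\bar{q}-\bar{u}\bar{p}$, assumption \eqref{eq:assumption_uniform_L-infty_bound}, the error bounds of Theorems~\ref{thm:error_estimates_state_aux} and \ref{thm:error_estimates_adj_aux}, and $\bar{\mathsf{q}}_{h}\to\bar{q}$ in $L^{2}(\Omega)$ give $\bar{\mathfrak{d}}_{h}\to\bar{\mathfrak{d}}$ in $L^{2}(\Omega)$, exactly as in the proof of Lemma~\ref{lemma:aux_error_estimate}. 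Testing \eqref{eq:discrete_var_ineq_variational} with $q=\bar{q}\in\mathbb{Q}_{ad}$ yields $(\bar{\mathfrak{d}}_{h},\bar{\mathsf{q}}_{h}-\bar{q})_{L^{2}(\Omega)}\le 0$, hence $\int_{\Omega}\bar{\mathfrak{d}}\,w\,\mathrm{d}x=\lim_{h}(\bar{\mathfrak{d}}_{h},w_{h})_{L^{2}(\Omega)}\le 0$; testing \eqref{eq:var_ineq_with_adj_state} with $q=\bar{\mathsf{q}}_{h}$ yields $(\bar{\mathfrak{d}},w_{h})_{L^{2}(\Omega)}\ge 0$, hence $\int_{\Omega}\bar{\mathfrak{d}}\,w\,\mathrm{d}x\ge 0$. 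Thus $\int_{\Omega}\bar{\mathfrak{d}}\,w\,\mathrm{d}x=0$, and since $\bar{\mathfrak{d}}\,w\ge 0$ a.e.\ by \eqref{eq:sign_cond} and \eqref{eq:derivative_j}, we conclude $w=0$ a.e.\ on $\{\bar{\mathfrak{d}}\neq 0\}$, i.e.\ $w\in C_{\bar{q}}\subset C_{\bar{q}}^{\tau}$ for the value $\tau>0$ appearing in \eqref{eq:second_order_2_2}. In contrast to Lemma~\ref{lemma:aux_error_estimate}, no hypothesis on the decay rate of $\|\bar{q}-\bar{\mathsf{q}}_{h}\|_{L^{2}(\Omega)}$ is needed here, precisely because $\bar{q}$ is admissible in \eqref{eq:discrete_var_ineq_variational}.

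\emph{Step 2: passage to the limit and contradiction.} Stability bounds for the problems solved by $\bar{u}-u(\hat{\mathsf{q}}_{h})$ and $\bar{p}-p(\hat{\mathsf{q}}_{h})$, together with $\hat{\mathsf{q}}_{h}\to\bar{q}$ in $L^{2}(\Omega)$ and the uniform $L^{\infty}(\Omega)$ bounds of Theorem~\ref{thm:L_infty_reg}, give $u(\hat{\mathsf{q}}_{h})\to\bar{u}$ and $p(\hat{\mathsf{q}}_{h})\to\bar{p}$ in $\tilde{H}^{s}(\Omega)$. Arguing as in Step~2 of the proof of Theorem~\ref{thm:optimal_solution}, $z(w_{h}):=\mathcal{S}'(\hat{\mathsf{q}}_{h})w_{h}\rightharpoonup z:=\mathcal{S}'(\bar{q})w$ in $\tilde{H}^{s}(\Omega)$, hence $z(w_{h})\to z$ in $L^{2}(\Omega)$ by Lemma~\ref{lemma:embedding_result}; combining this with the bounds above and the three-term splitting used for \eqref{eq:wzp} gives $(w_{h}z(w_{h}),p(\hat{\mathsf{q}}_{h}))_{L^{2}(\Omega)}\to(wz,\bar{p})_{L^{2}(\Omega)}$ and $\|z(w_{h})\|_{L^{2}(\Omega)}^{2}\to\|z\|_{L^{2}(\Omega)}^{2}$. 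Invoking \eqref{eq:charac_j2} first at $\hat{\mathsf{q}}_{h}$ and then at $\bar{q}$, and using $w\in C_{\bar{q}}^{\tau}$ with the second order condition \eqref{eq:second_order_2_2}, we obtain
\[
\lim_{h}j''(\hat{\mathsf{q}}_{h})w_{h}^{2}=\lambda-2(wz,\bar{p})_{L^{2}(\Omega)}+\|z\|_{L^{2}(\Omega)}^{2}=\lambda+j''(\bar{q})w^{2}-\lambda\|w\|_{L^{2}(\Omega)}^{2}\ge\lambda+(\mu-\lambda)\|w\|_{L^{2}(\Omega)}^{2}.
\]
Since $\|w\|_{L^{2}(\Omega)}\le 1$, the right-hand side is at least $\min\{\mu,\lambda\}=2\textgoth{C}$, so $j''(\hat{\mathsf{q}}_{h})w_{h}^{2}\ge\textgoth{C}$ for all sufficiently small $h$, contradicting $j''(\hat{\mathsf{q}}_{h})w_{h}^{2}<\textgoth{C}$. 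Hence $h_{\dagger}$ exists and \eqref{eq:aux_estimate_semi} holds.

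As in Lemma~\ref{lemma:aux_error_estimate}, the delicate point — the main obstacle — is the weak convergence $z(w_{h})\rightharpoonup z$ in $\tilde{H}^{s}(\Omega)$, which requires careful bookkeeping of the $L^{p}$-interpolated convergence $u(\hat{\mathsf{q}}_{h})\to\bar{u}$ and of the uniform $L^{\infty}(\Omega)$ bounds for the intermediate states and adjoints; everything else is a routine and somewhat shorter adaptation of the fully discrete argument, since the admissibility of $\bar{q}$ in \eqref{eq:discrete_var_ineq_variational} removes the projection-error estimate \eqref{eq:error_estimate_Ph} from the picture.
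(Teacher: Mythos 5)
Your proof is correct and follows essentially the same route as the paper: you establish $w \in C_{\bar{q}}$ by testing the semidiscrete variational inequality \eqref{eq:discrete_var_ineq_variational} with the admissible competitor $q=\bar{q}$ (exactly the paper's key simplification over the fully discrete case), and then repeat the limit argument for $j''(\hat{\mathsf{q}}_h)w_h^2$ from Lemma \ref{lemma:aux_error_estimate} to reach the bound with constant $2^{-1}\min\{\mu,\lambda\}$. The only difference is cosmetic: you wrap the argument as a contradiction of \eqref{eq:aux_estimate_semi}, whereas the paper proves the bound directly—consistent with your own (correct) observation that, unlike the fully discrete case, no hypothesis on the decay of $\|\bar{q}-\bar{\mathsf{q}}_h\|_{L^2(\Omega)}$ and no projection-error estimate are needed.
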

\begin{proof}
\EO{Define} $w_h:= (\bar{\mathsf{q}}_h-\bar{q})/\|\bar{\mathsf{q}}_h-\bar{q}\|_{L^2(\Omega)}$. We assume that (up to a subsequence if necessary) $w_{h} \rightharpoonup w$ in $L^{2}(\Omega)$ as $h \rightarrow 0$. The arguments elaborated in the proof of Lemma \ref{lemma:aux_error_estimate} guarantee that $w$ satisfies \eqref{eq:sign_cond} and that $\bar{\mathfrak{d}}_{h} \rightarrow \bar{\mathfrak{d}}$ in $L^2(\Omega)$ as $h \rightarrow 0$. \EO{Here,} $\bar{\mathfrak{d}} = \lambda\bar{\mathsf{q}} - \bar{u}\bar{p}$ and $\bar{\mathfrak{d}}_h = \lambda\bar{\mathsf{q}}_{h} - \bar{u}_{h}\bar{p}_{h}$. Invoke \eqref{eq:discrete_var_ineq_variational} with $q=\bar{q}$ to obtain
\begin{equation*}
\int_\Omega \bar{\mathfrak{d}}(x)w(x) \mathrm{d}x
=
\lim_{h\to 0}\frac{1}{\|\bar{\mathsf{q}}_h-\bar{q}\|_{L^2(\Omega)}}
\int_\Omega\bar{\mathfrak{d}}_h(x) (\bar{\mathsf{q}}_h(x)-\bar{q}(x))\mathrm{d}x \leq 0.
\end{equation*}
Since $\int_{\Omega} |\bar{\mathfrak{d}}(x)w(x)| \mathrm{d}x = \int_{\Omega} \bar{\mathfrak{d}}(x)w(x) \mathrm{d}x\leq 0$, we conclude that if $\bar{\mathfrak{d}}(x)\neq 0$, then $w(x) = 0$ for a.e. $x\in\Omega$. Consequently, $w \in C_{\bar{q}}$.

The rest of the proof follows the same arguments used in the proof of Lemma \ref{lemma:aux_error_estimate}. For the sake of brevity we omit these details.
\end{proof}

We are now in a position to derive an estimate for the error \EO{that arises} when approximating a locally optimal control variable $\bar{q}$.

\begin{theorem}[error estimate]\label{thm:error_estimate_semi}
Let $s\in(0,1)$, $r > d/2s$, and $f,u_{\Omega}\in L^2(\Omega) \cap L^r(\Omega)$. Let us assume that \eqref{eq:assumption_uniform_L-infty_bound_2} holds and that $\bar{q}\in\mathbb{Q}_{ad}$ satisfies the second order optimality conditions \eqref{eq:second_order_2_2}. Then, there exists $h_{\ddagger} > 0$ such that
\begin{equation}\label{eq:control_error_estimate_semi_extended}
\|\bar{q}-\bar{\mathsf{q}}_h\|_{L^2(\Omega)}\lesssim h^{2\gamma}|\log h|^{2\varphi}
\qquad \forall h < h_{\ddagger}, \qquad \gamma = \min\{s,\tfrac{1}{2}\}.
\end{equation}
Here $\varphi = \nu$ if $s \neq \tfrac{1}{2}$, $\varphi = 1 + \nu$ if $s = \tfrac{1}{2}$, and $\nu \geq \tfrac{1}{2}$ is the constant in Theorem \ref{thm:sobolev_reg}.
\end{theorem}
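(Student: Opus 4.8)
The plan is to transcribe the argument of Theorem~\ref{thm:error_estimate_fully}, exploiting the fact that in the variational discretization the discrete state $\bar{u}_h$ and the discrete adjoint $\bar{p}_h$ are \emph{exact} finite element approximations of the continuous state and adjoint states associated with the discrete control $\bar{\mathsf{q}}_h$; in particular, the terms involving the $L^2$-projection of the control that appear in the fully discrete estimate are simply absent here. First I would invoke Lemma~\ref{lemma:aux_error_estimate_semi}, which under the standing hypotheses yields $h_\dagger>0$ such that $\textgoth{C}\|\bar{q}-\bar{\mathsf{q}}_h\|_{L^2(\Omega)}^2 \le [j'(\bar{\mathsf{q}}_h)-j'(\bar{q})](\bar{\mathsf{q}}_h-\bar{q})$ for all $h<h_\dagger$, with $\textgoth{C}=2^{-1}\min\{\mu,\lambda\}$. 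Testing the continuous variational inequality \eqref{eq:var_ineq_with_adj_state} with $q=\bar{\mathsf{q}}_h\in\mathbb{Q}_{ad}$ gives $j'(\bar{q})(\bar{\mathsf{q}}_h-\bar{q})\ge0$, and testing the semidiscrete variational inequality \eqref{eq:discrete_var_ineq_variational} with $q=\bar{q}\in\mathbb{Q}_{ad}$ gives $j_h'(\bar{\mathsf{q}}_h)(\bar{\mathsf{q}}_h-\bar{q})\le0$; combining these with the characterizations $j'(\bar{\mathsf{q}}_h)v=(\lambda\bar{\mathsf{q}}_h-\hat{u}\hat{p},v)_{L^2(\Omega)}$ and $j_h'(\bar{\mathsf{q}}_h)v=(\lambda\bar{\mathsf{q}}_h-\bar{u}_h\bar{p}_h,v)_{L^2(\Omega)}$ yields
\[
 \textgoth{C}\,\|\bar{q}-\bar{\mathsf{q}}_h\|_{L^2(\Omega)}^2 \le [j'(\bar{\mathsf{q}}_h)-j_h'(\bar{\mathsf{q}}_h)](\bar{\mathsf{q}}_h-\bar{q}) = (\bar{u}_h\bar{p}_h-\hat{u}\hat{p},\bar{\mathsf{q}}_h-\bar{q})_{L^2(\Omega)},
\]
where $\hat{u}:=\mathcal{S}\bar{\mathsf{q}}_h$ solves \eqref{eq:weak_st_eq} with $q=\bar{\mathsf{q}}_h$ and $\hat{p}\in\tilde{H}^s(\Omega)$ solves \eqref{eq:adj_eq} with $u$ and $q$ replaced by $\hat{u}$ and $\bar{\mathsf{q}}_h$.

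The next step is to estimate the last product. Writing $\bar{u}_h\bar{p}_h-\hat{u}\hat{p}=\bar{u}_h(\bar{p}_h-\hat{p})+\hat{p}(\bar{u}_h-\hat{u})$ and using the uniform bound $\|\bar{u}_h\|_{L^\infty(\Omega)}\lesssim1$ provided by assumption \eqref{eq:assumption_uniform_L-infty_bound_2} together with $\|\hat{p}\|_{L^\infty(\Omega)}\lesssim\|f\|_{L^r(\Omega)}+\|u_\Omega\|_{L^r(\Omega)}$, which follows from Theorem~\ref{thm:L_infty_reg} applied first to $\hat{u}$ (so that $\hat{u}-u_\Omega\in L^r(\Omega)$) and then to $\hat{p}$, a Cauchy--Schwarz inequality followed by Young's inequality lets me absorb a $\tfrac{\textgoth{C}}{2}\|\bar{q}-\bar{\mathsf{q}}_h\|_{L^2(\Omega)}^2$ term into the left-hand side, leaving $\|\bar{q}-\bar{\mathsf{q}}_h\|_{L^2(\Omega)}^2\lesssim\|\hat{u}-\bar{u}_h\|_{L^2(\Omega)}^2+\|\hat{p}-\bar{p}_h\|_{L^2(\Omega)}^2$. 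All the hidden constants here are independent of $h$ because $\bar{\mathsf{q}}_h$ obeys the $h$-independent bounds $0<a\le\bar{\mathsf{q}}_h\le b$, so the stability and $L^\infty$ estimates of \S\ref{sec:the_state_eq} apply uniformly.

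It then remains to bound the two errors on the right. Since $\bar{u}_h$ solves the discrete state equation with the \emph{same} control $\bar{\mathsf{q}}_h$ and the same datum $f$ as $\hat{u}$, it is precisely the finite element approximation of $\hat{u}$ in the sense of \S\ref{sec:fem}, and Theorem~\ref{thm:error_estimates_frac_Lap} gives $\|\hat{u}-\bar{u}_h\|_{L^2(\Omega)}\lesssim h^{2\gamma}|\log h|^{2\varphi}\|f\|_{L^2(\Omega)}$. For the adjoint error I would, as in the proof of Theorem~\ref{thm:error_estimates_adj_aux}, introduce $\tilde{p}\in\tilde{H}^s(\Omega)$ solving $\mathcal{A}(\tilde{p},v)+(\bar{\mathsf{q}}_h\tilde{p},v)_{L^2(\Omega)}=(\bar{u}_h-u_\Omega,v)_{L^2(\Omega)}$ for all $v\in\tilde{H}^s(\Omega)$, exactly the problem \eqref{eq:tilde_p} with $\bar{q}_h$ replaced by $\bar{\mathsf{q}}_h$; then $\bar{p}_h$ is the finite element approximation of $\tilde{p}$, so Theorem~\ref{thm:error_estimates_frac_Lap} yields $\|\tilde{p}-\bar{p}_h\|_{L^2(\Omega)}\lesssim h^{2\gamma}|\log h|^{2\varphi}$, while a stability bound for the problem solved by $\hat{p}-\tilde{p}$, whose datum is $\hat{u}-\bar{u}_h$, gives $\|\hat{p}-\tilde{p}\|_{L^2(\Omega)}\lesssim\|\hat{u}-\bar{u}_h\|_{L^2(\Omega)}$. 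A triangle inequality then gives $\|\hat{p}-\bar{p}_h\|_{L^2(\Omega)}\lesssim h^{2\gamma}|\log h|^{2\varphi}$, and combining everything produces $\|\bar{q}-\bar{\mathsf{q}}_h\|_{L^2(\Omega)}^2\lesssim h^{4\gamma}|\log h|^{4\varphi}$, i.e.\ \eqref{eq:control_error_estimate_semi_extended}. I expect the only point requiring care to be the bookkeeping of $h$-uniformity of the stability and approximation constants, together with the correct routing of the adjoint error through the auxiliary problem \eqref{eq:tilde_p}; once the fully discrete machinery is in place, there is essentially no new difficulty, which is why the convergence-rate statement coincides verbatim with that of Theorem~\ref{thm:error_estimate_fully}.
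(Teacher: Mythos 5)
Your argument is correct and follows essentially the same route as the paper's proof: Lemma \ref{lemma:aux_error_estimate_semi} combined with the two variational inequalities, the identification $[j'(\bar{\mathsf{q}}_h)-j_h'(\bar{\mathsf{q}}_h)](\bar{\mathsf{q}}_h-\bar{q})=(\bar{u}_h\bar{p}_h-\hat{u}\hat{p},\bar{\mathsf{q}}_h-\bar{q})_{L^2(\Omega)}$, the same splitting $\bar{u}_h(\bar{p}_h-\hat{p})+\hat{p}(\bar{u}_h-\hat{u})$, and the same auxiliary variable $\tilde{p}$ with Theorem \ref{thm:error_estimates_frac_Lap}. The only cosmetic difference is that you absorb a quadratic term via Young's inequality while the paper keeps the factor $\|\bar{q}-\bar{\mathsf{q}}_h\|_{L^2(\Omega)}$ linear and divides it out, which changes nothing of substance.
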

\begin{proof}
\EO{Let us set $q=\bar{\mathsf{q}}_{h}$ in \eqref{eq:variational_inequality} and $q = \bar{q}$ in \eqref{eq:discrete_var_ineq_variational} to obtain $- j'(\bar{q})(\bar{\mathsf{q}}_{h} - \bar{q}) \leq 0$ and $j_{h}^{\prime}(\bar{\mathsf{q}}_{h})(\bar{q} - \bar{\mathsf{q}}_{h}) \geq 0$, respectively. Using these estimates in the auxiliary error estimate \eqref{eq:aux_estimate_semi} we obtain}
\begin{multline*}\label{eq:error_j-jh_norm}
\|\bar{q} -\bar{\mathsf{q}}_{h}\|_{L^2(\Omega)}^2  
\lesssim 
[j'(\bar{\mathsf{q}}_{h}) - j_{h}^{\prime}(\bar{\mathsf{q}}_{h})](\bar{\mathsf{q}}_{h} - \bar{q}) 
= 
(\bar{u}_{h}\bar{p}_{h} - \hat{u}\hat{p},\bar{\mathsf{q}}_{h} - \bar{q})_{L^2(\Omega)} 
\\
=  (\hat{p}(\bar{u}_{h} - \hat{u}),\bar{\mathsf{q}}_{h} - \bar{q})_{L^2(\Omega)} + (\bar{u}_{h}(\bar{p}_{h} - \hat{p}),\bar{\mathsf{q}}_{h} - \bar{q})_{L^2(\Omega)} = \mathfrak{J}_h + \mathfrak{K}_h.
\end{multline*}
Here, $\hat{u}$ and $\hat{p}$ in $\tilde{H}^{s}(\Omega)$ denote auxiliary variables that are defined as the solutions to \eqref{eq:fd_aux_hat_u} and \eqref{eq:fd_aux_hat_p}, respectively, but \EO{where} $\bar{q}_h$ \EO{is} replaced by $\bar{\mathsf{q}}_{h}$.

Let us estimate $\mathfrak{J}_h$. Since $f, u_{\Omega}\in L^{r}(\Omega)$, by applying the Theorem \ref{thm:L_infty_reg}, we deduce that $\hat{u},\hat{p}\in L^{\infty}(\Omega)$. \EO{The error estimate \eqref{eq:error_in_norm_L2} therefore allows the conclusion that}
\begin{equation}\label{eq:estimate_I_var}
\mathfrak{J}_h \leq  \|\hat{p}\|_{L^{\infty}(\Omega)}\|  \hat{u} - \bar{u}_{h} \|_{L^2(\Omega)}\| \bar{q} - \bar{\mathsf{q}}_{h}\|_{L^2(\Omega)} \lesssim h^{2\gamma}|\log h|^{2\varphi}\| \bar{q} - \bar{\mathsf{q}}_{h}\|_{L^2(\Omega)}.
\end{equation}
Here, $\gamma$ and $\varphi$ are as in the statement of the theorem. We now focus on the estimation of $\mathfrak{K}_h$. To do this, we introduce the auxiliary variable $\tilde{p}$ as in \eqref{eq:tilde_p}, \EO{but replacing} $\bar{q}_h$ with $\bar{\mathsf{q}}_h$. With this variable at hand, we invoke \eqref{eq:assumption_uniform_L-infty_bound_2} to obtain
\[
\mathfrak{K}_h 
\leq  
\|\bar{u}_{h}\|_{L^{\infty}(\Omega)}
\left( 
\| \hat{p} - \tilde{p} \|_{L^2(\Omega)} 
+
\| \tilde{p} - \bar{p}_{h} \|_{L^2(\Omega)}
\right)
\| \bar{q} - \bar{\mathsf{q}}_{h}\|_{L^2(\Omega)}.
\]
A stability estimate for the problem that $\hat{p} - \tilde{p}$ solves yields $\| \hat{p} - \tilde{p} \|_s \lesssim \| \hat{u} - \bar{u}_{h} \|_{L^2(\Omega)}$. The control of $\| \tilde{p} - \bar{p}_{h} \|_{L^2(\Omega)}$ follows from noticing that $\bar{p}_{h}$ corresponds to the finite element approximation of $\tilde{p}$ \EO{in the framework} of \S \ref{sec:fem}: $\| \tilde{p} - \bar{p}_{h} \|_{L^2(\Omega)} \lesssim h^{2\gamma}|\log h|^{2\varphi}$. A collection of these estimates allows us to conclude that $\mathfrak{K}_h \lesssim h^{2\gamma}|\log h|^{2\varphi}\|\bar{q} - \bar{\mathsf{q}}_{h}\|_{L^2(\Omega)}$.

The bound derived for $\mathfrak{K}_h$ together with that in \eqref{eq:estimate_I_var} for $\mathfrak{J}_h$ \EO{yields the final estimate} $\|\bar{q} -\bar{\mathsf{q}}_{h}\|_{L^2(\Omega)} \lesssim h^{2\gamma}|\log h|^{2\varphi}$. \EO{This concludes the proof.}
\end{proof}

We conclude this section with the following error estimates.

\begin{corollary}[error estimates]\label{cor:error_estimates_st_ad_semi}
Let the assumptions of Theorem \ref{thm:error_estimate_semi} hold. Then, there exists $h_{\ddagger}>0$ such that, for all $h < \EO{h_{\ddagger}}$,
\begin{align*}
\| \bar{u} - \bar{u}_h \|_s \lesssim h^{\gamma}|\log h|^{\varphi},
\quad
\| \bar{p} - \bar{p}_h \|_s \lesssim h^{\gamma}|\log h|^{\varphi},
\quad
\gamma = \min\{s,\tfrac{1}{2}\}.
\\
\| \bar{u} - \bar{u}_h \|_{L^2(\Omega)} \lesssim h^{2\gamma}|\log h|^{2\varphi},
\qquad
\| \bar{p} - \bar{p}_h \|_{L^2(\Omega)} \lesssim h^{2\gamma}|\log h|^{2\varphi}.
\end{align*}
Here, $\varphi = \nu$ if $s\neq \frac{1}{2}$, $\varphi = 1 +\nu$ if $s=\frac{1}{2}$, and $\nu \geq \frac{1}{2}$ is the constant in Theorem \ref{thm:sobolev_reg}.
\end{corollary}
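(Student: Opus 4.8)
The plan is to combine the auxiliary convergence estimates of Theorems \ref{thm:error_estimates_state_aux} and \ref{thm:error_estimates_adj_aux} with the control error estimate of Theorem \ref{thm:error_estimate_semi}, in exactly the same spirit as the proof of Corollary \ref{cor:error_estimates_st_ad_fully}. First I would specialize Theorem \ref{thm:error_estimates_state_aux} to $q=\bar{q}$ and $q_h=\bar{\mathsf{q}}_h$, so that $u=\bar{u}=\mathcal{S}\bar{q}$ and $u_h=\bar{u}_h$ is the state of the semidiscrete scheme; since $f\in L^2(\Omega)\cap L^r(\Omega)$, this yields $\|\bar{u}-\bar{u}_h\|_s\lesssim h^{\gamma}|\log h|^{\varphi}+\|\bar{q}-\bar{\mathsf{q}}_h\|_{L^2(\Omega)}$ and $\|\bar{u}-\bar{u}_h\|_{L^2(\Omega)}\lesssim h^{2\gamma}|\log h|^{2\varphi}+\|\bar{q}-\bar{\mathsf{q}}_h\|_{L^2(\Omega)}$, with hidden constants depending on the data. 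Theorem \ref{thm:error_estimate_semi}, whose hypotheses coincide with those of the present corollary, provides $\|\bar{q}-\bar{\mathsf{q}}_h\|_{L^2(\Omega)}\lesssim h^{2\gamma}|\log h|^{2\varphi}$ for $h<h_{\ddagger}$. Since $\gamma\in(0,\tfrac12]$ and $\varphi\geq\tfrac12$, the quantity $h^{\gamma}|\log h|^{\varphi}$ tends to $0$ as $h\downarrow 0$, hence $h^{2\gamma}|\log h|^{2\varphi}=(h^{\gamma}|\log h|^{\varphi})^2\leq h^{\gamma}|\log h|^{\varphi}$ once $h$ is below a threshold; shrinking $h_{\ddagger}$ if necessary and inserting this bound into the two displayed inequalities produces the asserted estimates for $\bar{u}-\bar{u}_h$.

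For the adjoint variable I would first observe that the semidiscrete adjoint $\bar{p}_h$, defined by \eqref{eq:discrete_adjoint_equation} with $\bar{q}_h$ and $\bar{u}_h$ replaced by $\bar{\mathsf{q}}_h$ and $\bar{u}_h$, is precisely the function $p_h$ appearing in \eqref{eq:aux_adj_problem2} for the choice $q_h=\bar{\mathsf{q}}_h$, $u_h=\bar{u}_h$, while $\bar{p}$ solves \eqref{eq:adj_eq} with $q=\bar{q}$, $u=\bar{u}$. Assumption \eqref{eq:assumption_uniform_L-infty_bound_2} implies \eqref{eq:assumption_uniform_L-infty_bound} and, a fortiori, that $\{\bar{u}_h\}_{h>0}$ is uniformly bounded in $L^r(\Omega)$; thus the hypotheses of Theorem \ref{thm:error_estimates_adj_aux} are met, and it gives $\|\bar{p}-\bar{p}_h\|_s\lesssim h^{\gamma}|\log h|^{\varphi}+\|\bar{q}-\bar{\mathsf{q}}_h\|_{L^2(\Omega)}$ together with $\|\bar{p}-\bar{p}_h\|_{L^2(\Omega)}\lesssim h^{2\gamma}|\log h|^{2\varphi}+\|\bar{q}-\bar{\mathsf{q}}_h\|_{L^2(\Omega)}$. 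Invoking once more $\|\bar{q}-\bar{\mathsf{q}}_h\|_{L^2(\Omega)}\lesssim h^{2\gamma}|\log h|^{2\varphi}$ and the absorption $h^{2\gamma}|\log h|^{2\varphi}\lesssim h^{\gamma}|\log h|^{\varphi}$ for small $h$ then yields the four bounds in the statement.

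No genuine difficulty is expected here: the only points requiring care are the bookkeeping of hypotheses — in particular verifying that \eqref{eq:assumption_uniform_L-infty_bound_2} supplies the uniform $L^r(\Omega)$ bound on $\{\bar{u}_h\}_{h>0}$ needed in Theorem \ref{thm:error_estimates_adj_aux}, and that the semidiscrete adjoint equation is indeed the one covered by \eqref{eq:aux_adj_problem2} — together with the elementary observation that the control error of order $h^{2\gamma}|\log h|^{2\varphi}$ is, for $h$ small, dominated by the $h^{\gamma}|\log h|^{\varphi}$ term that governs the $\|\cdot\|_s$-error of the state and adjoint.
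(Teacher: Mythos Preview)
Your proposal is correct and follows precisely the approach the paper intends: combine the auxiliary bounds of Theorems \ref{thm:error_estimates_state_aux} and \ref{thm:error_estimates_adj_aux} (specialized to $q=\bar{q}$, $q_h=\bar{\mathsf{q}}_h$) with the control error estimate \eqref{eq:control_error_estimate_semi_extended}, exactly as in the proof of Corollary \ref{cor:error_estimates_st_ad_fully}. The bookkeeping you flag---that \eqref{eq:assumption_uniform_L-infty_bound_2} supplies the $L^r$-boundedness needed in Theorem \ref{thm:error_estimates_adj_aux} and that the semidiscrete adjoint fits the template \eqref{eq:aux_adj_problem2}---is the only thing to verify, and you have done so.
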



\section{Numerical examples}\label{sec:numerical_exp}

We present three numerical experiments that illustrate the performance of the fully discrete and semidiscrete methods presented in sections \ref{sec:fully_discrete} and \ref{sec:variational_discretization}, respectively, when used to approximate a solution to the control problem \eqref{eq:weak_min_problem}--\eqref{eq:weak_st_eq}. The experiments were performed with a code implemented in MATLAB, and the schemes were solved with a semi--smooth Newton method.

The setting of the experiments is as follows: we let $s \in \{0.1, 0.2,\ldots, 0.9\}$, $\lambda = 1$, $d = 2$, and $\Omega = B(0,1)$; $B(0,1)$ denotes the unit disc. The exact optimal state and the exact optimal adjoint state are given by
\begin{equation}
\bar{u}(x) = \bar{p}(x) = ( 2^{2s}\Gamma^{2}\left(1 + s\right))^{-1}(1 - |x|^{2})^{s}_{+}, 
\qquad t_{+} = \max\{0,t\}.
\label{eq:u_and_p}
\end{equation} 

\subsection{Example 1}
In this numerical experiment we go beyond the theory presented and consider $a = 0$ and $b = 0.5$.


\begin{figure}[!ht]
\centering
\psfrag{s=01}{{\normalsize $s = 0.1$}}
\psfrag{s=02}{{\normalsize $s = 0.2$}}
\psfrag{s=03}{{\normalsize $s = 0.3$}}
\psfrag{s=04}{{\normalsize $s = 0.4$}}
\psfrag{s=05}{{\normalsize $s = 0.5$}}
\psfrag{s=06}{{\normalsize $s = 0.6$}}
\psfrag{s=07}{{\normalsize $s = 0.7$}}
\psfrag{s=08}{{\normalsize $s = 0.8$}}
\psfrag{s=09}{{\normalsize $s = 0.9$}}
\psfrag{h}{{\normalsize $h$}}
\psfrag{o(h05)}{{\normalsize $h^{0.5}$}}
\psfrag{o(h06)}{{\normalsize $h^{0.6}$}}
\psfrag{o(h07)}{{\normalsize $h^{0.7}$}}
\psfrag{o(h08)}{{\normalsize $h^{0.8}$}}
\psfrag{o(h09)}{{\normalsize $h^{0.9}$}}
\psfrag{o(h10)}{{\normalsize $h^{1.0}$}}
\begin{minipage}[c]{0.545\textwidth}\centering
{\large \hspace{0.5cm} $\| \bar{u} - \bar{u}_{h} \|_{s}$}~\\ 
\psfrag{state seminorm error}{}
\includegraphics[trim={0 0 0 0},clip,width=6.70cm,height=4.6cm,scale=0.35]{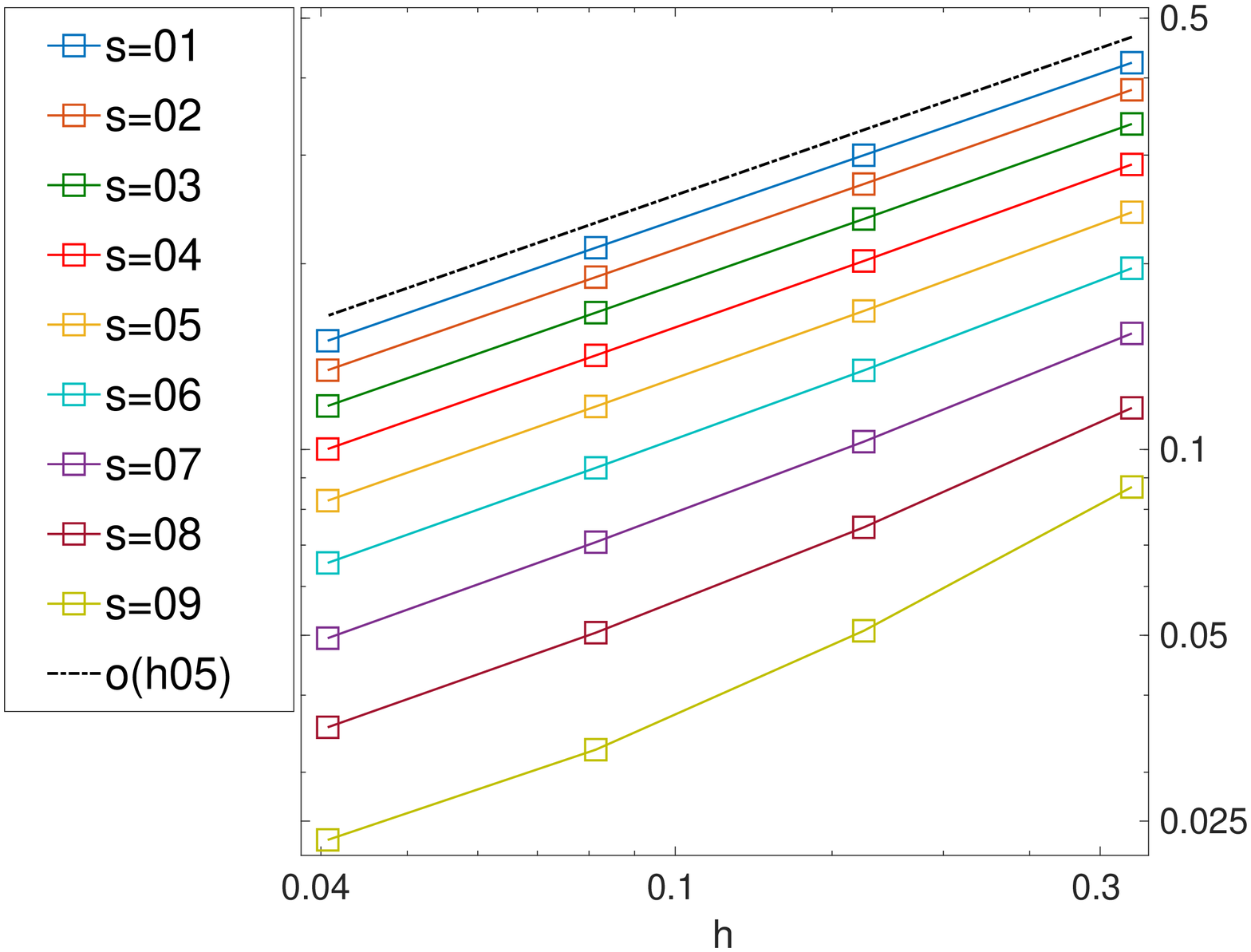} \\
\hspace{0.78cm} \tiny{(A.1)}~\\~\\
\psfrag{state seminorm error}{
}
\hspace{1.55cm}\includegraphics[trim={0 0 0 0},clip,width=5.30cm,height=4.6cm,scale=0.35]{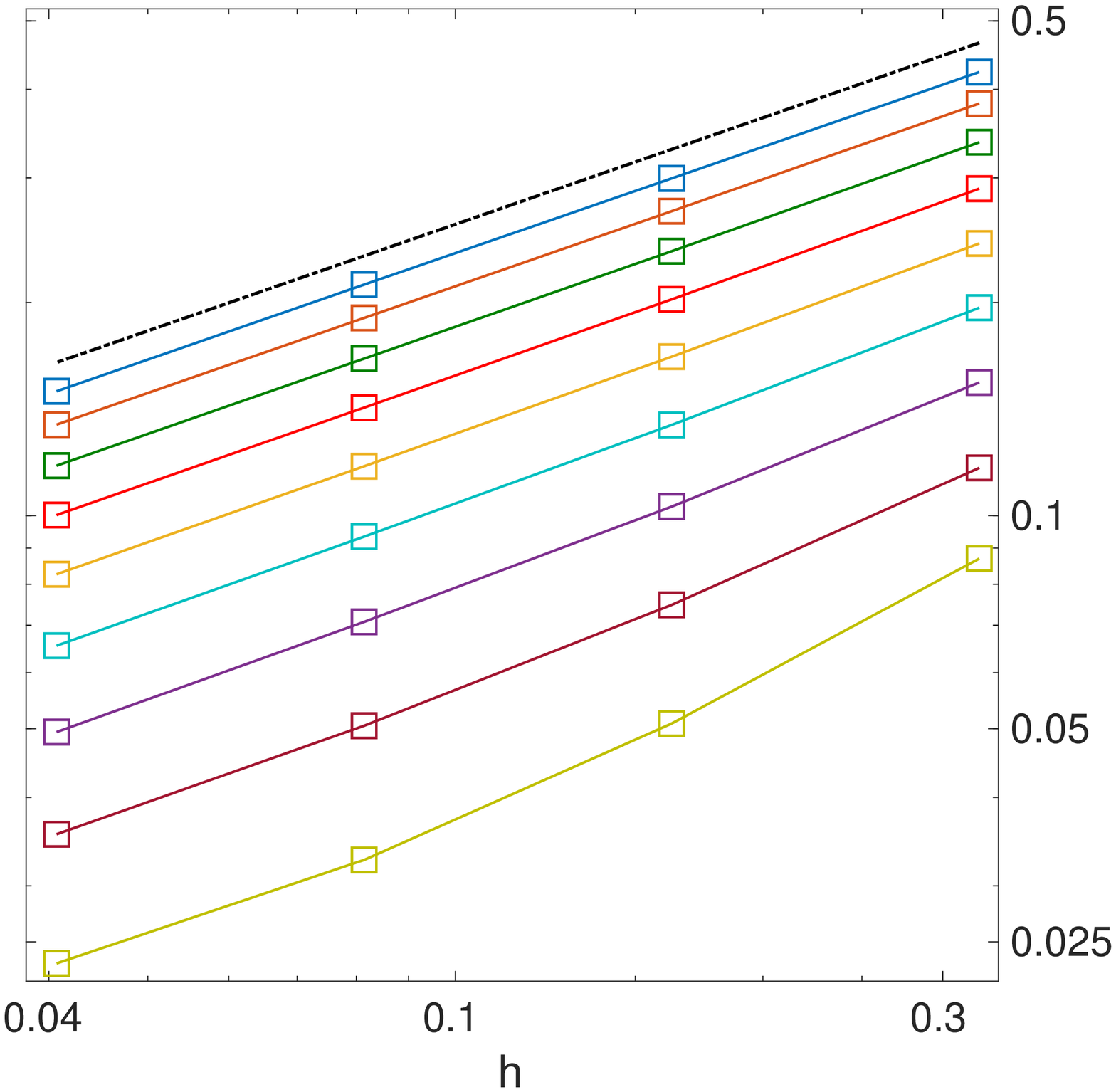} \\
\hspace{0.97cm}\tiny{(B.1)}~\\~\\
\end{minipage}
\begin{minipage}[c]{0.445\textwidth}\centering
{\large \hspace{-0.7cm} $\| \bar{p} - \bar{p}_{h} \|_{s}$}~\\ 
\psfrag{adjoint seminorm error}{}
\includegraphics[trim={0 0 0 0},clip,width=5.30cm,height=4.6cm,scale=0.35]{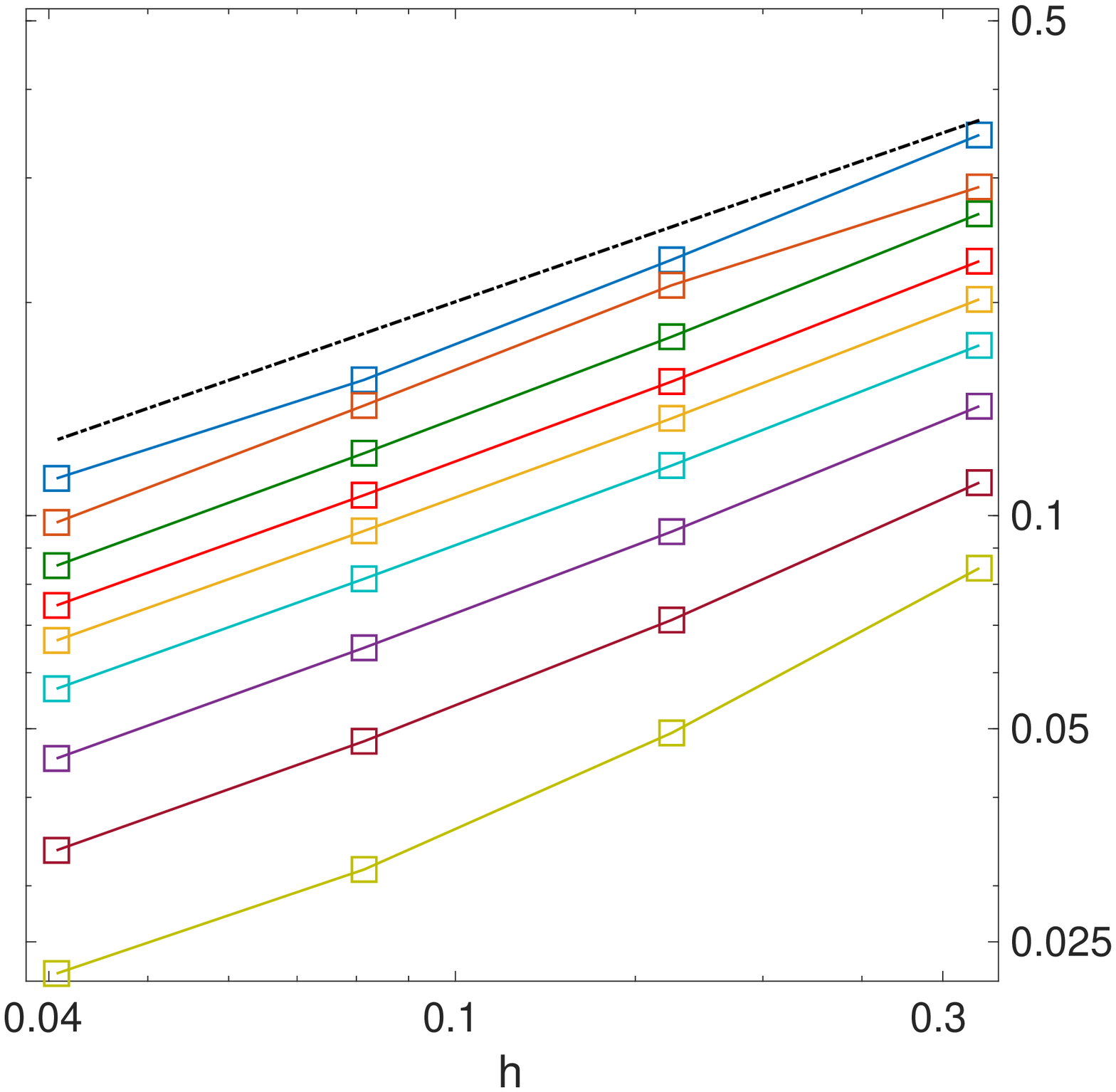}\\
\hspace{-0.55cm}\tiny{(A.2)}~\\~\\
\psfrag{adjoint seminorm error}{
}
\includegraphics[trim={0 0 0 0},clip,width=5.30cm,height=4.6cm,scale=0.35]{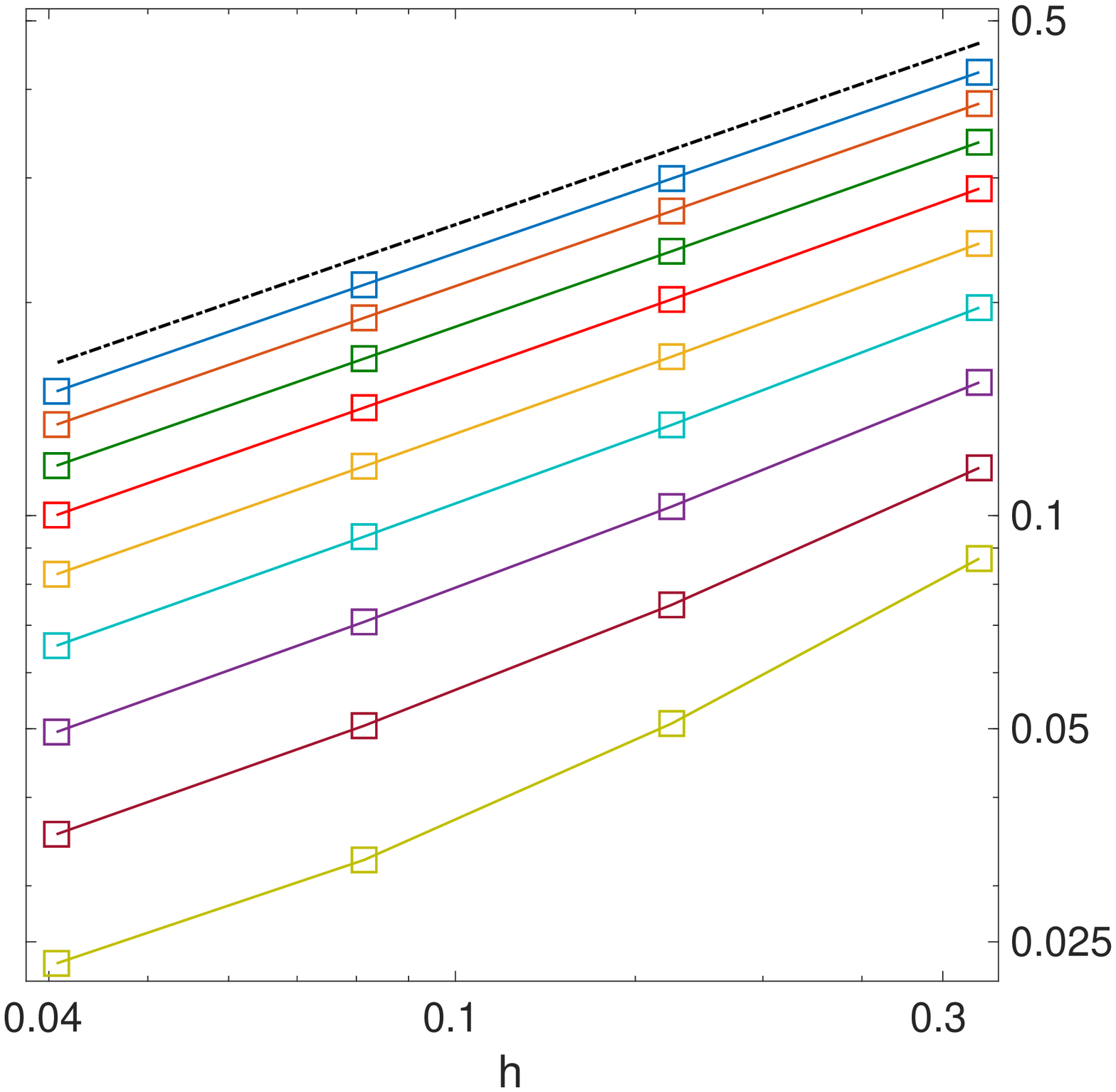}\\
\hspace{-0.55cm}\tiny{(B.2)}~\\~\\
\end{minipage}
\caption{Experimental rates of convergence for $\| \bar{u} - \bar{u}_h\|_s$ and $\| \bar{p} - \bar{p}_h \|_s$ considering the fully discrete (A.1)--(A.2) and semidiscrete schemes (B.1)--(B.2) for $s \in \{0.1,0.2,...,0.9\}$.}
\label{fig:ex-1.1}
\end{figure}


\begin{figure}[!ht]
\centering
\psfrag{s=01}{{\normalsize $s = 0.1$}}
\psfrag{s=02}{{\normalsize $s = 0.2$}}
\psfrag{s=03}{{\normalsize $s = 0.3$}}
\psfrag{s=04}{{\normalsize $s = 0.4$}}
\psfrag{s=05}{{\normalsize $s = 0.5$}}
\psfrag{s=06}{{\normalsize $s = 0.6$}}
\psfrag{s=07}{{\normalsize $s = 0.7$}}
\psfrag{s=08}{{\normalsize $s = 0.8$}}
\psfrag{s=09}{{\normalsize $s = 0.9$}}
\psfrag{h}{{\normalsize $h$}}
\psfrag{o(h05)}{{\normalsize $h^{0.5}$}}
\psfrag{o(h06)}{{\normalsize $h^{0.6}$}}
\psfrag{o(h07)}{{\normalsize $h^{0.7}$}}
\psfrag{o(h08)}{{\normalsize $h^{0.8}$}}
\psfrag{o(h09)}{{\normalsize $h^{0.9}$}}
\psfrag{o(h10)}{{\normalsize $h^{1.0}$}}
\begin{minipage}[c]{0.393\textwidth}\centering
{\large \hspace{0.7cm} $\| \bar{u} - \bar{u}_{h} \|_{L^{2}(\Omega)}$}~\\ 
\psfrag{state norm error}{}
\includegraphics[trim={0 0 0 0},clip,width=5.0cm,height=3.9cm,scale=0.35]{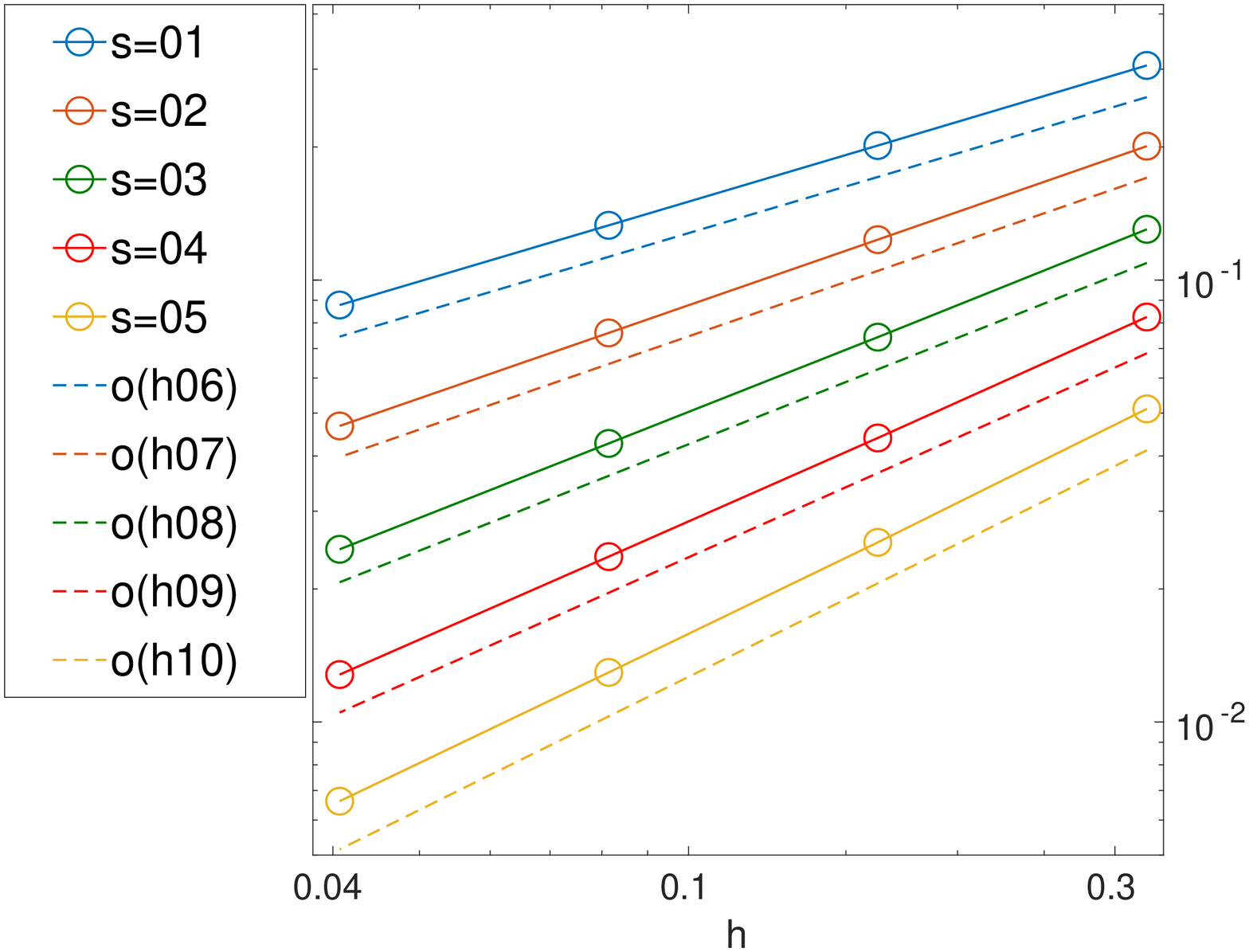} \\
\hspace{0.85cm}\tiny{(C.1)}~\\~\\ 
\psfrag{state norm error}{}
\hspace{1.15cm}\includegraphics[trim={0 0 0 0},clip,width=3.9cm,height=3.9cm,scale=0.35]{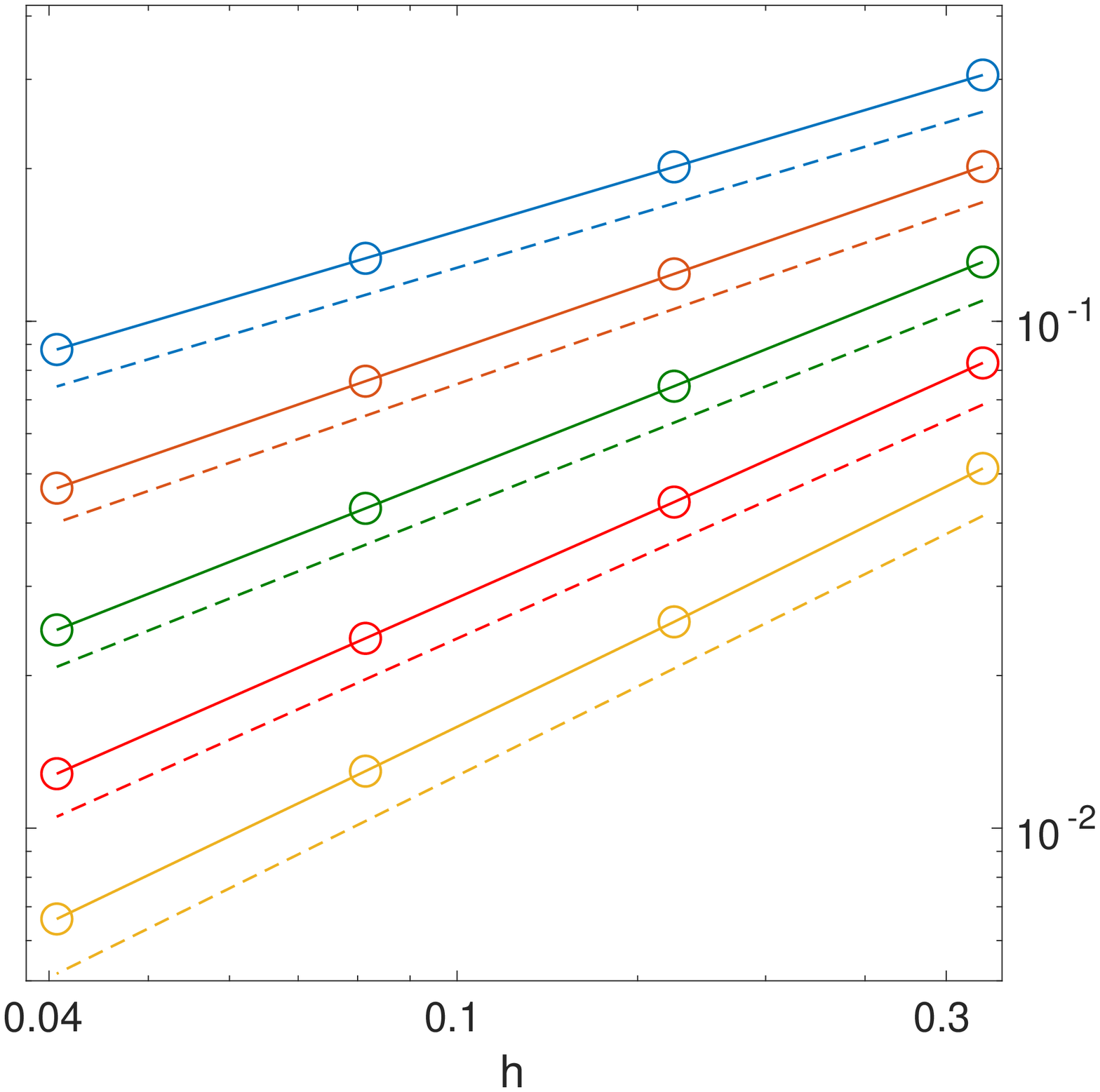} \\
\hspace{0.85cm}\tiny{(D.1)}~\\~\\
\end{minipage}
\begin{minipage}[c]{0.303\textwidth}\centering
{\large \hspace{-0.3cm} $\| \bar{p} - \bar{p}_{h} \|_{L^{2}(\Omega)}$}~\\ 
\psfrag{adjoint norm error}{}
\includegraphics[trim={0 0 0 0},clip,width=3.9cm,height=3.9cm,scale=0.35]{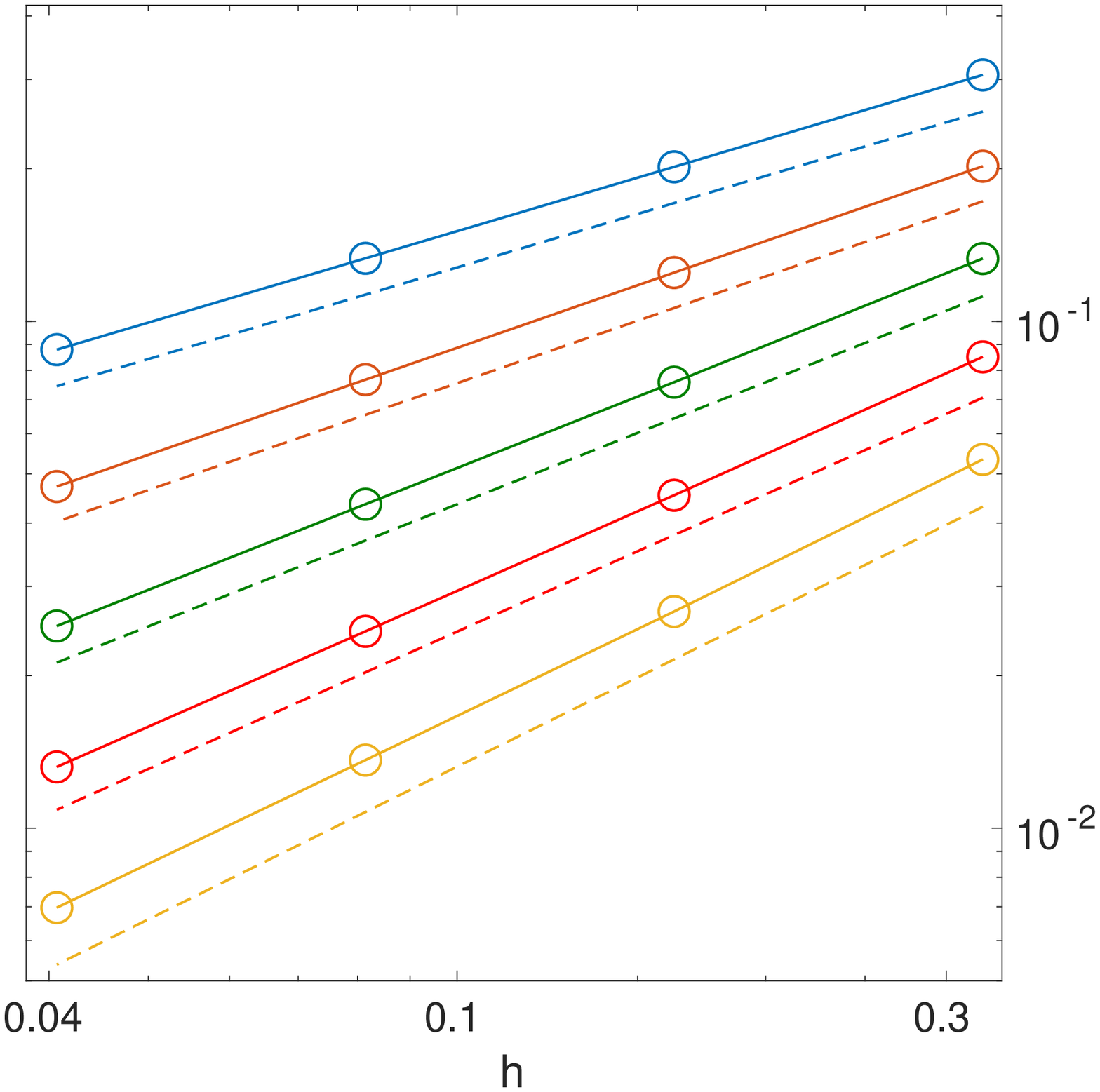} \\
\hspace{-0.3cm}\tiny{(C.2)}~\\~\\ 
\psfrag{adjoint norm error}{}
\includegraphics[trim={0 0 0 0},clip,width=3.9cm,height=3.9cm,scale=0.35]{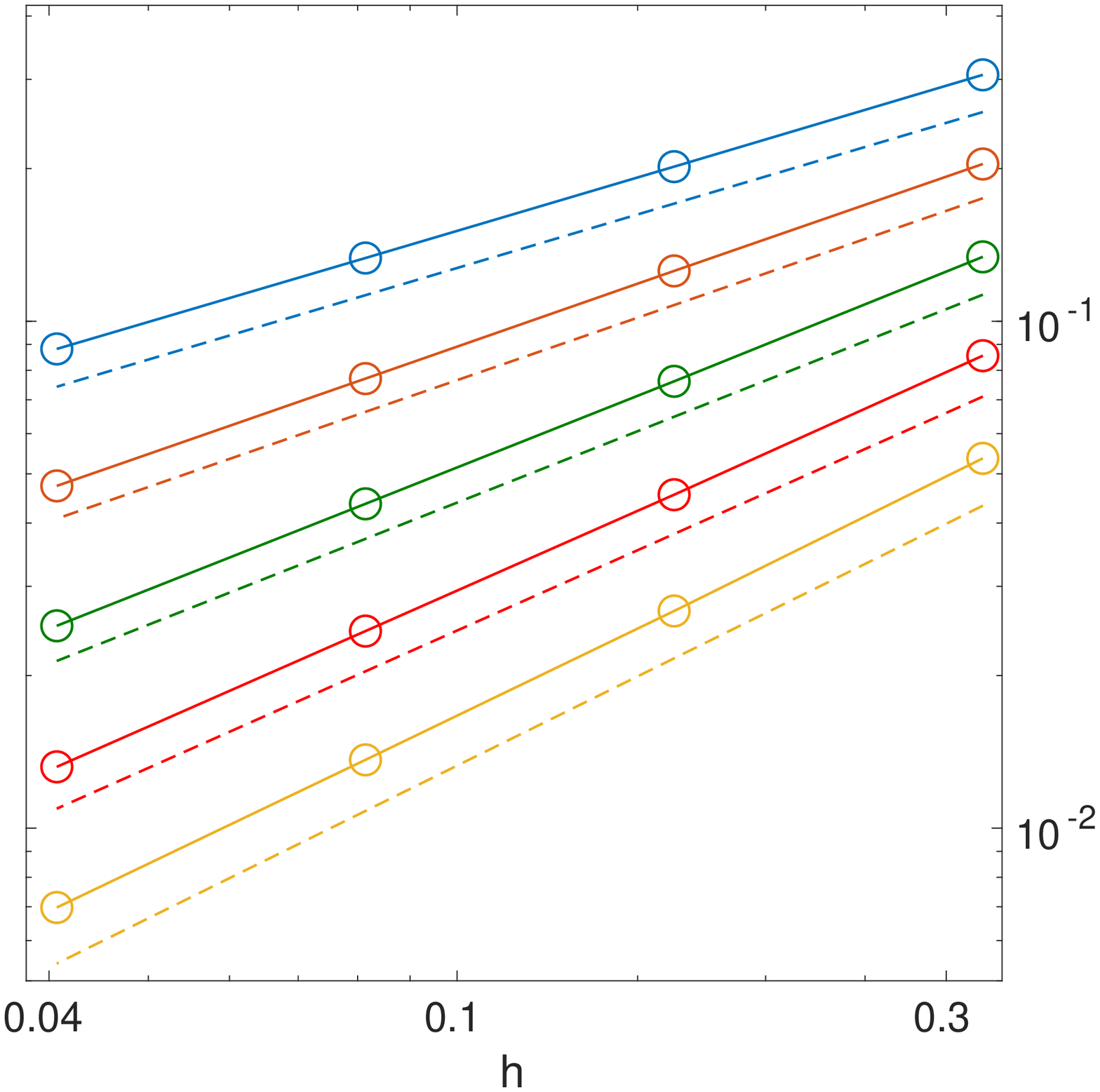} \\
\hspace{-0.3cm}\tiny{(D.2)}~\\~\\
\end{minipage}
\begin{minipage}[c]{0.303\textwidth}\centering
{\large \hspace{-0.3cm} $\| \bar{q} - \bar{q}_{h} \|_{L^{2}(\Omega)}$}~\\ 
\psfrag{control norm error}{
}
\includegraphics[trim={0 0 0 0},clip,width=3.9cm,height=3.9cm,scale=0.35]{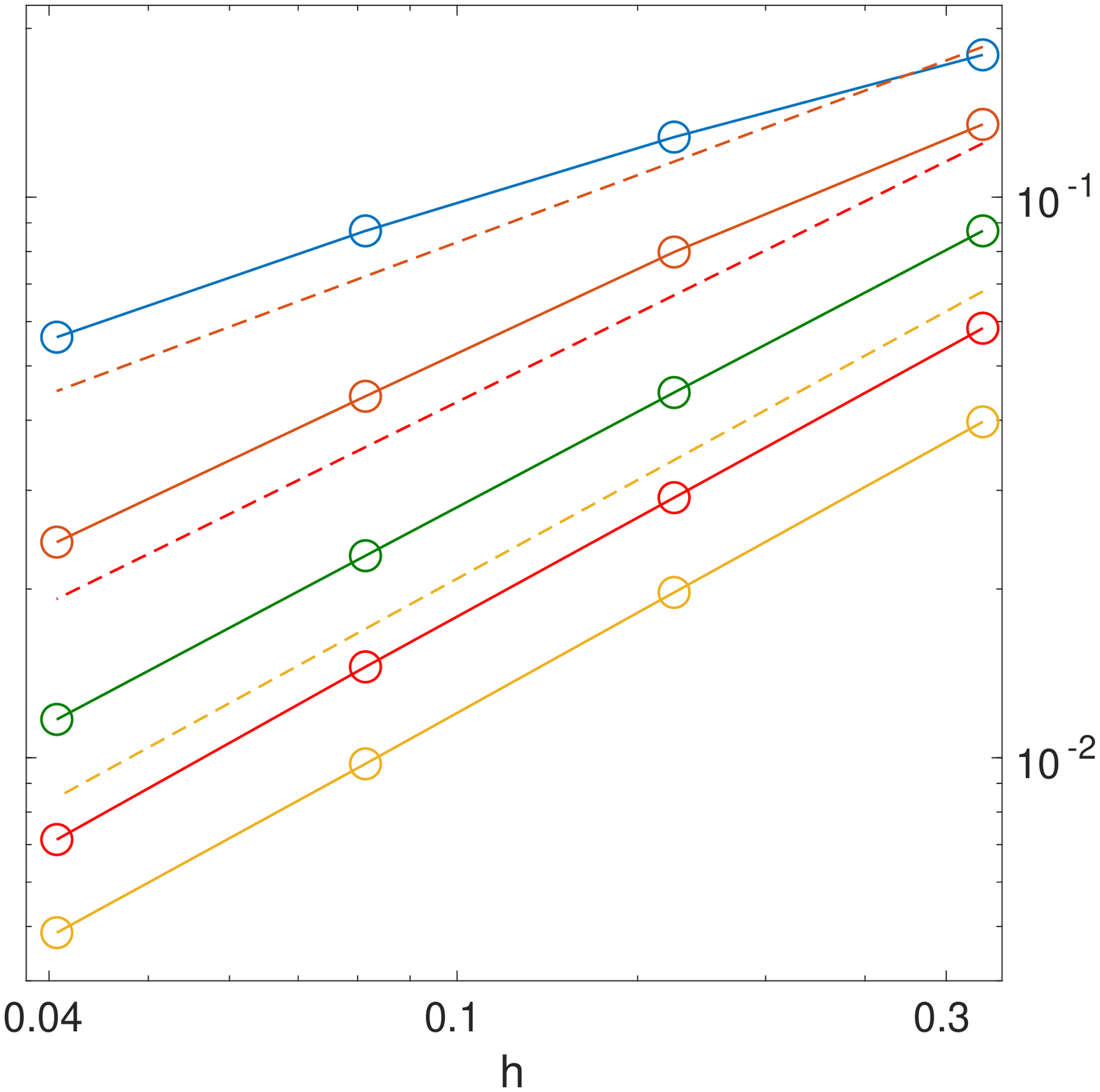}\\
\hspace{-0.3cm}\tiny{(C.3)}~\\~\\ 
\psfrag{control norm error}{
}
\includegraphics[trim={0 0 0 0},clip,width=3.9cm,height=3.9cm,scale=0.35]{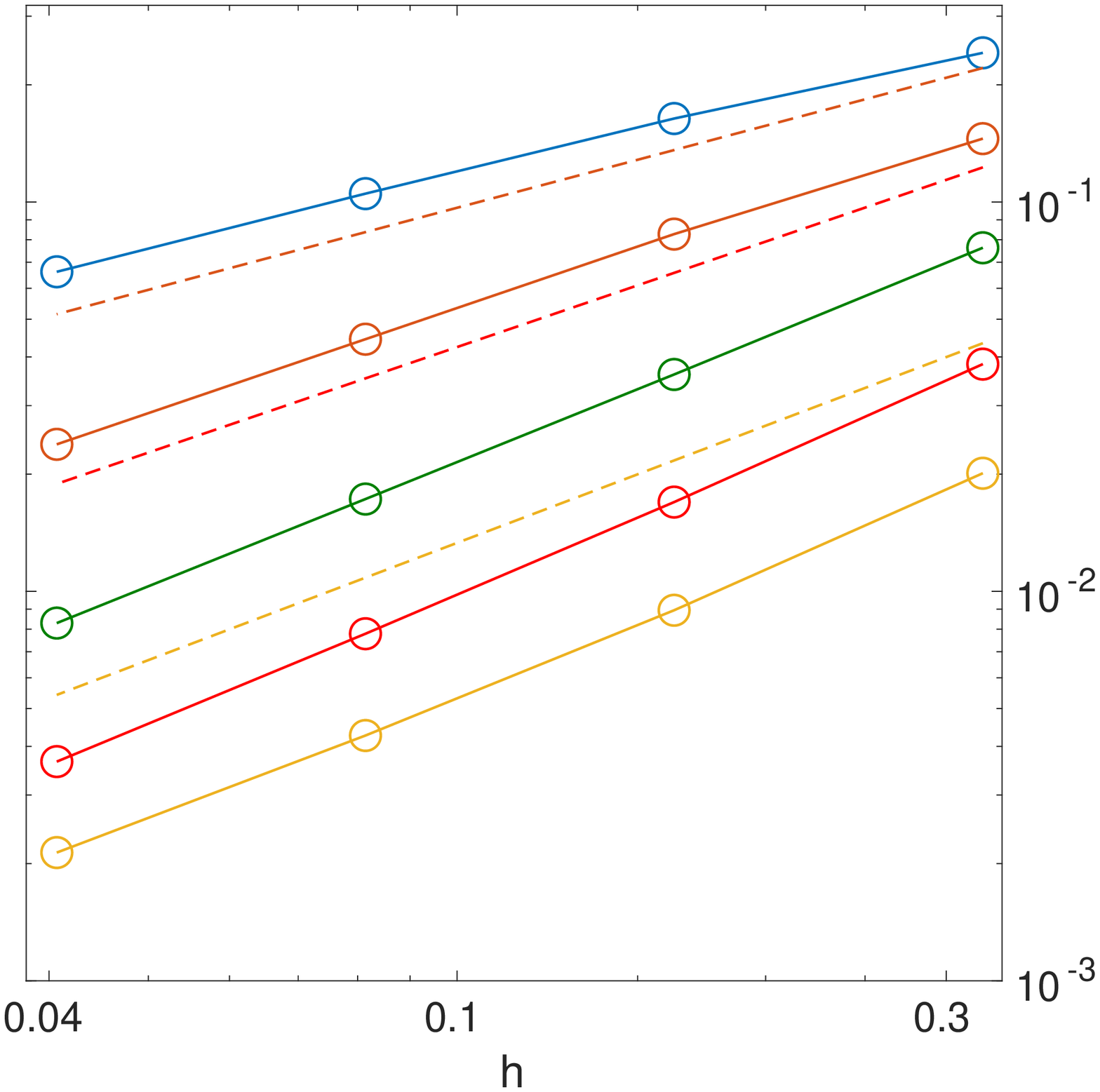}\\
\hspace{-0.3cm}\tiny{(D.3)}~\\~\\
\end{minipage}
\caption{Experimental rates of convergence for $\| \bar{u} - \bar{u}_h \|_{L^2(\Omega)}$, $\| \bar{p} - \bar{p}_h \|_{L^2(\Omega)}$, and $\| \bar{q} - \bar{q}_h \|_{L^2(\Omega)}$ considering the fully discrete (C.1)--(C.3) and semidiscrete schemes (D.1)--(D.3) for $s \in \{0.1,0.2,...,0.5\}$.}
\label{fig:ex-1.2}
\end{figure}


\begin{figure}[!ht]
\centering
\psfrag{s=01}{{\normalsize $s = 0.1$}}
\psfrag{s=02}{{\normalsize $s = 0.2$}}
\psfrag{s=03}{{\normalsize $s = 0.3$}}
\psfrag{s=04}{{\normalsize $s = 0.4$}}
\psfrag{s=05}{{\normalsize $s = 0.5$}}
\psfrag{s=06}{{\normalsize $s = 0.6$}}
\psfrag{s=07}{{\normalsize $s = 0.7$}}
\psfrag{s=08}{{\normalsize $s = 0.8$}}
\psfrag{s=09}{{\normalsize $s = 0.9$}}
\psfrag{h}{{\normalsize $h$}}
\psfrag{o(h10)}{{\normalsize $h^{1.0}$}}
\psfrag{o(h11)}{{\normalsize $h^{1.1}$}}
\psfrag{o(h12)}{{\normalsize $h^{1.2}$}}
\psfrag{o(h13)}{{\normalsize $h^{1.3}$}}
\psfrag{o(h14)}{{\normalsize $h^{1.4}$}}
\begin{minipage}[c]{0.393\textwidth}\centering
{\large \hspace{0.7cm} $\| \bar{u} - \bar{u}_{h} \|_{L^{2}(\Omega)}$}~\\ 
\psfrag{state norm error}{}
\includegraphics[trim={0 0 0 0},clip,width=5.0cm,height=3.9cm,scale=0.35]{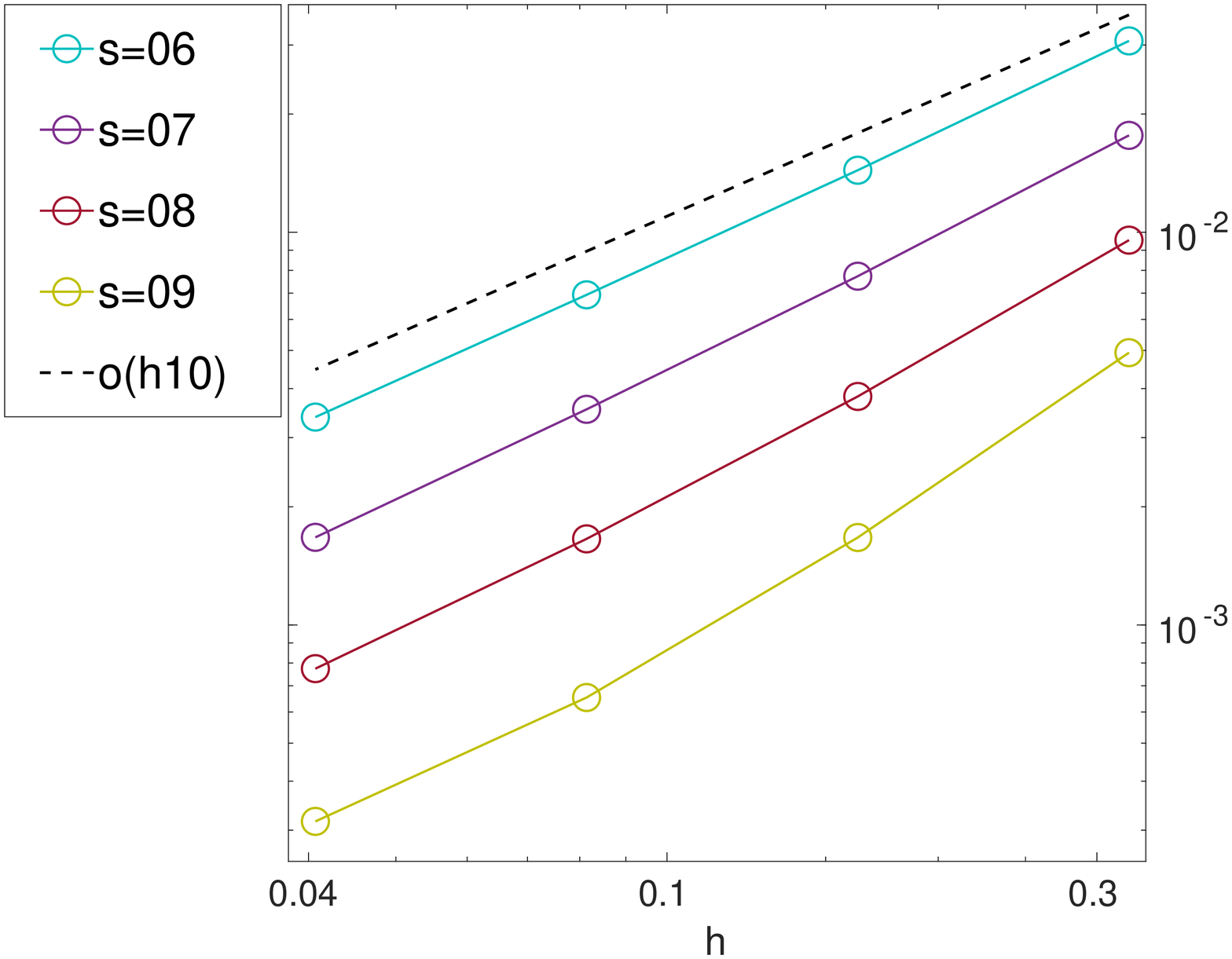} \\
\hspace{0.75cm}\tiny{(E.1)}~\\~\\ 
\psfrag{state norm error}{
}
\hspace{1.1cm}\includegraphics[trim={0 0 0 0},clip,width=3.95cm,height=3.9cm,scale=0.35]{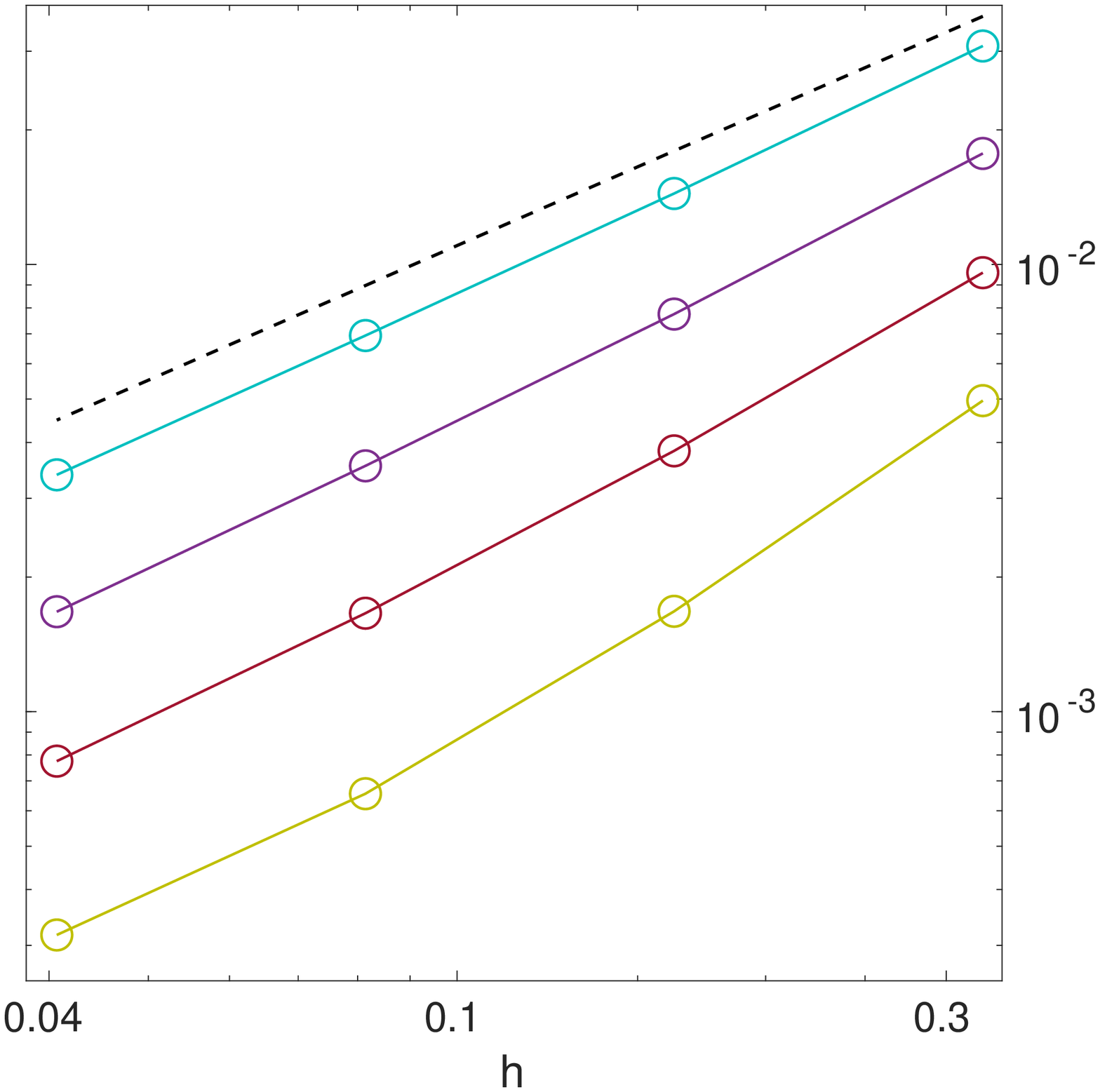} \\
\hspace{0.75cm}\tiny{(F.1)}~\\~\\
\end{minipage}
\begin{minipage}[c]{0.303\textwidth}\centering
{\large \hspace{-0.3cm} $\| \bar{p} - \bar{p}_{h} \|_{L^{2}(\Omega)}$}~\\ 
\psfrag{adjoint norm error}{}
\includegraphics[trim={0 0 0 0},clip,width=3.95cm,height=3.9cm,scale=0.35]{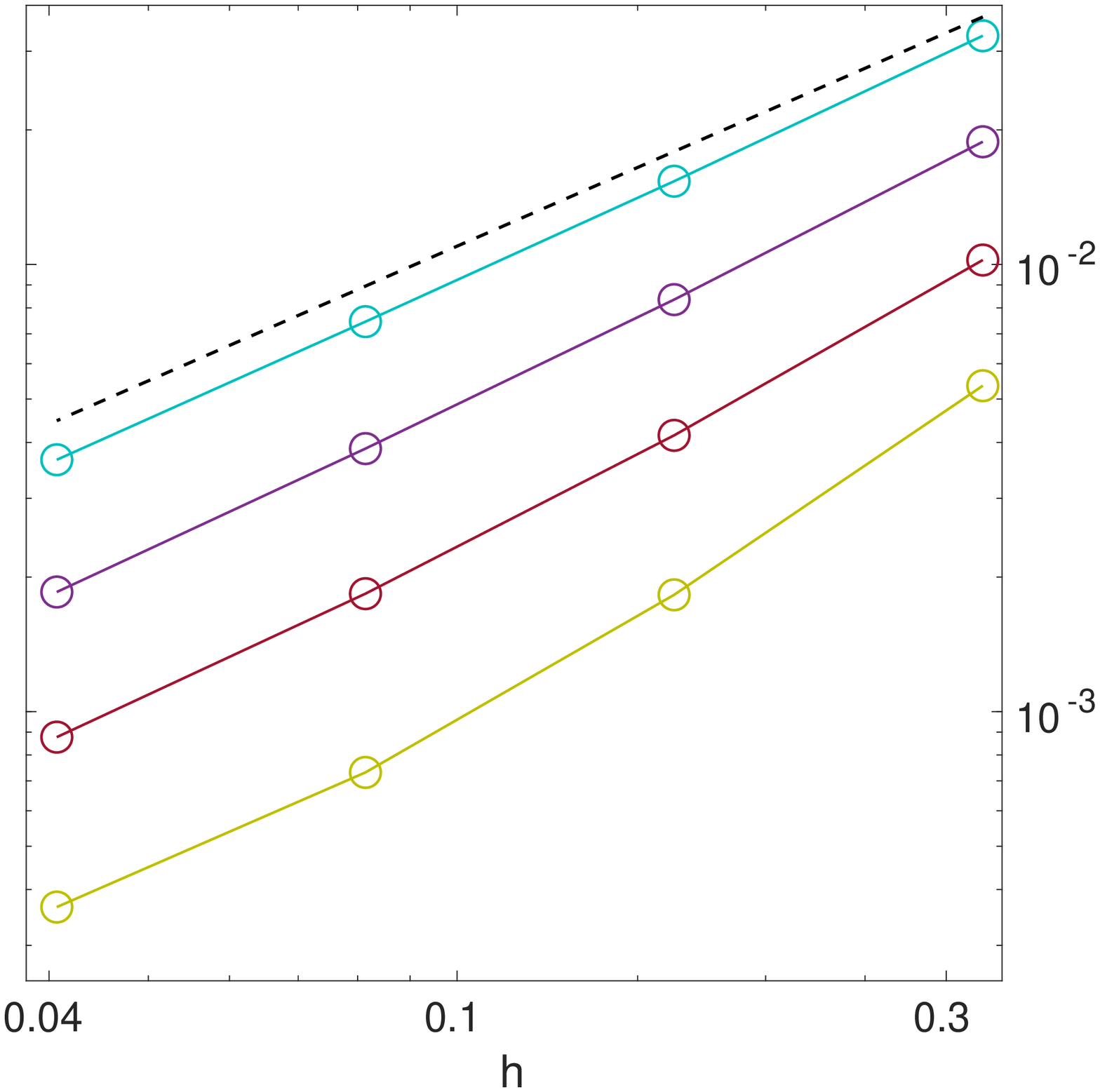} \\
\hspace{-0.25cm}\tiny{(E.2)}~\\~\\ 
\psfrag{adjoint norm error}{
}
\includegraphics[trim={0 0 0 0},clip,width=3.95cm,height=3.9cm,scale=0.35]{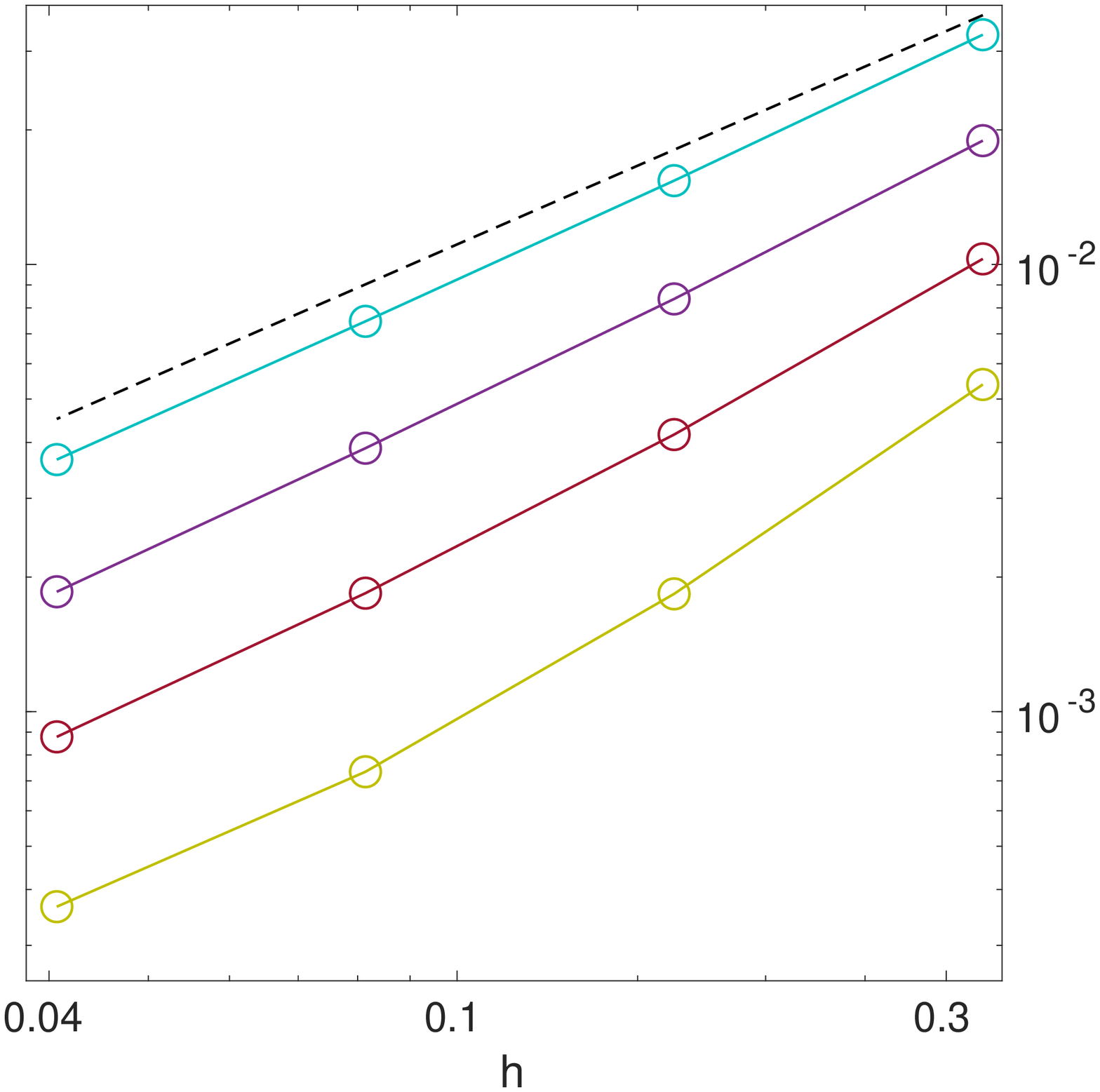} \\
\hspace{-0.25cm}\tiny{(F.2)}~\\~\\
\end{minipage}
\begin{minipage}[c]{0.303\textwidth}\centering
{\large \hspace{-0.3cm} $\| \bar{q} - \bar{q}_{h} \|_{L^{2}(\Omega)}$}~\\
\psfrag{control norm error}{}
\includegraphics[trim={0 0 0 0},clip,width=3.95cm,height=3.9cm,scale=0.35]{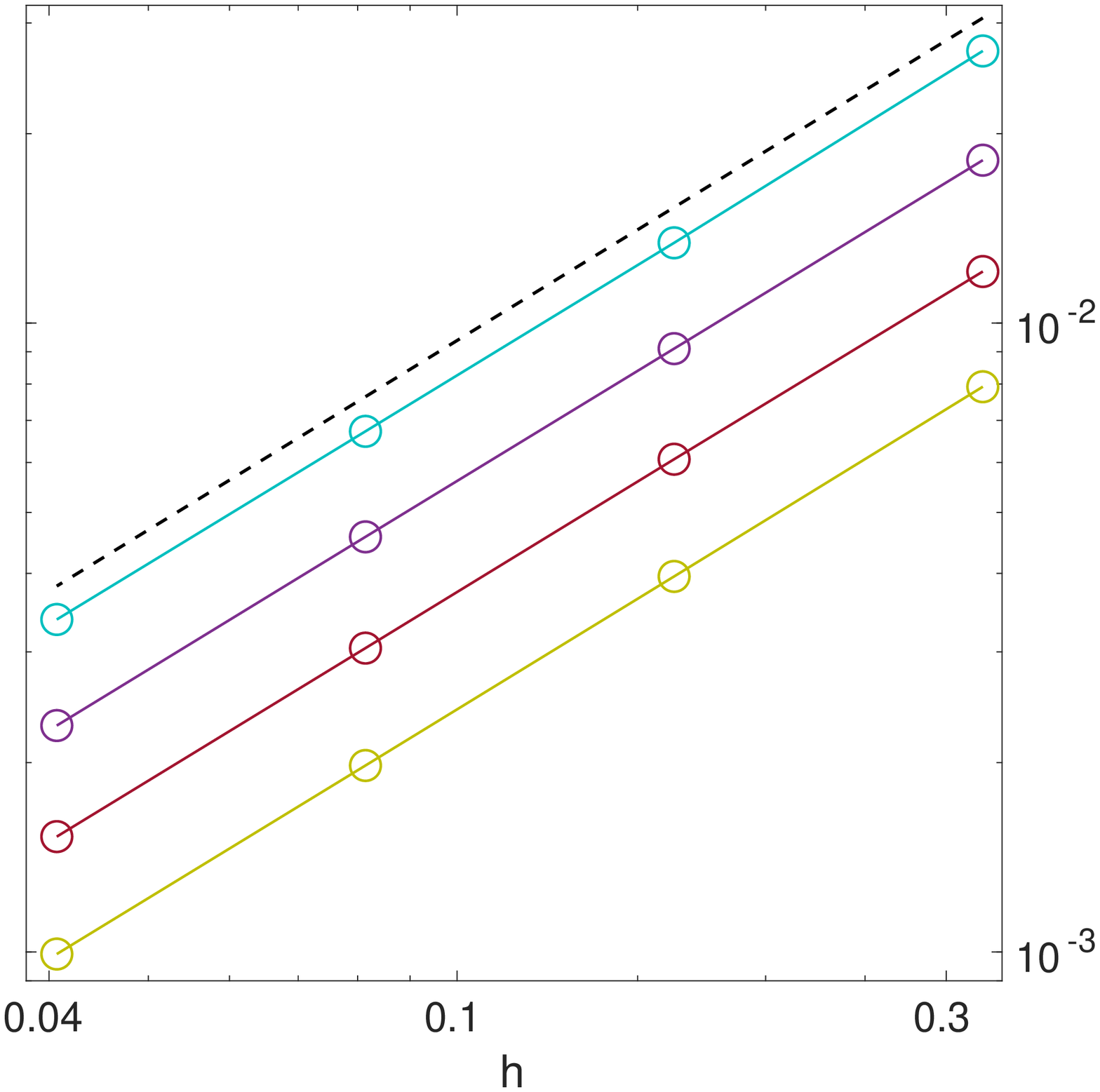}\\
\hspace{-0.25cm}\tiny{(E.3)}~\\~\\ 
\psfrag{control norm error}{
}
\includegraphics[trim={0 0 0 0},clip,width=3.95cm,height=3.9cm,scale=0.35]{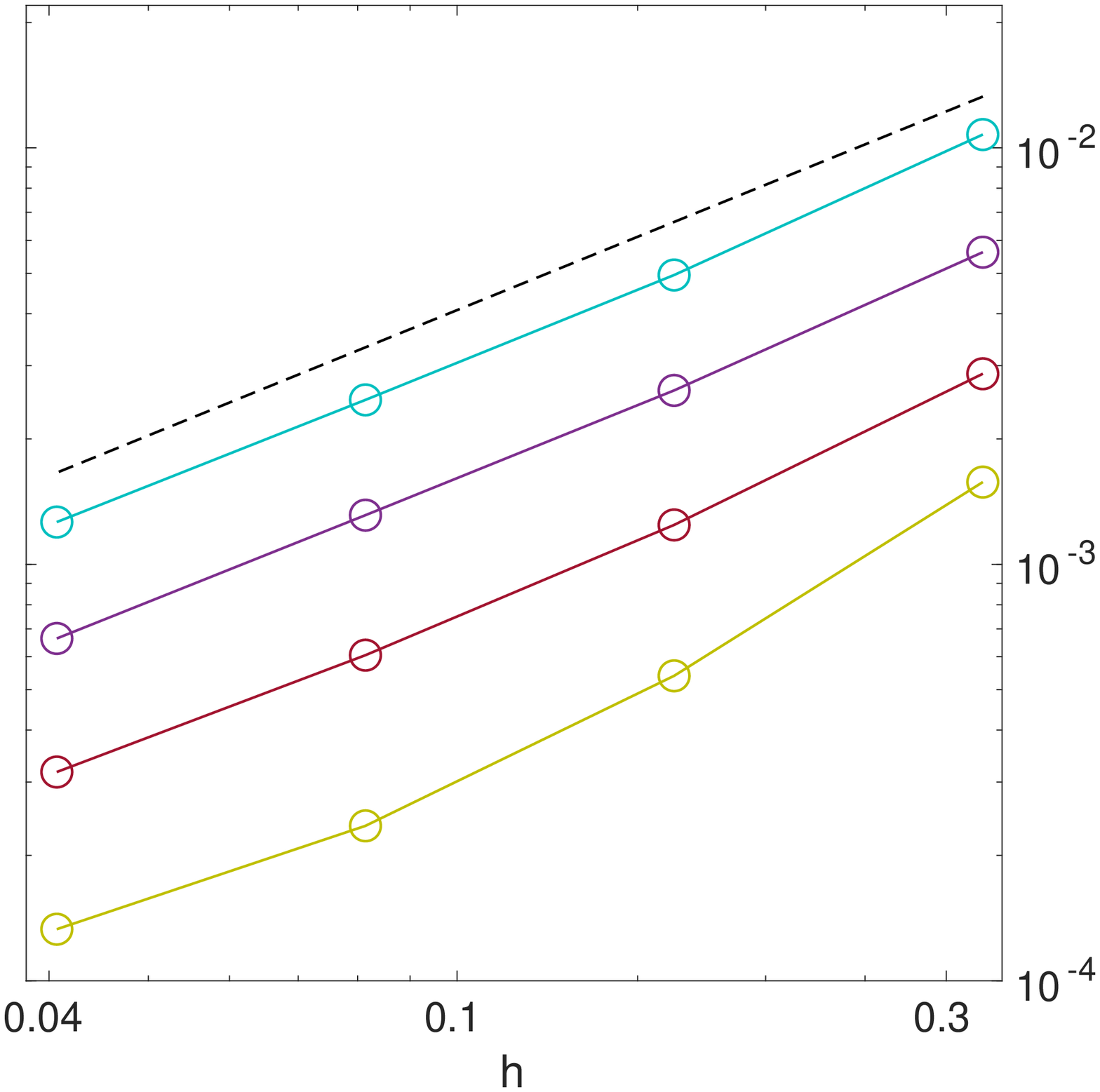}\\
\hspace{-0.25cm}\tiny{(F.3)}~\\~\\
\end{minipage}
\caption{Experimental rates of convergence for $\| \bar{u} - \bar{u}_h \|_{L^2(\Omega)}$, $\| \bar{p} - \bar{p}_h \|_{L^2(\Omega)}$, and $\| \bar{q} - \bar{q}_h \|_{L^2(\Omega)}$ considering the fully (E.1)--(E.3) and semidiscrete scheme (F.1)--(F.3) for $s \in \{0.6,0.7,0.8,0.9\}$.}
\label{fig:ex-1.3}
\end{figure}

Figures \ref{fig:ex-1.1}--\ref{fig:ex-1.3} show the results obtained for both the fully discrete and the semidiscrete schemes. In Figure \ref{fig:ex-1.1}, we show for $s \in \{0.1, 0.2,\ldots, 0.9\}$ the experimental convergence rates for $\| \bar{u} - \bar{u}_h \|_s$ and $\| \bar{p} - \bar{p}_h \|_s$. We observe that the rates predicted in the Corollaries \ref{cor:error_estimates_st_ad_fully} and \ref{cor:error_estimates_st_ad_semi} are achieved when $s \geq 0.5$. However, when $s < 0.5$ the experimental convergence rates exceed those derived in these corollaries.
We present experimental convergence rates for $\| \bar{u} - \bar{u}_h \|_{L^2(\Omega)}$, $\| \bar{p} - \bar{p}_h \|_{L^2(\Omega)}$, and $\| \bar{q} - \bar{q}_h \|_{L^2(\Omega)}$ for $s \in \{0.1,\ldots,0.5\}$  and $s \in \{0.6,\ldots,0.9\}$ in Figures \ref{fig:ex-1.2} and \ref{fig:ex-1.3}, respectively. It can be observed that the experimental convergence rates for all involved approximation errors are consistent with the error bounds obtained in section \ref{sec:error_estimates} when $s\geq 0.5$. However, when $s < 0.5$, the reported experimental convergence rates exceed those predicted in our manuscript.  These cases are discussed in Remarks \ref{rem:higher_rates} and \ref{rem:higher_rates2}.

\begin{remark}[convergence rates: state and adjoint variables]\label{rem:higher_rates}
Figures \ref{fig:ex-1.1} and \ref{fig:ex-1.2} show that the experimental convergence rates for the approximation errors associated to the state and adjoint variables, when $s < 0.5$, exceed the rates predicted in Corollaries \ref{cor:error_estimates_st_ad_fully} and \ref{cor:error_estimates_st_ad_semi} but are in agreement with respect to the \emph{maximal regularity}
\begin{equation}
 H^{s + \frac{1}{2} -\epsilon}(\Omega), \qquad 0 < \epsilon < s + \tfrac{1}{2}.
\label{eq:maximal_regularity}
\end{equation}
The error bounds that we derive in Corollaries \ref{cor:error_estimates_st_ad_fully} and \ref{cor:error_estimates_st_ad_semi} are based on the regularity estimates of Theorem \ref{thm:regul_control}, which in turn are inspired by the results in Theorem \ref{thm:sobolev_reg} (\cite[Theorem 2.1]{MR4283703}). If $s \in (0,0.5]$, $\mathfrak{u} \in H^{2s - 2\epsilon}(\Omega)$ for every $\epsilon \in (0,s)$, which is weaker than \eqref{eq:maximal_regularity}. As explained in \cite[page 1921]{MR4283703}, one expects the solutions to be smoother than just $H^{2s}(\Omega)$ if the forcing term $\mathfrak{f} \in H^r(\Omega)$, for some $r>0$; however, such a result of higher regularity cannot be derived from \cite[Theorem 2.1]{MR4283703}. Nevertheless, it is important to emphasize that the estimates in \cite[Theorem 2.1]{MR4283703} hold under the assumption that $\partial \Omega$ is \emph{merely} Lipschitz. Finally, we note that the functions $\bar{u}$ and $\bar{p}$ defined in \eqref{eq:u_and_p} satisfy \eqref{eq:maximal_regularity}.
\end{remark}

\begin{remark}[convergence rates: control variable]\label{rem:higher_rates2}
Figure \ref{fig:ex-1.2} (subfigures (C.3) and (D.3)) shows that the experimental convergence rates obtained for the control variable are higher than those predicted by \eqref{eq:error_estimates_control_final_fd} and \eqref{eq:control_error_estimate_semi_extended} when $s<0.5$. To explain this, we further investigate the regularity properties of $\bar{q}$ in our particular setting. Note that
$
\bar{u}(x)\bar{p}(x) = \mathfrak{C}(1 - |x|^{2})^{\sigma}_{+}, 
$
where $\sigma = 2s$ and $\mathfrak{C}^{-1} = (2^{2\sigma}\Gamma^{4}(1 + s))$. $\bar{u}\bar{p}$ can thus be regarded as the solution to \eqref{def:state_eq} with $s$ replaced by $\sigma$, $q \equiv 0$, and $f \equiv \Gamma^{-2}(1 + s)$. Consequently, the maximal regularity property \eqref{eq:maximal_regularity} in this case reads $\bar{u}\bar{p} \in H^{\iota}(\Omega)$, where $\iota = \sigma + 0.5 - \epsilon$ and $\epsilon \in (0,\sigma + 0.5)$. In view of the projection formula \eqref{eq:projection_control}, we apply \cite[Theorem 1]{MR1173747} to obtain $\bar{q} \in H^{\iota}(\Omega)$; observe that $\iota < 1.5$. As a result, in terms of regularity and approximation degree, we would expect, for the fully discrete scheme, $\| \bar{q} - \bar{q}_h \|_{L^2(\Omega)} \lesssim h^{\omega}$, where $\omega = \min\{2s+0.5-\epsilon, 1\}$. This is the behaviour observed in Subfigure (C.3). A similar conclusion holds for the semidiscrete scheme. Since $\bar{\mathsf{q}}_h$ is implicitly discretized with piecewise linear functions, we would expect $\| \bar{q} - \bar{\mathsf{q}}_h \|_{L^2(\Omega)} \lesssim h^{\varpi}$, where $\varpi = \min\{2s+0.5-\epsilon, 2\}$. However, we observe $\mathcal{O}(h^{\omega})$. An important observation in favor of this is the fact that it has been experimentally observed that $\| \bar{u} - \bar{u}_h \|_{L^2(\Omega)}$ and $\| \bar{p} - \bar{p}_h \|_{L^2(\Omega)}$ do not exceed $\mathcal{O}(h)$; see \cite[\S 6.1]{MR4283703}.
\end{remark}

\subsection{Example 2} 
We consider $a = 0.001 \|\bar{u}\bar{p}\|_{L^{\infty}(\Omega)}$ and $b = 1.5$.


\begin{figure}[!ht]
\centering
\psfrag{s=01}{{\normalsize $s = 0.1$}}
\psfrag{s=02}{{\normalsize $s = 0.2$}}
\psfrag{s=03}{{\normalsize $s = 0.3$}}
\psfrag{s=04}{{\normalsize $s = 0.4$}}
\psfrag{s=05}{{\normalsize $s = 0.5$}}
\psfrag{s=06}{{\normalsize $s = 0.6$}}
\psfrag{s=07}{{\normalsize $s = 0.7$}}
\psfrag{s=08}{{\normalsize $s = 0.8$}}
\psfrag{s=09}{{\normalsize $s = 0.9$}}
\psfrag{h}{{\normalsize $h$}}
\psfrag{o(h05)}{{\normalsize $h^{0.5}$}}
\psfrag{o(h06)}{{\normalsize $h^{0.6}$}}
\psfrag{o(h07)}{{\normalsize $h^{0.7}$}}
\psfrag{o(h08)}{{\normalsize $h^{0.8}$}}
\psfrag{o(h09)}{{\normalsize $h^{0.9}$}}
\psfrag{o(h10)}{{\normalsize $h^{1.0}$}}
{\large \hspace{0.7cm} $\|\bar{q} - \bar{q}_{h}\|_{L^{2}(\Omega)}$ for $a = 0.001\|\bar{u}\bar{p}\|_{L^{\infty}(\Omega)}$}
\begin{minipage}[c]{0.545\textwidth}\centering
\psfrag{control norm error}{}
\includegraphics[trim={0 0 0 0},clip,width=6.30cm,height=4.6cm,scale=0.4]{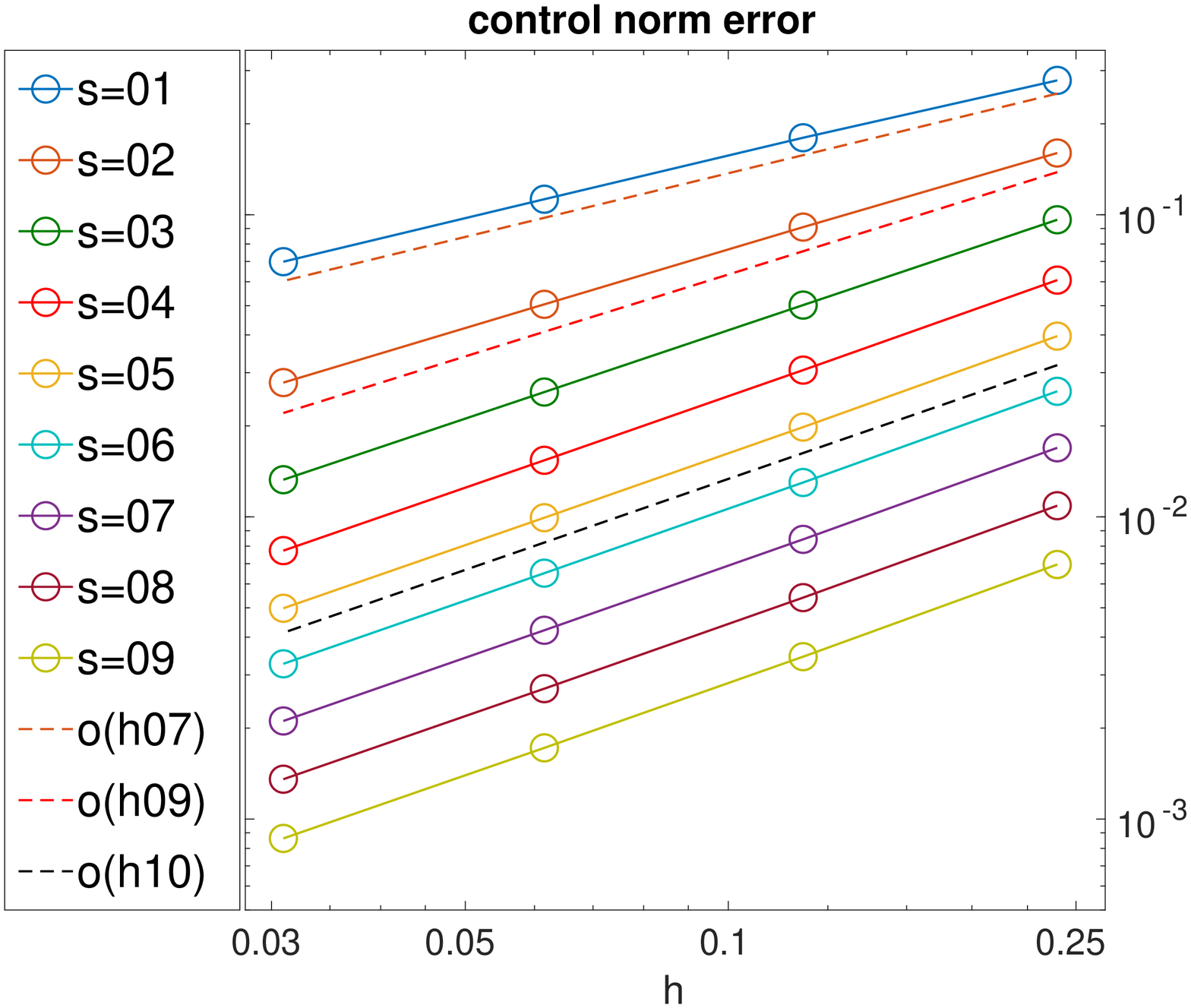} \\
\hspace{0.75cm} \tiny{(G.1)}
\end{minipage}
\begin{minipage}[c]{0.445\textwidth}\centering
\psfrag{control norm error}{}
\includegraphics[trim={0 0 0 0},clip,width=5.10cm,height=4.6cm,scale=0.4]{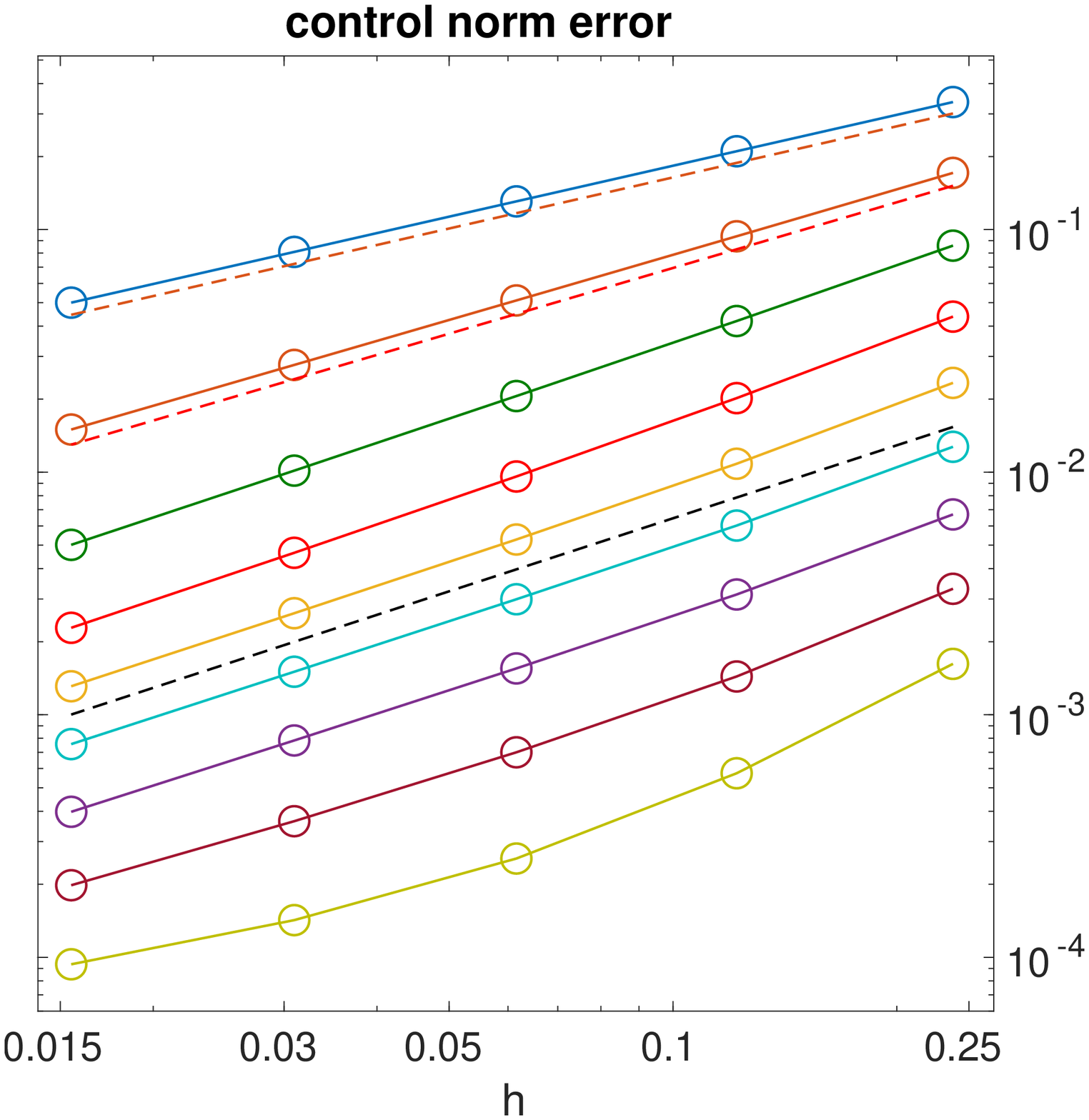}\\
\hspace{-0.4cm}\tiny{(G.2)}
\end{minipage}
\caption{Experimental rates of convergence for $\| \bar{q} - \bar{q}_h\|_{L^{2}(\Omega)}$ considering the fully discrete (G.1) and semidiscrete schemes (G.2) for $a = 0.001\|\bar{u}\bar{p}\|_{L^{\infty}(\Omega)}$ and $s \in \{0.1,0.2,..., 0.9\}$.}
\label{fig:ex-2.4}
\end{figure}

Figure \ref{fig:ex-2.4} shows for $s \in \{0.1,0.2,\ldots,0.9\}$ the experimental convergence rates for $\| \bar{q} - \bar{q}_h\|_{L^{2}(\Omega)}$ obtained for both the fully discrete and the semidiscrete schemes. We note that the experimental convergence rates coincide with those reported in Example 1. This particular result can be attributed to the following fact: the value of $a$ is so small that the singular behaviour of both $\bar{u}$ and $\bar{p}$ described in \eqref{eq:u_and_p} remains present with the computational resources at our disposal. This singular behaviour is inherited by the projection formula \eqref{eq:projection_control} on $\bar{q}$.

\subsection{Example 3}
We consider $a = 0.95 \|\bar{u}\bar{p}\|_{L^{\infty}(\Omega)}$ and $b = 1.5$.


\begin{figure}[!ht]
\centering
\psfrag{s=01}{{\normalsize $s = 0.1$}}
\psfrag{s=02}{{\normalsize $s = 0.2$}}
\psfrag{s=03}{{\normalsize $s = 0.3$}}
\psfrag{s=04}{{\normalsize $s = 0.4$}}
\psfrag{s=05}{{\normalsize $s = 0.5$}}
\psfrag{s=06}{{\normalsize $s = 0.6$}}
\psfrag{s=07}{{\normalsize $s = 0.7$}}
\psfrag{s=08}{{\normalsize $s = 0.8$}}
\psfrag{s=09}{{\normalsize $s = 0.9$}}
\psfrag{h}{{\normalsize $h$}}
\psfrag{o(h05)}{{\normalsize $h^{0.5}$}}
\psfrag{o(h06)}{{\normalsize $h^{0.6}$}}
\psfrag{o(h07)}{{\normalsize $h^{0.7}$}}
\psfrag{o(h08)}{{\normalsize $h^{0.8}$}}
\psfrag{o(h09)}{{\normalsize $h^{0.9}$}}
\psfrag{o(h10)}{{\normalsize $h^{1.0}$}}
{\large \hspace{0.7cm} $\|\bar{q} - \bar{q}_{h}\|_{L^{2}(\Omega)}$ for $a = 0.95\|\bar{u}\bar{p}\|_{L^{\infty}(\Omega)}$}
\begin{minipage}[c]{0.545\textwidth}\centering
\psfrag{control norm error}{}
\includegraphics[trim={0 0 0 0},clip,width=6.30cm,height=4.6cm,scale=0.4]{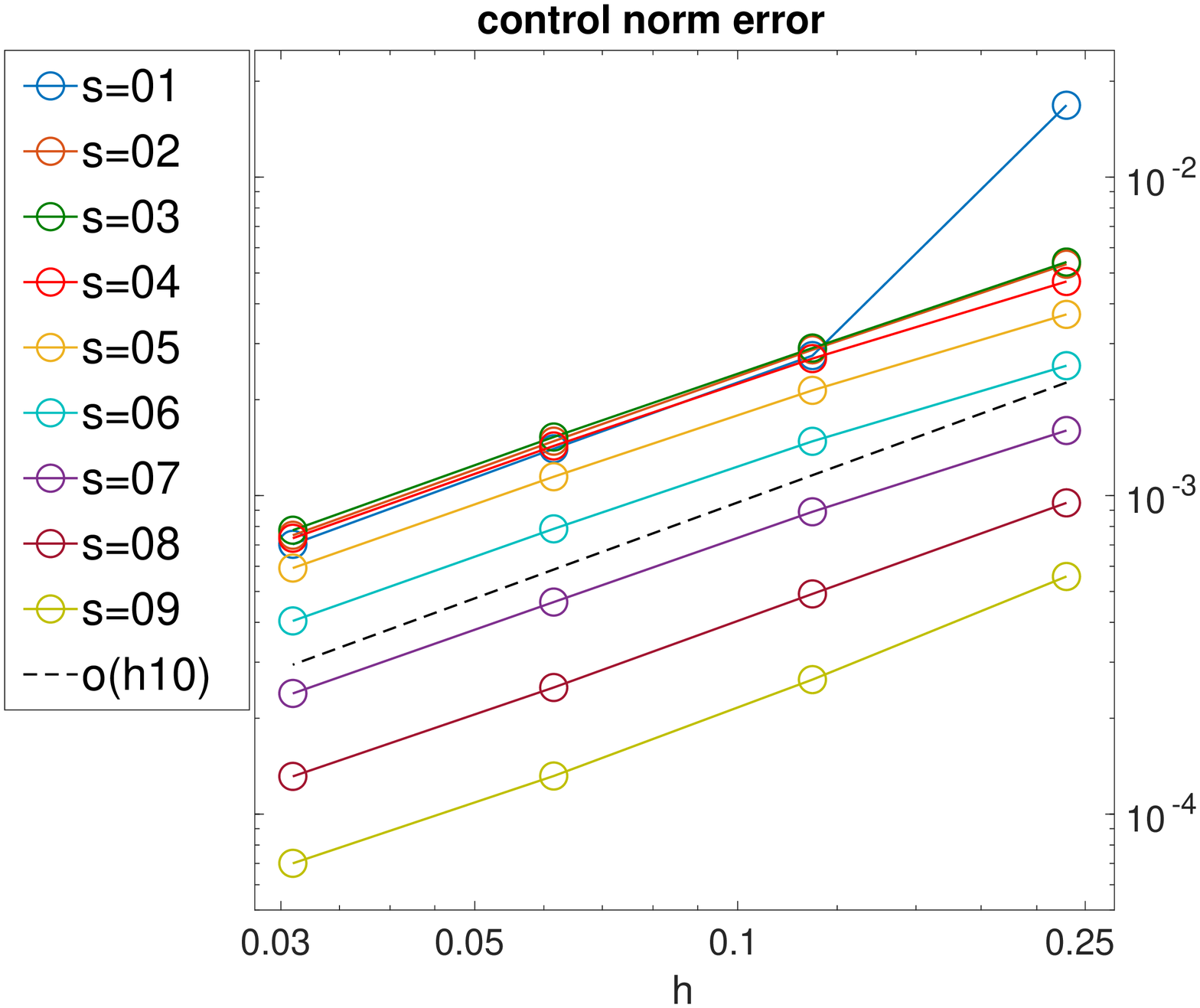} \\
\hspace{0.75cm} \tiny{(H.1)}
\end{minipage}
\begin{minipage}[c]{0.445\textwidth}\centering
\psfrag{control norm error}{}
\includegraphics[trim={0 0 0 0},clip,width=5.10cm,height=4.6cm,scale=0.4]{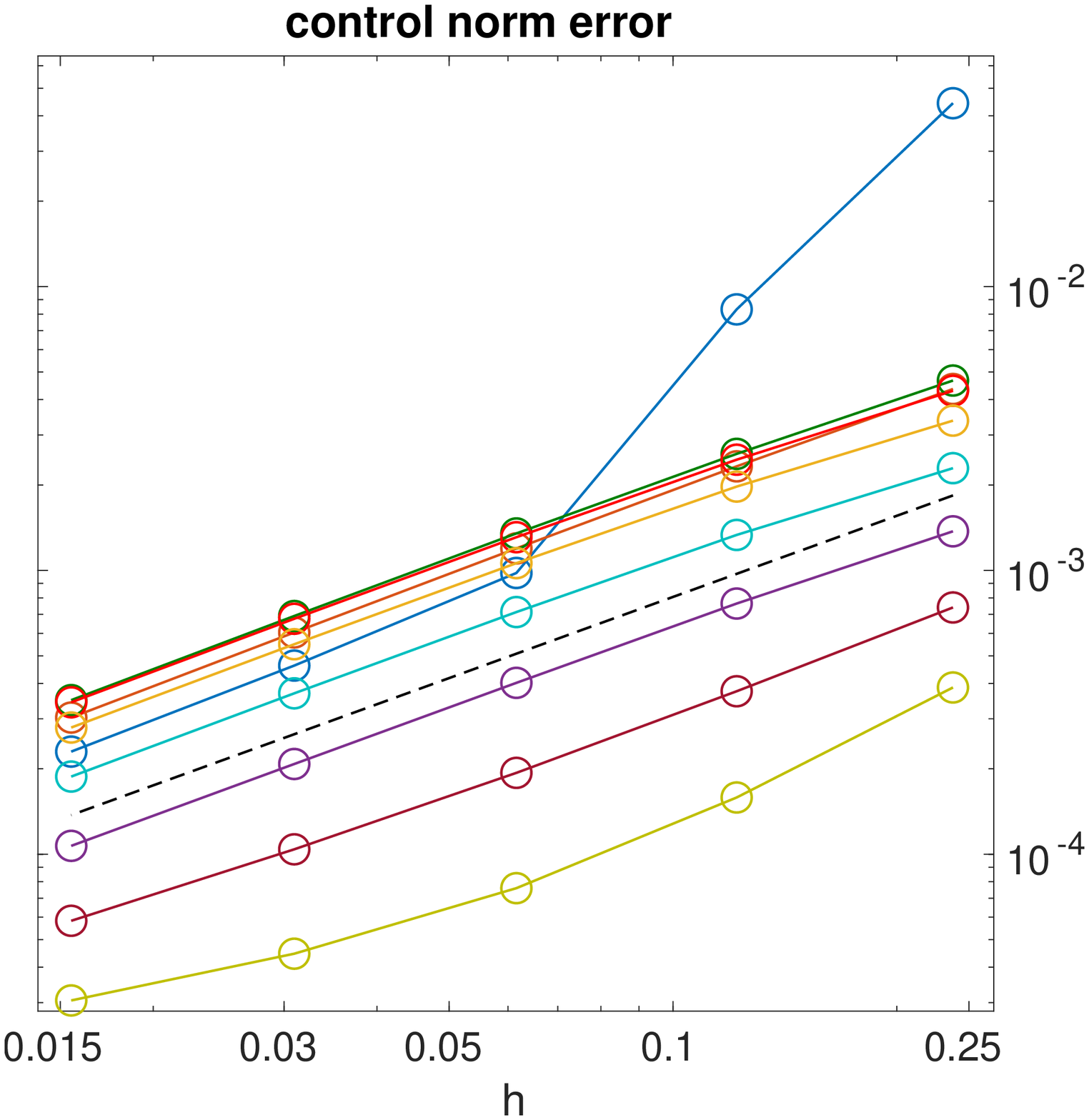}\\
\hspace{-0.4cm}\tiny{(H.2)}
\end{minipage}
\caption{Experimental rates of convergence for $\| \bar{q} - \bar{q}_h\|_{L^{2}(\Omega)}$ considering the fully discrete (H.1) and semidiscrete schemes (H.2) for $a = 0.95\|\bar{u}\bar{p}\|_{L^{\infty}(\Omega)}$ and $s \in \{0.1,0.2,..., 0.9\}$.}
\label{fig:ex-2.5}
\end{figure}

In Figure \ref{fig:ex-2.5} we present for $s \in \{0.1,0.2,\ldots,0.9\}$ the experimental convergence rates for $\| \bar{q} - \bar{q}_h\|_{L^{2}(\Omega)}$ obtained for both the fully discrete and the semidiscrete schemes. We note that $\| \bar{q} - \bar{q}_h\|_{L^{2}(\Omega)}$ achieves the experimental convergence rate $\mathcal{O}(h)$ for both methods and for all considered values of $s$. In contrast to Example 2, we have here that the singular behaviour near the boundary of $\Omega$ disappears when the restriction $a$ is large enough.

\begin{remark}[fully discrete versus semidiscrete approximation]
\DQ{In the following, we present what we consider to be the most important advantages and disadvantages of each discretization scheme.}
\\
\DQ{\textbf{Advantages(A)/disadvantages(D) of the fully discrete scheme:}
\begin{itemize}[leftmargin=*]
\item[(A1)] The scheme provides an explicit discrete control variable.
\item[(D1)] The scheme incurs additional computational costs due to the additional degrees of freedom required to discretize the admissible control set.
\item[(D2)] If the control set is discretized with piecewise constant functions, the expected convergence rate for the control approximation is always limited to $\mathcal{O}(h)$.
\end{itemize}
\textbf{Advantages(A)/disadvantages(D) of the semidiscrete scheme:}
\begin{itemize}[leftmargin=*]
\item[(A1)] A discrete control is not explicitly used in the computational implementation.
\item[(A2)] If the state and adjoint equations are discretized with piecewise linear functions, the data are smooth, and $\Omega$ is convex, then the rate $\mathcal{O}(h^2)$ can be obtained for the control approximation in the case $s =\ 1$. For the case $s \in (0,1)$, numerical evidence shows that such a rate is restricted to $\mathcal{O}(h)$.
\item[(D1)] An additional effort has to be made to compute an explicit discrete control by the projection formula. This can be interpreted as a post-processing step.
\end{itemize}}
\end{remark}

\bibliographystyle{siamplain}
\bibliography{bilinear_ref_sin_url}

\end{document}